\documentclass[10pt, a4paper, reqno, oneside]{amsart}
\usepackage[T1]{fontenc}
\usepackage[utf8]{inputenc}
\usepackage[italian, english]{babel}
\usepackage{geometry}
\usepackage{color}
\usepackage[hidelinks]{hyperref}
\usepackage[timezonesep={\ UTC}, showseconds=false]{datetime2}

\usepackage[autostyle]{csquotes}
\usepackage[backend=biber, sorting=nyt, doi=false, url=false, isbn=false,
style=alphabetic-verb]{biblatex}
\DefineBibliographyExtras{english}{}

\DeclareFieldFormat{postnote}{#1}
\DeclareFieldFormat{multipostnote}{#1}

\usepackage{xurl}

\usepackage{hyphenat}
\usepackage{csquotes}
\usepackage{comment}
\usepackage{enumitem}
\makeatletter
\newcommand{\mylabel}[2]{#2\def\@currentlabel{#2}\label{#1}}
\makeatother

\usepackage{upgreek}

\usepackage{amsfonts}
\usepackage{amsmath}
\usepackage{amssymb}
\usepackage{amsthm}
\usepackage{mathrsfs}
\usepackage{stmaryrd}
\usepackage{bm}
\usepackage{mathtools}
\usepackage{braket}
\usepackage{dsfont}

\usepackage{graphicx}
\usepackage{caption}
\usepackage{float}
\usepackage{multicol}
\usepackage{booktabs}
\usepackage{array}

\usepackage{tikz-cd}
\usepackage[all,cmtip]{xy}

\newtheorem{theorem}{Theorem}[section]

\newtheorem{lemma}[theorem]{Lemma}
\newtheorem{proposition}[theorem]{Proposition}
\newtheorem{corollary}[theorem]{Corollary}

\newtheorem*{theorem*}{Theorem}
\newtheorem*{statement*}{Statement}
\newtheorem*{lemma*}{Lemma}
\newtheorem*{proposition*}{Proposition}
\newtheorem*{corollary*}{Corollary}

\theoremstyle{definition}

\newtheorem{definition}[theorem]{Definition}

\theoremstyle{remark}
\newtheorem{remark}[theorem]{Remark}
\newtheorem{example}[theorem]{Example}

\newcommand{\numberset}{\mathbb}
\newcommand{\R}{\numberset{R}}
\newcommand{\C}{\numberset{C}}
\newcommand{\Q}{\numberset{Q}}
\newcommand{\Z}{\numberset{Z}}
\newcommand{\N}{\numberset{N}}
\newcommand{\F}{\numberset{F}}
\newcommand{\Zp}{\Z_p}

\newcommand{\Qp}{\Q_p}

\newcommand{\Qbar}{\bar{\Q}}

\newcommand{\Fbar}{\bar{F}}

\DeclareFontFamily{U}{wncy}{}
\DeclareFontShape{U}{wncy}{m}{n}{<->wncyr10}{}
\DeclareSymbolFont{mcy}{U}{wncy}{m}{n}
\DeclareMathSymbol{\sha}{\mathord}{mcy}{"58}
\newcommand{\Sha}{\sha}
\newcommand{\inj}{\hookrightarrow}
\newcommand{\surj}{\twoheadrightarrow}

\newcommand{\iso}{\xrightarrow{\sim}}
\newcommand{\longinj}{\lhook\joinrel\longrightarrow}

\renewcommand{\injlim}{\varinjlim}
\renewcommand{\projlim}{\varprojlim}

\DeclareMathOperator{\Jac}{Jac}

\DeclareMathOperator{\Hom}{Hom}

\DeclareMathOperator{\End}{End}
\DeclareMathOperator{\Aut}{Aut}
\DeclareMathOperator{\M}{M}
\DeclareMathOperator{\GL}{GL}
\DeclareMathOperator{\SL}{SL}

\newcommand{\matid}{\mathds{1}}

\DeclareMathOperator{\Tr}{Tr}
\DeclareMathOperator{\rk}{rk}
\newcommand{\Frob}{\mathrm{Frob}}

\DeclareMathOperator{\Ad}{Ad}
\newcommand{\adj}{\mathrm{adj}}
\newcommand{\tors}{\mathrm{tors}}
\newcommand{\ab}{\mathrm{ab}}

\newcommand{\mfrak}{\mathfrak{m}}
\newcommand{\p}{\mathfrak{p}}
\renewcommand{\P}{\mathfrak{P}}
\newcommand{\q}{\mathfrak{q}}

\DeclareMathOperator{\Gal}{Gal}

\newcommand{\cOP}{\mathcal{O}_{\P}}

\DeclareMathOperator{\nr}{nr}
\DeclareMathOperator{\Div}{Div}
\DeclareMathOperator{\CM}{CM}
\DeclareMathOperator{\lcm}{lcm}
\newcommand{\cfrak}{\mathfrak{c}}
\newcommand{\cond}{\cfrak}
\newcommand{\ellfr}{\mathfrak{l}}

\newcommand{\ev}{\mathrm{ev}}
\newcommand{\B}{B}
\newcommand{\Bhat}{\widehat{\B}}
\newcommand{\charpoly}{\mathrm{char}}
\DeclareMathOperator{\Ram}{Ram}

\newcommand{\rec}{\mathrm{rec}}

\newcommand{\cohomology}{H}

\newcommand{\hone}{\cohomology^1}

\newcommand{\honeur}{\hone_\ur}
\DeclareMathOperator{\Sel}{Sel}

\DeclareMathOperator{\Cor}{cor}
\DeclareMathOperator{\Res}{res}
\newcommand{\res}{\Res}
\newcommand{\cores}{\Cor}

\newcommand{\ur}{\mathrm{ur}}
\newcommand{\ord}{\mathrm{ord}}

\newcommand{\cont}{\mathrm{cont}}

\newcommand{\different}{\mathscr{D}}

\newcommand{\admissible}{\mathscr{S}}
\newcommand{\allowed}{\mathscr{P}}
\newcommand{\allowedprimes}{\mathscr{P}^{\mathrm{prime}}}%{\widetilde{\mathscr{P}}}

\newcommand{\nfrak}{\mathfrak{n}}

\newcommand{\cbf}{\mathbf{c}}
\newcommand{\dbf}{\mathbf{d}}

\DeclareMathOperator{\Selm}{Sel}
\DeclareMathOperator{\img}{img}

\newcommand{\cA}{\mathcal{A}}

\newcommand{\cN}{\mathcal{N}}
\newcommand{\cG}{\mathcal{G}}
\newcommand{\cO}{\mathcal{O}}

\newcommand{\fp}{\mathfrak{P}}
\newcommand{\fq}{\mathfrak{q}}

\newcommand{\fP}{\mathfrak{P}}

\makeatletter
\newcommand{\leqnomode}{\tagsleft@true\let\veqno\@@leqno}
\newcommand{\reqnomode}{\tagsleft@false\let\veqno\@@eqno}
\makeatother

\newcommand{\cOE}{\cO_E}
\newcommand{\cOEP}{\cO_{E,\P}}
\newcommand{\cOF}{\cO_F}
\newcommand{\cOK}{\cO_K}
\newcommand{\cOhat}{\widehat{\cO}}
\newcommand{\cOhatF}{\cOhat_F}

\newcommand{\Khat}{\widehat{K}}
\newcommand{\Fhat}{\widehat{F}}
\author{Luca Mastella}
\author{Ahmed Matar}
\author{Francesco Zerman}

\address{UniDistance Suisse\\ Schinerstrasse 18, 3900, Brig, Switzerland}
\email{luca.mastella@unidistance.ch}
\email{francesco.zerman@unidistance.ch}

\address{Department of Mathematics, University of Bahrain, P.O. Box 32038, Sukhair, Bahrain}
\email{amatar@uob.edu.bh}

\title[$\sha(A/K){[\P^\infty]}=0$ and anticycl. Iw. theory of $\GL_2$-abelian varieties]{Vanishing of $\sha(A/K){[\P^\infty]}$ and its consequences for the anticyclotomic Iwasawa theory of $\GL_2$-abelian varieties}

\subjclass{11R23, 11G15, 11F80, 11R34, 11G18}
\keywords{Iwasawa theory, Heegner points, Abelian varieties, Shimura curves}

\begin{document}

\maketitle

\begin{center}
    {\itshape
    Dedicated to the memory of {Jan Nekov{\'a}{\v{r}}}}
\end{center}

\begin{abstract}
In this article we generalise a classical Kolyvagin's result on the vanishing of the $p$-part of the Shafarevich--Tate group of an elliptic curve (defined over $\Q$) over an imaginary quadratic field $K$ to modular abelian varieties of $\GL_2$-type (defined over a totally real number field $F$) and a CM field $K/F$. Combining this result with the abstract Iwasawa-theoretical argument of \cite{matar-nekovar:kolyvagin}, we show that a similar vanishing holds over the layers of a suitably defined anticyclotomic multi-$\Z_p$-extension of $K$.
\end{abstract}

\section{Introduction}

This article is a follow up of the paper \cite{matar-nekovar:kolyvagin} of the second named author together with {Jan Nekov{\'a}{\v{r}}}, generalising their results to modular $\GL_2$-abelian varieties defined over a totally real field $F$. These are $F$-simple quotients $A$ of the Jacobian of certain Shimura curves $N_H^\ast$. It follows that $\cOE := \End_F(A)$ is the ring of integers of a (totally real) number field $E$ with $[E:\Q] =\dim(A) =:g$. For a precise description of these objects see Section \ref{sec:modular-av-and-heegner}.

In particular, \cite{matar-nekovar:kolyvagin} discusses an improvement of the classical Kolyvagin's result about the vanishing of the $p$-primary part of the Shafarevich--Tate group of an elliptic curve $A$ defined over $\Q$, or more precisely a special case of this proven by Gross in \cite{gross:kolyvagin}. 
Let us recall their result. 

Let $K/\Q$ be an imaginary quadratic field of discriminant $D_K \ne -3, -4$ satisfying the \emph{Heegner hypothesis} (i.e., such that all the primes dividing the conductor $N$ of $A$ split in $K$) and let $p$ be a prime such that $p \nmid 2D_K$. By the Heegner hypothesis,  we can choose an ideal $\cN$ of $\cO_K$ such that $\cO_K/\cN \cong \Z/N\Z$. Therefore, the natural projection of complex tori
\[\C/\cO_K \to \C/\cN^{-1}\]
is a cyclic $N$-isogeny, which corresponds to a point $x_1$ on the modular curve $X_0(N)$. The theory of complex multiplication shows that $x_1$ is rational over $K[1]$, the Hilbert class field of $K$. Fix a modular parametrization $\pi \colon X_0(N) \to A$ which maps the cusp $\infty$ of $X_0(N)$ to the origin of $A$ and let $y_1=\pi(x_1) \in A(K[1])$. Finally, define the point $y_K:=\Tr_{K[1]/K}(y_1) \in A(K)$. 
\begin{theorem}[{\cite[Theorem 6.7, case $K \ne \Q(i), \Q(\sqrt{-3})$]{matar-nekovar:kolyvagin}}]\label{theorem:gross_generalisation}
  Suppose that $A[p]$ is a simple $\F_p[G_\Q]$-module. If $y_K \notin p A(K)$, then $A(K) \otimes \Zp = \Zp y_K \cong \Zp$ and $\Sha(A/K)[p^{\infty}]=\{0\}$
\end{theorem}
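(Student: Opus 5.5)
This is Kolyvagin's Euler system argument, in Gross's formulation, with the hypothesis ``$\rho_p$ surjective'' replaced by ``$A[p]$ is a simple $\F_p[G_\Q]$-module''. The plan is to bound the $p^\infty$-Selmer group directly and prove
\[
\Sel_{p^\infty}(A/K)=(\Q_p/\Z_p)\cdot y_K,
\]
and then read off both claims from the exact sequence
\[
0\to A(K)\otimes\Q_p/\Z_p\to \Sel_{p^\infty}(A/K)\to \Sha(A/K)[p^\infty]\to 0.
\]
Since $y_K\notin pA(K)$, the point $y_K$ has infinite order: otherwise $y_K$ would be a torsion point not divisible by $p$, and simplicity of $A[p]$ together with $p\nmid 2D_K$ excludes the relevant $p$-torsion. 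So $A(K)\otimes\Z_p$ has rank at least one, and it contains $\Z_p y_K$ as a direct summand. It therefore suffices to show that $\Sel_p(A/K)$ is one-dimensional over $\F_p$, spanned by the Kummer image $\kappa(y_K)$. A standard devissage then gives $\Sel_{p^\infty}$ of corank one, divisible, generated by $y_K$. This forces $\Sha[p^\infty]=0$ and $A(K)\otimes\Z_p=\Z_p y_K$.

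For $\Sel_p(A/K)$ we split into the $\pm$ eigenspaces under complex conjugation $\tau$; this uses $p\ne2$. The point $y_K$ lies in the $\epsilon$-eigenspace, where $-\epsilon$ is the sign of the functional equation. The steps, in order, are as follows.

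First, construct Heegner points $y_n\in A(K[n])$ for squarefree products $n$ of Kolyvagin primes $\ell$. These are primes that are inert in $K$, with $\Frob_\ell=\tau$ on $K(A[p])$, so $p\mid \ell+1$ and $p\mid a_\ell$. Then form the derived classes
\[
P_n=\sum_{\sigma\in G_n} D_n\sigma\, y_n,\qquad D_\ell=\sum_i i\sigma_\ell^i.
\]
The Euler system relations $\Tr_{\ell}\, y_{n\ell}=a_\ell y_n$ and $y_{n\ell}\equiv \Frob_\lambda(y_n)$ modulo $\lambda$ show that $P_n \bmod p$ is fixed by $\mathrm{Gal}(K[n]/K)$. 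Descending it to a class $c(n)\in \hone(K,A[p])$ requires $A(K[n])[p]=0$. For this, simplicity of $A[p]$ replaces surjectivity: $A[p]^{G_{K[n]}}$ is a $G_\Q$-submodule, because $K[n]/\Q$ is Galois, and it is not all of $A[p]$ because $K[n]/\Q$ is generalized dihedral while $\Q(A[p])$ contains $\Q(\mu_p)$. Hence it vanishes.

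Second, compute the local behaviour of $c(\ell)$. It is unramified, and hence locally trivial, outside $\ell$ and the primes of bad reduction; at bad primes one uses the Heegner hypothesis and a Tamagawa argument that is tolerated since the statement only claims $p$-part. At $\lambda\mid\ell$, the finite-singular comparison identifies $\res_\lambda c(\ell)$ with the image of $y_K$ in $A(K_\lambda)/p\cong A[p]^{\pm}$ up to a unit, and it lies in the $-\epsilon$ eigenspace. Local Tate duality then gives
\[
\sum_v\langle s_v,\res_v c(\ell)\rangle=0
\]
for every $s\in\Sel_p$.

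Third, apply Chebotarev in $L=K(A[p])\bigl(\Sel_p^{\pm}\bigr)\bigl(p^{-1}y_K\bigr)$. Given a class $s\in\Sel_p^{\epsilon}$ independent of $\kappa(y_K)$, choose $\ell$ so that $\res_\ell y_K\ne0$ while $\res_\ell s=0$. Given $s\in\Sel_p^{-\epsilon}$, choose $\ell$ with $\res_\ell s\neq0$ and $\res_\ell y_K\ne 0$. In both cases the reciprocity law yields a contradiction, so $\Sel_p^{-\epsilon}=0$ and $\Sel_p^{\epsilon}=\F_p\kappa(y_K)$.

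The main obstacle is this Chebotarev step without the big-image hypothesis. One needs $\hone(\Gal(K(A[p])/K),A[p])=0$, so that Selmer classes define independent Kummer extensions of $K(A[p])$. One also needs the relevant Frobenius conditions to be simultaneously realizable, which uses $A[p]^{\tau=\pm}$ being one-dimensional. I would try to obtain the cohomology vanishing from the central element $-1$ or a scalar, or more generally from a Sah-type argument using simplicity and $p\nmid 2D_K$. If no scalar is available, I would fall back on the bound: $\hone$ is killed by an explicit small integer prime to $p$ via irreducibility, as in Gross's treatment.
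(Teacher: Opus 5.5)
\paragraph{There is a genuine gap in your third step, in the $\epsilon$-eigenspace.}
By your own bookkeeping, $c(\ell)\in\hone(K,A[p])^{-\epsilon}$. So its singular part $d(\ell)_\lambda\in\hone(K_\lambda,A)[p]$ lies in the $(-\epsilon)$-eigenspace. (It is $d(\ell)_\lambda$, not $\res_\lambda c(\ell)$, that corresponds to $\kappa(y_K)_\lambda$, through a $\tau$-\emph{anti}-equivariant isomorphism. Also $A(K_\lambda)/pA(K_\lambda)\cong A[p]$ is two-dimensional, with one-dimensional eigenspaces.) The local Tate pairing at $\lambda$ is $\tau$-invariant and $p$ is odd, so $(A(K_\lambda)/p)^{a}$ is orthogonal to $\hone(K_\lambda,A)[p]^{-a}$.

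Consequently, for $s\in\Sel_p(A/K)^{\epsilon}$, reciprocity applied to $c(\ell)$ gives $\langle s_\lambda,d(\ell)_\lambda\rangle=0$, and this holds automatically for every $\ell$. The classes $c(\ell)$ give $\Sel_p^{-\epsilon}=0$ but say nothing about $\dim\Sel_p^{\epsilon}$. Your choice of $\ell$ with $\res_\ell y_K\neq0=\res_\ell s$ yields no contradiction; it is only the choice of an \emph{auxiliary} prime $\ell_1$.

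What is missing is the derivative class of two primes. After replacing $s$ by $s-a\kappa(y_K)$, you may assume $s_{\lambda_1}=0$ and $s\neq0$. You then need a second Kolyvagin prime $\ell$ with $s_\lambda\neq0$ \emph{and} $c(\ell_1)_\lambda\neq0$. The class $c(\ell\ell_1)$ lies in the $\epsilon$-eigenspace and is locally trivial (in $\hone(\cdot,A)$) away from $\lambda,\lambda_1$. Moreover $d(\ell\ell_1)_\lambda$ corresponds to $c(\ell_1)_\lambda\neq0$, so reciprocity gives $0\neq\langle s_\lambda,d(\ell\ell_1)_\lambda\rangle=-\langle s_{\lambda_1},d(\ell\ell_1)_{\lambda_1}\rangle=0$.

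Finding such an $\ell$ requires Chebotarev in the Kummer extension of $K(A[p])$ cut out jointly by $s$ and the non-Selmer class $c(\ell_1)$. That extension is not contained in your field $L$. This is exactly what $S_2=\cOP\cdot\cbf(\ellfr_1)^{-\epsilon}$, the set $U$ and the classes $\dbf(\ellfr\ellfr_1)$ do in the paper's proof of the generalisation, Theorem \ref{main_theorem} (Lemma \ref{H1_generators_lemma} and Proposition \ref{order_Sel_prop}).

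\paragraph{Bad primes.}
Your treatment is not acceptable as stated. If $p$ divides a Tamagawa number $c_q$ for some $q\mid N$, then $\hone(K_v^{\ur}/K_v,A(K_v^{\ur}))[p]\neq0$, and nothing forces $d(n)_v=0$. The theorem asserts exact vanishing of $\Sha(A/K)[p^\infty]$ with no Tamagawa hypothesis, so this defect cannot be ``tolerated''. You must either assume $p\nmid c_q$ or use the device of Section \ref{sec:removing-tama}. There one takes Kolyvagin primes and classes of level $p^{1+m_0}$, where $p^{m_0}$ kills the $p$-parts of the component groups. These are pushed to $\hone(K,A[p])$ by multiplication by $p^{m_0}$, which sends $\delta_{p^{1+m_0}}(y_K)$ to $\kappa(y_K)$.

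\paragraph{Irreducibility over $G_K$.}
The Chebotarev step needs $A[p]$ absolutely simple over $G_K$, not just simple over $G_\Q$; here this is easy to supply. $\Q(A[p])/\Q$ is unramified outside $pN$, while $K/\Q$ ramifies only outside $pN$ (Heegner hypothesis and $p\nmid D_K$). Hence $K\cap\Q(A[p])=\Q$ and the images of $G_K$ and $G_\Q$ in $\Aut(A[p])$ coincide. That image contains the image of $\tau$, which has distinct eigenvalues $\pm1$, so it lies in no nonsplit Cartan. Once this is known, $\hone(K(A[p])/K,A[p])=0$ follows from Dickson's classification and Sah's lemma, so that is not where the real difficulty lies.
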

This result is obtained via a careful study of the assumptions used in the proof of \cite[Proposition 2.1]{gross:kolyvagin} (denoted ($C1$)-($C6$) in {\cite[Section 6]{matar-nekovar:kolyvagin}}): they prove that if $D_K \ne -3, -4$ these are all satisfied as long as $A[p]$ is an absolutely simple $\F_p[G_K]$-module and that this latter condition is (under their running hypothesis) equivalent to the (standard) simplicity of $A[p]$ as $\F_p[G_\Q]$-module.

Moreover, using the norm relations among the Heegner points (see \cite[Section 3.1, Proposition 1]{perrin-riou:heegner-points}), they extend the vanishing of the Shafarevich--Tate group along the layers $K_n$ of the anticyclotomic extension $K_\infty$ of $K$, combining Theorem \ref{theorem:gross_generalisation} with an abstract Iwasawa theoretical result (i.e., not involving any Euler system argument).
Indeed, if $a_p = p + 1 - \# \tilde{A}_p(\F_p)$, $\tilde{A}_p$ is the reduction of $A$ at $p$, $c_{\mathrm{Tam}}(A/\Q)$ is the Tamagawa number of $A$ and  
\[
    \varepsilon(p) = \begin{cases}
    1 \qquad &\text{if $p$ is split in $K/\Q$;}\\
    0  &\text{if $p$ is ramified in $K/\Q$;}\\
    -1 &\text{if $p$ is inert in $K/\Q$.}
    \end{cases}\]
they obtain the following result.

\begin{theorem}[{\cite[Theorem 6.9]{matar-nekovar:kolyvagin}}]\label{theorem:iwasawa-concrete-mn19}
Suppose that $A[p]$ is a simple $\F_p[G_\Q]$-module and that $p \nmid N\cdot  a_p \cdot  (a_p -1) \cdot (a_p - \varepsilon(p)) \cdot c_{\mathrm{Tam}}(A/\Q)$. 
If $y_K \notin p A(K)$, then, for any intermediate field $K \subseteq L \subseteq K_\infty$, $\Sha(A/L)[p^\infty] = 0$ and the Pontryagin dual of $A(L) \otimes \Q_p/\Z_p$ is a free module of rank one over $\Z_p \llbracket \Gal(L/K)\rrbracket$. Moreover for any integer $n \ge 0$, $\rk_{\Z} E(K_n) = p^n$, $\Sha(A/K_n)[p^\infty] = 0$ and $A(K_n) \otimes \Z_p$ is generated over $\Z_p[\Gal(K_n/K)]$ by the traces of Heegner points of $p$-power conductor.
\end{theorem}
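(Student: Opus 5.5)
The plan is to combine Theorem \ref{theorem:gross_generalisation}, which controls the base layer $K$, with a control-theorem and Nakayama argument along the anticyclotomic tower. No Euler system argument is used beyond the base layer. Let $\Gamma=\Gal(K_\infty/K)\cong\Z_p$ and $\Lambda=\Z_p\llbracket\Gamma\rrbracket$. Consider the $p^\infty$-Selmer groups $S(L)=\Sel_{p^\infty}(A/L)$ and their Pontryagin duals $X(L)$.

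\textbf{Step 1: the base layer.} From $y_K\notin pA(K)$, Theorem \ref{theorem:gross_generalisation} gives $A(K)\otimes\Z_p=\Z_p y_K$ and $\Sha(A/K)[p^\infty]=0$. Hence $S(K)\cong \Q_p/\Z_p$, generated by the Kummer image of $y_K$, and $X(K)\cong\Z_p$. Since $A[p]$ is irreducible, $A(K_n)[p]=0$ for every $n$, because $K_n/K$ is pro-$p$ and a nonzero fixed vector would exist otherwise. Consequently the restriction maps $H^1(K,A[p^\infty])\to H^1(K_n,A[p^\infty])^{\Gal(K_n/K)}$ are injective.

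\textbf{Step 2: control theorem (the main obstacle).} I would prove that $S(K)\to S(L)^{\Gal(L/K)}$ is an isomorphism for every $K\subseteq L\subseteq K_\infty$, or at least that it is injective with trivial cokernel. The global part of the cokernel is controlled by $H^2(\Gal(L/K),A(L)[p^\infty])=0$. The local part needs the vanishing of the kernels of $H^1(K_v,A)[p^\infty]\to H^1(L_w,A)$ at primes that ramify or are inert and non-split in $L/K$.
\begin{itemize}
\item Away from $p$, only primes dividing $N$ matter. There the kernel is bounded by $A(K_v)[p^\infty]$ modulo the component group, and it vanishes because $p\nmid c_{\mathrm{Tam}}$ and by the Heegner hypothesis: such primes split in $K$ and are finitely decomposed in $K_\infty$.
\item At $p$, where $p\nmid N$ gives good reduction, the kernel is controlled by $\tilde A(k_w)[p]$ over the residue fields. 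This is exactly where $p\nmid a_p(a_p-1)(a_p-\varepsilon(p))$ enters: these factors exclude $p$-torsion in $\tilde A_p(\F_p)$ and $\tilde A_p(\F_{p^2})$ in the split, ramified and inert cases, and they also give ordinarity, so that the ordinary Mazur-type control applies.
\end{itemize}
This bookkeeping is where I expect the difficulty to lie.

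\textbf{Step 3: Nakayama and freeness.} Write $X=X(K_\infty)$. Step 2 gives $X_\Gamma\cong X(K)\cong\Z_p$, so by topological Nakayama $X$ is cyclic over $\Lambda$, say $X\cong\Lambda/I$. The Heegner points of $p$-power conductor satisfy Perrin-Riou's norm relations. After twisting by a unit root of the Hecke polynomial, which is possible because $p\nmid a_p$, they form a norm-compatible family $(z_n)$ with $z_0$ a unit multiple of $y_K$. This family gives a map $\Lambda\to\varprojlim A(K_n)\otimes\Z_p$ whose image, paired with $X$, is nontrivial modulo the augmentation ideal. From this I would deduce $I=0$, so $X\cong\Lambda$ and the Selmer group is cofree of rank one. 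By Step 2 the same then holds for each $L$: $X(L)\cong\Z_p\llbracket\Gal(L/K)\rrbracket$.

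\textbf{Step 4: conclusions.} The Heegner classes give $A(L)\otimes\Q_p/\Z_p\subseteq S(L)$, and the dual of this subgroup surjects from $X(L)$. To show that the inclusion is an equality, I would compare coinvariants. The submodule generated by the $z_n$ is $\Gal(L/K)$-stable, and its coinvariants surject onto $S(K)$ by Step 1. Nakayama applied to the quotient $S(L)/(\text{Heegner part})$, whose $\Gal(L/K)$-invariants vanish, then forces that quotient to be zero. Hence $\Sha(A/L)[p^\infty]=0$ and the dual of $A(L)\otimes\Q_p/\Z_p$ is free of rank one. For $L=K_n$ this gives $\rk_{\Z}A(K_n)=p^n$, since $\Z_p[\Gal(K_n/K)]$ has $\Z_p$-rank $p^n$. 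It also shows that $A(K_n)\otimes\Z_p$ is generated over $\Z_p[\Gal(K_n/K)]$ by the traces $z_n$, using the torsion-freeness from Step 1.
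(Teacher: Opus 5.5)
Your architecture matches the one behind the paper. For its generalisation (Theorem \ref{theorem:iwasawa-th-main}) the paper only checks the hypotheses (A1)--(A7) of \cite[Theorem 3.5]{matar-nekovar:kolyvagin}, and that abstract result is proved along the lines of your Steps 2--4: control from the Tamagawa condition and $\tilde A(k_v)[p]=0$ at $v\mid p$, cyclicity of $X$ by Nakayama, and freeness from a universal norm with non-trivial base term. Steps 1--3 are fine in outline. In Step 3 the clean mechanism is that a non-zero universal norm is a non-zero element of $\varprojlim_n S_p(A/K_n)\cong\Hom_\Lambda(X,\Lambda)$ (up to the involution). No such element exists if $X\cong\Lambda/I$ with $I\neq 0$, because $\Lambda$ is a domain.

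There is one unjustified claim in Step 3. You say $z_0$ is a unit multiple of $y_K$, but $p\nmid a_p$ only gives the unit root $\alpha_p$. Tracing the norm relations down to $K$ gives
\[
z_0=\alpha_p^{-1}(\alpha_p-1)(\alpha_p-\varepsilon(p))\,y_K,
\]
which is the computation the paper does to verify (A7). Since $\alpha_p\equiv a_p \pmod p$, this is a unit multiple of $y_K$ exactly because $p\nmid (a_p-1)(a_p-\varepsilon(p))$. So these factors are used a second time here, not only for the residue fields in the control theorem. Everything about generation by Heegner points depends on this.

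The genuine gap is Step 4, which does not close as written. Let $G=\Gal(L/K)$ and $Q=\Sel_{p^\infty}(A/L)/(\text{Heegner part})$. Since $\Sel_{p^\infty}(A/L)^G=\Sel_{p^\infty}(A/K)$ lies in the Heegner part, you only get $Q^G\hookrightarrow H^1(G,\text{Heegner part})$. Nothing you have proved makes this $H^1$ vanish. You are also mixing the compact points $z_n$ with the discrete Selmer group.

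Work with compact Selmer groups instead. For $L=K_n$, control and Step 3 give $X(K_n)\cong\Lambda_n:=\Z_p[\Gal(K_n/K)]$. Using $A(K_n)[p]=0$, this gives $S_p(A/K_n)=\Hom_{\Z_p}(X(K_n),\Z_p)\cong\Lambda_n$, and corestriction $S_p(A/K_n)\to S_p(A/K)$ becomes the augmentation map. Your norm-compatible family, traced down to $z_n\in A(K_n)\otimes\Z_p$, satisfies $\cores(z_n)=z_0$, and $z_0$ generates $S_p(A/K)=\Z_p y_K$. Since $\Lambda_n$ is local, Nakayama shows $z_n$ generates $S_p(A/K_n)$ over $\Lambda_n$.

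Hence $A(K_n)\otimes\Z_p=S_p(A/K_n)$ is free of $\Z_p$-rank $p^n$. Now $\Sel_{p^\infty}(A/K_n)\cong D(\Lambda_n)$ is divisible of corank $p^n=\rk A(K_n)$. So $\Sha(A/K_n)[p^\infty]$ is divisible of corank $0$, i.e.\ zero. The statements for infinite $L$ then follow by passing to the limit.
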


The first main result of the present article is a generalisation of Theorem \ref{theorem:gross_generalisation} to the case of a modular $\GL_2$-abelian variety $A$ as in the first paragraph of this introduction. Therefore, let $K$ be a CM extension of $F$, satisfying Assumption \eqref{ass:heegner-hypothesis} of Section \ref{sec:heegner-points}. In particular, fixed an auxiliary CM point $x$ of $N_H^\ast$ of conductor $\cond(x)$, we define (mildly generalising \cite[Definition 4.4]{nekovar:euler-system-method}) for any ideal $\nfrak$ of $F$ which is the product of certain primes $\ellfr$ of $F$ which are unramified in $K/F$, a class of CM points $x(\nfrak) \in N_H^\ast(K^\ab)$ (see Definition \ref{def:allowed-primes}). Denote by $K(x)$ the field of definition of $x$ and by $K(x(\nfrak))$ the field of definition of $x(\nfrak)$; it turns out that  $K(x) \subseteq K[\cond(x)]$ and $K(x(\nfrak)) \subseteq K[\cond(x)\nfrak]$, denoting by $K[\cond]$ the ring class field of conductor $\cond$ for any ideal $\cond\subseteq F$. 

The images of the $x(\nfrak)$'s via the modular parametrisation 
\[
N_H^\ast \inj \Jac(N_H^\ast) \surj A,
\]
allow us to define a class of Heegner points $y(\nfrak) \in A(K^\ab)$ (see Definition \ref{def:heegner-points-av}), with whom we construct an Euler system. Define $y_K = \Tr_{K(x)/K}(y_1)$. In Section \ref{vanishingresult_section}, with an argument inspired by \cite{BD}, we exploit this Euler system in order to prove a vanishing theorem for $\sha(A/K)[\P^\infty]$. 

Fix a  polarization $\gamma$ of $A$, a rational prime $p \nmid 2[\cO_K^\times: \cO_F^\times]\deg(\gamma)$, a prime $\P$ of $E$ above $p$ and write $\cOP := \cOEP$.  Let $t$ be the order of $\P$ in the class group of $E$. 
The following theorem follows from Theorem \ref{main_theorem} for $M=t$, making a mild assumption \eqref{ass:image-residual} on the image of the residual representation $A[\P]$, for which we refer to Section \ref{vanishingresult_section}, if $t>1$.
\begin{theorem}\label{th:intro-main-1}
Under the above assumptions, suppose moreover that $y_K \notin \P A(K)$ and that $A[\P]$ is an absolutely simple $(\cOE/\P)[G_K]$-module. Then,  $A(K) \otimes_{\cOE} \cOP = \cOP y_K \cong \cOP$ and $\Sha(A/K)[\fP^{\infty}]=\{0\}$.
\end{theorem}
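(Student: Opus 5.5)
The plan is to run a Kolyvagin-style Euler system argument with the Heegner points $y(\nfrak)$, in the form used by Gross and made effective by Bertolini--Darmon. We work with $T = T_\P(A)$, the $\P$-adic Tate module, which is free of rank two over $\cOP$, and with the $\P^M$-Selmer groups $\Sel_{\P^M}(A/K)$ for $M = t$, where $t$ is the order of $\P$ in the class group of $E$. The reason for this choice is that $\P^t = (\pi)$ is principal, so multiplication by $\pi$ is an endomorphism of $A$. The polarization $\gamma$ with $p\nmid\deg(\gamma)$ gives a perfect Weil pairing on $A[\P^M]$, and this pairing, with local Tate duality, underlies all the global reciprocity arguments.

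\textbf{Step 1: admissible primes and derivative classes.} Take the Kolyvagin primes $\ellfr$ of Definition \ref{def:allowed-primes}: inert in $K/F$, prime to $p\cdot\cond(x)$ and to the level, with $\Frob_\ellfr$ acting on $A[\P^M]$ like complex conjugation, i.e.\ $\P^M \mid a_\ellfr$ and $\P^M \mid \mathrm{N}\ellfr+1$. For squarefree products $\nfrak$ of such primes we form the Kolyvagin derivatives $D_\nfrak y(\nfrak)$. Here $D_\nfrak=\prod D_\ellfr$ uses the cyclic groups $\Gal(K[\cond(x)\nfrak]/K[\cond(x)\nfrak/\ellfr])$, whose orders are $(\mathrm{N}\ellfr+1)/[\cO_K^\times:\cO_F^\times]$ up to the unit index; this is where $p\nmid[\cO_K^\times:\cO_F^\times]$ is used. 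The Euler system norm relations $\Tr\, y(\nfrak\ellfr)=a_\ellfr\, y(\nfrak)$ hold by construction. Using them, we show that $P(\nfrak)=\Tr_{K(x)/K} D_\nfrak y(\nfrak)$ is fixed modulo $\P^M$, which gives a class $c(\nfrak)\in \hone(K,A[\P^M])$. Absolute simplicity of $A[\P]$ gives $A(K^{\ab})[\P]=0$, so the inflation--restriction maps involved are injective. We then check that $c(\nfrak)$ is unramified, and hence finite, outside $\nfrak$. At $\ellfr\mid\nfrak$, its singular part is identified, via the usual local computation, with the finite part of $c(\nfrak/\ellfr)$.

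\textbf{Step 2: bounding the Selmer group with $c(1)$ alone.} The hypothesis $y_K\notin\P A(K)$ makes $c(1)$, the Kummer image of $y_K$, nonzero modulo $\P$. Following Gross's Proposition 2.1, it is enough to use $\nfrak=\ellfr$ a single prime. Suppose $s\in\Sel_{\P^M}(A/K)$ is not in $\cOP c(1)$. Using Chebotarev in $K(A[\P^M])(\frac1{\P^M}\Sel)/K$, we choose $\ellfr$ so that $\res_\ellfr c(1)\neq 0$ but $\res_\ellfr s$ vanishes. Global reciprocity with $c(\ellfr)$ then forces the localization of $s$ at $\ellfr$ to be zero. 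Doing this for all such $\ellfr$ shows that $\Sel_{\P^M}(A/K)$ is free of rank one over $\cOP/\P^M$, generated by $c(1)$, and the same argument controls the $\pm$-eigenspaces under complex conjugation. The Chebotarev step needs a disjointness result: $\hone(K(A[\P^M])/K,A[\P^M])=0$, and the Galois modules $A[\P]$ and $\Sel$ must be independent of each other. This is where absolute simplicity, assumption \eqref{ass:image-residual} when $t>1$, and $p\neq 2$ come in.

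\textbf{Step 3: conclusion, and the main obstacle.} From Step 2 we get the exact sequence $0\to A(K)\otimes\cOP/\P^M\to\Sel_{\P^M}\to\Sha[\P^M]\to0$. Its middle term is cyclic and is generated by the image of $y_K$, so $\Sha(A/K)[\P^M]=0$, and hence $\Sha[\P^\infty]=0$. Nakayama's lemma together with $A(K)[\P]=0$ then gives $A(K)\otimes_{\cOE}\cOP=\cOP y_K$, which is free of rank one. I expect the main obstacle to be Step 2 when $t>1$. There $\P$ is not principal, so we cannot simply work modulo $\P$ with an endomorphism of $A$; we have to pass through $\P^t$. Then one must check that the Chebotarev argument still separates classes in the $\cOP/\P^t$-module, and this requires showing that the image of Galois in $\mathrm{Aut}(A[\P^t])$ is large enough. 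That is exactly what \eqref{ass:image-residual} is designed to provide.
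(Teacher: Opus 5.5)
Steps 1 and 3 are essentially fine. Step 2 has a genuine gap: classes attached to a single prime cannot bound the Selmer group to rank one.

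\textbf{Single-prime classes only control one eigenspace.}
\begin{itemize}
\item Suppose $\cbf(1)$ lies in the $\epsilon$-eigenspace for $\tau_c$. Because $\psi_v$ is $\tau_c$-anti-equivariant, $\dbf(\ellfr)_\ellfr$ then lies in the $(-\epsilon)$-eigenspace of $\hone(K_\ellfr,A)[\P^M]$.
\item The local Tate pairing at an inert $\ellfr$ pairs only equal eigenspaces. So the reciprocity relation $\langle \res_\ellfr s,\dbf(\ellfr)_\ellfr\rangle=0$ kills only the $(-\epsilon)$-component of $\res_\ellfr s$.
\item This can control $\Sel_{\P^M}(A/K)^{-\epsilon}$. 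It says nothing about $\Sel_{\P^M}(A/K)^{\epsilon}$, the eigenspace containing $\cbf(1)$, because for $s$ there $\res_\ellfr s$ already lies in the $\epsilon$-part.
\end{itemize}
As written, your sketch also yields no contradiction: you choose $\ellfr$ with $\res_\ellfr s=0$ and then conclude from reciprocity that $\res_\ellfr s=0$.

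\textbf{The missing ingredient} is an auxiliary prime together with two-prime classes, as in Bertolini--Darmon, which is what the paper uses.
\begin{itemize}
\item Fix $\ellfr_1$ with $\cOP\cdot\cbf(1)^\epsilon_{\ellfr_1}\cong\cOP/\P^M$. Then $\dbf(\ellfr_1)^{-\epsilon}_{\ellfr_1}$, and hence $\cbf(\ellfr_1)^{-\epsilon}$, generate free rank-one modules.
\item Choose $\ellfr$ by Chebotarev in $L_S$, with $S=\cOP\cbf(1)^\epsilon+\cOP\cbf(\ellfr_1)^{-\epsilon}$, so that both localizations generate at $\ellfr$. Then $\hone(K_\ellfr,A)[\P^M]$ is spanned by $\dbf(\ellfr)^{-\epsilon}_\ellfr$ and $\dbf(\ellfr\ellfr_1)^{\epsilon}_\ellfr$.
\item Reciprocity kills $\uppsi_\ellfr$ on the first generator. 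On the second it gives $\uppsi_\ellfr(\dbf(\ellfr\ellfr_1)^\epsilon_\ellfr)=-\uppsi_{\ellfr_1}(\dbf(\ellfr\ellfr_1)^\epsilon_{\ellfr_1})$, so $\img\uppsi_\ellfr\subseteq\img\uppsi_{\ellfr_1}$.
\item $\img\uppsi_{\ellfr_1}$ is cyclic, because $\uppsi_{\ellfr_1}$ kills $\dbf(\ellfr_1)^{-\epsilon}_{\ellfr_1}$, which generates a free rank-one submodule of the free rank-two module $\hone(K_{\ellfr_1},A)[\P^M]$.
\item Since these images generate $\Sel_{\P^M}(A/K)^\ast$, the Selmer group is cyclic.
\end{itemize}
The classes $\dbf(\ellfr\ellfr_1)$ are exactly what your proposal lacks.

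\textbf{Two further points you would need to handle.}

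First, no geometric formula for the action of $\tau_c$ on $y_K$ is available here, so $\cbf(1)$ need not be an eigenvector. You must first choose $\epsilon$ with $y_K^\epsilon\notin\P A(K)$ and run the argument with $\cbf(1)^\epsilon=\delta_K(y_K^\epsilon \bmod \P^M)$. Otherwise the eigenspace bookkeeping above breaks down.

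Second, for $M=t>1$, ``nonzero localization'' must be strengthened to ``generates a free $\cOP/\P^M$-module of rank one''. The map $\phi_S$ is only injective, so the paper does the following:
\begin{itemize}
\item it passes to the $\cOP$-span $X_S$ and proves $X_S=\Hom_{\cOP}(S,A[\P^M])$ by a length count that uses absolute simplicity;
\item it chooses $U$ inside $\Gal(L_S/L_{S_1}\cap L_{S_2})$, so that no linear disjointness of $L_{S_1}$ and $L_{S_2}$ over $L$ is needed.
\end{itemize}
The separation needed for Chebotarev comes from this, not from a big-image statement. Assumption \eqref{ass:image-residual} enters only through $\hone(K(A[\P^M])/K,A[\P^M])=0$, i.e.\ injectivity of restriction to $L$.
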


This result might be seen as a refinement of \cite[Theorem 3.2]{nekovar:euler-system-method} or \cite[Theorem A]{howard:iwasawa-gl2-abelian-varieties}. In particular, the latter exploits a Kolyvagin system argument in order to prove a more general theorem about the structure of the Shafarevich--Tate group even when $y_K \in \P A(K)$, but under some stricter assumptions (he requires that $T_\P A$ satisfies a \emph{big image} assumption, which in the case of $\GL_2$-abelian varieties is known to hold for infinitely many primes $p$ only assuming that all the $\Qbar$-endomorphisms of $A$ are defined over $F$, see \cite{ribet:galois-action-av} and \cite[Theorem 1.4]{lombardo:image}). Our approach, in the spirit of Theorem \ref{theorem:gross_generalisation}, allows us to work under a minimal set of assumptions, which are stated explicitly throughout the article. Moreover, our definition of Heegner points, that follows \cite{nekovar:euler-system-method}, has the advantage of allowing parametrisations by a notably wider class of Shimura curves than those considered in \cite{howard:iwasawa-gl2-abelian-varieties}.

Moreover, despite being inspired by \cite{BD} and \cite{gross:kolyvagin}, the proof of Theorem \ref{th:intro-main-1} faces some technical difficulties. The first comes from the fact that a \emph{geometric} formula for the action of the complex conjugation on Heegner points is not available in this context. However, since $p$ is odd, we may write $y_K = y_K^+ + y_K^-$ with $y_K^{\pm}$ in the $\pm$-eigenspace with respect to the action of the complex conjugation: since $y_K \notin \P A(K)$, for a choice of $\epsilon \in \{\pm\}$ we have that also $y_K^\epsilon \notin \P A(K)$; thus we replace $y_K$ with $y_K^\epsilon$ in the argument of \cite{BD}. Another more serious problem is that the pairing considered in \cite[Proposition 9.3]{gross:kolyvagin} is not anymore perfect in our setting: the maps $\phi_S$ and $\psi_S$, that we define in Section \ref{sec:Kolyvaigin-pairing} for any $\cOP$-submodule $S \subseteq \hone(F, A[\P^M])$, are in general just injections (see \cite[Remark 6.5]{besser:finiteness-sha} for an explicit example of this phenomenon in a similar setting).  In order to overcome this problem we need to consider the $\cOP$-span $X_S$ of $\img \phi_S$, but this leads to a series of variations in the method. For instance it more complicated to study of how $X_S$ behave when we vary $S$ (see Section \ref{compatibility_sebsection}), which is a crucial step in the proof of Proposition \ref{generating_Sel_proposition}. Another technical difference between our framework and the one of \cite{BD} (or \cite{gross:kolyvagin}), which led to the computations of Section \ref{directproduct_subsection}, is that, if $S = S_1 \oplus S_2$ it's not anymore clear whether $L_{S_1}$ and $L_{S_2}$ are linearly disjoint over $L=K(A[\P^M])$, denoting by $L_{S_i}$ the field attached to $S_i$, for $i=1,2$. However, the computations of Section \ref{directproduct_subsection} show that this is actually not needed in order to apply our version of the Kolyvagin's method.

Then, in Section \ref{representation_section}, via a detailed study of the image of the residual representation $A[\P]$, we show that in some cases the assumption that $A[\P]$ is absolutely simple as $(\cOE/\P)[G_K]$-module is equivalent to being simple as $(\cOE/\P)[G_F]$-module (Proposition  \ref{theorem:assumption-reduction}). Combining this fact with Theorem \ref{th:intro-main-1}, we get the following corollary. 
\begin{corollary}
    In the setting of Theorem \ref{th:intro-main-1}, assume moreover that $A$ has good $\P$-ordinary reduction at a prime $\p$ of $F$ above $p$, that $F_\p(\mu_p)/F_\p$ is totally ramified and $p >3$. Furthermore, suppose that $y_K \notin \P A(K)$ and that $A[\P]$ is an  simple $(\cOE/\P)[G_F]$-module. Therefore, it follows that $A(K) \otimes_{\cOE} \cOP = \cOP y_K \cong \cOP$ and $\Sha(A/K)[\fP^{\infty}]=\{0\}$.
\end{corollary}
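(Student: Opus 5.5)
The corollary should follow formally from Theorem \ref{th:intro-main-1} once we know that $A[\P]$ is an absolutely simple $(\cOE/\P)[G_K]$-module. The plan is therefore to invoke Proposition \ref{theorem:assumption-reduction}. That proposition says the hypotheses added here ($\P$-ordinary good reduction at some $\p\mid p$, $F_\p(\mu_p)/F_\p$ totally ramified, $p>3$) make simplicity of $A[\P]$ over $(\cOE/\P)[G_F]$ equivalent to absolute simplicity over $(\cOE/\P)[G_K]$. Everything else in the conclusion is then verbatim Theorem \ref{th:intro-main-1}.

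The key steps are as follows.

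\begin{enumerate}
\item \textbf{Check that Proposition \ref{theorem:assumption-reduction} applies.} Its hypotheses are exactly the local conditions at $\p$ together with $p>3$, and these are all assumed. We deduce that $A[\P]$ is absolutely simple as a $(\cOE/\P)[G_K]$-module.

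\item \textbf{Check the remaining hypotheses of Theorem \ref{th:intro-main-1}.} These are $p\nmid 2[\cO_K^\times:\cO_F^\times]\deg(\gamma)$, the Heegner hypothesis on $K$, $y_K\notin\P A(K)$, and, when $t>1$, the mild assumption \eqref{ass:image-residual}. All of these are part of "the setting of Theorem \ref{th:intro-main-1}".

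\item \textbf{Conclude.} Theorem \ref{th:intro-main-1} (equivalently Theorem \ref{main_theorem} with $M=t$) gives $A(K)\otimes_{\cOE}\cOP=\cOP y_K\cong\cOP$ and $\Sha(A/K)[\P^\infty]=0$.
\end{enumerate}

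If I had to justify the reduction in step 1 myself, the idea would run as follows. Assume $A[\P]$ is $G_F$-simple but not absolutely simple as a $G_K$-module. Then either $A[\P]\otimes\overline{\F}$ splits over $G_K$ into two characters permuted by $\Gal(K/F)$, or it is reducible but $G_F$-irreducible. In either case the projective image of $\bar\rho_{A,\P}|_{G_F}$ is dihedral, induced from $K$.

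Ordinarity at $\p$ makes $\bar\rho|_{G_{F_\p}}$ reducible, with one character unramified and the other a twist of the mod $p$ cyclotomic character. Because $F_\p(\mu_p)/F_\p$ is totally ramified, the cyclotomic character has inertia image of full order $p-1>2$ when $p>3$. A quadratic inertial twist can only produce image of order at most $2$, so this is incompatible with a dihedral image induced from $K$, giving a contradiction.

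The main obstacle is precisely this local-global compatibility argument: it is where the hypotheses $p>3$ and total ramification enter, and it must handle both cases, $p$ ramified or unramified in $K/F$. Since it is already established in Proposition \ref{theorem:assumption-reduction}, the corollary itself requires only the bookkeeping above.
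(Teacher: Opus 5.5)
Your reduction is the same as the paper's: there the corollary is obtained by feeding Proposition \ref{theorem:assumption-reduction} into Theorem \ref{th:intro-main-1}, and your Steps 2--3 are fine. The weak point is Step 1, and the paper shares it. The implication from simplicity over $(\cOE/\P)[G_F]$ to absolute simplicity over $(\cOE/\P)[G_K]$ can fail when $\p$ splits in $K/F$, and your heuristic breaks down exactly there. If $\p\cO_K=\wp\bar\wp$, a decomposition group $D_\p$ lies inside $G_K$. So if $\bar\rho_\P|_{G_K}\cong\alpha\oplus\alpha^{\tau_c}$, ordinarity only says that $\alpha$ is the mod $p$ cyclotomic character $\chi$ on inertia at $\wp$ and trivial on inertia at $\bar\wp$ (or vice versa). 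Nothing is contradicted, however large $\chi(I_\p)$ is.

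Concretely, take $F=\Q$, $A$ the CM curve 49a1, $K=\Q(\sqrt{-7})$ and $p=11=(2+\sqrt{-7})(2-\sqrt{-7})$. Then $A$ has good ordinary reduction at $11$, and $\Q_{11}(\mu_{11})/\Q_{11}$ is totally ramified. Over $G_K$ one has $A[11]=A[\pi]\oplus A[\bar\pi]$ with $\pi=2+\sqrt{-7}\in\End_K(A)$, and the two characters are distinct and interchanged by $\tau_c$. Hence $A[11]$ is a simple $\F_{11}[G_\Q]$-module but not a simple $\F_{11}[G_K]$-module.

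In the paper the faulty step is in Proposition \ref{prop:implication-irreducibility}. Passing from $\{\alpha|_{I_\wp},\alpha^{\tau_c}|_{I_\wp}\}=\{\chi|_{I_\wp},1\}$ to $\chi|_{I_\wp}=1$ needs $\alpha^{\tau_c}|_{I_\wp}=\alpha|_{I_\wp}$. This holds when $\tau_c$ can be replaced by an element of $D_\p\smallsetminus G_K$ (i.e.\ $\p$ inert or ramified in $K$), but not in the split case.

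So, as written, your argument (like the paper's) proves the corollary only when $\p$ is not split in $K/F$. The other half of the reduction, Proposition \ref{prop:implication-absolute-irreducibility}, is fine in general, since there $\bar\rho_\P(I_\wp)$ lies in a nonsplit Cartan and is forced to be scalar. To cover the split case you need an extra input excluding $\bar\rho_\P|_{G_K}\cong\alpha\oplus\alpha^{\tau_c}$. For instance, in that situation $\bar\rho_\P(G_K)\neq\bar\rho_\P(G_F)$, so $K\subseteq F(A[\P])$ by the proposition immediately preceding Proposition \ref{prop:implication-irreducibility}. That is impossible if $K/F$ ramifies at some prime not above $p$ where $A$ has good reduction. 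In the example above $A(K)$ is finite, so $y_K\in 11A(K)$ and the corollary's conclusion itself is not contradicted; what is missing is a valid derivation of \eqref{ass:abs-G-K-irr}.
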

In particular, the definition of good $\P$-ordinary reduction is a weaker version of the usual good ordinary reduction for $A$: see Section \ref{sec:ordinary} for a discussion of its consequences.

Finally, in Section \ref{Iwasawatheory_section} we discuss the consequences of Theorem \ref{th:intro-main-1} for the anticyclotomic Iwasawa theory of $A$, applying the abstract Iwasawa theoretical results of \cite[Section 3]{matar-nekovar:kolyvagin}. We remark that here the norm relations among the Heegner points $y(\nfrak)$ (see Proposition \ref{prop:norm-relations-heegner}) play a crucial role in order to construct a universal norm whose basic term is a $\cOP$-multiple of $y_K$. 

Therefore, fixed a prime $\p$ (of the kind that can be used as conductor of an Heegner point) of $F$ above $p$ and assuming $p \nmid [K(x):K]$, we construct a multi-$\Z_p$-subextension $K_\infty$ of $K(x(\p^\infty)) = \bigcup_{n\ge 0} K(x(\p^n))$, and therefore pro-dihedral over $F$, which we call the \emph{anticyclotomic $\Z_p^{f_\p}$-extension relative to $x$}, $f_\p$ denoting the inertia degree of $\p$ in $F/\Q$. Moreover,  for any integer $n\ge0$ let $K_n$ be the unique subfield of $K_\infty$ such that $\Gal(K_n/K)=(\Z/p^n\Z)^{f_\p}$.

We have the following result, which combine Theorem \ref{theorem:iwasawa-th-main}, Remark \ref{rk:iwasawa-layers} and Proposition \ref{theorem:assumption-reduction}. Here, $a_\p = \Tr(\Frob_\p | T_\lambda A)$ for any $\lambda \nmid p$ prime of $E$. Moreover, for any $\ellfr$ prime of $F$ we define
\begin{equation*}
    \varepsilon(\ellfr)=\begin{cases}
        1 &\text{if $\ellfr$ splits in $K$;}\\
        -1 &\text{if $\ellfr$ is inert in $K$;}\\
        0 &\text{if $\ellfr$ is ramified in $K$.}
    \end{cases}
\end{equation*}
\begin{theorem}\label{theorem:iwasawa-intro}
    In the setting of Theorem \ref{th:intro-main-1}, suppose moreover that $p \nmid 3 [K(x):K]$, that $A$ has good ordinary reduction at any $\q \mid p$ of $F$, that $a_\p \not\equiv 1, \epsilon(\q) \bmod{\P'}$, for any prime $\q \mid p$ of $F$ and $\P' \mid p$ of $E$ and that that $F_\p(\mu_p)/F_\p$ is totally ramified. Thus if  $y_K \notin \P A(K)$ and $A[\P]$ is a simple $(\cOE/\P)[G_F]$-module, for any intermediate field $K \subseteq L \subseteq K_\infty$ we have that
    \[
    \sha(A/L)[\P^\infty] = 0, \quad A(L)\otimes_{\cOE} E_\P/\cOP = \Selm_{\P^\infty}(A/L),
    \]
    and both $S_\p(A/L)$ and the Pontryagin dual of $\Selm_{\P^\infty}(A/L)$ are free modules of rank one over $\Lambda_L = \cOP \llbracket \Gal(L/K)\rrbracket$. Moreover for any integer $n\ge 0$, $\rk_{\cOE} A(K_n) = p^{nf_\p}$ and 
    \[A(K_n) \otimes_{\cOE}\cOP = S_\P(A/L)\]
    is generated over $\cOP[\Gal(K_n/K)]$ by the traces of the Heegner points $\{y(\p^m)\}_{m\ge0}$.
\end{theorem}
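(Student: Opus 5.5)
The plan is to deduce Theorem \ref{theorem:iwasawa-intro} from three inputs: the base case $L=K$, which is Theorem \ref{th:intro-main-1} together with Proposition \ref{theorem:assumption-reduction}; the abstract Iwasawa-theoretic criterion of \cite[Section 3]{matar-nekovar:kolyvagin}; and the norm relations of Proposition \ref{prop:norm-relations-heegner}.

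First, the hypotheses give good ordinary reduction at $\p$, $F_\p(\mu_p)/F_\p$ totally ramified, and $p>3$. So Proposition \ref{theorem:assumption-reduction} upgrades simplicity of $A[\P]$ as an $(\cOE/\P)[G_F]$-module to absolute simplicity over $G_K$. Theorem \ref{th:intro-main-1} then gives $\Sha(A/K)[\P^\infty]=0$ and $A(K)\otimes_{\cOE}\cOP=\cOP y_K$. In particular $\Selm_{\P^\infty}(A/K)$ has corank one over $\cOP$ and is generated by the image of $y_K$.

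Next I would set up the abstract machinery over the $\Z_p^{f_\p}$-extension $K_\infty/K$. By \cite[Section 3]{matar-nekovar:kolyvagin}, it suffices to check two things for $\Lambda=\cOP\llbracket\Gal(K_\infty/K)\rrbracket$. The first is a control statement: the restriction maps $\Selm_{\P^\infty}(A/K)\to\Selm_{\P^\infty}(A/L)^{\Gal(L/K)}$ are isomorphisms, or at least have the kernel and cokernel required there. The second is the existence of a universal norm in $\varprojlim_L A(L)\otimes\cOP$ whose image at level $K$ is a unit multiple of $y_K$.

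For control I would run the usual snake-lemma diagram. Global $H^1$-injectivity comes from $A(K_\infty)[\P]=0$, which follows from the irreducibility of $A[\P]$ because $K_\infty/K$ is pro-$p$. The local conditions are handled as follows.
\begin{itemize}
\item At primes $\q\mid p$, ordinarity together with $a_\q\not\equiv 1,\varepsilon(\q)\bmod\P'$ kills the relevant $\tilde A(k)[\P]$-terms in the residue fields of $K$ and of its extensions, whether split or inert.
\item At primes away from $p$, the primes dividing the level are split or otherwise finitely decomposed, and the Tamagawa-type contributions vanish.
\end{itemize}
If the paper's hypotheses do not directly cover Tamagawa factors, I would instead use the $(\cOE/\P)$-simplicity and the Heegner hypothesis as in \cite[Section 6]{matar-nekovar:kolyvagin}.

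For the universal norm, the starting point is Proposition \ref{prop:norm-relations-heegner}: $\Tr_{K(x(\p^{m+1}))/K(x(\p^m))} y(\p^{m+1}) = a_\p y(\p^m) - y(\p^{m-1})$, with a modified relation at $m=0$ involving $\varepsilon(\p)$. Since $a_\p$ is a $\P$-unit by ordinarity, the standard $\alpha$-stabilisation $z_m=\alpha^{-m}(y(\p^m)-\alpha^{-1}y(\p^{m-1}))$ is norm compatible, where $\alpha$ is the unit root of $X^2-a_\p X+N\p$ in $\cOP$. Taking traces from $K(x(\p^m))$ down to $K_m$ then gives a compatible system. This uses $p\nmid[K(x):K]$, so that $K_\infty$ is a direct factor and the trace to $K$ lands in $[K(x):K]$-multiples.

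The basic term should be $(1-\alpha^{-1}\cdot\text{stuff})(1-\varepsilon(\p)\alpha^{-1}+\dots)\,y_K$, up to units. Here I expect the main obstacle: showing that this Euler-like factor is a $\P$-adic unit, exactly via $a_\p\not\equiv1,\varepsilon(\p)$. I would compute the factor explicitly in the three cases $\varepsilon(\p)=1,0,-1$.

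With these inputs, \cite[Section 3]{matar-nekovar:kolyvagin} gives the following. The dual $\Selm_{\P^\infty}(A/K_\infty)^\vee$ is free of rank one over $\Lambda$, generated by the universal norm, and $\Sha$ vanishes at every intermediate $L$. Moreover $\Selm_{\P^\infty}(A/L)^\vee$ and $S_\P(A/L)$ are free of rank one over $\Lambda_L$, with $A(L)\otimes E_\P/\cOP=\Selm_{\P^\infty}(A/L)$. Specialising to $L=K_n$ gives $\rk_{\cOE}A(K_n)=[K_n:K]=p^{nf_\p}$, as in Remark \ref{rk:iwasawa-layers}, and shows that $A(K_n)\otimes\cOP$ is generated by the traces of the $y(\p^m)$.
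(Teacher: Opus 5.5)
Your overall route is the paper's: Proposition \ref{theorem:assumption-reduction} and Theorem \ref{th:intro-main-1} give the base case, and the rest is \cite[Theorem 3.5]{matar-nekovar:kolyvagin}, whose hypotheses the paper checks one by one. Your torsion and residue-field discussion is (A2)--(A3), and your universal norm is (A7). You should also record (A1), the perfect alternating $G_F$-equivariant pairing $(\,,\,)^\gamma_\P$ coming from $p\nmid\deg\gamma$, and (A5), the pro-dihedrality of $K_\infty/F$; both are immediate.

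The step that does not go through as written is the local analysis at bad primes away from $p$. A prime of bad reduction outside $S_\B$ may split in $K$, since \eqref{ass:heegner-hypothesis} only constrains $S_\B$. The primes of $K$ above it then need not split completely in $K_\infty$, and there the local kernel of the control map is governed by the component groups $\pi_0(\tilde A_v)$. Nothing in the listed hypotheses makes their $\P$-part vanish.

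Your fallback does not repair this. Simplicity of $A[\P]$ does not force $\pi_0(\tilde A_v)[\P]=0$, and \cite{matar-nekovar:kolyvagin} itself assumes $p\nmid c_{\mathrm{Tam}}(A/\Q)$ in the elliptic curve case (Theorem \ref{theorem:iwasawa-concrete-mn19}). The paper does not derive this either: \eqref{ass:Tamagawa} is a running hypothesis of Section \ref{Iwasawatheory_section}, and it is exactly what gives the first half of (A2) via \cite[(1.8.2)]{matar-nekovar:kolyvagin}. So the theorem as stated in the introduction tacitly carries \eqref{ass:Tamagawa}, and you should assume it rather than try to prove it.

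There are also smaller corrections, mostly about the universal norm.

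\begin{itemize}
\item Because of the factor $u(\alpha-1)$ in \eqref{eq:norm-higher-heegner-y}, the stabilised points must be $z_n=\alpha_\p^{-n}y(\p^{n+1})-u(n)\alpha_\p^{-n-1}y(\p^n)$ on $K(x(\p^{n+1}))$. Your $z_m$ omits $u(0)$, so it fails norm compatibility at the bottom step when $u(0)\ne1$.
\item The factor you leave open is a one-line computation. From $\Tr_{K(x(\p))/K}y(\p)=(a_\p-1-\varepsilon(\p))y_K$ and $[K(x(\p)):K(x)]=(N\p-\varepsilon(\p))/u(0)$ one gets $\Tr_{K(x(\p))/K}z_0=\alpha_\p^{-1}(\alpha_\p-1)(\alpha_\p-\varepsilon(\p))\,y_K$. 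Since $\alpha_\p\equiv a_\p\bmod\P$, the hypothesis $a_\p\not\equiv1,\varepsilon(\p)$ makes this a unit multiple of $y_K$.
\item The case $\varepsilon(\p)=0$ cannot occur, because $\p\in\allowedprimes$ is unramified in $K$.
\item The hypothesis $p\nmid[K(x):K]$ is used only to split $\Gal(K(x(\p^{n+1}))/K)\cong(\Z/p^n\Z)^{f_\p}\times\Delta$, so that $K_n=K(x(\p^{n+1}))^\Delta$. It plays no role in any statement about traces.
\item The count of $\tilde A_v(k_v)$ for $v\mid\q$ involves $a_\q$. Your reading of the non-congruence as $a_\q\not\equiv1,\varepsilon(\q)$ is therefore the one the argument at $\q$ actually uses.
\end{itemize}
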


\subsection{Outline of the paper}
In Section \ref{sec:av}, we recall some background material about (simple) $\GL_2$-abelian varieties defined over $F$.
In Section \ref{sec:modular-av-and-heegner} we define the Shimura curves $N_H^\ast$ parametrising the abelian varieties $A$ that we consider in the paper. Moreover, we define a class of CM points on $N_H^\ast$ which give rise to Heegner points on $A$ and we study their properties. In particular, we prove for these points some norm relations (Proposition \ref{prop:norm-relations-heegner}), of a similar shape of those of \cite{perrin-riou:heegner-points} for Heegner points on elliptic curves.
Section \ref{vanishingresult_section} is the core of the paper: we get here the vanishing result for the Shafarevich--Tate group.
Section \ref{representation_section} is of independent interest: we study there the image of the residual representation $A[\P]$ under the $\P$-ordinary assumption at a prime $\p$ of $F$ above $p$. In particular, we find some conditions under which the assumptions \eqref{ass:abs-G-K-irr} and \eqref{ass:G_F-irr} are equivalent.
In Section \ref{Iwasawatheory_section} we obtain some consequences of the results of the previous sections for the anticyclotomic Iwasawa theory of $A$.
 
\subsection{Notation and assumptions}

For any perfect field $L$, we use th notation $G_L:=\Gal(\bar{L}/L)$ for its absolute Galois group. If, moreover, $L$ is a number field and $v$ is a prime of $L$, we denote by $\cO_L$ the ring of integers of $L$, by $L_v$ the completion of $L$ at $v$ and by $\cO_{L,v}$ the valuation ring of $L_v$. Let also $L_v^\ur$ be the maximal unramified extension of $L_v$. We denote by $Nv$ the norm of $v$ in the extension $L/\Q$.

We write $\Frob_v$ for the set of elements of $G_L$ that lie in some decomposition group at $v$ and that reduce to the Frobenius automorphism modulo the corresponding inertia at $v$. Note that, if $T$ is a $G_L$-module unramified at $v$, the characteristic polynomial for the action of $\Frob_v$ on $T$ is independent of the choice of a representative.

If $A$ is an abelian variety, we denote by $A^\vee$ the dual abelian variety and if $\alpha \colon A \to A'$ is an isogeny of abelian varieties, we let $\alpha^\vee \colon (A')^\vee \to A^\vee$ be the dual isogeny.

\subsection{Acknowledgments}

The first and third author, while writing this article, were supported by the \emph{ERC Consolidator Grant ShimBSD: Shimura varieties and the BSD conjecture}.

\section{Abelian varieties and arithmetic groups}\label{sec:av}

\subsection{Abelian varieties of $\GL_2$-type}

Let $A$ be an abelian variety of dimension $g\ge 1$ defined over a totally real number field $F$ of degree $d$. Consider the following assumption, which we assume for the rest of this section:
\leqnomode
\begin{equation}
    \hspace{31pt}\text{$\End_F(A) = \cOE$ is the ring of integers of a number field $E$, with $[E:\Q]=g$}; \tag{$\GL_2$}\label{ass:GL-2}
\end{equation}
\reqnomode
In particular, \eqref{ass:GL-2} implies that $A$ is a simple. This family of abelian varieties, first introduced in \cite{ribet:galois-action-av} after a suggestion of John Tate and called \emph{of $\GL_2$-type}, will be the main object of our study. The name descends from the fact that the Tate module of $A$ can be decomposed into rank-2 submodules, as we now recall.

Fix a rational prime $p>2$. Denote by $T_p A = \projlim_{M} A[p^M]$ the $p$-adic Tate module of $A$ and by $A[p^\infty] = \injlim_{M} A[p^M]$ its $p$-divisible group, both endowed with a $\Z_p$-linear action of $G_F$. 

\begin{definition}
    For any ideal $\mathfrak{a}$ of $\cOE$, define $A[\mathfrak{a}] := \set{P \in A(\Qbar) : \text{$\alpha P = 0$ for any $\alpha \in \mathfrak{a}$}}$.
\end{definition}
For any $M\in\Z_{\ge 1}$, the decomposition
\[
\cOE/p^M\cOE\cong\prod_{\P\mid p}\cOE/\P^{Mr_\P },
\]
where $\P$ runs through the primes of $E$ above $p$ and $r_\P$ denotes the ramification index of $\P$ over $p$, induces the decomposition
\[
A[p^M] = \bigoplus_{\P \mid p} A[\P^{Mr_\P}].
\]
Taking direct and inverse limits over $M$, we obtain the decompositions
\begin{equation*}
	A[p^\infty]= \bigoplus_{\P\mid p} A[\P^{\infty}]\quad\text{and}\quad T_p A = \bigoplus_{\P \mid p} T_\P A.
\end{equation*}
For any $\P \mid p$, the $\P$-adic Tate module $T_\P A$ is a free $\cOEP$-module of rank $2$ (see \cite[Proposition 2.2.1]{ribet:galois-action-av}) and $A[\P^M]$ is free of rank $2$ over $\cOE/\P^M$ for any $M \in \Z_{\ge 1}$. 
Moreover, since $\cO_E=\End_F(A)$ is made by endomorphisms defined over $F$, all these modules are endowed with an $\cO_E$-linear $G_F$-action. We denote the Galois representations attached to $T_\P A$ and $A[\P^M]$ respectively as
\[
\rho_\P \colon G_F \to \Aut_{\cO_{E,\P}}(T_\P A) \cong \GL_2(\cOEP),  \quad \rho_{\P, M} \colon G_F \to \Aut_{\cO_E/\P^M}(A[\P^M]) \cong \GL_2(\cOE/\P^M).
\]
By \cite[Proposition 2.1.2]{ribet:galois-action-av}, for any finite prime $\ellfr \nmid p$ of $F$ of good reduction (so that $T_\P A$ is unramified as $G_F$-representation, by the Néron--Ogg--Shafarevich criterion for abelian varieties \cite[Theorem 1]{serre-tate:good-red-av}), the characteristic polynomial
\[
\charpoly(\Frob_\ellfr | T_\P A) = \det(X\matid - \rho_\P(\Frob_\ellfr))
\]
of $\Frob_\ellfr$ on $T_\P A$ is independent of both $p$ and $\P$ (as long as $\ellfr \nmid p$)  and has coefficeints in $\cO_E$.

\subsection{Selmer and Shafarevich--Tate groups}\label{sec:selmer-sha}
We introduce here some relevant arithmetic groups attached to an abelian variety $A$ as above, whose study is the aim of this paper.
\begin{definition}
The \textit{Shafarevich--Tate group} of $A$ over a field extension $L$ of $F$ is 
\[
\sha(A/L) = \ker\biggl(\hone(L, A) \to \prod_v \hone(L_v, A)\biggr), 
\]
the product being taken over all places $v$ of $L$.
\end{definition} 
From now on, until the end of Section \ref{sec:av}, we fix a prime $\P$ of $E$ over $p$ and write $\cOP:= \cOEP$ for brevity. We fix also an integer $M \ge 1$ and consider the following assumption, that we assume to be in force in this paragraph:
\leqnomode
\begin{equation}
    \text{$M$ is divisible by the order of $\P$ in the class group of $E$}.\tag{$\P^M$-princ}\label{ass:P-M-princ}
\end{equation}
\reqnomode
It follows that $\P^M$ is principal, say generated by $\pi_M$. 

\begin{definition}
   The $\P^M$-Selmer group of $A$ over a finite extension $L/F$ is  
   \[
   \Sel_{\P^M}(A/L) = \ker\biggl(\hone(L, A[\P^M]) \to \prod_v \hone(L_v, A)\biggr),
   \]
where the product runs over all the places $v$ of $L$ and the maps are those arising in global and local cohomology from the short exact sequence of $G_{F}$-modules
$\begin{tikzcd}[cramped, column sep=small]
     0 \ar[r] & A[\P^M] \ar[r] & A \ar[r, "\pi_M"] & A \ar[r] & 0.
 \end{tikzcd}$

\end{definition}
By definition, $\Sel_{\P^M}(A/L)$ fits into the short exact sequence
\begin{equation}\label{eq:ses-kummer-P-M}
    \begin{tikzcd}
    0 \ar[r] & A(L)/\P^M A(L) \ar[r, "\delta_L"] & \Sel_{\P^M}(A/L) \ar[r] & \sha(A/L)[\P^M] \ar[r] & 0,
\end{tikzcd}
\end{equation}
where $\delta_L$ is the Kummer map with respect to the multiplication by $\pi_M$, i.e., the connecting morphism in Galois cohomology. 
Define 
\begin{align*}
    \Sel_{\P^\infty}(A/L) &:= \varinjlim_M \Sel_{\P^{M}}(A/L) \subseteq \hone(L, A[\P^\infty]);\\
    S_{\P^\infty}(A/L) &:= \varprojlim_M \Sel_{\P^{M}}(A/L) \subseteq \hone(L, T_\P A),
\end{align*}
where $M$ varies among the integers that are divisible by the order of $\P$ in the class group of $\cOE$. Taking the direct and inverse limits of 
\eqref{eq:ses-kummer-P-M}, we obtain the short exact sequences 
\begin{equation}\label{eq:ses-selmer-sha}
    \begin{tikzcd}
     0 \ar[r] & A(L) \otimes_{\cOE} E_\P/\cOP \ar[r] & \Sel_{\P^\infty}(A/L) \ar[r] & \sha(A/L)[\P^\infty] \ar[r] & 0;
\end{tikzcd}
\end{equation}
\begin{equation}\label{eq:ses-selmer-cpt-tsha}
\begin{tikzcd}
    0 \ar[r] & A(L) \otimes_{\cOE} \cOP \ar[r] & S_{\P^\infty}(A/L) \ar[r] & T_\P \sha(A/L) \ar[r] & 0,
\end{tikzcd}
\end{equation}
where $T_\P \sha(A/L) = \varprojlim_M \sha(A/L)[\P^M]$.

Let now $L/F$ be an algebraic extension of fields. If $M=\infty$ or as before, we denote by
\[
\Sel_{\P^M}(A/L) = \varinjlim_{F', \res} \Sel_{\P^M}(A/F'), \qquad S_{\P^M}(A/L) = \varprojlim_{F', \cores} S_{\P^M}(A/F'),
\]
where $F'$ varies through all intermediate field $F \subseteq F' \subseteq L$ of finite degree over $F$ and the limits are taken with respect to restriction and corestriction maps in cohomology, respectively. These still fit into the exact sequences \eqref{eq:ses-kummer-P-M}, \eqref{eq:ses-selmer-sha} and \eqref{eq:ses-selmer-cpt-tsha}.

\begin{remark}
    The definition of the Kummer map in \cite{matar-nekovar:kolyvagin} 
    is slightly different from ours, which, instead, agrees with the one in \cite{nekovar:euler-system-method}. 
    However, both maps fit in the short exact sequence \eqref{eq:ses-kummer-P-M}, therefore they have the same image and they differ by an isomorphism. The same applies if we choose a different generator of $\P^M$.
\end{remark}

We now characterize the image of the local Kummer map at primes of good reduction in terms of unramified Galois cohomology. Let $L/F$ be an algebraic field extension.
\begin{definition}
    Let $v$ be a finite place of $L$. For any integer $M\ge1$, define
    \[
    \hone_\ur(L_v, A[\P^{M}]) = \ker\Bigl(\hone(L_v, A[\P^{M}]) \to \hone\bigl(L_v^\ur, A[\P^{M}]\bigr)\Bigr).
    \]
\end{definition}

\begin{proposition}\label{prop:comparison-image-kummer-unramified-local-conditions}
    Let $v \nmid p$ be a finite place of $L$, suppose that $A$ has good reduction at $v$ and let $M\ge1$ be an integer satisfying \eqref{ass:P-M-princ}. 
    Then the local Kummer map $\delta_v$ induces an isomorphism
    \[
    \delta_v \colon A(L_v)/\P^{M}A(L_v) \iso \hone_\ur(L_v, A[\P^{M}]).
    \]
\end{proposition}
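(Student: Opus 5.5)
The plan is to compare the Kummer sequence for $A(L_v)$ with its analogue over $L_v^\ur$ and to use that $A[\P^M]$ is unramified at $v$. Throughout, $v\nmid p$ and $A$ has good reduction at $v$, so by Néron--Ogg--Shafarevich $A[\P^M]$ is unramified at $v$.

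First we show that the image of $\delta_v$ lies in $\honeur(L_v, A[\P^M])$. The Kummer sequence attached to $\pi_M$ over $L_v^\ur$ yields an injection $A(L_v^\ur)/\pi_M A(L_v^\ur)\inj \hone(L_v^\ur, A[\P^M])$. We claim that $\pi_M\colon A(L_v^\ur)\to A(L_v^\ur)$ is surjective. To see this, let $\mathcal{A}$ be the Néron model over $\cO_{L_v^\ur}$, which is an abelian scheme. Since $\pi_M$ divides $p^{k}$ in $\cOE$ for some $k$ (indeed $\P^M \mid p^M$ in $\cOE$, so $p^M=\pi_M\beta$ with $\beta\in\cOE$), it suffices to prove surjectivity of multiplication by $p^k$. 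This map is an étale isogeny of $\mathcal{A}$ because $p$ is invertible on the residue field. By Hensel's lemma, any point of $\mathcal{A}(\cO_{L_v^\ur})$ lifts along this étale isogeny once its reduction lifts, and the reduction lifts because the residue field of $L_v^\ur$ is separably closed and multiplication by $p^k$ on the special fibre is finite étale and surjective. Hence the restriction of $\delta_v(P)$ to $L_v^\ur$ is zero for every $P$, so $\delta_v$ lands in $\honeur$.

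Injectivity of $\delta_v$ follows from the local Kummer sequence.

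For surjectivity, we compare orders, since all the groups involved are finite. Inflation–restriction gives $\honeur(L_v,A[\P^M])=\hone(\mathrm{Gal}(L_v^\ur/L_v),A[\P^M])$. When $L_v$ has finite residue field, so that the Galois group is $\hat\Z$ generated by Frobenius $\phi$, this group equals $A[\P^M]/(\phi-1)A[\P^M]$, whose order is $\#A[\P^M]^{\phi}=\#A(L_v)[\P^M]$. On the other side, $A(L_v)$ is an extension of $\tilde A(k_v)$ by the formal group points, which are pro-$\ell$ with $\ell\ne p$, so $\pi_M$ is bijective on them. Therefore $A(L_v)/\P^M A(L_v)\cong \tilde A(k_v)/\pi_M\tilde A(k_v)$. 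Since $\tilde A(k_v)$ is finite, the cokernel of $\pi_M$ has the same order as its kernel, namely $\tilde A(k_v)[\P^M]\cong A(L_v)[\P^M]$ by reduction injectivity on prime-to-$\ell$ torsion. Equality of orders combined with injectivity then gives the isomorphism.

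For general algebraic $L/F$, we pass to the direct limit over finite subextensions, since both sides commute with direct limits. Alternatively, one can argue directly: the cokernel of $\pi_M$ on $A(L_v)$ is identified with the cokernel on $\tilde A(k_v)$, and $\hone(\mathrm{Gal}(L_v^\ur/L_v),-)$ is computed as a limit. I expect the main obstacle to be the order computation. It requires care about the non-principality of $\P$, which \eqref{ass:P-M-princ} handles, and about using $\#\mathrm{coker}=\#\ker$ for $\pi_M$, an endomorphism of a finite group. The infinite-extension case should then follow formally by taking limits.
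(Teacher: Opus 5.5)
Your proof is correct, but it gets surjectivity by a different mechanism than the paper.

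The first step, placing the image in $\honeur$, is essentially the paper's. The paper cites the argument of \cite[Lemma VIII.2.1]{silverman:arithmetic-elliptic-curves}, and your \'etale-isogeny/Hensel proof that $\pi_M$ is onto $A(L_v^\ur)$ is one way to run it.

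After that the paper does no counting. Since $A[\P^M]\subseteq A(L_v^\ur)$ and $\pi_M$ is surjective on $A(L_v^\ur)$, it takes $\Gal(L_v^\ur/L_v)$-cohomology of $0\to A[\P^M]\to A(L_v^\ur)\xrightarrow{\pi_M}A(L_v^\ur)\to 0$. This gives the exact sequence
\[
0\to A(L_v)/\P^MA(L_v)\xrightarrow{\delta_v}\hone(L_v^\ur/L_v,A[\P^M])\to \hone\bigl(L_v^\ur/L_v,A(L_v^\ur)\bigr)[\P^M].
\]
The last term vanishes by \cite[Proposition 3.8]{milne:arithmetic-duality}. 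That result identifies $\hone(L_v^\ur/L_v,A(L_v^\ur))$ with the cohomology of the component group of the special fibre, which is trivial under good reduction; this is Lang's theorem in disguise.

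You instead compare orders and conclude by injectivity. On one side, $\#\honeur(L_v,A[\P^M])=\#A(L_v)[\P^M]$ via $\hone(\hat\Z,N)=N/(\phi-1)N$. On the other,
\[
\#A(L_v)/\P^MA(L_v)=\#\tilde A(k_v)/\pi_M\tilde A(k_v)=\#\tilde A(k_v)[\pi_M]=\#A(L_v)[\P^M],
\]
using the pro-$\ell$ formal group and the equality of kernel and cokernel orders on a finite group.

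What each route buys:
\begin{itemize}
\item Your argument is more elementary: it needs no N\'eron component groups and no Lang's theorem. Your direct-limit reduction for infinite $L$ is also fine, since $L_v^\ur=L_vF_v^\ur$ and every term commutes with filtered colimits.
\item The paper's route yields exactness in one stroke. It also rests on the same identification $\honeur(K_v,A)\cong\hone(k(v),\pi_0(\tilde A_v))$ that the paper reuses in Proposition \ref{prop:kolyvaginclasses_properties}(i). There the component group can be nontrivial and \eqref{ass:Tamagawa} enters, so the paper's approach meshes with that later bad-prime analysis.
\end{itemize}
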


\begin{proof}
    The same argument of \cite[Lemma VIII.2.1]{silverman:arithmetic-elliptic-curves} shows that the image of $\delta_v$ is contained in $\hone_\ur(L_v, A[\P^{M}])$. By the inflation-restriction exact sequence, the latter is isomorphic to $\hone\bigl(L_v^\ur/L_v, A[\P^{M}]\bigr)$, therefore the local Kummer sequence induces the short exact sequence
    \[
    \begin{tikzcd}[column sep=small]
        0 \ar[r] & A(L_v)/\P^{M} A(L_v) \ar[r, "\delta_v"] & \hone\bigl(L_v^\ur/L_v, A[\P^{M}]\bigr) \ar[r]& \hone\bigl(L_v^\ur/L_v, A(L_v^\ur)\bigr)[\P^{M}].
    \end{tikzcd}
    \]
    Since $A$ has good reduction at $v$, the last term vanishes by \cite[Proposition 3.8]{milne:arithmetic-duality}.
\end{proof}

\begin{remark}\label{remark:hones}
    Note that the previous proposition implies that $\hone(L_v, A)[\P^{M}] \cong \frac{\hone(L_v, A[\P^{M}])}{\hone_\ur(L_v, A[\P^{M}])}$ for any finite place $v \nmid p$ of good reduction.
\end{remark}

\subsection{Duality}\label{sec:duality}
 
For any discrete or compact topological $\Z_p$-module $X$, denote by  
\[
D(X) = \Hom_{\Z_p}^\cont(X, \Q_p/\Z_p)
\]
the Pontryagin dual of $X$, endowed with the compact-open topology.

\begin{lemma}\label{lemma:pontryagin-iso}
If $X$ is a discrete or profinite $\cOP$-module, 
\[
D(X) \cong \Hom_{\cOP}^\cont(X, E_\P/\cOP) =:D_{\cOP}(X),
\]
the isomorphism depending on the choice of a generator of the inverse different of $\cOP/\Z_p$.
\end{lemma}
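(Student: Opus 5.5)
The approach is to compose with the trace map of $E_\P/\Q_p$. Choose a generator $\delta$ of the inverse different $\mathscr{D}^{-1}_{\cOP/\Z_p} = \{x \in E_\P : \Tr_{E_\P/\Q_p}(x\cOP)\subseteq \Z_p\}$; this ideal is fractional in the discrete valuation ring $\cOP$, hence principal. Consider the $\Z_p$-linear map
\[
t\colon E_\P/\cOP \to \Q_p/\Z_p,\qquad x \mapsto \Tr_{E_\P/\Q_p}(\delta x) \bmod \Z_p .
\]
It is well defined because $\Tr(\delta\cOP)\subseteq\Z_p$. We then send $f\in D_{\cOP}(X)$ to $t\circ f\in D(X)$. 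Continuity is preserved, since $t$ is continuous for the discrete topologies; the map is clearly $\cOP$-linear, with $\cOP$ acting on both sides through $X$.

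The key step is the following claim: the pairing $E_\P/\cOP\times \cOP\to\Q_p/\Z_p$, $(x,a)\mapsto t(ax)$, identifies $E_\P/\cOP$ with $\Hom_{\Z_p}(\cOP,\Q_p/\Z_p)$. This follows from nondegeneracy of the trace form. Indeed, the dual of $\cOP$ under $\Tr$ is $\delta\cOP$, so $\Hom_{\Z_p}(\cOP,\Z_p)\cong \delta\cOP$, and tensoring $0\to\Z_p\to\Q_p\to\Q_p/\Z_p\to0$ with free modules gives $\Hom_{\Z_p}(\cOP,\Q_p/\Z_p)\cong \delta E_\P/\delta\cOP\cong E_\P/\cOP$.

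With the claim in hand, the tensor–hom adjunction gives
\[
\Hom_{\Z_p}(X,\Q_p/\Z_p)\cong\Hom_{\cOP}\bigl(X,\Hom_{\Z_p}(\cOP,\Q_p/\Z_p)\bigr)\cong\Hom_{\cOP}(X,E_\P/\cOP),
\]
and unwinding the isomorphisms shows that the composite is exactly the inverse of $f\mapsto t\circ f$. Concretely, the inverse sends $\phi$ to the map $x\mapsto$ (the unique $y\in E_\P/\cOP$ with $t(ay)=\phi(ax)$ for all $a\in\cOP$).

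The remaining point, and the only delicate one, is continuity of this inverse in the discrete and profinite cases. For $X$ discrete there is nothing to check. For $X$ profinite, let $\phi$ be continuous. Since $\cOP$ is finite free over $\Z_p$ with some basis $a_1,\dots,a_n$, the element $y(x)$ is determined by the finitely many values $\phi(a_ix)$. Each $x\mapsto\phi(a_ix)$ is continuous, because multiplication by $a_i$ is continuous on $X$. Hence $x\mapsto y(x)$ is continuous into the discrete group $E_\P/\cOP$. Finally, the compact-open topologies correspond under post-composition with $t$, since both the map $t$ and its inverse on the relevant finite pieces are continuous.

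Dependence on the choice of $\delta$ is evident: changing $\delta$ by a unit $u$ changes the isomorphism by multiplication by $u$.
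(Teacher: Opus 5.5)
Your proof is correct and follows essentially the same route as the paper: you identify $\Hom_{\Z_p}(\cOP,\Q_p/\Z_p)$ with $E_\P/\cOP$ via the trace form twisted by a generator of the inverse different, and then apply the tensor--hom adjunction. The only difference is that you make the isomorphism explicit as $f\mapsto t\circ f$ and check continuity by hand in the profinite case, whereas the paper cites the profinite adjunction from Ribes--Zalesskii.
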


\begin{proof}
    Since $\cOP$ is a finite free $\Z_p$-module, we have that
    \[
    \Hom_{\Z_p}^\cont(\cOP, \Q_p/\Z_p) = \Hom_{\Z_p}(\cOP, \Q_p/\Z_p) \cong \Hom_{\Z_p}(\cOP, \Z_p) \otimes_{\Z_p} \Q_p/\Z_p
    \]
    (see \cite[Proposition 1.10]{mastella:vanishing-sha-mod-forms} and the discussion above it). 
    Chosen a generator $c \in \mathscr{D}_{\cOP/\Z_p}^{-1}$ of the inverse different, we get an isomorphism 
\begin{equation}\label{eq:generator-inverse-different}
\cOP \iso \Hom_{\Z_p}(\cOP, \Z_p), \quad a \mapsto [b \mapsto\Tr_{E_\P/\Q_p}(cab)]
\end{equation}
(see \cite[Section III.2]{neukirch:algebraic-number-theory}).
Putting these facts together and using the $(\otimes, \Hom)$-adjunction (\cite[5.5.4]{ribes-zalesskii:profinite} in the profinite case), if $X$ is a profinite or discrete topological $\cOP$-module, we obtain an isomorphism 
    \[
    \Hom_{\Z_p}^\cont(X, \Q_p/\Z_p) \cong \Hom_{\cOP}^\cont(X, \cOP \otimes_{\Z_p} \Q_p/\Z_p).
    \]
We conclude noting that $\cOP \otimes_{\Z_p} \Q_p/\Z_p \cong E_\P/\cOP$. 
\end{proof}

\subsection{The Weil pairing}\label{sec:weil-pairing}

Fix now a polarization $\gamma \colon A \to A^\vee$ and consider the following hypotheses, which will be in force until the end of Section \ref{sec:av}:
\leqnomode
\begin{gather}
    \text{$\gamma$ is $\cOE$-linear (i.e., $\alpha^\vee \circ\gamma = \gamma \circ \alpha$ for any $\alpha \in \cOE=\End_F(A)$)};\label{ass:O-E-lin}\tag{$\cOE$-lin}\\
    \text{$p \nmid \deg(\gamma)$}.\tag{$\deg$-$\gamma$}\label{ass:deg-lambda}
\end{gather}
\reqnomode
The condition \eqref{ass:O-E-lin} is equivalent to the fact that the Rosati involution attached to $\gamma$ acts trivially on $\cO_E$. The Weil pairing
\[
(\,\, , \, )_p \colon T_p A \times T_p A^\vee \to \Z_p(1)
\]
is $\Z_p$-bilinear, $G_F$-equivariant, perfect and satisfies $(\alpha_\ast x, y) = (x, \alpha^\vee_\ast y)$ for any $\alpha \in \End_F(A)$. Since the degree of the polarization $\gamma$ is prime to $p$, it follows that the map
\[
\gamma_\ast \colon T_p A \longrightarrow T_p A^\vee
\]
induced by $\gamma$ is an isomorphism (see, e.g., \cite[(10.6)]{edixhoven-moonen-van-der-geer:av}). Therefore, the pairing 
\[
(\,\, , \, )_p^{\gamma} \colon T_p A \times T_p A \to \Z_p(1)
\]
defined by $(x , y )_p^{\gamma} = (x, \gamma_\ast y)_p$ is skew-symmetric, $\Z_p$-bilinear, $G_F$-equivariant and perfect. Moreover, assumption \eqref{ass:O-E-lin} implies that this pairing is also $\cOE$-bilinear, in the sense that, for any $x,y \in T_p A$ and $\alpha \in \cOE = \End_F(A)$, we have that $(x , \alpha_\ast y )_p^{\gamma} = (\alpha_\ast x , y )_p^{\gamma}$ . 
\begin{lemma}
The Weil pairing $(\, , \,)_p^\gamma$ restricts to a skew-symmetric $G_F$-equivariant $\Z_p$-bilinear perfect pairing $T_\P A \times T_\P A \to \Z_p(1)$.
\end{lemma}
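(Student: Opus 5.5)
The plan is to use the decomposition $T_p A = \bigoplus_{\P'\mid p} T_{\P'} A$ and show that distinct summands are orthogonal for $(\,,\,)_p^\gamma$. Perfectness on $T_\P A$ then follows from perfectness on all of $T_p A$. Skew-symmetry, $G_F$-equivariance and $\Z_p$-bilinearity are inherited directly by restriction. The summand $T_\P A$ is $G_F$-stable because the $\cOE$-action commutes with $G_F$, so restriction makes sense.

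\emph{Orthogonality.} By \eqref{ass:O-E-lin} the pairing is $\cOE$-balanced: $(\alpha x, y)_p^\gamma = (x,\alpha y)_p^\gamma$, and this extends by continuity to $\cOE\otimes\Z_p = \prod_{\P'\mid p}\cO_{E,\P'}$. Let $e_{\P'}$ denote the idempotents of this product, so that $T_{\P'}A = e_{\P'}T_pA$. For $x\in T_\P A$ and $y \in T_{\P'}A$ with $\P'\neq\P$, we get
\[
(x,y)_p^\gamma=(e_\P x, e_{\P'} y)_p^\gamma = (x, e_\P e_{\P'} y)_p^\gamma = 0 .
\]
The extension of $\cOE$-balancedness to $\cOE\otimes\Z_p$ is immediate, since both sides are $\Z_p$-linear in $\alpha$ and continuous, and $\cOE$ is dense in $\cOE\otimes\Z_p$. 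Equivalently, one can take $\alpha\in\cOE$ with $\alpha\equiv 1 \bmod \P^{N}$ and $\alpha\equiv 0 \bmod \P'^{N}$ for large $N$, and pass to the limit.

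\emph{Perfectness.} Since $(\,,\,)_p^\gamma$ is perfect on $T_pA$, the map $T_pA \to \Hom_{\Z_p}(T_pA,\Z_p(1))$ is an isomorphism. Under the direct sum decomposition, orthogonality makes this map block diagonal, with blocks $T_{\P'}A\to\Hom(T_{\P'}A,\Z_p(1))$. A block diagonal map is an isomorphism exactly when each block is, so the block for $\P$ is an isomorphism, i.e.\ the restricted pairing is perfect.

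I do not expect a real obstacle. The only point needing care is extending the $\cOE$-balancedness to the idempotents $e_\P$, which is the continuity/approximation argument above.
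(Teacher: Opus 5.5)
Your proof is correct and follows essentially the same route as the paper. The paper also shows that distinct summands $T_{\P'}A$ are orthogonal, using an idempotent of $\cOE\otimes\Z_p=\prod_{\P'}\cO_{E,\P'}$ together with the $\cOE$-balancedness coming from \eqref{ass:O-E-lin}, and then deduces perfectness on $T_\P A$ from perfectness on $T_pA$; your block-diagonal argument gives full perfectness (not just non-degeneracy) a bit more explicitly, and the passage from $\cOE$ to $\cOE\otimes\Z_p$ needs only $\Z_p$-linearity, not continuity.
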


\begin{proof}
    Let $p = \prod_{i=1}^t \P_i^{r_i}$ be the prime factorization of $p$ in $\cOE$, with $\P_1 = \P$. Thanks to the decomposition
    $
    T_p A = \bigoplus_{i=1}^t T_{\P_i} A,
    $
    we only have to show that $\bigoplus_{i=2}^t T_{\P_i} A$ is the exact complement of $T_\P A$ with respect to the pairing $(\, , \,)_p^\gamma$.
     
    Let $a \in \cOE \otimes \Z_p$ corresponding, in the decomposition $\cOE \otimes \Z_p = \prod_{i=1}^t \cO_{E,\P_i}$, to $0 \in \cOEP$ and $1 \in \cO_{E, \P_i}$ for any $i > 1$. 
    Thanks to the $\cOE$ and $\Z_p$-bilinearities, we have that 
    \[
    (x, y)_p^\gamma = (x, ay)_p^\gamma = (ax, y)_p^\gamma = (0, y)_p^\gamma = 0,
    \]
    for any $x \in T_\P A$ and $y \in \bigoplus_{i=2}^t T_{\P_i} A$.     
    To conclude the proof, it is enough to observe that the existence of $0 \ne y \in T_\P A$ orthogonal to $T_\P A$ would contradict the non-degeneracy of $(\, , \,)_p$.
\end{proof}
Choose from now on a generator $c \in \different_{\cOP/\Z_p}^{-1}$ of the inverse different. The corresponding isomorphism \eqref{eq:generator-inverse-different} gives an isomorphism of $\cOP$-modules 
\begin{align*}
    \Hom_{\cOP}\bigl(T_\P A, \cOP(1)\bigr)&\cong \Hom_{\cOP}\bigl(T_\P A, \Hom_{\Z_p}(\cOP, \Z_p(1))\bigr) \cong\\
    & \cong \Hom_{\Z_p}\bigl(T_\P A \otimes_{\cOP} \cOP, \Z_p(1)\bigr) =\Hom_{\Z_p}\bigl(T_\P A, \Z_p(1)\bigr),
\end{align*}
so that the adjunction morphism of the Weil pairing induces
\[
T_\P A \iso \Hom_{\Z_p}(T_\P A, \Z_p(1)) \iso  \Hom_{\cOP}\bigl(T_\P A, \cOP(1)\bigr).
\]
Therefore, we get an $\cOP$-bilinear version of the Weil pairing
\begin{align*}
    (\, ,\,)_{\P}^\gamma \colon \colon T_\P A \times T_\P A \to \cOP(1), 
\end{align*}
characterized by 
\[
\Tr_{\cOP/\Z_p}\bigl( c a \cdot (x, y)_{\P}^\gamma\bigr)= (x, ay)_p^\gamma= (ax, y)_p^\gamma,
\]
for any $x, y \in T_\P A$, $a \in \cOP$.

Reducing modulo $\P^M$, we obtain a skew-symmetric $\cOE/\P^M$-bilinear $G_F$-equivariant perfect pairing
\[
(\, , \, )_{\P, M}^\gamma \colon A[\P^M] \times A[\P^M] \to (\cOP/\P^M)(1).
\]
The properties of this Weil pairing imply this well-known consequence for the action of $G_F$ on $T_\P A$. 
\begin{lemma}\label{lemma:det}
    $\det (\rho_\P) = \chi_p$, where $\chi_p \colon G_F \to \Z_p^\times \subseteq \cOP^\times$ is the $p$-cyclotomic character.
\end{lemma}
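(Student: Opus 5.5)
The plan is to use the $\cOP$-bilinear Weil pairing $(\,,\,)_\P^\gamma$ constructed above, which is skew-symmetric, perfect and $G_F$-equivariant with values in $\cOP(1)$. We use the standard fact that a skew-symmetric bilinear form on a free rank-$2$ module is a multiple of the determinant. Fix an $\cOP$-basis $e_1,e_2$ of $T_\P A$, which is free of rank $2$ by Ribet. Skew-symmetry, together with $(x,x)_\P^\gamma = 0$ (automatic since $p$ is odd, or directly from alternation of the Weil pairing), gives
\[
(x,y)_\P^\gamma = \det[x\,|\,y]\cdot u, \qquad u := (e_1,e_2)_\P^\gamma .
\]
Here $[x\,|\,y]$ is the matrix of coordinates of $x,y$ in the basis. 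Perfectness of the pairing forces $u$ to generate $\cOP(1)$ as an $\cOP$-module; otherwise the adjunction map $T_\P A \to \Hom_{\cOP}(T_\P A,\cOP(1))$ would not be surjective.

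Next we apply $G_F$-equivariance. For $\sigma\in G_F$ we have $(\sigma x,\sigma y)_\P^\gamma = \sigma\cdot(x,y)_\P^\gamma = \chi_p(\sigma)(x,y)_\P^\gamma$, because $G_F$ acts on $\cOP(1)=\cOP\otimes_{\Z_p}\Z_p(1)$ through $\chi_p$. On the other hand, $\det[\sigma x\,|\,\sigma y] = \det\rho_\P(\sigma)\det[x\,|\,y]$. Taking $x=e_1$ and $y=e_2$ gives
\[
\det\rho_\P(\sigma)\, u = \chi_p(\sigma)\, u .
\]
Since $u$ is a generator of the free rank-one $\cOP$-module $\cOP(1)$, it is torsion-free, so we may cancel it. This yields $\det\rho_\P(\sigma)=\chi_p(\sigma)$.

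The only point needing care is the equivariance of the $\cOP$-valued pairing $(\,,\,)_\P^\gamma$ itself. The $\Z_p$-valued pairing $(\,,\,)_p^\gamma$ is $G_F$-equivariant, and $(\,,\,)_\P^\gamma$ is characterized by $\Tr(ca\,(x,y)_\P^\gamma)=(x,ay)_p^\gamma$. Because $G_F$ commutes with $\cOE$ and with the trace, and the trace form is nondegenerate, equivariance transfers to $(\,,\,)_\P^\gamma$. Alternatively, one can avoid this step: work modulo $\P^M$ with the pairing $(\,,\,)_{\P,M}^\gamma$ to get $\det\rho_{\P,M}\equiv\chi_p \pmod{\P^M}$ for all $M$, then pass to the limit. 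I expect this bookkeeping to be the only mild obstacle; the rest is linear algebra.
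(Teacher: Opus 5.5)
Your proposal is correct and is essentially the paper's argument: the paper's proof is the one-line remark that the claim follows from an explicit computation using the $G_F$-equivariance, alternation and nondegeneracy of $(\,,\,)_\P^\gamma$, which is exactly the computation $\det\rho_\P(\sigma)\,(e_1,e_2)_\P^\gamma=\chi_p(\sigma)\,(e_1,e_2)_\P^\gamma$ that you write out. Your trace-form check that the $\cOP$-valued pairing inherits $G_F$-equivariance fills in a step the paper takes for granted. For cancelling $u$, nondegeneracy ($u\neq 0$ in the torsion-free module $\cOP(1)$) already suffices.
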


\begin{proof}
It follows from an explict computation, using the $G_F$-equivariance, the alternating and the nondegeneracy properties of $(\, ,\,)_{\P}^\gamma$.
\end{proof}
It follows that, for any finite prime $\ellfr \nmid p$ of $F$ of good reduction for $A$,
\[
\charpoly(\Frob_\ellfr | T_\P A) = X^2 - a_\ellfr X + N \ellfr
\]
(and is independent of $p$ and $\P$), where $a_\ellfr := \Tr\bigl(\rho_\P(\Frob_\ellfr)\bigr) \in \cOE$ and $N\ellfr\in\Z$ is the norm of $\ellfr$ in $E/\Q$.

\subsection{Cup product and Tate duality}\label{sec:cup-tate-products}

Let now $L/F$ be a finite extension of fields, let $v$ be a place of $L$ and let
\[
\cup_{\P, M, v}^\gamma \colon \hone(L_v, A[\P^M]) \times \hone(L_v, A[\P^M]) \to \cOP/\P^M
\]
be the cup product attached to the Weil pairing $(\, , \, )_{\P, M}^\gamma$, for any $M\ge 1$. This is perfect, as follows by the following lemma.
\begin{lemma}
    For any $N$ finte discrete $\cOP[G_{F_v}]$-module, the cup product
    \[
    \cup_{\cOP} \colon \hone(F_v, N) \times \hone(F_v, D_{\cOP}(N)(1)) \to \cohomology^2(F_v, E_\P/\cOP(1)) \cong E_\P/\cOP(1)
    \]
    is perfect. Moreover, the following diagram commutes
    \begin{equation}\label{eq:diagram-cup-compatibility}
    \begin{tikzcd}
       \hone(F_v, D(N)(1)) \ar[r, "\adj(\cup)"]\ar[d]& D\bigl(\hone(F_v, N)\bigr)\ar[d]\\
        \hone(F_v, D_{\cOP}(N)(1)) \ar[r, "\adj(\cup_{\cOP})"]& 
        D_{\cOP}\bigl(\hone(F_v, N)\bigr),
    \end{tikzcd}
    \end{equation}
    where the horizontal maps are the adjoint morphisms attached respectively to $\cup_{\cOP}$ and the cup product
    \[
    \cup \colon \hone(F_v, N) \times \hone(F_v, D(N)(1)) \to \cohomology^2(F_v, \Q_p/\Z_p(1)) \cong \Q_p/\Z_p(1),
    \]
    while the vertical arrows are (induced by) the isomorphism of Lemma \ref{lemma:pontryagin-iso}.
\end{lemma}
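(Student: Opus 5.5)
The plan is to deduce both claims from classical local Tate duality over $\Z_p$, transported to the $\cOP$-linear setting through Lemma \ref{lemma:pontryagin-iso}. Local Tate duality says that for a finite discrete $G_{F_v}$-module $N$ the cup product
\[
\cup \colon \hone(F_v, N) \times \hone(F_v, D(N)(1)) \to \cohomology^2(F_v, \Q_p/\Z_p(1)) \cong \Q_p/\Z_p
\]
is perfect. Since $N$ is finite and $p$-primary, this is \cite[Corollary I.2.3]{milne:arithmetic-duality} restricted to $p$-parts. So the top horizontal map $\adj(\cup)$ in \eqref{eq:diagram-cup-compatibility} is an isomorphism. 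Once we know the diagram commutes and both vertical maps are isomorphisms, $\adj(\cup_{\cOP})$ is an isomorphism as well. The same argument with the roles of $N$ and $D_{\cOP}(N)(1)$ exchanged, or simply finiteness together with injectivity on both sides, then gives perfectness of $\cup_{\cOP}$.

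First I make the vertical maps precise. Fix the generator $c$ of the inverse different. Lemma \ref{lemma:pontryagin-iso} gives an isomorphism $\iota_X \colon D_{\cOP}(X) \iso D(X)$, namely $f \mapsto \mathrm{tr}_c \circ f$. Here $\mathrm{tr}_c \colon E_\P/\cOP \to \Q_p/\Z_p$ is induced by $a \mapsto \Tr_{E_\P/\Q_p}(ca)$, and it is well defined because $c\cOP$ is the inverse different. This isomorphism is natural in $X$ and $G_{F_v}$-equivariant. Applying it with $X = N$ and twisting gives a $G_{F_v}$-isomorphism $D_{\cOP}(N)(1) \iso D(N)(1)$, hence the left vertical isomorphism on $\hone$; the right vertical map is $\iota_X^{-1}$ with $X = \hone(F_v,N)$. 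The other input is the $\cOP$-valued cup product. It comes from the evaluation pairing $N \times D_{\cOP}(N)(1) \to E_\P/\cOP(1)$, followed by the identification $\cohomology^2(F_v, E_\P/\cOP(1)) = \cOP \otimes_{\Z_p} \cohomology^2(F_v,\Q_p/\Z_p(1)) \cong E_\P/\cOP$. This identification holds because $\cOP$ is finite free over $\Z_p$ with trivial action, so $E_\P/\cOP(1) \cong \cOP\otimes_{\Z_p}\Q_p/\Z_p(1)$.

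The commutativity of \eqref{eq:diagram-cup-compatibility} then reduces to a single identity: for $x \in \hone(F_v,N)$ and $y \in \hone(F_v, D_{\cOP}(N)(1))$,
\[
\mathrm{tr}_c(x \cup_{\cOP} y) = x \cup \iota(y).
\]
Both sides are cup products followed by a map on coefficients. On the level of coefficient modules, $\mathrm{tr}_c(\langle n, f\rangle) = \langle n, \mathrm{tr}_c\circ f\rangle$ holds tautologically. Since cup products are natural in the coefficient pairing, the identity follows once $\mathrm{tr}_c$ is compatible on $\cohomology^2$ with the invariant maps. This is the step I expect to need the most care: I must check that $\cohomology^2(F_v, -)$ applied to $\mathrm{tr}_c\otimes 1 \colon \cOP\otimes\Q_p/\Z_p(1) \to \Q_p/\Z_p(1)$ corresponds, under $\mathrm{inv}$, to $\mathrm{tr}_c$ on $E_\P/\cOP$. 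I would argue by additivity of $\cohomology^2$ and the fact that $\cohomology^2(F_v, \cOP\otimes_{\Z_p} -) = \cOP\otimes_{\Z_p}\cohomology^2(F_v,-)$ for a free module. Then $\mathrm{tr}_c\otimes 1$ acts only on the $\cOP$ factor, which is exactly how the target isomorphism was defined.

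Finally I conclude perfectness. The diagram commutes, the verticals and the top map are isomorphisms, so $\adj(\cup_{\cOP})$ is an isomorphism. For the other adjoint, I apply the same reasoning with $N' = D_{\cOP}(N)(1)$, using $D_{\cOP}(N')(1) \cong N$ by $\cOP$-linear Pontryagin biduality. Alternatively, the orders of the two finite groups match, and that already forces the second adjoint to be an isomorphism as well.
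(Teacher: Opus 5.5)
Your proposal is correct and is essentially the paper's argument: local Tate duality makes $\adj(\cup)$ an isomorphism, so the perfectness of $\cup_{\cOP}$ reduces to the commutativity of the square, which the paper only asserts can be checked explicitly via the $(\otimes,\Hom)$-adjunction and the description of cup products. Your identity $\mathrm{tr}_c(x\cup_{\cOP}y)=x\cup\iota(y)$ is exactly that check written out. It follows from the naturality of cup products in the coefficient pairing and from $\mathrm{tr}_c$ being $a\mapsto\Tr_{E_\P/\Q_p}(ca)$ tensored with $\Q_p/\Z_p(1)$, hence compatible with the identification of $\cohomology^2$. For the other adjoint, matching orders alone does not suffice; it is cleaner to note that it equals the $\cOP$-dual of the first adjoint composed with $\cOP$-linear biduality.
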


\begin{proof}
    By Tate's local duality \cite[7.2.6]{neukirch:cnf}, it's enough to prove the commutativity of the square. Identifying $D_{\cOP}(X)$ with $\Hom_{\cOP}(X, \Hom_{\Z_p}(\cOP, \Q_p/\Z_p))$ for any discrete $\cOP$-module $X$, as seen in the proof of Lemma \ref{lemma:pontryagin-iso}, this can be explicitely checked using the explicit description of the adjunction isomorphism with respect to  $(\otimes, \Hom)$ and of the cup products.
\end{proof}

Assume until the end of this subsection that $M$ satisfies \eqref{ass:P-M-princ}. The next proposition defines an $\cOP/\P^M$-linear version of the Tate pairing.
\begin{proposition}\label{prop:tate-pairing}
The image of $A(L_v)/\P^M A(L_v)$ in $\hone(L_v, A[\P^M])$ via the Kummer map is a maximal isotropic subgroup of $\hone(L_v, A[\P^M])$ for the cup product $\cup_{\P, M, v}^\gamma$. In other words, $\cup_{\P, M, v}^\gamma$ induces a perfect pairing
\[
\langle \; , \;\rangle_{\P, M, v}^\gamma \colon A(L_v)/\P^M A(L_v) \times \hone(L_v, A)[\P^M] \to \cOP/\P^M.
\]
\end{proposition}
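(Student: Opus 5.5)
The plan is to reduce the statement to the classical $\Z_p$-linear Tate local duality for abelian varieties, and then transport it to the $\cOP/\P^M$-linear setting using the compatibility diagram \eqref{eq:diagram-cup-compatibility}.

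\emph{Step 1: isotropy.} We show that $\delta_v(A(L_v)/\P^M A(L_v))$ is isotropic for $\cup_{\P,M,v}^\gamma$. The cup product attached to the $\Z_p$-valued Weil pairing $(\, ,\,)_p^\gamma$ kills the image of the Kummer map: this is the classical statement that the Kummer images of $A$ and $A^\vee$ annihilate each other under the Tate pairing (Milne, \emph{Arithmetic duality theorems}, I.3). We transport this via $\gamma_\ast$, which is an isomorphism on $p$-power torsion by \eqref{ass:deg-lambda}. Here we use that the Kummer map for $\pi_M$ is the restriction of the Kummer map for multiplication by a suitable power of $p$ to the $\P$-component, since $A[\P^M]$ is a direct summand of $A[p^{N}]$ for $N$ large. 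By the defining identity $\Tr_{\cOP/\Z_p}(ca\cdot(x,y)^\gamma_\P)=(x,ay)^\gamma_p$, we get $\Tr(ca\cdot(\xi\cup_{\P}\eta))=0$ for all $a\in\cOP$. Because the trace form twisted by $c$ is perfect by \eqref{eq:generator-inverse-different}, this forces $\xi\cup_{\P,M,v}^\gamma\eta=0$.

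\emph{Step 2: maximality via counting.} Next we identify $A[\P^M]$ with $D_{\cOP}(A[\P^M])(1)$ through $(\, ,\,)_{\P,M}^\gamma$, so that $\cup_{\P,M,v}^\gamma$ is perfect by the preceding lemma. Perfectness gives
\[
\#\hone(L_v,A[\P^M])=\#\hone(L_v,A[\P^M])^{\perp\perp},
\]
so maximality amounts to the order identity
\[
\#\hone(L_v,A[\P^M])=\bigl(\#A(L_v)/\P^M A(L_v)\bigr)^2.
\]
Using the local Kummer sequence
\[
0\to A(L_v)/\P^M\to \hone(L_v,A[\P^M])\to \hone(L_v,A)[\P^M]\to 0,
\]
this is equivalent to
\[
\#\hone(L_v,A)[\P^M]=\#A(L_v)/\P^M A(L_v).
\]
To prove it, we use Tate's duality $\hone(L_v,A)\cong D(A^\vee(L_v))$. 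Via $\gamma$, whose degree is prime to $p$ and which is $\cOE$-linear by \eqref{ass:O-E-lin}, this gives $\hone(L_v,A)[\P^\infty]\cong D(A(L_v)\otimes\cOP)$ compatibly with the $\cOE$-action. Hence
\[
\hone(L_v,A)[\P^M]\cong D\bigl(A(L_v)\otimes\cOP/\P^M\bigr).
\]
Here we must check that the $\cOE$-action on the dual side matches through the Rosati involution, which is trivial by \eqref{ass:O-E-lin}.

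\emph{Step 3: the induced pairing.} Once the Kummer image $I$ is shown to be maximal isotropic with $I^\perp=I$, the pairing descends to
\[
I\times \hone(L_v,A[\P^M])/I\to\cOP/\P^M,
\]
and it is perfect because $\cup_{\P,M,v}^\gamma$ is perfect and $I^\perp=I$. Identifying the quotient $\hone(L_v,A[\P^M])/I$ with $\hone(L_v,A)[\P^M]$ by the Kummer sequence yields $\langle\,,\rangle^\gamma_{\P,M,v}$.

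The main obstacle is Step 2, and specifically making the duality $\cOE$-equivariant: one must check that the Tate pairing respects the $\P$-decomposition, so that orthogonality of distinct $\P_i$-parts holds, by the same idempotent argument used in the restriction lemma for the Weil pairing. An alternative that avoids counting is to take the $\P$-isotypic component of the classical statement that the Kummer image is its own annihilator in $\hone(L_v,A[p^N])$ under $\cup^\gamma$, and then pass to $\cOP$-valued pairings via the trace as in Step 1. I would try this route first, since it reuses Step 1 verbatim.
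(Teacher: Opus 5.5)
Your proposal is correct and is essentially the paper's proof: isotropy of the Kummer image via the mechanism of Milne's Theorem I.3.2, and maximality via the order identity $\#\hone(L_v,A)[\P^M]=\#A(L_v)/\P^M A(L_v)$, which the paper derives from the $\cOE$-linear Tate pairing $A^\vee(L_v)\times\hone(L_v,A)\to\Q/\Z$ applied directly to $\phi=\pi_M$ together with \eqref{ass:deg-lambda}; working with $\phi=\pi_M$ directly means no idempotent decomposition is needed. One correction to Step 1: $A[\P^M]$ is a direct summand of $A[p^N]$ only when $\P^M$ is the whole $\P$-part of $p^N\cOE$, which fails e.g.\ if $r_\P\nmid M$, so your reduction to the $p^N$ case does not work as stated; instead write $p^N=\pi_M\beta$ with $\beta\in\cOE$, let $\iota\colon A[\pi_M]\hookrightarrow A[p^N]$ be the inclusion, and use $\iota_\ast\delta_{\pi_M}(P)=\delta_{p^N}(\beta P)$, $\beta_\ast\delta_{p^N}(Q)=\delta_{\pi_M}(Q)$ and the isogeny Weil pairing identity $e_{p^N}(x,\gamma y)=e_{\pi_M}(x,\gamma\beta y)$ (for $x\in A[\pi_M]$, $y\in A[p^N]$), which together give $\delta_{\pi_M}(P)\cup\delta_{\pi_M}(Q)=\delta_{p^N}(\beta P)\cup\gamma_\ast\delta_{p^N}(Q)=0$ by the classical mutual annihilation of the Kummer images of $A$ and $A^\vee$ for $p^N$.
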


\begin{proof}
We apply the the same argument in the proof \cite[Theorem I.3.2]{milne:arithmetic-duality} (which combines I.0.3, I.0.8,  I.0.14, I.0.16, I.0.18, I.2.1, I.3.1 in \emph{loc.~cit.}) to the pairing 
\[
A[\P^M] \times A[\P^M] \to \Q_p/\Z_p(1),
\]
induced by $(\, , \, )_{\P, M}^\gamma$, applying Lemma \ref{lemma:pontryagin-iso}, and to the short exact sequence 
\[
0\to A[\P^M] \to A \xrightarrow{\pi_M} A \to 0:
\] 
we get therefore (together with \eqref{ass:deg-lambda}) that there is a commutative diagram 
\begin{equation}
        \label{eq:diagram-tate-duality'}
        \begin{tikzcd}[cramped, column sep=small]
            0 \ar[r] & A(L_v)/\P^M A(L_v) \ar[r] \ar[d] & \hone(L_v, A[\P^M]) \ar[r] \ar[d] & \hone(L_v, A)[\P^M] \ar[r] \ar[d] & 0\\
            0 \ar[r] &  D\Bigl(\hone(L_v, A)[\P^M]\Bigr) \ar[r] & D(\hone(L_v, A[\P^M])) \ar[r] &  D\Bigl(A(L_v)/\P^M A(L_v)\Bigr) \ar[r] & 0
        \end{tikzcd}
    \end{equation}
(up to a sign), where the middle vertical arrow is the adjoint isomorphism induced by that pairing. Therefore it's enough to show that the leftmost vertical map (which is known to be injective) is a bijection. Then, by the commutatity of \eqref{eq:diagram-cup-compatibility}, the lemma follows composing it with the isomorphism of Lemma \ref{lemma:pontryagin-iso}.

We prove its surjectivity by order considerations. Recall that, by Theorem I.3.2 of \emph{loc.~cit.}, the limit of cup products with respect to the $\bmod \, n$ Weil parings induces a perfect pairing
\[
\langle \, , \, \rangle_v \colon A^\vee(L_v) \times \hone(L_v, A) \to \Q/\Z,
\]
which is by definition $\cOE$-linear, in the sense that for any $\phi \in \cOE= \End_F(A)$ the equality $\langle P , \phi_\ast \alpha \rangle_v = \langle \phi^\vee P , \alpha \rangle_v$ holds.
Therefore under the adjoint isomorphism attached to $\langle\,,\,\rangle_v$, we have that $\phi^\vee(A^\vee(L_v))$ corresponds to $(\phi_\ast)^\ast D(\hone(L_v, A))$ and thus
\[
\frac{A^\vee(L_v)}{\phi^\vee(A^\vee(L_v)) } \iso \frac{D(\hone(L_v, A))}{(\phi_\ast)^\ast D(\hone(L_v, A))} \cong D(\hone(L_v, A)[\phi_\ast]).
\]
Applying this to $\phi = \pi_M$ and recalling the assumption \eqref{ass:deg-lambda}, this shows that 
\[
\#A(L_v)/\P^M A(L_v) = \#\hone(L_v, A)[\P^M],
\]
finishing the proof.
\end{proof}

\subsection{Ordinary reduction}\label{sec:ordinary}

Assume for this subsection that $A$ has good reduction at a finite place $\p \mid p$ of $F$, let $\cA_\p/\cO_{F_\p}$ be the Néron model of $A_\p := A \otimes_F F_{\p}$ and $\tilde{A}_\p/k_\p$ be its special fibre, where $k_\p$ denotes the residue field of $F$ at $\p$. 
\begin{lemma}\label{lemma:good-reduction-iso-tate-modules}
    For any prime $\ell$ different from $p$, we have that $T_\ell A \cong T_\ell \tilde{A}_\p$ as $G_{F_\p}$-modules.
\end{lemma}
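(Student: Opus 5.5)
The plan is to obtain the isomorphism from the reduction map on $\ell$-power torsion points, using the Néron mapping property together with the smoothness of the Néron model. Fix $n\ge1$. Since $A$ has good reduction at $\p$, the Néron model $\cA_\p$ is an abelian scheme over $\cO_{F_\p}$. Multiplication by $\ell^n$ on it is then finite étale, because $\ell$ is invertible on the base. Consequently $\cA_\p[\ell^n]$ is a finite étale group scheme over the henselian ring $\cO_{F_\p}$.

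For such a scheme, the points over $\bar F_\p$ of the generic fibre correspond bijectively, via the specialisation map, to the points over $\bar k_\p$ of the special fibre. More concretely, the reduction map
\[
A[\ell^n](\bar F_\p) = \cA_\p[\ell^n](\cO_{F_\p}^{\ur}) \longrightarrow \tilde{A}_\p[\ell^n](\bar k_\p)
\]
is a group isomorphism. The equality on the left uses three facts:
\begin{itemize}
\item the points of $A[\ell^n]$ are all defined over $F_\p^\ur$, since the scheme is étale;
\item they extend to $\cO_{F_\p}^\ur$-points of $\cA_\p$ by the Néron mapping property (or simply by properness);
\item the map is bijective because finite étale schemes over a strictly henselian base are constant (Hensel's lemma).
\end{itemize}
Both sides have cardinality $\ell^{2gn}$, which gives an independent check.

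Next comes Galois equivariance. The group $G_{F_\p}$ acts on the left through the quotient $\Gal(F_\p^\ur/F_\p)\cong G_{k_\p}$; in particular inertia acts trivially, which is the Néron--Ogg--Shafarevich criterion already cited in the paper. The reduction map $\cO_{F_\p}^\ur\to\bar k_\p$ is compatible with the natural surjection $G_{F_\p}\surj G_{k_\p}$, so the isomorphism is $G_{F_\p}$-equivariant, where $G_{F_\p}$ acts on $\tilde{A}_\p[\ell^n]$ through $G_{k_\p}$.

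Finally, the isomorphisms for the various $n$ are compatible with multiplication by $\ell$, because reduction is a homomorphism of group schemes. Taking the inverse limit over $n$ then yields $T_\ell A\cong T_\ell\tilde{A}_\p$ as $G_{F_\p}$-modules. The one step that needs care is the claim that $\cA_\p[\ell^n]$ is étale and that reduction is bijective on it. This is standard (see \cite[Theorem 1]{serre-tate:good-red-av} or the analogous \cite[Proposition VII.3.1]{silverman:arithmetic-elliptic-curves}), and I would cite it rather than reprove it. Note also that the $\cOE$-action extends to $\cA_\p$ by the Néron property, so the isomorphism is moreover $\cOE$-linear; this is presumably what the paper uses later.
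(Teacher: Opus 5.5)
Your argument is correct and is essentially the paper's: the paper cites \cite[Lemma 2]{serre-tate:good-red-av} (reduction is bijective on inertia-fixed prime-to-$p$ torsion) together with the Néron--Ogg--Shafarevich criterion, while you derive the same equivariant bijection directly from the finite étaleness of $\cA_\p[\ell^n]$ over the henselian base and then pass to the limit. One small correction: the bijectivity statement you want to cite is Lemma 2 of Serre--Tate, not Theorem 1, which is the criterion itself.
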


\begin{proof}
    This follows from \cite[Lemma 2]{serre-tate:good-red-av}, since $A[\ell^n]$ is unramified for any $n\ge1$ by the Néron--Ogg--Shafarevich criterion for abelian varieties, i.e., \cite[Theorem 1]{serre-tate:good-red-av}.
\end{proof}

\begin{definition}
    We say that $A$ has ordinary reduction at $\p$ if $\rk_{\Z_p}T_p \tilde{A}_\p = g (= \frac{1}{2} \rk_{\Z_p}T_p A)$.
\end{definition}

Assume that $\ell \nmid \deg(\gamma)$ and write $P_{\p}(X) := \charpoly(\Frob_\p |T_\ell A) \in \Z[X]$. For a prime $\lambda$ of $E$ above $\ell$ write also
$f_\p(X) := \charpoly (\Frob_\p | T_\lambda A) = X^2 - a_\p X + N\p \in \cOE[X]$.
These polynomials are related, by \cite[Proposition 11.9]{shimura:anf-sympl-disc-groups}, by the relation
\begin{equation}
P_\p(X) = N_{E/\Q}(f_\p(X)):=\prod_{\sigma\colon E\to\C}\sigma(f_\p(X)).\label{eq:relation-char-poly}
\end{equation}
\begin{lemma}\label{lemma:ordinary-equivalent-1}
    Let $a_{\p, g}$ be the coefficient of $X^g$ in $P_{\p}(X)$. Then $A$ has ordinary reduction at $\p$ if and only if $p \nmid a_{\p, g}$. 
\end{lemma}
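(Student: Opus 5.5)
The plan is to translate ordinarity into the number of Frobenius eigenvalues that are $p$-adic units, via the Newton polygon of $P_\p(X)$. Since the lemma concerns the special fibre $\tilde{A}_\p$ over the finite field $k_\p$ of cardinality $q = N\p = p^{f}$, Lemma \ref{lemma:good-reduction-iso-tate-modules} (with some auxiliary $\ell \ne p$) tells us that $P_\p(X)$ is the characteristic polynomial of the Frobenius endomorphism $\phi$ of $\tilde{A}_\p$. Write its roots as $\alpha_1,\dots,\alpha_{2g}$, algebraic integers of absolute value $q^{1/2}$, and fix an embedding $\bar{\Q}\inj\bar{\Q}_p$. Then $P_\p(X)=\prod_i (X-\alpha_i)$ with constant term $q^g$, and $a_{\p,g}$ is, up to sign, the elementary symmetric function $e_g(\alpha_1,\dots,\alpha_{2g})$.

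The first step is to identify $\rk_{\Z_p}T_p\tilde{A}_\p$ with the number of roots $\alpha_i$ that are $p$-adic units. This is the standard fact that the étale part of the $p$-divisible group $\tilde{A}_\p[p^\infty]$ (equivalently, the unit-root part of the Dieudonné module, on which Frobenius acts bijectively) has height equal to the multiplicity of the slope-$0$ part of the Newton polygon of the characteristic polynomial of $\phi$. I would cite this (e.g.\ Manin's or Mumford's treatment, or via the Dieudonné module: $T_p\tilde{A}_\p\otimes W(\bar{k}_\p)$ is the slope-zero part of the contravariant Dieudonné module, whose Frobenius has characteristic polynomial $P_\p$). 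Since $p$-rank $\le g$ always, $A$ has ordinary reduction iff exactly $g$ of the $\alpha_i$ are $p$-adic units.

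The second step is elementary. The roots come in pairs $\alpha, q/\alpha$ (by the Weil pairing / functional equation $X^{2g}P_\p(q/X)=q^gP_\p(X)$), so the Newton polygon is symmetric, and slopes (normalized by $v_p(q)=f$) lie in $[0,1]$ with slope $0$ multiplicity equal to slope $1$ multiplicity. If exactly $g$ roots $\alpha_1,\dots,\alpha_g$ are units, the other $g$ have valuation $v_p(q)$ each; then in $e_g$ the term $\alpha_1\cdots\alpha_g$ is a unit while every other product contains a root of positive valuation, so $p\nmid a_{\p,g}$. Conversely, if $p\nmid a_{\p,g}$, the Newton polygon of $P_\p$ passes through the point $(g,\,v_p(q^g))$ when read from the constant term, i.e.\ the vertex at $X^g$ has valuation $0$; since the polygon joins $(0,gf)$ to $(2g,0)$ via $(g,0)$ and is convex, the $g$ smallest slopes sum to $0$, hence there are at least $g$ unit roots, and so exactly $g$ by the bound on the $p$-rank.

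The main obstacle is the first step, the identification of the $p$-rank with the number of unit-root eigenvalues of Frobenius; everything else is Newton polygon bookkeeping. I would not reprove it but invoke it from the literature on abelian varieties over finite fields, checking only that $P_\p$ as defined via $T_\ell A$ coincides with the Frobenius characteristic polynomial of $\tilde{A}_\p$, which is exactly what Lemma \ref{lemma:good-reduction-iso-tate-modules} provides.
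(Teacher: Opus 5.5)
Your proposal is correct and follows essentially the same route as the paper: ordinarity is characterised by the Frobenius eigenvalues having slopes $0$ and $1$, each with multiplicity $g$; Lemma \ref{lemma:good-reduction-iso-tate-modules} identifies $P_\p(X)$ with the characteristic polynomial of Frobenius on $\tilde{A}_\p$; and the equivalence with $p \nmid a_{\p,g}$ is Newton polygon bookkeeping. The only difference is that you carry out this last step by hand, using the pairing $\alpha \leftrightarrow q/\alpha$ and convexity, whereas the paper simply cites Corollary 15.36 of Edixhoven--van der Geer--Moonen; note that in your converse the relevant vertex is $(g,0)$, not the stray $(g, v_p(q^g))$ you wrote.
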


\begin{proof}
    It is known that $A$ has good ordinary reduction if and only if its Newton polygon has slopes $0$ and $1$, both with multiplicities $g$. By \cite[Corollary 15.36]{edixhoven-moonen-van-der-geer:av}, this is true if and only if $p$ does not divide the coefficient of $X^g$ in the characteristic polynomial of $\Frob_\p$ acting on $T_\ell\tilde{A}_\p$. The latter equals $P_{\p}(X)$, by Lemma \ref{lemma:good-reduction-iso-tate-modules}.
\end{proof}

\begin{proposition}\label{proposition:ordinary-equivalent-2}
    $A$ has ordinary reduction at $\p$ if and only if $a_\p \in \cOEP^\times$, for any prime $\P \mid p$ of $E$.
\end{proposition}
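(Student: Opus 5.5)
Combine Lemma \ref{lemma:ordinary-equivalent-1} with the factorisation \eqref{eq:relation-char-poly}. Write $f_\p(X) = X^2 - a_\p X + N\p$. Over $\C$, each conjugate $\sigma(f_\p)$ has two roots $\alpha_\sigma, \beta_\sigma$ with $\alpha_\sigma\beta_\sigma = N\p$ and $\alpha_\sigma+\beta_\sigma = \sigma(a_\p)$. Then $P_\p(X) = \prod_\sigma (X-\alpha_\sigma)(X-\beta_\sigma)$ has degree $2g$.

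The coefficient of $X^g$ is, up to the sign $(-1)^g$, the elementary symmetric function $e_g$ in the $2g$ roots. Rather than expand $e_g$ directly, I would work $p$-adically. Fix an embedding $\bar\Q \inj \bar\Q_p$. The embeddings $\sigma\colon E \to \bar\Q_p$ are partitioned according to the prime $\P \mid p$ of $E$ they induce.

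For each $\sigma$, the roots $\alpha_\sigma, \beta_\sigma$ are $p$-adic integers, being roots of a monic polynomial with integral coefficients. Their valuations sum to $v_p(N\p) > 0$. By the Newton polygon of $\sigma(f_\p)$:
\begin{itemize}
\item if $\sigma(a_\p)$ is a $p$-adic unit, exactly one root is a unit and the other has positive valuation;
\item if $\sigma(a_\p)$ is not a unit, both roots have positive valuation.
\end{itemize}
Moreover $\sigma(a_\p)$ is a unit if and only if $a_\p \in \cOEP^\times$ for the prime $\P$ induced by $\sigma$.

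Now the Newton polygon of $P_\p$ is built from the multiset of root valuations. Let $r$ be the number of unit roots of $P_\p$. The condition $p \nmid a_{\p,g}$ means that the Newton polygon has a vertex at abscissa $g$ of height $0$, equivalently at least $g$ roots of valuation $0$, so $r \ge g$.

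By the dichotomy above, each pair $\{\alpha_\sigma,\beta_\sigma\}$ contributes at most one unit root. Hence $r \le g$, with equality if and only if every $\sigma(a_\p)$ is a unit. Therefore $p \nmid a_{\p,g}$ if and only if $a_\p \in \cOEP^\times$ for all $\P \mid p$. Lemma \ref{lemma:ordinary-equivalent-1} then gives the claim.

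To make the step "coefficient of $X^g$ is a unit $\iff$ at least $g$ unit roots" rigorous, I would argue as follows. Write $P_\p = Q\cdot R$ over $\Z_p$-integers of a large enough extension, where $Q$ collects the roots of valuation $0$ (say $\deg Q = r$) and $R$ collects the roots of positive valuation, so $R \equiv X^{2g-r}$ modulo the maximal ideal. Reducing mod the maximal ideal, $\bar P_\p = \bar Q \cdot X^{2g-r}$ with $\bar Q(0) \ne 0$. Hence the lowest nonzero coefficient of $\bar P_\p$ sits in degree $2g-r$. Since $r \le g$, we have $2g - r \ge g$. Therefore the $X^g$ coefficient is nonzero mod $p$ if and only if $2g-r = g$.

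The main obstacle is bookkeeping the correspondence between embeddings $\sigma$ and primes $\P$, and justifying that the $p$-adic valuation of $\sigma(a_\p)$ detects membership of $a_\p$ in $\cOEP^\times$. Both follow from the standard identification $E\otimes\Q_p = \prod_{\P} E_\P$.
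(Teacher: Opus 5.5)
Your argument is correct, but it takes a different route from the paper's proof.

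\textbf{The paper's route.} The paper expands $\prod_\sigma\bigl(X^2-\sigma(a_\p)X+N\p\bigr)$ directly. The only way to reach degree $g$ without picking up a constant term $N\p$ is to take the linear term from every factor. Hence $a_{\p,g}=(-1)^g N_{E/\Q}(a_\p)+N\p\cdot c$, where $c$ is a rational algebraic integer and so lies in $\Z$. Since $N\p$ is a positive power of $p$, it follows that $p\nmid a_{\p,g}$ iff $p\nmid N_{E/\Q}(a_\p)$, iff $a_\p$ lies in no prime $\P\mid p$.

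\textbf{Your route.} You count unit roots $p$-adically instead. Each conjugate quadratic has at most one unit root, because the product of its roots is $N\p$. It has exactly one unit root iff $\sigma(a_\p)$ is a unit. The reduction $\bar P_\p=\bar Q\cdot X^{2g-r}$, combined with $r\le g$, then shows that the $X^g$-coefficient is a unit iff $r=g$.

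\textbf{What each buys.} The paper's congruence is shorter. It also hides the bookkeeping between embeddings $E\hookrightarrow\bar{\Q}_p$ and primes $\P\mid p$ inside the standard fact $|N_{E/\Q}(a)|=[\cO_E:a\cO_E]$. Your version makes the slope structure explicit: each $\sigma(f_\p)$ has exactly one unit root and one non-unit root. The paper has to re-derive precisely this fact later, for the roots $\alpha_{\sigma,\p},\beta_{\sigma,\p}$ in the proof of Theorem~\ref{theorem:iwasawa-th-main}. Your argument also matches the Newton-polygon formulation underlying Lemma~\ref{lemma:ordinary-equivalent-1}.
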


\begin{proof}
    We follow the argument of \cite[Proposition 3.8]{wang-tian:ordinary-GL-2-av}.
    By \eqref{eq:relation-char-poly} (and recalling that $[E:\Q]=g$) we know that $a_{\p, g}=N_{E/\Q}(a_\p) + N\p \cdot \ast$, for some $\ast \in \cOE \cap \Q = \Z$. This implies that $p\nmid a_{\p, g}$ if and only if $a_\p\in\cO_{E,\P}^\times$. The result follows from Lemma \ref{lemma:ordinary-equivalent-1}.
\end{proof}

By \cite[Section 7.3, Proposition 6]{bosch-lut-raynaud:neron-models}, any isogeny $\alpha \colon A \to A$ defined over $F$ extends to a morphism on $\cA_\p$  that specialises to an isogeny of the special fibre $\tilde{A}_\p$. In this way, we get a action of $\cOE$ on $\tilde{A}_\p$ (commuting with the $G_{k_\p}$-action) and therefore we may consider the $\P^M$-torsion $\tilde{A}_\p[\P^M]$, for any $M>0$, and the $\P$-adic Tate module $T_\P \tilde{A}_\p$ of $\tilde{A}_\p$. In the rest, we will consider also the following weaker ordinarity assumption.
\begin{definition}
    Let $\P$ be a prime of $E$ above $p$. We say that $A$ has $\P$-ordinary reduction at $\p$ if $\rk_{\cOP}T_\P \tilde{A}_\p = 1 (=\frac{1}{2} \rk_{\cOP} T_\P A)$. 
\end{definition}
Notice that $A$ has ordinary reduction at $\p$ if and only if it has $\P$-ordinary reduction at $\p$ for any prime $\P$ of $E$ above $p$.

\begin{lemma}\label{lemma:reduction-divisible}
    $\tilde{A}_\p[\P^\infty]$ is a divisible $\cOP$-module.
\end{lemma}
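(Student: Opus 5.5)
The claim is that $\tilde{A}_\p[\P^\infty]$ is a divisible $\cOP$-module. The plan is to reduce divisibility to the surjectivity of a single element of $\cOE$ acting on the reduction $\tilde{A}_\p$.

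First, since $\cOP$ is a discrete valuation ring, a torsion $\cOP$-module is divisible if and only if multiplication by a uniformizer is surjective on it. For this I will use an element $\varpi \in \cOE$ with $v_\P(\varpi)=1$. Any such element generates $\P\cOP$ in the completion, so it is a uniformizer of $\cOP$ even though it need not generate $\P$ globally. It therefore suffices to show that $\varpi$ maps $\tilde{A}_\p[\P^\infty]$ onto itself.

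The key input is that the endomorphism $\varpi$ of $A$ extends to the Néron model and specialises to an isogeny of $\tilde{A}_\p$, as recalled just before the statement via \cite[Section 7.3, Proposition 6]{bosch-lut-raynaud:neron-models}. Indeed $\varpi\neq0$ in $\cOE=\End_F(A)$, so it is an isogeny of $A$, and its reduction is again an isogeny because the reduction map $\End(A)\to\End(\tilde{A}_\p)$ is injective and preserves degrees. An isogeny of abelian varieties over $\bar k_\p$ is surjective on $\bar k_\p$-points. So given $P\in\tilde{A}_\p[\P^n]$, I choose $Q\in\tilde{A}_\p(\bar k_\p)$ with $\varpi Q=P$, and it remains to show that $Q$ lies in $\tilde{A}_\p[\P^\infty]$.

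This last point is the only delicate step, since $Q$ might a priori have $\varpi$-torsion components away from $\P$. I handle it as follows. The point $Q$ is killed by $\varpi\cdot\P^n$. Factor $\varpi\cOE=\P\mathfrak b$ with $\mathfrak b$ prime to $\P$, so that $Q$ is killed by the ideal $\P^{n+1}\mathfrak b$. Since $\P^{n+1}$ and $\mathfrak b$ are coprime, the Chinese remainder theorem decomposes the torsion as
\[
\tilde{A}_\p[\P^{n+1}\mathfrak b]=\tilde{A}_\p[\P^{n+1}]\oplus\tilde{A}_\p[\mathfrak b],
\]
and accordingly $Q=Q_1+Q_2$ with $Q_1\in\tilde{A}_\p[\P^{n+1}]$ and $Q_2\in\tilde{A}_\p[\mathfrak b]$. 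Then $\varpi Q_2$ lies in $\tilde{A}_\p[\mathfrak b]\cap\tilde{A}_\p[\P^\infty]=0$, so $P=\varpi Q=\varpi Q_1$. This shows $\varpi$ is surjective on $\tilde{A}_\p[\P^\infty]$, and hence the module is divisible.
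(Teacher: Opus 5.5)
Your proof is correct and has the same structure as the paper's. You take $\varpi\in\cOE$ with $v_\P(\varpi)=1$ as a uniformizer of $\cOP$, use that its reduction is an isogeny of $\tilde{A}_\p$ and hence surjective on $\bar{k}_\p$-points, and then lift. The difference is in the last step, and yours is the sound version.

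The paper claims that \emph{any} preimage $Q$ of $P\in\tilde{A}_\p[\P^m]$ already lies in $\tilde{A}_\p[\P^\infty]$, via the identity $\tilde{A}_\p[\pi^{t(m+1)}]=\tilde{A}_\p[\P^{t(m+1)}]$. This fails whenever $\pi\cOE=\P\mathfrak{b}$ with $\mathfrak{b}\neq(1)$, which is forced if $\P$ is not principal. In that case $\ker(\pi)\supseteq\tilde{A}_\p[\mathfrak{b}]$, and this is nonzero as soon as $\mathfrak{b}$ has a prime factor prime to $p$. So a preimage need not be $\P$-primary. Correspondingly, the B\'ezout relation $1=rx+s\pi^{t(m+1)}$ used there cannot exist, since both $x$ and $\pi^{t(m+1)}$ lie in $\mathfrak{b}^{t(m+1)}$.

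Your decomposition $Q=Q_1+Q_2$ along $\P^{n+1}+\mathfrak{b}=\cOE$, together with $\varpi Q_2\in\tilde{A}_\p[\mathfrak{b}]\cap\tilde{A}_\p[\P^\infty]=0$, is exactly what is needed to replace $Q$ by a $\P$-primary preimage. So you have correctly identified and closed the one delicate point.

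A minor remark: injectivity of $\End(A)\to\End(\tilde{A}_\p)$ is not what makes $\tilde{\varpi}$ an isogeny. A quick argument is that $\beta:=N_{E/\Q}(\varpi)/\varpi\in\cOE$ and $\tilde{\varpi}\circ\tilde{\beta}=[N_{E/\Q}(\varpi)]$. The right-hand side is surjective on $\tilde{A}_\p(\bar{k}_\p)$, so $\tilde{\varpi}$ is too.
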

\begin{proof}
    Let $\pi\in \cOE$ such that $\pi \in \P \smallsetminus \P^2$. Since $\pi$ is a uniformiser of $\cOP$, it is enough to prove that $\tilde{A}_\p[\P^\infty]$ is $\pi$-divisible. Now, since $\pi$ acts as an isogeny on $\tilde{A}_\p$, it is surjective on $\tilde{A}_\p(\bar{k}_\p)$. Moreover, the preimage $Q \in \tilde{A}_\p(\bar{k}_\p)$ of an element $P \in \tilde{A}_\p(\bar{k}_\p)[\P^\infty]$ via $\pi$ still lies in $\tilde{A}_\p(\bar{k}_\p)[\P^\infty]$: indeed, let $m$ such that $P \in \tilde{A}_\p(\bar{k}_\p)[\P^{m}]$ and denote by $t$ the order of $\P$ in the class group of $E$. Then 
    \[
    Q \in \tilde{A}_\p(\bar{k}_\p)[\pi^{t(m+1)}] = \tilde{A}_\p(\bar{k}_\p)[\P^{t(m+1)}].
    \]
    To prove the latter equality, consider a generator $z$ of $\P^t$, so that $\pi^{t(m+1)} = z^{m+1}x$ for some $x \notin \P$ and $
    R\in \tilde{A}_\p(\bar{k}_\p)[\pi^{t(m+1)}]$; then there exist $r, s \in \cOE$ such that $1 = rx + s\pi^{t(m+1)}$, thus 
    \[
    z^{t(m+1)}R = z^{t(m+1)}(rx R + s\pi^{t(m+1)}R) = r (z^{t(m+1)}x)R = r (\pi^{t(m+1)}R)=0.
    \]
    Thus $Q \in \tilde{A}_\p(\bar{k}_\p)[\P^{t(m+1)}]$. The reverse implication is trivial.
\end{proof}

\begin{proposition}\label{prop:P-ordinary-equiv}
    $A$ has $\P$-ordinary reduction at $\p$ if and only if $\tilde{A}_\p[\P^M] \cong \cOE/\P^M$ for any $M \ge 1$. 
\end{proposition}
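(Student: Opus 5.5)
The plan is to use Lemma \ref{lemma:reduction-divisible} to identify $\tilde{A}_\p[\P^\infty]$ with a module of the form $(E_\P/\cOP)^r$, and then to link the integer $r$ both to the rank of $T_\P\tilde{A}_\p$ and to the shape of the finite layers $\tilde{A}_\p[\P^M]$.

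First we fix the structure. Write $W := \tilde{A}_\p[\P^\infty]$, viewed as an $\cOP$-module; the action of $\cOE$ on $W$ extends to $\cOP$ because every element is killed by a power of $\P$. As in the proof of Lemma \ref{lemma:reduction-divisible}, choose $\pi \in \P\smallsetminus\P^2$. Then $W[\pi] = \tilde{A}_\p[\P]$ lies inside $\tilde{A}_\p[p]$, so it is finite. A divisible torsion $\cOP$-module with finite $\pi$-torsion is isomorphic to $(E_\P/\cOP)^r$, for instance by the structure theory of divisible modules over the DVR $\cOP$, or by passing to the Pontryagin dual, which is a finitely generated torsion-free, hence free, $\cOP$-module. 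Moreover $r \le 2$, because $\tilde{A}_\p[\P]$ is a subgroup of $\tilde{A}_\p[p]$ and has at most $(\#\cOE/\P)^2$ elements; we also get this from the injection of $T_\P\tilde{A}_\p$ into the reduction of $T_\P A$ at $\p$. We do not need this bound, though.

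Next come the two identifications.

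\textbf{(a)} We have $T_\P\tilde{A}_\p = \projlim_M W[\P^M] \cong \cOP^r$, so $\rk_{\cOP}T_\P\tilde{A}_\p = r$. To make this precise, note that $\tilde{A}_\p[\P^M]$ means the elements of $W$ killed by $\P^M$, that is, $W[\P^M]$. We can work with $\pi^M$ in place of $\P^M$, since $\P^M\cOP = \pi^M\cOP$ and $W$ is an $\cOP$-module; the trick with $z$ and the exponent $t$ in the proof of Lemma \ref{lemma:reduction-divisible} shows that this change does not alter the torsion.

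\textbf{(b)} For every $M \ge 1$ we have $\tilde{A}_\p[\P^M] \cong (\cOP/\P^M)^r = (\cOE/\P^M)^r$.

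Given these, the equivalence follows. If $r=1$, then (b) gives $\tilde{A}_\p[\P^M]\cong\cOE/\P^M$ for all $M$. Conversely, if $\tilde{A}_\p[\P^M]\cong\cOE/\P^M$ for some $M$, for example $M=1$, then comparing cardinalities in (b) gives $(\#\cOE/\P)^r = \#\cOE/\P$, hence $r=1$, and (a) shows that $A$ is $\P$-ordinary.

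The main obstacle is justifying $W\cong(E_\P/\cOP)^r$ carefully, in particular checking that $W$ is really an $\cOP$-module with finite $\pi$-torsion; the rest is formal. For the finiteness we use that $\tilde{A}_\p[p]$ is finite, since multiplication by $p$ is an isogeny of $\tilde{A}_\p$, together with $\P\ni p$. We then appeal to the standard classification of divisible $\cOP$-modules. If a self-contained argument is preferred, we can instead lift an $\cOE/\P$-basis of $W[\pi]$ to compatible systems of $\pi$-power roots using divisibility, and check that the resulting map $(E_\P/\cOP)^r\to W$ is injective on $\pi$-torsion and surjective by induction on the exponent.
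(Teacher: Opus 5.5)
Your argument is correct and is essentially the paper's proof: Lemma~\ref{lemma:reduction-divisible} together with the structure theory of divisible torsion $\cOP$-modules gives $\tilde{A}_\p[\P^\infty]\cong(E_\P/\cOP)^r$, and $\P$-ordinarity is exactly $r=1$. Beyond the paper's short argument, you also make explicit three points it leaves implicit: the finiteness of $r$ (from finiteness of $\tilde{A}_\p[\P]$), the identification $r=\rk_{\cOP}T_\P\tilde{A}_\p$, and the converse direction. Your aside bounding $r\le 2$ is not justified by $\tilde{A}_\p[\P]\subseteq\tilde{A}_\p[p]$ alone; one would instead use surjectivity of the reduction map $A[\P]\to\tilde{A}_\p[\P]$. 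As you note, that bound plays no role.
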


\begin{proof}
By Lemma \ref{lemma:reduction-divisible}, we have that $\tilde{A}_\p[\P^\infty]=(E_\P/\cOP)^r$ for some integer $r \ge 0$ (as a consequence of Maltis structure theorem for injective modules). $\P$-ordinarity implies that $r=1$.
\end{proof}

In the next proposition $\chi_{\P, M}$ denotes composition of the cyclotomic character with the natural map $\Z_p^\times \inj \cOEP \surj \cOE/\P^M$. Recall that, by Lemma \ref{lemma:det}, we have that $\det(\rho_{\P, M}) = \chi_{\P, M}$.
\begin{proposition}\label{prop:P-ordinary-shape}
    Suppose that $A$ has $\P$-ordinary reduction at $\p$ and let $I_\p\subseteq D_\p$ be respectively the inertia and decomposition group at $\p$. Then, for any $M\ge1$ there is a basis of $A[\P^M]$ such that $\rho_{\P,M}(D_\p)$ is upper-triangular and
    \[
    \rho_{\P,M}|_{I_\p} = 
    \begin{pmatrix}
    \chi_{\P, M} & \ast\\ 0&1    
    \end{pmatrix}.
    \]
\end{proposition}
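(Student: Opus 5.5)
The idea is to use the reduction map on $\P^M$-torsion at $\p$, which is $D_\p$-equivariant; its kernel gives the first basis vector and the unramified quotient gives the second. Let $\mathcal{A}_\p$ be the Néron model and fix a prime of $\bar F$ above $\p$, so that $D_\p$ and $I_\p$ are defined. Reduction gives a $D_\p$-equivariant, $\cOE$-linear map
\[
r \colon A[\P^M] \longrightarrow \tilde{A}_\p[\P^M](\bar{k}_\p).
\]
Its compatibility with the $\cOE$-action follows from the extension of endomorphisms to $\mathcal{A}_\p$ recalled before Lemma \ref{lemma:reduction-divisible}. We have $A[p^M]=\mathcal{A}_\p[p^M](\cO_{\bar F_\p})$, since $\mathcal{A}_\p[p^M]$ is a finite flat group scheme over the henselian base. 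Hence $r$ is surjective: the connected–étale sequence
\[
0\to \mathcal{A}_\p[p^M]^0\to \mathcal{A}_\p[p^M]\to \mathcal{A}_\p[p^M]^{\mathrm{et}}\to 0
\]
is $\cOE$-stable, and the étale part is identified with $\tilde{A}_\p[p^M]$. Taking $\P$-components via the idempotent decomposition of $\cOE\otimes\Z_p$ gives surjectivity on the $\P$-parts.

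By Proposition \ref{prop:P-ordinary-equiv}, the target of $r$ is free of rank one over $\cOE/\P^M$. Since $A[\P^M]$ is free of rank two, the kernel $V_1:=\ker r$ is a $D_\p$-stable $\cOE/\P^M$-submodule, and the quotient $A[\P^M]/V_1\cong \cOE/\P^M$ is free.

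Next, $V_1$ is free of rank one. This holds because the sequence $0\to V_1\to (\cOE/\P^M)^2\to\cOE/\P^M\to 0$ splits, as the quotient is free over the local ring $\cOE/\P^M$. Choose a generator $e_1$ of $V_1$ and a lift $e_2$ of a generator of the quotient. In the basis $(e_1,e_2)$ the representation $\rho_{\P,M}(D_\p)$ is upper triangular, with diagonal characters $\psi_1$ (on $V_1$) and $\psi_2$ (on the quotient).

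On the quotient, $G_{F_\p}$ acts through its action on $\tilde{A}_\p(\bar k_\p)$, which factors through $\Gal(\bar k_\p/k_\p)$. Therefore $I_\p$ acts trivially there, so $\psi_2|_{I_\p}=1$. Finally, Lemma \ref{lemma:det} gives $\psi_1\psi_2=\det\rho_{\P,M}=\chi_{\P,M}$. Restricting to $I_\p$ yields $\psi_1|_{I_\p}=\chi_{\P,M}|_{I_\p}$, which is the claimed shape.

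The main obstacle I expect is the surjectivity of $r$ on $\P^M$-torsion together with its $\cOE$-equivariance. This needs the identification of the étale quotient of $\mathcal{A}_\p[p^M]$ with $\tilde{A}_\p[p^M]$ and the smoothness of the special fibre. I would handle it with Hensel lifting: $\tilde{A}_\p[p^M]$ is étale-reduced, and étale points lift through the finite flat group scheme. Since the idempotents of $\cOE\otimes\Z_p$ act on every term, the statement then descends to the $\P$-parts.
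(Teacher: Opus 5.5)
Your argument has the same structure as the paper's proof:
\begin{enumerate}
\item take the kernel of the $D_\p$-equivariant reduction map on $\P^M$-torsion (the paper writes it as the formal-group torsion $\mathcal{F}(\bar{\mathfrak m})[\P^M]$);
\item use Proposition \ref{prop:P-ordinary-equiv} to see that the quotient is free of rank one;
\item choose $e_1$ in the kernel and read off $\psi_2|_{I_\p}=1$ from the action on $\tilde A_\p(\bar k_\p)$;
\item obtain $\psi_1|_{I_\p}=\chi_{\P,M}$ from Lemma \ref{lemma:det}.
\end{enumerate}
The only real difference is how you get surjectivity of $r$. The paper uses the formal-group exact sequence and Tan's lemma on $\alpha$-torsion, for $\alpha$ a generator of a principal power $\P^m$ with $m\ge M$. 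You use the finite flat group scheme $\mathcal{A}_\p[p^M]$, its connected--\'etale sequence and Hensel lifting. This avoids needing a principal generator. (The identification of the \'etale part with $\tilde A_\p[p^M]$ is on $\bar k_\p$-points only, since the special fibre itself is not \'etale, but that is harmless.)

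There is one step that does not hold as written. The idempotent of $\cOE\otimes\Z_p$ attached to $\P$ cuts out of $A[p^M]$ the submodule $A[\P^{Mr_\P}]$, not $A[\P^M]$, where $r_\P$ is the ramification index of $\P$ over $p$. So your argument gives surjectivity of $A[\P^{N}]\to\tilde A_\p[\P^{N}]$ only for $N=Mr_\P$. Surjectivity of a module map does not in general pass to $\P^M$-torsion, so you need a descent step.

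This is exactly what the paper does when it passes from level $m$ back to level $M$. At level $N$ the target is free of rank one by Proposition \ref{prop:P-ordinary-equiv}, so $A[\P^N]=\ker r_N\oplus W$ with $r_N\colon W\iso\tilde A_\p[\P^N]$. Taking $\P^M$-torsion gives $W[\P^M]\iso\tilde A_\p[\P^M]$. Equivalently, $\ker r_N$ is free of rank one, so $\ker r_M=(\ker r_N)[\P^M]$ has order $\#(\cOE/\P^M)$, and counting gives surjectivity at level $M$.

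With this step added, or when $\P$ is unramified over $p$, your proof is complete.
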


\begin{proof}
Let $\bar{\mathfrak{m}}$ be the maximal ideal of $\Fbar_\p$ and $\mathcal{F}$ be the formal group over $\cO_{F_\p}$ attached to $A_\p$. By \cite[Theorem C.2.6]{hindry-silverman:diophantine-geo} and \cite[Proposition IV.6.4(a)]{silverman:advanced}, we have an exact sequence
\[
\begin{tikzcd}
0\ar[r]&  \mathcal{F}(\bar{\mathfrak{m}}) \ar[r] & A(\Fbar_\p) \ar[r] & \tilde{A}_\p(\bar{k}_\p) \ar[r]& 0,
\end{tikzcd}
\]
that induces for any $m \geq 1$, the exact sequence
\begin{equation}\label{fp_exact_sequence1}
\begin{tikzcd}
    0\ar[r]& \mathcal{F}(\bar{\mathfrak{m}})[\P^m]\ar[r]&A(\Fbar_\p)[\P^m]\ar[r, "{\pi'}"]& \tilde{A}_\p(\bar{k}_\p)[\P^m].
\end{tikzcd}
\end{equation}
Now choose $m \geq M$ such that $\P^m$ is principal, say generated by $\alpha$, so that $A(\Fbar_\p)[\P^m] =A(\Fbar_\p)[\alpha]$ and $\tilde{A}(\bar{k}_\p)[\P^m] = \tilde{A}(\bar{k}_\p)[\alpha]$. Thus, it follows by \cite[Lemma 2.1.2]{tan:gen-mazur-th} that $\pi'$ is surjective. 
From this fact, since $A(\Fbar_\p)[\P^m] \cong (\cO_{\P}/\P^m)^2$ and $\tilde{A}(\bar{k}_\p)[\P^m] \cong \cO_{\P}/\P^m$ by Proposition \ref{prop:P-ordinary-equiv}, we deduce that $\mathcal{F}(\bar{\mathfrak{m}})[\P^m] \cong \cO_{\P}/\P^m$. This then implies that $\mathcal{F}(\bar{\mathfrak{m}})[\P^M] \cong \cO_{\P}/\P^M$.

It follows that we have a short exact sequence
\begin{equation}\label{fp_exact_sequence2}
\begin{tikzcd}
    0\ar[r]& \mathcal{F}(\bar{\mathfrak{m}})[\P^M]\ar[r]&A(\Fbar_\p)[\P^M]\ar[r, "{\pi}"]& \tilde{A}_\p(\bar{k}_\p)[\P^M] \ar[r]&0
\end{tikzcd}
\end{equation}
and that we can choose a basis $P, Q$ of $A(\Fbar_\p)[\P^M] \cong (\cOE/\P^M)^2$ such that $P \in \mathcal{F}(\bar{\mathfrak{m}})[\P^M]$.
Let $\sigma \in D_\p$ and let $\rho_{\P, M}(\sigma)=\bigl(\begin{smallmatrix} a & b \\ c & d \end{smallmatrix}\bigr) \in \GL_2(\cOE/\P^M)$ in the basis $\set{P, Q}$. Since $\mathcal{F}(\bar{\mathfrak{m}})[\P^M]$ is stable under $\Gal(\Fbar_\p/F_\p)$, being the kernel of the equivariant reduction map, then $c=0$. Now let $\sigma \in I_\p$; we have that
\[d\pi(Q)=\pi(bP+dQ)=\pi(\sigma Q)=\sigma(\pi(Q))=\pi(Q)\]
and therefore (since $Q$ is a basis element) $d=1$. It follows that $a = \det\rho_{\P, M}(\sigma)=\chi_{\P, M}(\sigma)$, whence the result.
\end{proof}

\section{Modular Abelian varieties and Heegner points}\label{sec:modular-av-and-heegner}

\subsection{Shimura curves}\label{sec:shimura-curves}

For the reader's convenience, we recall some properties of Shimura curves over totally real fields, mainly following \cite[§1]{nekovar:euler-system-method}.

Recall that $F$ is a totally real field of degree $d$ over $\Q$. Fix an archimedean prime $\tau_1$ of $F$ and a finite set $S_\B$ of non-archimedean primes of $F$ whose cardinality has opposite parity with respect to $d$. Let $\B$ be the (unique, up to isomorphism) quaternion algebra over $F$ ramified at the set
\begin{equation*}
    \Ram(\B)=S_\B\cup\set{\tau\mid\infty \ : \ \tau\ne\tau_1}.
\end{equation*}
For every real embedding $\tau\colon F\hookrightarrow \R$ put $\B_\tau:= \B\otimes_{F,\tau}\R$. When $\tau=\tau_1$, we also fix an isomorphism of $\R$-algebras $\eta_\infty\colon \B_{\tau_1}\cong\M_2(\R)$.

Let $H$ be a compact open subgroup of the finite ideles $\Bhat^\times$ of $\B$. The double coset
\begin{equation*}
    M_H(\C):=\B^\times\backslash (\mathcal{H}^\pm\times \Bhat^\times)/H
\end{equation*}
has a natural structure as a Riemann surface, where $\B^\times$ acts on $\mathcal{H}^\pm:=\C\setminus\R$ via M\"obius transformations (seeing $\B^\times\subseteq \B_{\tau_1}^\times$ inside $\GL_2(\R)$ by applying the isomorphism $\eta_\infty$) and on $\Bhat^\times$ by left multiplication, and $H$ acts as right multiplication on $\Bhat^\times$ and trivially on $\mathcal{H}^\pm$. Moreover, it consists of the $\C$-points of an algebraic curve $M_H$, that is smooth over $F$. We denote by $[z,b]$ the point of $M_H(\C)$ represented by the pair $(z,b)\in\mathcal{H}^\pm\times\Bhat^\times$.

When $F=\Q$ and $S_\B=\emptyset$ (i.e., when $\B=\M_2(\Q)$) the curves $M_H$ are the usual modular curves, and we denote by $M^\ast_H$ their standard Baily-Borel compactification, obtained adjoining a set of cusps of $M_H$. If $\B\ne \M_2(\Q)$, the curves $M_H$ are proper over $F$ (and the Riemann surface $M_H(\C)$ is compact); in this case we just set $M_H^\ast:=M_H$.

The action of $\Fhat^\times$ on $M_H(\C)$ defined by 
\begin{equation*}
    \langle a\rangle[z,b]:=[z,ba]
\end{equation*}
 for every $z\in\mathcal{H}^\pm$, $b\in\Bhat^\times$ and $a\in \Fhat^\times$ induces an action of $\Fhat^\times$ on $M_H$ and $M_H^\ast$, that factors through the finite quotient $\Fhat^\times/(\Fhat^\times\cap H)F^\times$. We denote by $N_H$ and $N_H^\ast$ the corresponding quotient curves, which are again smooth over $F$. We also have an identification
\begin{equation*}
    N_H(\C)=\B^\times\backslash (\mathcal{H}^\pm\times \Bhat^\times)/H\Fhat^\times
\end{equation*}
as Riemann surfaces. In particular, the Shimura curves $N_H$ and $N_H^\ast$ depend only on the subgroup $H\Fhat^\times$ of $\Bhat^\times$. Each curve $M_H^\ast$ and $N_H^\ast$ is irreducible over $F$, but not necessarily geometrically irreducible. By a little abuse of notation, we will use the notation $[z,b]$ to denote points on $N_H$, too. The ambient curve will be clear from the context and, from a certain point on, we will only be interested on $N_H$.

\subsection{Hecke operators}\label{sec:Hecke-operators}

Let $S\supseteq S_\B$ be the minimal set of non archimedean primes of $F$ such that $H=H_S H^S$, where $H_S\subseteq \prod_{\ellfr\in S}\B_\ellfr^\times$ and $H^S=\prod_{\ellfr\notin S}H_\ellfr$ with $H_\ellfr$ a maximal compact subgroup of $\B_\ellfr^\times$. In other words, for each non-archimedean prime $\ellfr\notin S$ there exists a (unique) maximal $\cO_{F_\ellfr}$-order $R_\ellfr\subseteq \B_\ellfr$ such that $H_\ellfr=R_\ellfr^\times$.

\begin{definition}
    For every non-archimedean prime $\ellfr\notin S$, the \textbf{Hecke correspondence} $T_\ellfr$ on $M_H^\ast$ is the double coset operator
\begin{equation*}
    T_\ellfr=[H t_\ellfr H]\colon M_H^\ast\dashrightarrow M_H^\ast
\end{equation*}
for any element $t_\ellfr\in R_\ellfr\cap \B_\ellfr^\times$ (seen naturally inside $\widehat{B}^\times$) satisfying $\ord_\ellfr(\nr(t_\ellfr))=1$, where $\nr$ is the reduced norm and $\ord_\ellfr$ is the standard $\ellfr$-adic order.
\end{definition}
 For example, we can fix a uniformizer $\varpi_\ellfr$ of $\cO_{F_\ellfr}$ and take the element $t_\ellfr$ corresponding, under an isomorphism $R_\ellfr\cong\M_2(\cO_{F_\ellfr})$, to the matrix $\left(\begin{smallmatrix}
    \varpi_\ellfr&0\\0&1
\end{smallmatrix}\right)$. After fixing a decomposition $Ht_\ellfr HF^\times = \coprod_i \beta_i HF^\times$, the action of $T_\ellfr$ on $\Div(M_H^\ast(\C))$ is given by
\begin{equation*}
    T_\ellfr([z,b])=\sum_i [z,b\beta_i]
\end{equation*}
for every $(z,b)\in\mathcal{H}^\pm\times \Bhat^\times$.

The Hecke correspondences $T_\ellfr$ (for $\ellfr\notin S$) act on any ``reasonable'' cohomology theory for $M_H^\ast$ and on its Jacobian $\Jac(M_H^\ast)$. Moreover, if we replace $H$ by $H\Fhat^\times$, we can build Hecke correspondences for $N_H^\ast$ with similar properties.

\begin{example}\label{ex:Hecke-operators}
Let's compute $T_\ellfr$, for $\ellfr \notin S$ and
$t_\ellfr$ corresponding to 
$ \left(\begin{smallmatrix}
\varpi_\ellfr&0\\0&1
\end{smallmatrix}\right)$, 
under a fixed identification of $R_\ellfr$ with $\M_2(\cO_{F_\ellfr})$, as correspondence on $M_H^\ast$ (resp.~$N_H^\ast$).

This amounts to find a decomposition $Ht_\ellfr HF^\times = \coprod_i \beta_i HF^\times$ (resp.~$Ht_\ellfr H\hat{F}^\times = \coprod_i \beta_i H\hat{F}^\times$). Since $t_\ellfr \in \Bhat^\times$ has trivial component at any $\ellfr'\ne \ellfr$, it is enough to find a decomposition of 
$\GL_2(\cO_{F_\ellfr})
\left(\begin{smallmatrix}
\varpi_\ellfr&0\\0&1
\end{smallmatrix}\right)\GL_2(\cO_{F_\ellfr}) = \coprod_i \beta_i \GL_2(\cO_{F_\ellfr})$ such that the $\beta_i$ are not $F^\times$-proportional (resp.~$F_\ellfr^\times$-proportional). Similarly as in \cite[discussion around Proposition 5.2.1]{diamond-shurman} (taking right cosets instead of left ones) one can show that in both cases the set of $\beta_i$'s can be chosen to be the set
$\left\{\left(\begin{smallmatrix}
1&0\\0& \varpi_\ellfr
\end{smallmatrix}\right)\right\}\cup\left\{\left(\begin{smallmatrix}
\varpi_\ellfr&j\\0& 1
\end{smallmatrix}\right) \ | \ \bar{j}\in \cOF/\varpi_\ellfr\cOF \right\}$, or $\left\{\left(\begin{smallmatrix}
0&1\\\varpi_\ellfr&0
\end{smallmatrix}\right)\right\}\cup\left\{\left(\begin{smallmatrix}
\varpi_\ellfr&j\\0& 1
\end{smallmatrix}\right) \ | \ \bar{j}\in \cOF/\varpi_\ellfr\cOF \right\}$, where $j \in \cO_{F_\ellfr}$ is a lift of $\bar{j}$.
Thus, both in $\Jac(M_H^\ast)$ and $\Jac(N_H^\ast)$, we have that
\begin{equation}\label{eq:T-ellfr-formula}
T_\ellfr([z, b]) = \hspace{-5pt}\sum_{\bar{j} \in \cOF/\varpi_\ellfr\cOF} \hspace{-10pt}\left[z, b
\left(\begin{smallmatrix}
    \varpi_\ellfr&j\\0&1
\end{smallmatrix}\right)\right] + \left[z, b
\left(\begin{smallmatrix}
    1&0\\0&\varpi_\ellfr
\end{smallmatrix}\right)\right] = \hspace{-5pt}\sum_{\bar{j} \in \cOF/\varpi_\ellfr\cOF} 
\hspace{-10pt}
\left[z, b
\left(\begin{smallmatrix}
    \varpi_\ellfr&j\\0&1
\end{smallmatrix}\right)\right] + \left[z, b
\left(\begin{smallmatrix}
    0&1\\\varpi_\ellfr&0
\end{smallmatrix}\right)\right].
\end{equation}
\end{example}

\begin{definition}
    Let $\mathfrak{h}(H)$ be the subring of $\End\bigl(\Jac(N_H^\ast)\bigr)$ generated by all Hecke operators $T_\ellfr$ for any $\ellfr\notin S$.
\end{definition}
Recall that (see \cite[§1.16-1.18]{nekovar:euler-system-method}) the abelian variety $\Jac(N_H^\ast)$ has a canonical principal polarization and the corresponding Rosati involution fixes the Hecke algebra $\mathfrak{h}(H)$.

\subsection{CM points}\label{sec:heegner-points}

Let's now fix a CM extension $K/F$ such that 
\begin{equation}\label{ass:heegner-hypothesis}\tag{Heeg}
    \text{each prime $\q \in S_\B$ either ramifies or is inert in $K/F$,}
\end{equation}
and fix an embedding $\iota_K\colon K\hookrightarrow \B$ of $F$-algebras (which exists, under this hypothesis, thanks to \cite[Proposition 14.6.7]{voight:quaternion-algebras}). As noticed in \cite[Lemma 2.2]{nekovar:euler-system-method}, there is a unique point $z\in\mathcal{H}^+$ which is fixed by the action of $(\eta_\infty\circ\iota_K)(K^\times)$.

\begin{definition}
    The set of \emph{CM points by $K$} on the curve $N_H^\ast$ is the set 
    \[
    \CM(N_H^\ast,K)=\set{[z,b]\in N_H^\ast(\C) \ | \ b\in\Bhat^\times}.
    \]
\end{definition}

The reciprocity law (see \cite[(2.4)]{nekovar:euler-system-method}) states that $\CM(N_H^\ast,K)\subseteq N_H^\ast(K^\ab)$ and that the Galois action of $\Gal(K^\ab/K)$ on $\CM(N_H^\ast,K)$ is described, via the reciprocity map in class field theory $\rec_K\colon \widehat{K}^\times\to\Gal(K^\ab/K)$, by the formula
\begin{equation}\label{eq:action-of-reciprocity-CM-points}
    \rec_K(a)[z,b]=[z,\hat{\iota}_K(a)b]
\end{equation}
for all $a\in\widehat{K}^\times$.

Let's fix once and for all a CM point $x=[z,b]$ on $N_H^\ast$ and denote by $K(x)$ its field of definition. As shown in \cite[(4.1)]{nekovar:euler-system-method}, the extension $K(x)/K$ is determined, through the reciprocity map in class field theory, by the subgroup $K^\times\Fhat^\times Z$ of $\Khat^\times$, where $Z:=\hat{\iota}_K^{-1}(b H \cOhatF^\times b^{-1})$. Let's denote by $\cond(x)$ the ideal of $\cO_F$ such that $K[\cond(x)]$ is the smallest ring class field of $K$ containing $K(x)$, whence $Z\supseteq \cOhat_{\cond(x)}^\times$, where $\cO_{\cond(x)}$ is the order of conductor $\cond(x)$ of $K$.
\begin{remark}
    We follow the convention of \cite{zhang:gross-zagier-gl2}, \cite{nekovar:euler-system-method} and \cite{howard:iwasawa-gl2-abelian-varieties} about ring class fields, so that for any nonzero ideal $\cond$ of $F$, the reciprocity map induces an isomorphism
    \[
    \rec_K \colon \Khat^\times/K^\times \Fhat^\times \cOhat_{\cond}^\times \iso \Gal(K[\cond]/K),
    \]
    where $\cO_{\cond} = \cO_F + \cond \cOK$ is the order of $K$ of conductor $\cond$. 
    Note that in literature there is also another convention: see \cite{cornut-vatsal:CM-points},\cite{cornut-vatsal:nontriviality-rankin-selberg}, \cite{longo:anticyclotomic-imc-hilbert}.

    For further properties of the ring class fields (in particular, we will frequently use the fact that they are generalized dihedral over $F$), see \cite[Section 2.6]{nekovar:euler-system-method}
\end{remark}

Recall that, for each non-archimedean prime $\ellfr\notin S$ of $F$, where $S$ is the set of primes of $F$ defined at the beginning of Section \ref{sec:Hecke-operators}, there exists a (unique) maximal $\cO_{F_\ellfr}$-order $R_\ellfr\subseteq \B_\ellfr$ such that $H_\ellfr=R_\ellfr^\times$. From now on we fix, for any such a place $\ellfr$, an isomorphism $\eta_\ellfr\colon\B_\ellfr \cong \M_2(F_\ellfr)$ such that $(\eta_\ellfr\circ\Ad(b_\ellfr)^{-1})(R_\ellfr)=\M_2(\cO_{F_\ellfr})$, where $\Ad(b_\ellfr)$ denotes conjugation by $b_\ellfr$. More precisely, when $\ellfr\nmid\mathfrak{c}(x)$ is split in $K$, we choose $\eta_\ellfr$ with the property that
\begin{equation*}
    (\eta_\ellfr\circ \Ad(b_\ellfr)^{-1} \circ \iota_{K,\ellfr})(a,b)= \bigl(\begin{smallmatrix}
    a & b-a\\0&b    
    \end{smallmatrix}\bigr)
\end{equation*}
for every $(a,b)\in K_\ellfr\cong F_\ellfr\times F_\ellfr$. This can be done because any two optimal embeddings of $\mathcal{O}_{K_\ellfr}$ in $\M_2(\mathcal{O}_{F_\ellfr})$ are conjugated by an element of $\GL_2(\mathcal{O}_{F_\ellfr})$ (see \cite[Proposition 30.5.3]{voight:quaternion-algebras}).

Following \cite[Proposition 2.10]{nekovar:euler-system-method}, define the ideal $\mathfrak{I}_0$ of $\cO_K$ as
\[
\mathfrak{I}_0 = \lcm \set{(u-1) : u \in (\cO^\times_K)_{\tors}, \ u \ne 1}. 
\]
\begin{definition}\label{def:allowed-primes}
Let
$
\allowedprimes = \set{ \text{$\ellfr$ prime of $F$} \, : \, \text{$\ellfr$ is unramified in $K/F$}, \, \ellfr \notin S, \,  \ellfr \nmid \cond(x), \, \ellfr \cO_K \nmid \mathfrak{I}_0}
$
and, for any $r \ge 0$, let $\allowed_r$ be the set of products of powers of $r$ distinct elements of $\allowedprimes$ (with the convention that $\allowed_0 = \set{(1)}$). Let $\allowed = \bigcup_{r \ge 0} \allowed_r$.
\end{definition}

\begin{definition}\label{def:heegner-points-shimura}
    Let $h\colon \allowed\to \Bhat^\times$ be the function with the following properties:
    \begin{itemize}
        \item $h((1))=1$;
        \item for each $\ellfr\in\allowedprimes$, let $h(\ellfr) \in \B_\ellfr \subseteq \Bhat$ be the element such that $\eta_\ellfr(h(\ellfr))= \bigl(\begin{smallmatrix}
            \varpi_\ellfr & 0\\0& 1
        \end{smallmatrix}\bigr)$;
        \item for each $r>1$ and $\allowed_r\ni \nfrak=\ellfr_1^{\alpha_1}\cdots\ellfr_r^{\alpha_r}$ (with $\ellfr_i\in\allowedprimes$), put $h(\nfrak)=h(\ellfr_1)^{\alpha_1}\cdots h(\ellfr_r)^{\alpha_r}$.
    \end{itemize}
    For every $\nfrak\in\allowed$, define the CM points
    \begin{equation*}
        x(\nfrak):=[z,bh(\nfrak)]\in\CM(N_H^\ast,K).
    \end{equation*}
    We denote by $K(x(\nfrak))$ the field of definition of $x(\nfrak)$.
\end{definition}

Recall that $Z:=\hat{\iota}_K^{-1}(b H \cOhatF^\times b^{-1})$ and define $Z(\nfrak):=\hat{\iota}_K^{-1}(b h(\nfrak) H \cOhatF^\times h(\nfrak)^{-1} b^{-1})$ for all $\nfrak\in \allowed$. The decomposition of $H=H_S H^S$ of the beginning of §\ref{sec:Hecke-operators} induces a decomposition $Z=Z_S Z^S$ with $Z_S\subseteq \prod_{\ellfr\in S}\cO_{K_\ellfr}^\times$ and $Z^S=\prod_{v\notin S} Z_\ellfr$, with $Z_\ellfr=\iota_{K,\ellfr}^{-1}(b_\ellfr H_\ellfr \widehat{\cO}_{F_\ellfr}^\times b_\ellfr^{-1})$. 

The following proposition describes the Galois group $\Gal(K(x(\nfrak))/K)$. Let $N\ellfr:=\# (\cO_F/\ellfr)$ and define
\begin{equation*}
    \varepsilon(\ellfr)=\begin{cases}
        1 &\text{if $\ellfr$ splits in $K$;}\\
        -1 &\text{if $\ellfr$ is inert in $K$;}\\
        0 &\text{if $\ellfr$ is ramified in $K$,}
    \end{cases}
\end{equation*}
for every ideal $\ellfr$ of $\cO_F$.

\begin{proposition}\label{prop:computation-quotient-Zs}
    Let $\nfrak\in \allowed$ and write $\alpha_\ellfr:=\ord_{\ellfr}(\nfrak)$, for any prime $\ellfr$ of $F$.

\begin{enumerate}[label=(\roman*)]
    \item The reciprocity map induces an isomorphism
    \[
    \Khat^\times/K^\times \Fhat^\times Z(\nfrak) \iso \Gal(K(x(\nfrak))/K).
    \]
    Moreover, $Z/Z(\nfrak) \iso \prod_{\ellfr\mid \nfrak} Z_\ellfr/Z(\nfrak)_\ellfr$.
    \item If $\alpha_\ellfr >0$, the group $Z_\ellfr/Z(\nfrak)_\ellfr$ is the product of a cyclic group of order $N\ellfr-\varepsilon(\ellfr)$ with a group of order $(N\ellfr)^{\alpha_\ellfr-1}$, isomorphic to $(\Z/\ell^{\alpha_\ellfr-1}\Z)^{f_\ellfr}$, where $(\ell)=\ellfr\cap\Q$ and $f_\ellfr$ is the inertia degree of $\ellfr$ in $F/\Q$.
    \item $Z(\nfrak) = Z \cap \cOhat_\nfrak^\times$ and $K(x(\nfrak)) \subseteq K[\mathfrak{c}(x)\nfrak]$. 
\end{enumerate}
\end{proposition}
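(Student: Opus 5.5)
We prove the three parts in order; all are local computations at the primes $\ellfr \mid \nfrak$, fed into the reciprocity law \eqref{eq:action-of-reciprocity-CM-points}.

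For (i), we repeat the argument of \cite[(4.1)]{nekovar:euler-system-method} with $b$ replaced by $bh(\nfrak)$. By \eqref{eq:action-of-reciprocity-CM-points}, $\rec_K(a)$ fixes $x(\nfrak)=[z,bh(\nfrak)]$ iff $[z,\hat\iota_K(a)bh(\nfrak)]=[z,bh(\nfrak)]$. Since $z$ is the unique fixed point of $\iota_K(K^\times)$ in $\mathcal H^+$, this holds iff $\hat\iota_K(a)\in \iota_K(K^\times)\, bh(\nfrak)H\Fhat^\times h(\nfrak)^{-1}b^{-1}$. Using $\Fhat^\times=F^\times\cOhatF^\times\cdot(\text{finite})$ and the class group description, this is equivalent to $a\in K^\times\Fhat^\times Z(\nfrak)$, as in \emph{loc.~cit.} For the product decomposition, note that $h(\nfrak)$ has trivial component away from the primes dividing $\nfrak$. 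Hence $Z(\nfrak)_\ellfr=Z_\ellfr$ for $\ellfr\nmid\nfrak$, and $Z=Z_SZ^S$ with $S$ disjoint from the support of $\nfrak$. This gives $Z/Z(\nfrak)\cong\prod_{\ellfr\mid\nfrak}Z_\ellfr/Z(\nfrak)_\ellfr$.

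For (ii) and (iii), fix $\ellfr\mid\nfrak$, so $\ellfr\notin S$, $\ellfr\nmid\cond(x)$ and $\ellfr$ is unramified in $K$. Conjugating by $\eta_\ellfr\circ\Ad(b_\ellfr)^{-1}$, we get $Z_\ellfr=\iota^{-1}(\GL_2(\cO_{F_\ellfr})F_\ellfr^\times)\cap\cO_{K_\ellfr}^\times$-saturation. Since $K_\ellfr$ embeds optimally, this is $\cO_{K_\ellfr}^\times$ up to the center. Likewise $Z(\nfrak)_\ellfr$ is the preimage of $t^{\alpha}\GL_2(\cO_{F_\ellfr})t^{-\alpha}F_\ellfr^\times$ with $t=\mathrm{diag}(\varpi_\ellfr,1)$ and $\alpha=\alpha_\ellfr$. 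In the split case we use the explicit embedding $(a,b)\mapsto\bigl(\begin{smallmatrix}a&b-a\\0&b\end{smallmatrix}\bigr)$. Conjugating by $t^{-\alpha}$ multiplies the upper-right entry by $\varpi_\ellfr^{-\alpha}$, so integrality (modulo scalars) forces $a\equiv b \bmod \varpi_\ellfr^{\alpha}$ up to $\cO_{F_\ellfr}^\times$. This is exactly $(\cO_F+\ellfr^{\alpha}\cO_K)_\ellfr^\times\cO_{F_\ellfr}^\times$. In the inert case we argue similarly with any optimal embedding, using that $t^{-\alpha}\M_2(\cO_{F_\ellfr})t^{\alpha}\cap\M_2(\cO_{F_\ellfr})$ is an Eichler order of level $\ellfr^{\alpha}$, whose intersection with $K_\ellfr$ is the order of conductor $\ellfr^{\alpha}$. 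This yields $Z(\nfrak)_\ellfr=Z_\ellfr\cap\cOhat^\times_{\nfrak,\ellfr}$, which gives the first claim of (iii).

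It then remains to compute $\cO_{K_\ellfr}^\times/\cO_{F_\ellfr}^\times(\cO_F+\ellfr^{\alpha}\cO_K)_\ellfr^\times$, using the standard filtration. The quotient by level $1$ is $(\cO_K/\ellfr)^\times/(\cO_F/\ellfr)^\times$, which is cyclic of order $(N\ellfr^2-1)/(N\ellfr-1)=N\ellfr+1$ in the inert case and $(N\ellfr-1)^2/(N\ellfr-1)=N\ellfr-1$ in the split case. The successive layers are $(1+\ellfr^i\cO_K)/(1+\ellfr^i\cO_F)(1+\ellfr^{i+1}\cO_K)\cong \cO_K/(\cO_F+\ellfr\cO_K)\cong k_\ellfr$, and they assemble to a group of order $N\ellfr^{\alpha-1}$. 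To see that it is isomorphic to $(\Z/\ell^{\alpha-1})^{f_\ellfr}$, identify it with $(1+\varpi\cO_{F_\ellfr}\delta)$ modulo level $\alpha$ via the additive structure; here one must take care when $\ell$ is ramified in $F$, and we expect the exponent claim to follow from the $\ell$-adic logarithm for $\ell$ large, with a direct check otherwise. This exponent statement is the step we expect to be most delicate.

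Finally, $Z(\nfrak)\supseteq\cOhat^\times_{\cond(x)}\cap\cOhat^\times_{\nfrak}=\cOhat^\times_{\cond(x)\nfrak}$, because $\cond(x)$ and $\nfrak$ are coprime. By (i) and class field theory this gives $K(x(\nfrak))\subseteq K[\cond(x)\nfrak]$.
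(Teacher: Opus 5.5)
You follow the paper's proof step for step, and most of it is sound. Part (i) comes from the reciprocity law \eqref{eq:action-of-reciprocity-CM-points} and the fact that $h(\nfrak)$ is supported at the primes dividing $\nfrak$. For each $\ellfr\mid\nfrak$ you then identify $Z_\ellfr/Z(\nfrak)_\ellfr\cong\cO_{K_\ellfr}^\times/(\cO_F+\ellfr^{\alpha}\cO_K)_\ellfr^\times$. In the split case this uses the explicit upper-triangular embedding; in the inert case it uses the Eichler order of level $\ellfr^{\alpha}$, where the paper instead quotes \cite[Proposition 4.7(iv)]{nekovar:euler-system-method}. Part (iii) and the orders in (ii) follow, and the splitting into a cyclic part of order $N\ellfr-\varepsilon(\ellfr)$ times an $\ell$-group of order $N\ellfr^{\alpha-1}$ is justified by coprimality of these orders.

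Two small cleanups:
\begin{itemize}
\item In (i) the relevant fact is simply that $\Fhat^\times$ is central, so $\hat{\iota}_K^{-1}(b'H\Fhat^\times b'^{-1})=\Fhat^\times Z(\nfrak)$ for $b'=bh(\nfrak)$. The class group plays no role.
\item Since $\cO_{F_\ellfr}^\times\subseteq R_\ellfr^\times$, $Z_\ellfr$ is the preimage of $\GL_2(\cO_{F_\ellfr})$ itself, not of $\GL_2(\cO_{F_\ellfr})F_\ellfr^\times$. Hence $Z_\ellfr=\cO_{K_\ellfr}^\times$ exactly: $\subseteq$ by compactness, $\supseteq$ because $\ellfr\nmid\cond(x)$. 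No ``up to the center'' is needed.
\end{itemize}

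The genuine gap is the step you flag, and your suggested route cannot close it. The claim that the $\ell$-part is $(\Z/\ell^{\alpha-1}\Z)^{f_\ellfr}$ is false whenever $e:=e(\ellfr/\ell)\ge 2$ and $\alpha\ge 3$, however large $\ell$ is. Note first that $\ell$ is odd: $-1\in\cO_K^\times$ forces $2\cO_K\mid\mathfrak{I}_0$, so primes above $2$ are not in $\allowedprimes$. For $x\in\ellfr\cO_{K_\ellfr}$ one has
\[
(1+x)^\ell=1+\sum_{k=1}^{\ell-1}\binom{\ell}{k}x^k+x^\ell\in 1+\ellfr^{\min(e+1,\ell)}\cO_{K_\ellfr}\subseteq 1+\ellfr^{3}\cO_{K_\ellfr}.
\]
So for $\alpha=3$ the $\ell$-part $(1+\ellfr\cO_{K_\ellfr})/(1+\ellfr\cO_{F_\ellfr})(1+\ellfr^{3}\cO_{K_\ellfr})$ has order $\ell^{2f_\ellfr}$ but exponent $\ell$. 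It is therefore $(\Z/\ell\Z)^{2f_\ellfr}\not\cong(\Z/\ell^2\Z)^{f_\ellfr}$. For example, with $F=\Q(\sqrt3)$ and $\ellfr=(\sqrt3)$ the group has order $9$ and exponent $3$.

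What the logarithm actually gives, when $e<\ell-1$, is an isomorphism of the $\ell$-part with
\[
\ellfr\cO_{K_\ellfr}/(\ellfr\cO_{F_\ellfr}+\ellfr^{\alpha}\cO_{K_\ellfr})\cong\cO_{F_\ellfr}/\ellfr^{\alpha-1}.
\]
This has the claimed shape exactly when $e=1$ (or $\alpha\le 2$). So the missing hypothesis is ``$\ell$ unramified in $F$'', not ``$\ell$ large''. Under that hypothesis your logarithm argument does finish the proof for every $\ell$ that can occur, since $\ell$ is odd.

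The paper's own proof simply asserts that the structure ``coincides with the claimed structure in (ii)'', so it has the same gap. Statement (ii) needs $e(\ellfr/\ell)=1$, or its structural part must be weakened to ``an abelian $\ell$-group of order $N\ellfr^{\alpha_\ellfr-1}$''. The same applies to Corollary \ref{cor:degree-x-n-ells}(ii) and to the description $\Gamma_n\cong(\Z/p^n\Z)^{f_\p}$ in Section \ref{Iwasawatheory_section} when $\p$ ramifies over $p$.
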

\begin{proof}
    (i) The statement about the reciprocity map is a consequence of \cite[Proposition 2.5]{nekovar:euler-system-method}. By the definition of $h(\nfrak)$, there is the decomposition
    \begin{equation}\label{eq:decomposition-Z-proof}
        Z(\nfrak) = Z_S \prod_{\ellfr \mid \nfrak} Z(\nfrak)_\ellfr \prod_{\ellfr \notin S, \ellfr \nmid \nfrak} Z_{\ellfr},
    \end{equation}
    with $Z(\nfrak)_\ellfr =\iota_{K,\ellfr}^{-1}(b_\ellfr h(\ellfr)^{\alpha_\ellfr}H \cOhatF^\times h(\ellfr)^{-\alpha_\ellfr}b_\ellfr^{-1})$, for any $\ellfr \mid \nfrak$. Therefore, we obtain that $Z/Z(\nfrak) \cong \prod_{\ellfr\mid \nfrak} Z_\ellfr/Z(\nfrak)_\ellfr$. 
    
    (ii) In the following, let 
    \[i := \eta_\ellfr\circ\mathrm{Ad}(b_\ellfr)^{-1} \circ \iota_{K_\ellfr}\colon K_\ellfr\hookrightarrow \M_2(F_\ellfr).\]
    Since $Z\supseteq\cOhat^\times_{\cond(x)}$ and $\ellfr\nmid \cond(x)$, we have that $i^{-1}(\GL_2(\cO_{F_\ellfr}))=Z_\ellfr=\cO_{K_\ellfr}^\times$.
    
    Suppose that $\ellfr\mid\mathfrak{n}$ is inert in $K$, for that $K_\ellfr$ is a field. Applying \cite[Proposition 4.7.(iv)]{nekovar:euler-system-method} (with $E=F_\ellfr$, $E'=K_\ellfr$, $R=\M_2(\cO_{F_\ellfr})$, $g=\eta_\ellfr(h(\ellfr))\in \M_2(F_\ellfr)$ and $r=\alpha_\ellfr>0$, in the notation of \emph{loc.~cit.}), the embedding $i$ induces an isomorphism
        \[
        \frac{Z_\ellfr}{Z(\nfrak)_\ellfr}=
        \frac{i^{-1}(\GL_2(\cO_{F_\ellfr}))}{i^{-1}(\eta_\ellfr(h(\ellfr))^{\alpha_\ellfr} \GL_2(\cO_{F_\ellfr}) \eta_\ellfr(h(\ellfr))^{-\alpha_\ellfr})} = 
        \frac{\cO_{K_\ellfr}^\times}{\cO_{F_\ellfr}^\times(1+\varpi_\ellfr^{\alpha_\ellfr}\cO_{K_\ellfr})} \cong \frac{(\cOK/\ellfr^{\alpha_\ellfr}\cOK)^\times}{(\cOF/\ellfr^{\alpha_\ellfr}\cOF)^\times},
        \]
    where the first equality holds by definition of $i$. The structure of the last group appearing in the equation coincides with the claimed structure in (ii).

    Suppose that $\ellfr\mid\nfrak$ is split in $K$, for that $K_\ellfr\cong F_\ellfr\times F_\ellfr$. By our choice of the isomorphism $\eta_\ellfr$, the embedding $i$ has the shape $i(a,b)=\left(\begin{smallmatrix}
        a&b-a\\0&b
    \end{smallmatrix}\right)$ for all $(a,b)\in K_\ellfr$. Therefore, by making some explicit computations, one can check that $Z_\ellfr = i^{-1}(\GL_2(\cO_{F_\ellfr})) = \cO_{F_\ellfr}^\times \times \cO_{F_\ellfr}^\times$ and that $Z(\nfrak)_\ellfr = i^{-1}(\eta_\ellfr(h(\ellfr))^{\alpha_\ellfr} \GL_2(\cO_{F_\ellfr}) \eta_\ellfr(h(\ellfr))^{-\alpha_\ellfr}) = \cO_{F_\ellfr}^\times (1, 1) + \cO_{F_\ellfr}(0, \varpi_\ellfr^{\alpha_\ellfr})$.
   Therefore,
    \[
        \frac{Z_\ellfr}{Z(\nfrak)_\ellfr}=\frac{i^{-1}(\GL_2(\cO_{F_\ellfr}))}{i^{-1}(\eta_\ellfr(h(\ellfr))^{\alpha_\ellfr} \GL_2(\cO_{F_\ellfr}) \eta_\ellfr(h(\ellfr))^{-\alpha_\ellfr})} = \frac{\cO_{F_\ellfr}^\times\times\cO_{F_\ellfr}^\times}{\cO_{F_\ellfr}^\times (1, 1) + \cO_{F_\ellfr}(0, \varpi_\ellfr^{\alpha_\ellfr})} \cong (\cO_{F}/\ellfr^{\alpha_\ellfr})^\times.
        \]
    The claim follows, as the last group has the structure prescribed in (ii).
    
    (iii) In both the inert and the split case, in the proof of point (ii) we also showed that
    $Z(\nfrak)_\ellfr=Z_\ellfr\cap(\cO_{\nfrak})_\ellfr^\times$ for every $\ellfr\mid\nfrak$. Therefore, the decomposition \eqref{eq:decomposition-Z-proof} implies that $Z(\nfrak)=Z\cap \widehat{\cO}_{\nfrak}^\times$. Since $Z\supseteq \widehat{\cO}^\times_{\mathfrak{c}(x)}$, then $Z(\nfrak)\supseteq \widehat{\cO}^\times_{\mathfrak{c}(x)}\cap\widehat{\cO}^\times_{\nfrak}=\widehat{\cO}^\times_{\mathfrak{c}(x)\nfrak}$, therefore $K(x(\nfrak)) \subseteq K[\mathfrak{c}(x)\nfrak]$.
\end{proof}

\begin{definition}
    Let $u\colon \N\to\N$ be the function defined by
    \[
    u(r) =\begin{cases}
        [K^\times\cap\Fhat^\times Z : F^\times] \quad &\text{if $r=0$};\\
        1   \qquad &\text{if $r>0$}.
    \end{cases}
    \]
\end{definition}

\begin{corollary}\label{cor:degree-x-n-ells}
    Let $\nfrak\in \allowed_r$, $\ellfr\in \allowedprimes$ and let $\alpha_\ellfr:=\ord_{\ellfr}(\nfrak)$.
    \begin{enumerate}[label=(\roman*)]
        \item If $\alpha_\ellfr=0$, then $K(x(\nfrak\ellfr))/K(x(\nfrak))$ is cyclic of degree $(N\ellfr-\varepsilon(\ellfr))/u(r)$.
        \item If $\alpha_\ellfr>0$, then $K(x(\nfrak\ellfr^n))/K(x(\nfrak))$ has degree $N\ellfr^n$ and is isomorphic to $(\Z/\ell^n\Z)^{f_\ellfr}$ for all $n\ge 1$.
        \item For any $\mfrak\in\allowed_s$ with $s\ge 0$ and $\mfrak\subseteq\nfrak$, we have that
        \begin{equation*}
            [K(x(\nfrak)):K(x(\mfrak))]=\prod_{\substack{\ellfr\mid\nfrak \\ \ellfr\nmid\mfrak}}\frac{N\ellfr-\varepsilon(\ellfr)}{u(s)}\prod_{\ellfr\mid(\nfrak/\mfrak)}N\ellfr^{\ord_\ellfr(\nfrak)-\max\{1,\ord_\ellfr(\mfrak)\}}.
        \end{equation*}
    \end{enumerate}
\end{corollary}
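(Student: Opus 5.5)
The plan is to compute everything through Proposition~\ref{prop:computation-quotient-Zs}(i). For $\mfrak \mid \nfrak$ (equivalently $\nfrak\subseteq\mfrak$) in $\allowed$, every $\ellfr\mid\mfrak$ also divides $\nfrak$ and $h(\mfrak)$ divides $h(\nfrak)$ componentwise, so $Z(\nfrak)\subseteq Z(\mfrak)$ and $K(x(\mfrak))\subseteq K(x(\nfrak))$. Under the reciprocity isomorphisms we then get
\[
\Gal\bigl(K(x(\nfrak))/K(x(\mfrak))\bigr)\cong K^\times\Fhat^\times Z(\mfrak)\big/K^\times\Fhat^\times Z(\nfrak),
\]
and I will compute this quotient in two stages: first a local index, then a correction coming from global units.

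For the local index, note that $Z(\nfrak)\supseteq\cOhatF^\times$ (this is built into its definition) and $Z(\nfrak)\subseteq\cOhat_K^\times$, so $\Fhat^\times\cap Z(\mfrak)=\cOhatF^\times\subseteq Z(\nfrak)$. Hence $\Fhat^\times Z(\mfrak)/\Fhat^\times Z(\nfrak)\cong Z(\mfrak)/Z(\nfrak)$. By the decomposition \eqref{eq:decomposition-Z-proof}, this quotient is $\prod_{\ellfr\mid\nfrak} Z(\mfrak)_\ellfr/Z(\nfrak)_\ellfr$, where $Z(\mfrak)_\ellfr=Z_\ellfr$ when $\ellfr\nmid\mfrak$. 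Proposition~\ref{prop:computation-quotient-Zs}(ii) gives $\#Z_\ellfr/Z(\nfrak)_\ellfr=(N\ellfr-\varepsilon(\ellfr))N\ellfr^{\alpha_\ellfr-1}$. Taking ratios of these orders yields the factor $N\ellfr-\varepsilon(\ellfr)$ for each $\ellfr\mid\nfrak$ with $\ellfr\nmid\mfrak$, and the factor $N\ellfr^{\ord_\ellfr(\nfrak)-\max\{1,\ord_\ellfr(\mfrak)\}}$ for the remaining powers.

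The global correction is
\[
\bigl[K^\times\cap\Fhat^\times Z(\mfrak):K^\times\cap\Fhat^\times Z(\nfrak)\bigr],
\]
and I claim that $K^\times\cap\Fhat^\times Z(\mfrak)=F^\times$ as soon as $\mfrak\neq(1)$. This is the step I expect to be the real obstacle, and I would imitate \cite[Proposition 2.10]{nekovar:euler-system-method}.

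First, an element $t=fz\in K^\times$ with $f\in\Fhat^\times$ and $z\in Z(\mfrak)\subseteq\cOhat_K^\times$ has all its $K$-valuations coming from $F$. Since $\mathrm{Pic}$ issues could appear here, I would work with $t/\bar t$ instead, which is a unit of $\cO_K$ of absolute value $1$ at every archimedean place, hence a root of unity $\zeta$.

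Second, at a prime $\ellfr\mid\mfrak$, the computation in the proof of Proposition~\ref{prop:computation-quotient-Zs}(ii)–(iii) gives $Z(\mfrak)_\ellfr\subseteq\cO_{F_\ellfr}^\times(1+\varpi_\ellfr\cO_{K_\ellfr})$. It follows that $\zeta\equiv1\pmod{\ellfr\cO_K}$.

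Third, the condition $\ellfr\cO_K\nmid\mathfrak I_0$ in Definition~\ref{def:allowed-primes} forces $\zeta=1$, so $t\in F^\times$. The step I am least sure of is the passage from $t\in K^\times$ to $t/\bar t$ being a unit of $\cO_K$, and this needs care.

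Combining the two stages, the unit index is $u(s)$, contributed when $\mfrak=(1)$, and the degrees assemble into the formula (iii). Parts (i) and (ii) then follow by specialising $\mfrak=\nfrak$, with the larger ideal $\nfrak\ellfr$ or $\nfrak\ellfr^n$. For the group structure I would identify the Galois group with the image of $Z(\mfrak)_\ellfr/Z(\nfrak)_\ellfr$.
\begin{enumerate}[label=(\roman*)]
\item In case (i), this image is a quotient of $(\cO_K/\ellfr)^\times/(\cO_F/\ellfr)^\times$, as computed in Proposition~\ref{prop:computation-quotient-Zs}(ii). That group is cyclic, being a quotient of the multiplicative group of a finite field, or of $(\cO_F/\ellfr)^\times$ in the split case; so any quotient of it is cyclic.
\item In case (ii), this image is the kernel of $Z_\ellfr/Z(\nfrak\ellfr^n)_\ellfr\to Z_\ellfr/Z(\nfrak)_\ellfr$. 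It is a quotient of the form $(1+\ellfr^{\alpha}\cO_{K_\ellfr})/\cO_{F_\ellfr}^\times(1+\ellfr^{\alpha+n}\cO_{K_\ellfr})$, which is the $p$-part identified in Proposition~\ref{prop:computation-quotient-Zs}(ii) as $(\Z/\ell^{n}\Z)^{f_\ellfr}$. Since the unit correction is trivial here ($s>0$), this is the whole Galois group.
\end{enumerate}
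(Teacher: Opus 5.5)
Your argument is correct and is essentially the paper's proof. The paper obtains (i)--(ii) by running the proof of \cite[Proposition 4.8(i)]{nekovar:euler-system-method} with Proposition~\ref{prop:computation-quotient-Zs} as input, and then gets (iii) by telescoping (i)--(ii). That proof consists of the same ingredients as yours: reciprocity, the local index $[Z(\mfrak):Z(\nfrak)]$, and the unit correction $K^\times\cap\Fhat^\times Z(\mfrak)=F^\times$ for $\mfrak\neq(1)$. The paper takes this last fact from Nekov\'a\v{r}'s (2.7.3) and Proposition 2.10, as it does explicitly in the proof of Proposition~\ref{prop:norm-relations-cm}, whereas you reprove it via $\zeta=t/\bar t$ and $\mathfrak I_0$. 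You also compute (iii) in one step rather than by telescoping. The step you flag as uncertain is immediate: since $\bar f=f$, one has $t/\bar t=z/\bar z\in K^\times\cap\cOhat_K^\times=\cO_K^\times$.

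There are two points to fix.

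First, for $\mfrak\mid\nfrak$ with $\mfrak\neq\nfrak$, your count gives a \emph{single} factor $u(s)^{-1}$ in front of $\prod_{\ellfr\mid\nfrak,\,\ellfr\nmid\mfrak}(N\ellfr-\varepsilon(\ellfr))\prod_{\ellfr\mid\nfrak/\mfrak}N\ellfr^{\ord_\ellfr(\nfrak)-\max\{1,\ord_\ellfr(\mfrak)\}}$. The display in (iii) instead puts $u(s)$ inside the product, and it also writes $\mfrak\subseteq\nfrak$ where $\mfrak\mid\nfrak$ is meant. For $s=0$, $\nfrak$ with two prime factors and $u(0)>1$, the two expressions differ by a factor $u(0)$. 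So your computation does not ``assemble into'' the display as printed. Your value is the correct one: it is what telescoping (i)--(ii) gives, and it is what the paper itself uses in the proof of Lemma~\ref{lemma:degree-prime-extensions}. You should state explicitly that (iii) must be read with $u(s)$ outside the product.

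Second, in (ii) you take the identification with $(\Z/\ell^n\Z)^{f_\ellfr}$ from Proposition~\ref{prop:computation-quotient-Zs}(ii), exactly as the paper does (and ``$p$-part'' there should read ``$\ell$-part''). This identification is only right when $\ellfr$ is unramified over $\ell$. If $\ell-1>e(\ellfr/\ell)$, the $\ell$-adic logarithm identifies $(1+\varpi^{\alpha}\cO_{K_\ellfr})/(1+\varpi^{\alpha}\cO_{F_\ellfr})(1+\varpi^{\alpha+n}\cO_{K_\ellfr})$ with $\cO_{F_\ellfr}/\ellfr^n$. For $e(\ellfr/\ell)=n=2$ this is $(\Z/\ell\Z)^{2f_\ellfr}$, not $(\Z/\ell^2\Z)^{f_\ellfr}$. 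The degree $N\ellfr^n$ is unaffected.
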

\begin{proof}
    The proof of the first two points follows the same lines of the proof of \cite[Proposition 4.8.(i)]{nekovar:euler-system-method}, using Proposition \ref{prop:computation-quotient-Zs} in place of \cite[Proposition 4.5]{nekovar:euler-system-method}.
     The third point follows directly from the first two.
\end{proof}

We now show that the CM points of Definition \ref{def:heegner-points-shimura} satisfy some compatibility properties with respect to the action of the Galois trace and of the Hecke operators. In order to do this, from now on we identify $N_H^\ast$ with its image in $\Jac(N_H^\ast)$ using the map defined in \cite[(1.19)]{nekovar:chow-groups} (and called $\iota$ therein).

\begin{proposition}\label{prop:norm-relations-cm}
    Let $\nfrak\in \allowed_r$ and $\ellfr\in \allowedprimes$ such that $\ellfr\nmid\nfrak$. Then the following equalities
    \begin{gather}
   u(r) \Tr_{K(x(\nfrak\ellfr))/K(x(\nfrak))} x(\nfrak\ellfr) = 
   \begin{cases}
       T_\ellfr x(\nfrak) & \text{if $\ellfr$ is inert in $K$;}\\
       (T_\ellfr  - \Frob_\lambda -\Frob_{\bar{\lambda}})x(\nfrak) \quad& \text{if $\ellfr$ splits in $K$;}
   \end{cases}\label{eq:norm-basic-cm}\\
   \Tr_{K(x(\nfrak\ellfr^{\alpha+1}))/K(x(\nfrak\ellfr^\alpha))} x(\nfrak\ellfr^{\alpha+1}) = T_\ellfr x(\nfrak \ellfr^\alpha) - x(\nfrak\ellfr^{\alpha-1}), \qquad \forall \alpha\ge1\label{eq:norm-higher-cm}
   \end{gather}
   hold in $\Jac(N_H^\ast)(K^\ab)$, where $\lambda$ and $\bar{\lambda}$ are the two primes of $K$ occurring in the decomposition $\ellfr\cO_K=\lambda\bar{\lambda}$, when this is the case.
\end{proposition}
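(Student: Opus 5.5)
The plan is to compute both sides explicitly on representatives $[z,\cdot]$ and compare them coset by coset, as in \cite[Proposition 4.8]{nekovar:euler-system-method} and in the classical argument of \cite{perrin-riou:heegner-points}. By the reciprocity law \eqref{eq:action-of-reciprocity-CM-points}, the conjugates of $x(\nfrak\ellfr^{\alpha+1})$ over $K(x(\nfrak\ellfr^\alpha))$ are the points $[z,\hat\iota_K(a)\,b\,h(\nfrak\ellfr^{\alpha+1})]$. Here $a$ runs over representatives of $\Gal(K(x(\nfrak\ellfr^{\alpha+1}))/K(x(\nfrak\ellfr^\alpha)))$. By Proposition \ref{prop:computation-quotient-Zs}(i) and Corollary \ref{cor:degree-x-n-ells}, these can be taken in $Z(\nfrak\ellfr^{\alpha})_\ellfr/Z(\nfrak\ellfr^{\alpha+1})_\ellfr$, a subgroup of $\cO_{K_\ellfr}^\times$, modulo the global units $K^\times\cap\Fhat^\times Z$ when $r=0$. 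This is the source of the factor $u(r)$: when $r=0$ and $\alpha=0$ the map from the local quotient to the Galois group has kernel of order $u(0)$, so the trace counted locally equals $u(r)$ times the Galois trace. Since every element involved is trivial away from $\ellfr$ and $b$ commutes with these local considerations, I will work in $\B_\ellfr^\times\cong\GL_2(F_\ellfr)$ via $\eta_\ellfr\circ\Ad(b_\ellfr)^{-1}$. There $h(\ellfr)$ is $\left(\begin{smallmatrix}\varpi_\ellfr&0\\0&1\end{smallmatrix}\right)$ and $H_\ellfr$ becomes $\GL_2(\cO_{F_\ellfr})$.

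The key step is to identify the right cosets $i(a)\,h(\ellfr)^{\alpha+1}\GL_2(\cO_{F_\ellfr})F_\ellfr^\times$, as $a$ varies, with a subset of the $N\ellfr+1$ cosets $\beta_j$ in the decomposition of Example \ref{ex:Hecke-operators}, after translating by $h(\ellfr)^\alpha$. Equivalently, I view these cosets as vertices of the Bruhat--Tits tree at distance $\alpha+1$ from the standard vertex $v_0$. The vertex $h(\ellfr)^{\alpha}v_0$ lies at distance $\alpha$, and $T_\ellfr x(\nfrak\ellfr^\alpha)$ is the sum over its $N\ellfr+1$ neighbours.

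For $\alpha\ge1$, the stabilizer of the path from $v_0$ acts simply transitively on the $N\ellfr$ neighbours pointing away from $v_0$. This is because $\cO_{K_\ellfr}^\times$ acts through $(\cO_K/\ellfr^{\alpha+1})^\times/(\cO_F/\ellfr^{\alpha+1})^\times$, and the kernel of the reduction to level $\alpha$ has order $N\ellfr$, matching Corollary \ref{cor:degree-x-n-ells}(ii). The remaining neighbour is $h(\ellfr)^{\alpha-1}v_0$. This gives \eqref{eq:norm-higher-cm}.

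For $\alpha=0$, the orbit of $h(\ellfr)v_0$ under $\cO_{K_\ellfr}^\times$ consists of the neighbours of $v_0$ not fixed by $K_\ellfr^\times$. In the inert case $\cO_{K_\ellfr}^\times$ acts transitively on all $N\ellfr+1$ neighbours, since $\mathbb{P}^1(k_\ellfr)$ is a torsor under $k_{K,\ellfr}^\times/k_\ellfr^\times$. Hence $u(r)\Tr=T_\ellfr x(\nfrak)$.

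In the split case $i(\cO_{K_\ellfr}^\times)$ is upper triangular, so it fixes the two neighbours corresponding to the eigenlines of $K_\ellfr$ and acts transitively on the other $N\ellfr-1$. The two fixed neighbours are $i(\varpi_\lambda)v_0$ and $i(\varpi_{\bar\lambda})v_0$, where $\varpi_\lambda=(\varpi_\ellfr,1)$ and $\varpi_{\bar\lambda}=(1,\varpi_\ellfr)$. The corresponding points are $[z,\hat\iota_K(\varpi_\lambda)b]=\rec_K(\varpi_\lambda)x(\nfrak)=\Frob_\lambda x(\nfrak)$, and similarly $\Frob_{\bar\lambda}x(\nfrak)$. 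Subtracting these two terms from $T_\ellfr x(\nfrak)$ gives the split case of \eqref{eq:norm-basic-cm}. Here I use that $\ellfr\nmid\nfrak\cond(x)$, so that $x(\nfrak)$ is unramified at $\ellfr$ and $\Frob_\lambda$ acts through $\rec_K$. The matching must be checked explicitly with the shape of $i(a,b)$ fixed in Section \ref{sec:heegner-points}, for instance whether $\left(\begin{smallmatrix}1&0\\0&\varpi_\ellfr\end{smallmatrix}\right)$ or $\left(\begin{smallmatrix}\varpi_\ellfr&0\\0&1\end{smallmatrix}\right)$ is the fixed coset, but the structure is as above.

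I expect the main obstacle to be the bookkeeping of the factor $u(r)$ and of the global units. One has to check that for $r>0$ the global unit group $K^\times\cap\Fhat^\times Z(\nfrak)$ is already $F^\times$; this is where the hypothesis $\ellfr\cO_K\nmid\mathfrak{I}_0$ in $\allowedprimes$ enters, via \cite[Proposition 2.10]{nekovar:euler-system-method}. One also has to check that the local orbit counts agree with the degrees in Corollary \ref{cor:degree-x-n-ells}, so that the map from local cosets to Galois conjugates is exactly $u(r)$-to-one. Finally, all identities are first equalities of divisors on $N_H^\ast$; they pass to $\Jac(N_H^\ast)$ through the map $\iota$ of \cite{nekovar:chow-groups}, which is compatible with Hecke operators and Galois action.
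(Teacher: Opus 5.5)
Your proposal is correct and follows the paper's route: the reciprocity law with the $u(r)$-to-one map $Z(\nfrak)/Z(\nfrak\ellfr)\to\Gal\bigl(K(x(\nfrak\ellfr))/K(x(\nfrak))\bigr)$, a local computation at $\ellfr$ matched against the Hecke coset decomposition of Example \ref{ex:Hecke-operators}, and identification of the leftover cosets with $\Frob_\lambda x(\nfrak)$ and $\Frob_{\bar\lambda}x(\nfrak)$, or with $x(\nfrak\ellfr^{\alpha-1})$. The difference is presentational. You do the coset bookkeeping as orbit counting on the Bruhat--Tits tree, and you get the two fixed neighbours from the commutativity of $i(K_\ellfr^\times)$, whereas the paper writes explicit matrix representatives for $\GL_2(\cO_{F_\ellfr})/U^0(\varpi_\ellfr)$ and $U^0(\varpi_\ellfr^\alpha)/U^0(\varpi_\ellfr^{\alpha+1})$.
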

\begin{proof}
    We'll prove these formulas as divisors on $N_H^\ast \otimes_F \bar{\Q}$; the same argument of \cite[proof of Proposition 4.8]{nekovar:euler-system-method} shows that the same equalities descend to $\Jac(N_H^\ast)$. Moreover, even though the inert case of \eqref{eq:norm-basic-cm} is already proven in \emph{loc.~cit.}, we give here different (and more explicit) proof, for the convenience of the reader.

     It follows from \cite[(2.7.3) and Proposition (2.10)]{nekovar:euler-system-method} that the kernel of the natural surjection 
    \[
    \frac{Z(\nfrak)}{Z(\nfrak\ellfr)} \surj \Gal\biggl(\frac{K(x(\nfrak\ellfr))}{K(x(\nfrak))}\biggr)
    \]
    has cardinality $u(r)$. Combining this with the action of the reciprocity map \eqref{eq:action-of-reciprocity-CM-points} we obtain that
    \begin{equation}\label{eq:proof-norm-relation}
        u(r) \Tr_{K(x(\nfrak\ellfr))/K(x(\nfrak))} x(\nfrak\ellfr) =
    \sum_{[a] \in \frac{Z(\nfrak)}{Z(\nfrak\ellfr)}} \hspace{-5pt} \rec_K(a) \cdot [(z, bh(\nfrak\ellfr))] =
    \sum_{[a] \in \frac{Z(\nfrak)}{Z(\nfrak\ellfr)}}  [(z, \hat{\iota}_K(a)bh(\nfrak\ellfr))].
    \end{equation}
    Since $Z(\nfrak)/Z(\nfrak\ellfr) \cong Z(\nfrak)_\ellfr/Z(\nfrak\ellfr)_\ellfr = Z_\ellfr/Z(\ellfr)_\ellfr$ we may chose the $a$'s to have trivial component outside $\ellfr$; thus in order to conclude the computation we need only to study the $\ellfr$-components of the points in the sum. As done in the proof of Proposition \ref{prop:computation-quotient-Zs}, let  
    \[i := \eta_\ellfr\circ\mathrm{Ad}(b_\ellfr)^{-1} \circ \iota_{K,\ellfr}\colon K_\ellfr\hookrightarrow \M_2(F_\ellfr).\]
    We also identify $B_\ellfr$ with $\M_2(F_\ellfr)$ via the isomorphism $\eta_\ellfr$, dropping it from the notation for simplicity.
    
    Assume that $\ellfr$ is inert in $K$. Applying \cite[Proposition 4.7, (iii) and (iv)]{nekovar:euler-system-method} (with $E=F_\ellfr$, $E'=K_\ellfr$, $R=\M_2(\cO_{F_\ellfr})$, $g=h(\ellfr)\in \M_2(F_\ellfr)$ and $r=1$ in the notation of \emph{loc.~cit.}), the embedding $i$ induces a set theoretical bijection
    \begin{equation*}
        \frac{Z_\ellfr}{Z(\nfrak\ellfr)_{\ellfr}} = \frac{i^{-1}(\GL_2(\cO_{F_\ellfr}))}{i^{-1}(h(\ellfr) \GL_2(\cO_{F_\ellfr}) h(\ellfr)^{-1})} \xrightarrow{1:1} \frac{\GL_2(\cO_{F_\ellfr})}{U^0(\varpi_\ellfr)},
    \end{equation*}
    where $U^0(\varpi_\ellfr)=\GL_2(\cO_{F_\ellfr})\cap h(\ellfr)\GL_2(\cO_{F_\ellfr})h(\ellfr)^{-1}=\left\{\left(\begin{smallmatrix}
            a &\varpi_\ellfr b\\ c&d
        \end{smallmatrix}\right) \ | \ a,b,c,d\in\cO_{F_\ellfr} \right\}\cap\GL_2(\cO_{F_\ellfr})$. The quotient on the right is represented by the matrices $\bigl(\begin{smallmatrix}
    0 & 1\\
    1 & 0
\end{smallmatrix}\bigr)$ and
$\bigl(\begin{smallmatrix}
    1 & j\\
    0 & 1
\end{smallmatrix}\bigr)$,
with $j\in\cO_F$ varying among the representatives of $\cO_F/\ellfr$. Thus the $\ellfr$-components of the second entry of the points in the sum appearing in \eqref{eq:proof-norm-relation} is given by
\[
b_\ellfr  
\bigl(\begin{smallmatrix}
    1 & j\\
    0 & 1
\end{smallmatrix}\bigr)b_\ellfr^{-1} b_\ellfr h(\ellfr) = b_\ellfr 
\bigl(\begin{smallmatrix}
    \varpi_\ellfr & j\\
    0 & 1
\end{smallmatrix}\bigr)
\]
for any $j$ and by
\[
b_\ellfr  
\bigl(\begin{smallmatrix}
    0 & 1\\
    1 & 0
\end{smallmatrix}\bigr)b_\ellfr^{-1} b_\ellfr h(\ellfr) = b_\ellfr 
\bigl(\begin{smallmatrix}
    0 & 1\\
    \varpi_\ellfr & 0
\end{smallmatrix}\bigr)
\]
By the explicit definition of $T_\ellfr$ of \eqref{eq:T-ellfr-formula}, the formula \eqref{eq:norm-basic-cm} in the inert case follows.

Assume now that $\ellfr$ is split in $K$. Recalling the computations in the proof of Proposition \ref{prop:computation-quotient-Zs} and observing that one has, again by the explicit description of $i$, that $Z(\nfrak\ell)_\ellfr = i^{-1}(U^0(\varpi_\ellfr))$, we see that
$i$ induces a set theoretical injection
\begin{equation*}
    \frac{Z_\ellfr}{Z(\nfrak\ellfr)_\ellfr}=\frac{\cO_{F_\ellfr}^\times\times\cO_{F_\ellfr}^\times}{\cO_{F_\ellfr}^\times(1,1)+\cO_{F_\ellfr}(0,\varpi_\ellfr)}\longinj \frac{\GL_2(\cO_{F_\ellfr})}{U^0(\varpi_\ellfr)}.
\end{equation*}
Indeed, in this case the source, being isomorphic to $(\cO_{F_\ellfr}/\ellfr)^\times$, has cardinality $N\ellfr -1$, while the target $N\ellfr+1$. Moreover, one can check that the only two classes not lying in the image of this injection are those represented by the matrices $\bigl(\begin{smallmatrix}1&1\\0&1\end{smallmatrix}\bigr)$ and $\bigl(\begin{smallmatrix}0&1\\1&0\end{smallmatrix}\bigr)$. 
Thus, repeating the computation above for the inert case and comparing with \eqref{eq:T-ellfr-formula}, it turns out that the sum \eqref{eq:proof-norm-relation} equals
\[
T_\ellfr x(\nfrak) - [(z, bh(\nfrak)\bigl(\begin{smallmatrix}\varpi_\ellfr&1\\0&1\end{smallmatrix}\bigr))] -
[(z, bh(\nfrak)\bigl(\begin{smallmatrix}1&0\\0&\varpi_\ellfr\end{smallmatrix}\bigr))]
\]
Since $\Frob_\lambda$ and $\Frob_{\bar{\lambda}}$ are respectively the image via $\rec_K$ of the idèle which has $\varpi_\ellfr \in F_\ellfr \cong K_\lambda, K_{\bar{\lambda}}$ at the $\lambda$ (resp.~$\bar{\lambda}$)-component and $1$ elsewhere, then
\[
\Frob_{\lambda} \cdot  x(\nfrak) = [(z, \iota_{K,\ellfr}(\varpi_\ellfr, 1)bh(\nfrak))] \qquad \text{and} \qquad 
\Frob_{\bar{\lambda}} \cdot x(\nfrak) = [(z, \iota_{K,\ellfr}(1,\varpi_\ellfr)bh(\nfrak))]
\]
Moreover, the $\ellfr$-component of the second entry of these points is respectively
\[
\iota_{K,\ellfr}(\varpi_\ellfr, 1)b_\ellfr = b_\ellfr i(\varpi_\ellfr, 1) = b_\ellfr \bigl(\begin{smallmatrix}
    \varpi_\ellfr & 1 - \varpi_\ellfr\\0&1
\end{smallmatrix}\bigr) =
b_\ellfr \bigl(\begin{smallmatrix}
    \varpi_\ellfr & 1\\0&1
\end{smallmatrix}\bigr)
\bigl(\begin{smallmatrix}
    1 & -1\\0&1
\end{smallmatrix}\bigr)
\]
and
\[
\iota_{K,\ellfr}(1,\varpi_\ellfr)b_\ellfr = b_\ellfr i(1,\varpi_\ellfr) = b_\ellfr \bigl(\begin{smallmatrix}
    1&\varpi_\ellfr - 1\\0& \varpi_\ellfr
\end{smallmatrix}\bigr) =
b_\ellfr \bigl(\begin{smallmatrix}
    1&0\\0&\varpi_\ellfr
\end{smallmatrix}\bigr)
\bigl(\begin{smallmatrix}
    1 & \varpi_\ellfr-1\\0&1
\end{smallmatrix}\bigr).
\]
Since $\bigl(\begin{smallmatrix}
    1 & -1\\0&1
\end{smallmatrix}\bigr), \bigl(\begin{smallmatrix}
    1 & \varpi_\ellfr-1\\0&1
\end{smallmatrix}\bigr) \in \GL_2(F_\ellfr)$, it follows that
\[
\Frob_{\lambda} \cdot  x(\nfrak) = [(z, bh(\nfrak)\bigl(\begin{smallmatrix}\varpi_\ellfr&1\\0&1\end{smallmatrix}\bigr))] \qquad \text{and} \qquad \Frob_{\lambda} \cdot  x(\nfrak) = [(z, bh(\nfrak)\bigl(\begin{smallmatrix}1&0\\0&\varpi_\ellfr\end{smallmatrix}\bigr))],
\]
proving formula \eqref{eq:norm-basic-cm} in the split case.

Similarly to \eqref{eq:proof-norm-relation}, for any $\alpha \ge 1$ we obtain the formula
\begin{align}\label{eq:proof-norm-relation-2}
    \Tr_{K(x(\nfrak\ellfr^{\alpha+1}))/K(x(\nfrak\ellfr^\alpha))} x(\nfrak\ellfr^{\alpha+1}) = \sum_{[a] \in \frac{Z(\nfrak\ellfr^{\alpha})_\ellfr}{Z(\nfrak\ellfr^{\alpha+1})_\ellfr}} [(z, \hat{\iota}_K(a) b h(\nfrak\ellfr^{\alpha+1})].
\end{align}
Moreover, it follows by \cite[Proposition 4.7]{nekovar:euler-system-method} in the inert case and by an explicit computation in the split case as above (again one can check that $Z(\nfrak\ellfr^k)_\ellfr = i^{-1}(U^0(\varpi_\ellfr^k))$, for any $k \ge 1$.) that $i$ induces an bijection
\[
Z(\nfrak\ellfr^{\alpha})_\ellfr/Z(\nfrak\ellfr^{\alpha+1})_\ellfr \xrightarrow{1:1} U^0(\varpi_\ellfr^{\alpha})/U^0(\varpi_\ellfr^{\alpha+1})
\]
(the surjectivity can be checked counting the cardinality of both sides, applying Corollary \ref{cor:degree-x-n-ells}). 
The elements of the right hand side can be represented by the matrices
$\bigl(\begin{smallmatrix}
    1 & \varpi_\ellfr^{\alpha}j\\
    0 & 1
\end{smallmatrix}\bigr)$,
with $j\in\cO_F$ varying among the representatives of $\cO_F/\ellfr$. Thus the $\ellfr$ components of the second entry of the points appearing in the sum in \eqref{eq:proof-norm-relation-2} are given by 
\[
b_\ellfr  
\bigl(\begin{smallmatrix}
    1 & \varpi^\alpha j\\
    0 & 1
\end{smallmatrix}\bigr)b_\ellfr^{-1} b_\ellfr h(\ellfr)^{\alpha+1} = b_\ellfr h(\ellfr)^{\alpha}
\bigl(\begin{smallmatrix}
    \varpi_\ellfr & j\\
    0 & 1
\end{smallmatrix}\bigr).
\]
The explicit description of $T(\ellfr)$ proves that the sum in \eqref{eq:proof-norm-relation-2} equals
\[
T_\ellfr x(\nfrak\ellfr^{\alpha})- [(z, 
 b_\ellfr h(\ellfr^{\alpha})\bigl(\begin{smallmatrix}
    1&0\\0&\varpi_\ellfr
\end{smallmatrix}\bigr))].
\]
The relation 
\[
b_\ellfr h(\ellfr)^\alpha\bigl(\begin{smallmatrix}
    1&0\\0&\varpi_\ellfr
\end{smallmatrix}\bigr) = b_\ellfr h(\ellfr)^{\alpha-1} \cdot \varpi_\ellfr
\]
implies that $ [(z, 
b_\ellfr h(\ellfr^{\alpha})\bigl(\begin{smallmatrix}
    1&0\\0&\varpi_\ellfr
\end{smallmatrix}\bigr)] = x(\nfrak\ellfr^{\alpha-1})$, therefore we obtain \eqref{eq:norm-higher-cm}.
\end{proof}

\subsection{Modular abelian varieties}
As explained in \cite[(1.17)]{nekovar:euler-system-method}, there is a finite decomposition 
\begin{equation*}
    \mathfrak{h}(H)\otimes_\Z\Q\cong\prod_{j\in J} E_j,
\end{equation*}
where the $E_j$ are totally real fields. For each $j\in J$, denote by $\theta_j$ the natural morphism
\begin{equation*}
    \theta_j\colon \mathfrak{h}(H)\to\mathfrak{h}(H)\otimes_\Z\Q\to E_j.
\end{equation*}

\begin{proposition}[Nekov{\'a}{\v{r}}]\label{prop:nekovar-structure-jacobian}
    Let $\ellfr\notin S$ be a prime of $F$.
    \begin{enumerate}[label=(\roman*)]
        \item  There exists a $\mathfrak{h}(H)$-linear isogeny $\Jac(N_H^\ast)\to\prod_{j\in J} A_j^{b_j}$ defined over $F$, where, for any $j \in J$, $b_j\ge 1$ and $A_j$ are $F$-simple abelian varieties over $F$ of dimension $[E_j:\Q]$ satisfying $\End_F(A_j)=\cO_{E_j}$, on which $\mathfrak{h}(H)$ acts via $\theta_j$.
        \item The abelian varieties $A_j$ are unique up to an $\cO_{E_j}$-linear isogeny and, for each polarization of $A_j$ defined over $F$, the corresponding Rosati involution acts trivially on $E_j=\End_F^0(A_j)$.
        \item The trace of the action of $\Frob_\ellfr$ on $T_\P A_j$ coincides with $\theta_j(T_\ellfr)$.
    \end{enumerate}
    
\end{proposition}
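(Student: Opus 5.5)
The proof follows Nekovář's treatment in \cite[(1.17)--(1.18)]{nekovar:euler-system-method}, which in turn adapts Shimura's and Ribet's arguments for modular Jacobians. The starting point is the structure of the Hecke algebra. $\mathfrak{h}(H)$ is a commutative subring of $\End(\Jac(N_H^\ast))$, finite and free over $\Z$, and it is stable under the Rosati involution of the canonical principal polarization (recalled above). Since the Rosati involution is positive, $\mathfrak{h}(H)\otimes\Q$ is reduced. It is a product of fields, and the positivity forces each factor $E_j$ to be totally real.

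\textbf{Step 1: decompose the Jacobian.} Let $e_j$ be the idempotents of $\mathfrak{h}(H)\otimes\Q$. For each $j$ choose $n_j\in\Z_{>0}$ with $n_je_j\in\mathfrak{h}(H)$, and set $B_j:=\operatorname{im}(n_je_j)\subseteq\Jac(N_H^\ast)$. These are abelian subvarieties defined over $F$, because the Hecke correspondences are defined over $F$. The sum map $\prod_j B_j\to \Jac(N_H^\ast)$ is then an $\mathfrak{h}(H)$-linear isogeny, and we compose with a quasi-inverse.

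\textbf{Step 2: analyse each $B_j$.} On $B_j$ the algebra $\mathfrak{h}(H)$ acts through $\theta_j$, so $E_j\subseteq\End^0_F(B_j)$. The key input is \emph{multiplicity one}: $H^1(N_H^\ast,\Q_\ell)$, equivalently $V_\ell\Jac$, is free of rank $2$ over $\mathfrak{h}(H)\otimes\Q_\ell$ up to old-form multiplicities. This comes from the Jacquet--Langlands correspondence together with the Eichler--Shimura relation. Concretely, on the $e_j$-part one gets $V_\ell B_j\cong (E_j\otimes\Q_\ell)^{2b_j}$, where $b_j$ is the number of old-form copies. I expect this step to be the main obstacle. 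It requires Faltings' isogeny theorem and Carayol's/Ohta's description of $H^1$ of Shimura curves as $\bigoplus\pi_f^H\otimes\rho_\pi$. Using these, one shows that $\End^0_F(B_j)\cong\M_{b_j}(E_j)$: the $G_F$-representation on $V_\lambda$ is absolutely irreducible of dimension $2$ over $E_{j,\lambda}$ (a cuspidal, non-CM situation), so its commutant is exactly $E_j$. It follows that $B_j$ is isogenous to $A_j^{b_j}$ with $\End_F^0(A_j)=E_j$ and $\dim A_j=[E_j:\Q]$. Replacing $A_j$ by an isogenous variety, one can arrange $\End_F(A_j)=\cO_{E_j}$, for instance by taking $A_j\otimes_{\cO'}\cO_{E_j}$ via Serre's tensor construction, where $\cO'$ is the order of $A_j$. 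This gives (i).

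\textbf{Step 3: uniqueness and the Rosati involution.} Uniqueness up to $\cO_{E_j}$-linear isogeny follows from Poincaré reducibility together with Faltings' theorem, since $A_j$ is determined by its $\ell$-adic representation. For the Rosati statement: a Rosati involution restricts to a positive involution of $E_j$. A totally real field admits no nontrivial positive involution, because positivity of $\Tr(x x^\dagger)$ together with $E_j$ being totally real forces the involution to be trivial. This gives (ii).

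\textbf{Step 4: trace of Frobenius.} This follows from the Eichler--Shimura congruence relation $T_\ellfr=\Frob_\ellfr+ N\ellfr\,\Frob_\ellfr^{-1}$ on the good reduction of $N_H^\ast$ at $\ellfr\notin S$ (Carayol, and Nekovář for the quotient curves $N_H^\ast$). It implies that $\Frob_\ellfr$ satisfies $X^2-T_\ellfr X+N\ellfr$ on $V_\ell\Jac$, hence on $T_\P A_j$ with $T_\ellfr$ acting as $\theta_j(T_\ellfr)$. Since $T_\P A_j$ has rank $2$ over $\cO_{E_j,\P}$, its trace equals $\theta_j(T_\ellfr)$, which proves (iii).
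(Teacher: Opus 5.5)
Your sketch is correct, and it is essentially the standard argument behind \cite[Proposition 1.18]{nekovar:euler-system-method}. The paper gives no argument of its own beyond that citation, and your ingredients match: Jacquet--Langlands and Carayol's description of $H^1$, Faltings' theorem, positivity of the Rosati involution, and the Eichler--Shimura relation.

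There are a few small points to tighten. In Step 4, knowing that $\Frob_\ellfr$ satisfies $X^2-\theta_j(T_\ellfr)X+N\ellfr$ on a rank-two module does not by itself give the trace, since a priori $\Frob_\ellfr$ could act as a scalar equal to one root. You should add $\det=N\ellfr$ (via the $\cO_{E_j}$-linear Weil pairing, cf.\ Lemma \ref{lemma:det}), or simply read off the trace from Carayol's description, which you already use in Step 2. The ``non-CM'' caveat in Step 2 is unnecessary, because $\rho_\pi$ is absolutely irreducible as a $G_F$-representation for CM forms as well. Finally, to get total reality of the $E_j$ you need the Rosati involution to fix $\mathfrak{h}(H)$ pointwise, not merely to stabilise it.
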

\begin{proof}
    It follows from \cite[Proposition 1.18]{nekovar:euler-system-method}.
\end{proof}

\begin{remark}
    Note that \cite[Proposition 1.18(iii)]{nekovar:euler-system-method} shows also that there is a Hilbert modular form $f$ of parallel weight $(2, \dots, 2)$ such that $V_\P A := T_\P A \otimes \Q_p$ is (the dual of) the representation attached to $f$.
\end{remark}

Until the end of Section \ref{sec:modular-av-and-heegner}, fix an abelian variety $A$ of dimension $g$ defined over $F$ and assume that $A$ satisfies the following assumption:
\leqnomode
\begin{equation}\label{ass:modular-av}\tag{Mod}
\begin{gathered}
  \text{$A$ is one of the $A_j$'s coming from Proposition \ref{prop:nekovar-structure-jacobian}, for a quaternion algebra $\B$}\\
  \text{ramified at $S_\B \cup \set{\tau \mid \infty : \tau \ne \tau_1 }$ and a compact open subgroup $H$ of $\Bhat^\times$.}
\end{gathered}
\end{equation}
\reqnomode
By Proposition \ref{prop:nekovar-structure-jacobian}, $A$ satisfies \eqref{ass:GL-2}, and let $E = \End^0_F(A)$. Moreover, \eqref{ass:O-E-lin} is satisfied by any polarization $\gamma$ of $A$, since the Rosati involution associated with $\gamma$ acts trivially on $E$. Let us denote by
\begin{equation*}
    \phi\colon \Jac(N_H^\ast)\surj A
\end{equation*}
the surjection given by Proposition \ref{prop:nekovar-structure-jacobian}, which is Hecke and $G_F$-equivariant. By Proposition \ref{prop:nekovar-structure-jacobian}.(iii), we also have that, for every $\ellfr\notin S$, the element $a_\ellfr:=\Tr(\Frob_\ellfr| T_\P A)$ coincides with the action of $T_\ellfr$ on $A$, i.e. with $\theta_j(T_\ellfr)$.

We now define a class of Heegner points $y(\nfrak)$ on $A$, projecting  
the CM points $x(\nfrak)$ defined in the previous section (seen into $\Jac(N_H^\ast)$ through the morphism of \cite[§1.19]{nekovar:euler-system-method}) onto $A$ via $\phi$.
\begin{definition}
    For each $\nfrak\in\allowed$, we define the subfield $K(x(\nfrak))'\subseteq K(x(\nfrak))$ by
    \begin{equation*}
        K(x(\nfrak))'=
        \begin{cases}
            K(x)&\text{if $\nfrak=(1)$};\\
            K(x(\ellfr_1^{\alpha_1}))\cdots K(x(\ellfr_r^{\alpha_r})) &\text{if $\nfrak=\ellfr_1^{\alpha_1}\cdots\ellfr_r^{\alpha_r}$ for $r\ge 1$ and $\ellfr_i\in\tilde{\admissible_1}$}
        \end{cases}
    \end{equation*}
    and put $G(\nfrak)=\Gal(K(x(\nfrak))'/K(x))$.
\end{definition}
\begin{lemma}\label{lemma:degree-prime-extensions}
    For any $\nfrak=\prod_{i=1}^r \ellfr_i^{\alpha_i} \in \allowed_r$ (for $r \ge 0$), we have that 
    \begin{enumerate}[label=(\roman*)]
        \item the canonical map $G(\nfrak)\to \prod_{i=1}^r G(\ellfr_i^{\alpha_i})$ is an isomorphism;
        \item $[K(x(\nfrak)):K(x(\nfrak))'] = u(r)u(0)^{r-1}$.
    \end{enumerate}
\end{lemma}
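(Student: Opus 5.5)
Everything will be read off on the idelic side via class field theory. By Proposition \ref{prop:computation-quotient-Zs}(i), $\Gal(K(x(\mathfrak{m}))/K)\cong \Khat^\times/K^\times\Fhat^\times Z(\mathfrak{m})$ for every $\mathfrak{m}\in\allowed$, with $Z(\mathfrak{m})=Z_S\prod_{\ellfr\mid\mathfrak{m}}Z(\mathfrak{m})_\ellfr\prod_{\ellfr\notin S,\ellfr\nmid\mathfrak{m}}Z_\ellfr$ and $Z\supseteq Z(\mathfrak{m})$. The compositum $K(x(\nfrak))'=\prod_i K(x(\ellfr_i^{\alpha_i}))$ therefore corresponds to the intersection of the norm groups:
\[
\Gal(K^\ab/K(x(\nfrak))') \leftrightarrow \bigcap_i K^\times\Fhat^\times Z(\ellfr_i^{\alpha_i}),
\]
while $K(x)$ corresponds to $K^\times\Fhat^\times Z$.

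For (i), I would look at $G(\nfrak)\to\prod_i G(\ellfr_i^{\alpha_i})$. It is injective, since this is a compositum. For surjectivity, I would compute orders. Each $G(\ellfr_i^{\alpha_i})$ is a quotient of $Z/Z(\ellfr_i^{\alpha_i})=Z_{\ellfr_i}/Z(\ellfr_i^{\alpha_i})_{\ellfr_i}$ by the image of $K^\times\Fhat^\times\cap Z$. Concretely,
\[
G(\mathfrak{m})\cong Z/\bigl(Z(\mathfrak{m})(K^\times\Fhat^\times\cap Z)\bigr),
\]
so it is enough to control $K^\times\cap\Fhat^\times Z$ modulo $F^\times$. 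This group has order $u(0)$ and consists of units of $K$ modulo units of $F$, up to scaling by $F^\times$. The key point is that for $\ellfr\in\allowedprimes$, with $\ellfr\cO_K\nmid\mathfrak{I}_0$, a torsion unit $\zeta\neq1$ is not congruent to $1$ modulo $\ellfr$. Hence the image of this group in $Z_\ellfr/Z(\ellfr^\alpha)_\ellfr$ is injective, of size $u(0)$. This is the content of \cite[Proposition 2.10]{nekovar:euler-system-method} used in Corollary \ref{cor:degree-x-n-ells}.

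The hardest step is the structure of the diagonal image of this finite group $U:=(K^\times\cap\Fhat^\times Z)/F^\times$ in $\prod_i Z_{\ellfr_i}/Z(\ellfr_i^{\alpha_i})_{\ellfr_i}$. Writing $Q_i=Z_{\ellfr_i}/Z(\ellfr_i^{\alpha_i})_{\ellfr_i}$, we get $G(\ellfr_i^{\alpha_i})\cong Q_i/U$ and $\Gal(K(x(\nfrak))/K(x))\cong(\prod_i Q_i)/U_{\mathrm{diag}}$.

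A priori the diagonal quotient is not the product of the individual quotients. I expect the correct statement to be that the norm group of $K(x(\nfrak))'$ is $K^\times\Fhat^\times\prod_i\bigl(Z(\ellfr_i^{\alpha_i})_{\ellfr_i}\cdot U\bigr)$ localized, so that $G(\nfrak)\cong\prod_i Q_i/U$ directly. I would establish this by checking that an idele in the intersection of the norm groups can be adjusted componentwise by elements of $U$. This relies on $U$ being a finite group of global units acting at every place simultaneously, and on the identity $x\in K^\times\Fhat^\times Z(\ellfr_i^{\alpha_i})\iff x_{\ellfr_i}\in U\cdot Z(\ellfr_i^{\alpha_i})_{\ellfr_i}$ after normalizing into $Z$. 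That gives (i).

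For (ii), I compare degrees. First,
\[
[K(x(\nfrak)):K(x)]=\frac{\prod_i|Q_i|}{u(0)},
\]
using injectivity of $U$ on the product (if $r\ge1$). On the other hand, $|G(\nfrak)|=\prod_i|Q_i|/u(0)^r$. The quotient of these is $u(0)^{r-1}$. Since $u(r)=1$ for $r\ge1$, this equals $u(r)u(0)^{r-1}$. For $r=0$ the degree is $1=u(0)u(0)^{-1}$, consistent with the convention $K(x(1))'=K(x)$.
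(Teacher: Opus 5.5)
Your proposal is correct. For (ii) it is the paper's argument: granting (i), the paper compares $\#G(\nfrak)=u(0)^{-r}\prod_i(N\ellfr_i-\varepsilon(\ellfr_i))N\ellfr_i^{\alpha_i-1}$ with $[K(x(\nfrak)):K(x)]=u(0)^{-1}\prod_i(N\ellfr_i-\varepsilon(\ellfr_i))N\ellfr_i^{\alpha_i-1}$ from Corollary \ref{cor:degree-x-n-ells}. That is exactly your count with $|Q_i|$ and $U$.

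The difference is in (i). The paper gives no argument of its own and refers to \cite[Proposition 4.10(ii)]{nekovar:euler-system-method}, which is stated there for products of inert primes. Your idelic computation of the norm group $\bigcap_i K^\times\Fhat^\times Z(\ellfr_i^{\alpha_i})$ makes the lemma self-contained. It also treats split primes and exponents $\alpha_i>1$ uniformly, with no input on the ramification of $K(x(\ellfr_i^{\alpha_i}))/K(x)$. The ``componentwise adjustment'' you leave as a plan does go through:
\begin{itemize}
\item write $w=kfz$ with $z\in Z$ once and for all;
\item compare with each representation $w=k_if_iz_i$, $z_i\in Z(\ellfr_i^{\alpha_i})$;
\item then $k_i^{-1}k\in K^\times\cap\Fhat^\times Z$, so $z_{\ellfr_i}$ lies in the local image of $U$ times $Z(\ellfr_i^{\alpha_i})_{\ellfr_i}$.
\end{itemize}

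You are, however, working harder than needed for (i). An idele supported at $\ellfr_i$ with component in $Z_{\ellfr_i}$ lies in $Z(\ellfr_j^{\alpha_j})$ for every $j\neq i$. Hence any tuple in $\prod_i Q_i$, and so in $\prod_i G(\ellfr_i^{\alpha_i})$, is hit by a single idele supported at $\ellfr_1,\dots,\ellfr_r$. Surjectivity is therefore immediate, and no order count or description of the intersection is required.

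One inaccuracy should be fixed. Describing $U=(K^\times\cap\Fhat^\times Z)/F^\times$ as ``units of $K$ modulo units of $F$, up to $F^\times$'' is false in general. A non-principal ideal of $F$ that becomes principal in $K$ gives elements of $K^\times\cap\Fhat^\times Z$ outside $F^\times\cO_K^\times$; for instance $\sqrt{-2}$ for $F=\Q(\sqrt{10})$, $K=F(\sqrt{-2})$, with $Z$ maximal. What you actually use is only that $U$ injects into each $Q_i$. This is equivalent to the degree formula of Corollary \ref{cor:degree-x-n-ells}. It can also be seen directly from the injection $U\hookrightarrow\mu_K$, $k\mapsto k/\bar k$, together with $\ellfr_i\cO_K\nmid\mathfrak{I}_0$.
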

\begin{proof}
    The case $r=0$ is trivial, so let $r\ge 1$. Point (i) is showed exactly as in \cite[Proposition 4.10.(ii)]{nekovar:euler-system-method}. From this, it descends that $\#G(\nfrak) = u(0)^{-r}\prod_{i=1}^r (N\ellfr_i-\varepsilon(\ellfr_i))(N\ellfr_i)^{\alpha_i-1}$, as follows from Corollary \ref{cor:degree-x-n-ells}. Moreover, we have $\#\Gal\bigl(K(x(\nfrak))/K(x)\bigr) = u(0)^{-1}\prod_{i=1}^r (N\ellfr_i-\varepsilon(\ellfr_i))(N\ellfr_i)^{\alpha_i-1}$, again by Corollary \ref{cor:degree-x-n-ells}. The result follows.
\end{proof}

\begin{definition}\label{def:heegner-points-av}
    For each $r\ge 0$ and $\nfrak\in\allowed_r$, define
    \begin{equation*}
        y(\nfrak)=\frac{u(0)}{u(r)}\Tr_{K(x(\nfrak))/K(x(\nfrak))'}\phi(x(\nfrak))\in A(K(x(\nfrak))').
    \end{equation*}
\end{definition}

\begin{proposition}\label{prop:norm-relations-heegner}
   Let $\nfrak \in \allowed_r$ and $\ellfr \in \allowedprimes$ such that $\ellfr\nmid\nfrak$ and, if $\ellfr$ splits in $K$, let $\lambda$ and $\bar{\lambda}$ be the two primes of $K$ such that $\ellfr\cO_K=\lambda\bar{\lambda}$. Then the following equalities hold in $A(K^\ab)$
   \begin{gather}
   \Tr_{K(x(\nfrak\ellfr))'/K(x(\nfrak))'} y(\nfrak\ellfr) = 
   \begin{cases}
       a_\ellfr y(\nfrak) & \text{if $\ellfr$ is inert in $K$;}\\
       (a_\ellfr  - \Frob_\lambda -\Frob_{\bar{\lambda}})y(\nfrak) \quad& \text{if $\ellfr$ splits in $K$;}
   \end{cases}\label{eq:norm-basic-heegner-y}\\
   \Tr_{K(x(\nfrak\ellfr^{\alpha+1}))'/K(x(\nfrak\ellfr^\alpha))'} y(\nfrak\ellfr^{\alpha+1}) = a_\ellfr y(\nfrak \ellfr^\alpha) - u(\alpha-1)y(\nfrak\ellfr^{\alpha-1}), \qquad \forall \alpha\ge1.\label{eq:norm-higher-heegner-y}
   \end{gather}
\end{proposition}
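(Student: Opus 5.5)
We deduce the proposition from the corresponding relations for CM points in Proposition \ref{prop:norm-relations-cm}. We apply the Hecke- and $G_F$-equivariant surjection $\phi$, and then take traces down from the fields $K(x(\cdot))$ to the subfields $K(x(\cdot))'$. Since $\phi$ is $\mathfrak{h}(H)$-linear and $T_\ellfr$ acts on $A$ as $a_\ellfr$ (Proposition \ref{prop:nekovar-structure-jacobian}(iii)), applying $\phi$ to \eqref{eq:norm-basic-cm} gives
\[
u(r)\Tr_{K(x(\nfrak\ellfr))/K(x(\nfrak))}\phi(x(\nfrak\ellfr)) = a_\ellfr\phi(x(\nfrak))
\]
in the inert case, and the same identity with $a_\ellfr$ replaced by $a_\ellfr-\Frob_\lambda-\Frob_{\bar\lambda}$ in the split case. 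Applying $\phi$ to \eqref{eq:norm-higher-cm} likewise gives
\[
\Tr\,\phi(x(\nfrak\ellfr^{\alpha+1}))=a_\ellfr\phi(x(\nfrak\ellfr^\alpha))-\phi(x(\nfrak\ellfr^{\alpha-1})).
\]

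Next we apply $\Tr_{K(x(\nfrak))/K(x(\nfrak))'}$ to both sides and use transitivity of traces. Both $\Tr_{K(x(\nfrak))/K(x(\nfrak))'}\circ\Tr_{K(x(\nfrak\ellfr))/K(x(\nfrak))}$ and $\Tr_{K(x(\nfrak\ellfr))'/K(x(\nfrak))'}\circ\Tr_{K(x(\nfrak\ellfr))/K(x(\nfrak\ellfr))'}$ equal the trace from $K(x(\nfrak\ellfr))$ to $K(x(\nfrak))'$. This needs $K(x(\nfrak))'\subseteq K(x(\nfrak\ellfr))'\subseteq K(x(\nfrak\ellfr))$, which follows from the definition as a compositum. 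In the split case we also need that $\Frob_\lambda,\Frob_{\bar\lambda}$ (elements of the abelian group $\Gal(K^\ab/K)$) commute with the traces. The left side then becomes
\[
\frac{u(r)u(r+1)}{u(0)}\Tr_{K(x(\nfrak\ellfr))'/K(x(\nfrak))'}y(\nfrak\ellfr),
\]
and the right side becomes $\frac{u(r)}{u(0)}\,a_\ellfr\, y(\nfrak)$, with $a_\ellfr$ replaced by $a_\ellfr-\Frob_\lambda-\Frob_{\bar\lambda}$ in the split case.

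The bookkeeping of the constants $u(\cdot)$ is the main point to check. Since $r+1\ge1$ we have $u(r+1)=1$, so the factor $u(r)/u(0)$ cancels from both sides and gives \eqref{eq:norm-basic-heegner-y}. Since everything takes place in the abelian group $A(K^\ab)$, we must either cancel this factor or argue that the identity holds before multiplying. To avoid dividing in $A$, I would compute directly with the explicit sums over $Z(\nfrak)/Z(\nfrak\ellfr)$ as in \eqref{eq:proof-norm-relation}. These realize $u(r)\Tr$ exactly, so I would fold the factor $u(0)/u(r)$ of Definition \ref{def:heegner-points-av} into the counting of cosets, using Lemma \ref{lemma:degree-prime-extensions}(ii) to identify $[K(x(\nfrak))':K(x(\nfrak))]$-type indices.

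For \eqref{eq:norm-higher-heegner-y}, the number of distinct primes dividing $\nfrak\ellfr^{\alpha+1}$, $\nfrak\ellfr^\alpha$ and $\nfrak\ellfr^{\alpha-1}$ is $r+1$, except that $\nfrak\ellfr^{0}=\nfrak$ has $r$ primes. The degrees $[K(x(\cdot)):K(x(\cdot))']=u(\cdot)u(0)^{\cdot-1}$ agree for the first two. For the third term, we must relate
\[
\Tr_{K(x(\nfrak\ellfr^\alpha))/K(x(\nfrak\ellfr^\alpha))'}\phi(x(\nfrak\ellfr^{\alpha-1}))
\]
to $y(\nfrak\ellfr^{\alpha-1})$. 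This trace is a trace from a larger field, so it equals a degree factor times $y(\nfrak\ellfr^{\alpha-1})$, and this factor produces the coefficient $u(\alpha-1)$: it is $u(0)$ exactly when $\alpha=1$ and $1$ otherwise. I expect verifying this degree factor, via Corollary \ref{cor:degree-x-n-ells} and Lemma \ref{lemma:degree-prime-extensions}, to be the main obstacle.
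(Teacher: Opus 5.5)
Your reduction is the same as the paper's. You push Proposition \ref{prop:norm-relations-cm} through $\phi$ and trace down to the primed fields, which reduces \eqref{eq:norm-higher-heegner-y} to computing $u(0)\Tr_{K(x(\nfrak\ellfr^\alpha))/K(x(\nfrak\ellfr^\alpha))'}\phi(x(\nfrak\ellfr^{\alpha-1}))$.

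For \eqref{eq:norm-basic-heegner-y}, no coset counting is needed to avoid dividing in $A$. Multiply the CM relation by the integer $u(0)/u(r)$ before tracing. Since $u(r+1)=1$, the left side becomes exactly $\Tr y(\nfrak\ellfr)$ and the right side becomes $a_\ellfr y(\nfrak)$.

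\textbf{The gap is the step you defer.} It is not automatic that a trace from a larger field of a point defined over a smaller field is a degree factor times $y(\nfrak\ellfr^{\alpha-1})$. Write $L_0=K(x(\nfrak\ellfr^{\alpha-1}))$ and $M=K(x(\nfrak\ellfr^{\alpha}))'$. The trace equals $[K(x(\nfrak\ellfr^\alpha)):L_0M]\cdot\Tr_{L_0/L_0\cap M}\phi(x(\nfrak\ellfr^{\alpha-1}))$. This is a multiple of $y(\nfrak\ellfr^{\alpha-1})$ only if $L_0\cap M=K(x(\nfrak\ellfr^{\alpha-1}))'$. That intersection statement, together with the index of the compositum, is the real content of \eqref{eq:norm-higher-heegner-y}.

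\textbf{Where degree counting works and where it fails.}
\begin{itemize}
\item For $\alpha>1$, degrees suffice as you suggest. The four extensions in the square have degrees $N\ellfr$ and $u(0)^r$, which are coprime because $u(0)\mid N\ellfr-\varepsilon(\ellfr)$.
\item For $\alpha=1$ and $r\le1$, the claim is trivial because $K(x(\nfrak))=K(x(\nfrak))'$.
\item For $\alpha=1$ and $r\ge2$, Corollary \ref{cor:degree-x-n-ells} and Lemma \ref{lemma:degree-prime-extensions} only give $[K(x(\nfrak)):K(x(\nfrak))']=u(0)^{r-1}$ and $[K(x(\nfrak\ellfr))':K(x(\nfrak))']=(N\ellfr-\varepsilon(\ellfr))/u(0)$. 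These need not be coprime, precisely because $u(0)\mid N\ellfr-\varepsilon(\ellfr)$. So no degree count can determine $K(x(\nfrak))\cap K(x(\nfrak\ellfr))'$.
\end{itemize}

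\textbf{The missing idea is ramification at $\ellfr$.} Every prime of $K(x(\nfrak))'$ above $\ellfr$ is unramified in $K(x(\nfrak))\subseteq K[\cond(x)\nfrak]$. On the other hand, by the splitting $G(\nfrak\ellfr)\cong G(\nfrak)\times G(\ellfr)$ of Lemma \ref{lemma:degree-prime-extensions}(i), such a prime is totally ramified in $K(x(\nfrak\ellfr))'/K(x(\nfrak))'$, because $\ellfr$ is totally ramified in $K(x(\ellfr))/K(x)$. When $\ellfr$ is inert this is Nekov\'a\v{r}'s Proposition 4.6(iii). When $\ellfr$ splits, a local class field theory computation shows that the inertia group, the image of $\cO_{K_\lambda}^\times$ in $Z_\ellfr/Z(\ellfr)_\ellfr$, is all of $\Gal(K(x(\ellfr))/K(x))$.

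This gives $K(x(\nfrak))\cap K(x(\nfrak\ellfr))'=K(x(\nfrak))'$, hence $[K(x(\nfrak\ellfr)):K(x(\nfrak))K(x(\nfrak\ellfr))']=u(0)$. That index is what produces the coefficient $u(0)=u(\alpha-1)$.
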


\begin{proof}
    Equation \eqref{eq:norm-basic-heegner-y} is a straightforward consequence of Proposition \ref{prop:norm-relations-cm} together with the Hecke and Galois equivariance of the map $\phi$.

    Let's move to the proof of \eqref{eq:norm-higher-heegner-y}. As above, applying Proposition \ref{prop:norm-relations-cm} together with the Hecke and Galois equivariance of the map $\phi$, we obtain the relation
    \begin{equation}\label{eq:middle-proof-norm-relations-y}
        \Tr_{K(x(\nfrak\ellfr^{\alpha+1}))'/K(x(\nfrak\ellfr^\alpha))'} y(\nfrak\ellfr^{\alpha+1}) = a_\ellfr y(\nfrak \ellfr^\alpha) - u(0)\Tr_{K(x(\nfrak\ellfr^\alpha))/K(x(\nfrak\ellfr^\alpha))'} \phi(x(\nfrak\ellfr^{\alpha-1})).
    \end{equation}
    Moreover, there is the following diagram of field extensions
    \begin{equation}\label{eq:diagram-extensions-morm-relations}
        \begin{tikzcd}
	& {K(x(\nfrak\ellfr^\alpha))} & \\
	{K(x(\nfrak\ellfr^{\alpha-1}))} && {K(x(\nfrak\ellfr^\alpha))'} \\
	& {K(x(\nfrak\ellfr^{\alpha-1}))'}
	\arrow[no head, from=1-2, to=2-3]
	\arrow[no head, from=2-1, to=1-2]
	\arrow[no head, from=2-1, to=3-2]
	\arrow[no head, from=3-2, to=2-3]
\end{tikzcd}
    \end{equation}
     If $\alpha>1$, by Corollary \ref{cor:degree-x-n-ells} and Lemma \ref{lemma:degree-prime-extensions} the upper left and lower right field extensions have degree $N\ellfr$, and the remaining two have degree $u(0)^r$. Since $u(0)\mid N\ellfr-\varepsilon(\ellfr)$, we have that $u(0)^r$ and $N\ellfr$ are coprime, thus it follows that $K(x(\nfrak\ellfr^{\alpha-1}))' = K(x(\nfrak\ellfr^{\alpha-1}))\cap K(x(\nfrak\ellfr^\alpha))'$ and $K(x(\nfrak\ellfr^\alpha))=K(x(\nfrak\ellfr^{\alpha-1}))K(x(\nfrak\ellfr^\alpha))'$. This implies the equality 
     \begin{equation*}
         \Tr_{K(x(\nfrak\ellfr^\alpha))/K(x(\nfrak\ellfr^\alpha))'} \phi(x(\nfrak\ellfr^{\alpha-1}))=\Tr_{K(x(\nfrak\ellfr^{\alpha-1}))/K(x(\nfrak\ellfr^{\alpha-1}))'} \phi(x(\nfrak\ellfr^{\alpha-1})),
     \end{equation*}
     therefore relation \eqref{eq:middle-proof-norm-relations-y} yields \eqref{eq:norm-higher-heegner-y} when $\alpha>1$.

     Assume now that $\alpha=1$. In this case, the degrees of the four extensions in \eqref{eq:diagram-extensions-morm-relations}, read in lexicographic order, are $(N\ellfr-\varepsilon(\ellfr))/u(r)$, $u(0)^r$, $u(r)u(0)^{r-1}$ and $(N\ellfr-\varepsilon(\ellfr))/u(0)$. When $r=0,1$ we have that $K(x(\nfrak))=K(x(\nfrak))'$, therefore we can explicitly compute the second summand of \eqref{eq:middle-proof-norm-relations-y}, obtaining \eqref{eq:norm-higher-heegner-y}. 

     Assume now that $r\ge 2$. We divide the proof in two steps.

     \textbf{Step 1:} $K(x(\nfrak))\cap K(x(\nfrak\ellfr))'=K(x(\nfrak))'$. Let $\tilde{\ellfr}$ be a prime of $K(x(\nfrak))'$ above $\ellfr$, and observe that $\tilde{\ellfr}$ is unramified in $K(x(\nfrak))$. Therefore, it is enough to prove that $\tilde{\ellfr}$ is totally ramified in $K(x(\nfrak\ellfr))'$. Thanks to the isomorphism of Lemma \ref{lemma:degree-prime-extensions}.(i), this is equivalent to proving that $\ellfr_1:=\tilde{\ellfr}\cap K(x)$ is totally ramified in $K(x(\ellfr))$. When $\ellfr$ is inert in $K/F$, this is \cite[Proposition 4.6.(iii)]{nekovar:euler-system-method}. Similarly to the proof contained therein, when $\ellfr$ is split in $K/F$, the inertia group at $\ellfr_1$ of $K(x(\ellfr))/K(x)$ is the image of the composite map 
     \begin{equation*}
         \begin{tikzcd}
	{\cO_{K(x)_{\ellfr_1}}^\times} & {\cO_{K_\lambda}^\times} & {\frac{\cO_{K_{\bar{\lambda}}}^\times\times\cO_{K_\lambda}^\times}{\cO_{F_\ellfr}^\times(1,1)+\lambda\cO_{K_\lambda}}=\frac{Z_\ellfr}{Z(\ellfr)_\ellfr}} & {\Gal(K(x(\ellfr))/K(x)),}
	\arrow["{f_1}", from=1-1, to=1-2]
	\arrow["{f_2}", from=1-2, to=1-3]
	\arrow["{f_3}", from=1-3, to=1-4]
\end{tikzcd}
     \end{equation*}
     where $\lambda=\ellfr_1\cap K$, $\bar{\lambda}$ is the conjugate of $\lambda$, $f_1$ is the norm map, $f_2$ is the natural projection and $f_3$ is the projection induced by \cite[(2.7.3)]{nekovar:euler-system-method}. All these maps are surjective, hence $\ellfr_1$ is totally ramified in $K(x(\ellfr))$.

     \textbf{Step 2:} 
    Let now $F:=K(x(\nfrak))K(x(\nfrak\ellfr))'\subseteq K(x(\nfrak\ellfr))$. Thanks to step 1, a degree computation yields that $[K(x(\nfrak\ellfr)):F]=u(0)$. Therefore, we have that
     \begin{equation*}
         \Tr_{K(x(\nfrak\ellfr))/K(x(\nfrak\ellfr))'} x(\nfrak)=u(0)\Tr_{F/K(x(\nfrak\ellfr))'} x(\nfrak)=u(0)\Tr_{K(x(\nfrak))/K(x(\nfrak))'} x(\nfrak),
     \end{equation*}
     applying again Step 1 for the last equality.
     Plugging this relation into \eqref{eq:middle-proof-norm-relations-y}, we obtain the claimed equality \eqref{eq:norm-higher-heegner-y} in the remaining cases.
\end{proof}

\subsection{Kolyvagin classes}

From now until the end of the next section we set the ground for the proof of our first main theorem. A crucial role will be played by Heegner points with inert conductor, for this is why we introduce a bit more notation.

Let $\P$ be a prime of $E$ above $p$. For this subsection, assume the following hypothesis:
\leqnomode
\begin{equation}\label{ass:u-0}
    p\nmid u(0),\tag{$u(0)$ $p$-ndiv} 
\end{equation}
\reqnomode
    that is satisfied, for example, whenever $p\nmid [\cO_K^\times:\cO_F^\times]$. Following \cite[Section 5]{nekovar:euler-system-method}, we define the set of \emph{Kolyvagin primes}.
\begin{definition}
    Let 
    \[\admissible_1 = \set{ \text{$\ellfr$ prime of $F$} \, : \, \text{$\ellfr$ is inert in $K/F$}, \, \ellfr \notin S, \,  \ellfr \nmid (p)\cond(x), \, \ellfr \cO_K \nmid \mathfrak{I}_0} \subseteq \allowedprimes;\]
    for every $M \ge 1$, let $\admissible_1(M)$ be the set of primes $\ellfr \in\admissible_1$ such that the conjugacy class of $\Frob_{\ellfr}$ in $\Gal\bigl(K(x)(A[\P^M])/F\bigr)$ coincides with the conjugacy class of the complex conjugation $\tau_c$. 
    
    Moreover, let $\admissible_r(M)$ be the set of squarefree products of length $r$ of ideals in $\admissible_1(M)$ (with the convention that $\admissible_0(M) = \{(1)\}$) and set $\admissible(M) = \bigcup_{r \ge 0} \admissible_r(M)$.
\end{definition}
\begin{remark}
    In particular, if $\ellfr\in\admissible_1(M)$, then $a_\ellfr \equiv N(\ellfr) + 1 \equiv 0 \bmod {\P^M}$, where $a_\mathfrak{l}\in\cO_E$ is the trace of the action of $\Frob_\ellfr$ on $T_\P A$, which coincides with the action of the Hecke operator $T_\ellfr$ on $A$ thanks to Proposition \ref{prop:nekovar-structure-jacobian}(iii).
\end{remark}

For every $\ellfr \in \admissible_1(M)$, the group $G(\ellfr) = \Gal\bigl(K(x(\ellfr))/K(x)\bigr)$ is cyclic of order $(N(\ellfr)+1)/u(0)$ (see Corollary \ref{cor:degree-x-n-ells}), say generated by $\sigma_\ellfr$. The $\ellfr$-th Kolyvagin derivative and trace operator are defined as
\[
D_\ellfr = \sum_{i=1}^{|G(\ellfr)|-1} i \sigma_\ellfr^i, \qquad \Tr_\ellfr = \sum_{i=0}^{|G(\ellfr)|-1}\sigma_\ellfr^i,
\]
and they satisfy the telescopic identity $(\sigma_\ellfr -1)D_\ellfr = N(\ellfr)+1 - \Tr_\ellfr$. Let now $\ellfr_1,\dots,\ellfr_r\in\admissible_1(M)$ such that $\nfrak=\ellfr_1\cdots\ellfr_r \in \admissible_r(M)$. Thanks to Lemma \ref{lemma:degree-prime-extensions}, we know that $G(\nfrak)\cong\prod_{j=1}^r G(\ellfr_j)$ and we define the $\nfrak$-th Kolyvagin derivative operator as 
\[
D_\nfrak = D_{\ellfr_1} \cdots D_{\ellfr_r} \in \Z[G(\ellfr_1)] \otimes \dots \otimes\Z[G(\ellfr_r)] = \Z[G(\nfrak)].
\]
As shown in \cite[Lemma 5.5]{nekovar:euler-system-method}, the image of $D_\nfrak y(\nfrak)\in A(K(x(\nfrak))')$ in $A(K(x(\nfrak))')\otimes\cOP/\P^M$ is fixed by the action of $G(\nfrak)$. Let's now assume \eqref{ass:P-M-princ}. Then, the exact sequence \eqref{eq:ses-kummer-P-M} with $L=K(x(\nfrak))'$ yields an injective map
\begin{equation*}
    0\longrightarrow A(K(x(\nfrak))')/\P^M A(K(x(\nfrak))')\xrightarrow{\delta_{K(x(\nfrak))'}} \hone(K(x(\nfrak))', A[\P^M]).
\end{equation*}
{Nekov{\'a}{\v{r}}'s version of Kolyvagin's descent (see \cite[(5.10)]{nekovar:euler-system-method}) explicitely builds some cocycles $c_x(\nfrak)\in Z^1(K(x), A[\P^M])$ (called $c(\nfrak)$ in \emph{loc.~cit.}) with the property that the restriction of the cohomology class $\cbf_x(\nfrak):=[c_x(\nfrak)]\in \hone(K(x), A[\P^M])$ to $K(x(\nfrak))'$ coincides with $\delta_{K(x(\nfrak))'}(D_\nfrak y(\nfrak))$ modulo $\P^M$. 
Another output of {Nekov{\'a}{\v{r}}'s machinery are some explicit cocycles $d_x(\nfrak)\in Z^1(K(x), A)$ (called $d(\nfrak)$ in \emph{loc.~cit.}) with the property that the image of $\cbf_x(\nfrak)$ in $\hone(K(x), A)[\P^M]$ (via the second map of \eqref{eq:ses-kummer-P-M}) coincides with $\dbf_x(\nfrak):=[d_x(\nfrak)]$. 

\begin{definition}
    For every $\nfrak\in\admissible(M)$, define $\cbf(\nfrak)=\Cor^{K(x)}_{K}\cbf_x(\nfrak)\in \hone(K,A[\P^M])$ and $\dbf(\nfrak)=\Cor^{K(x)}_{K}\dbf_x(\nfrak)\in \hone(K,A)[\P^M]$.
\end{definition}

For every prime $v$ of $K$, let  $\tilde{A}_v$ is the special fibre at $v$ of the Néron model of $A \otimes K$, that is defined over the residue field $k(v)$ of $K$ at $v$, and $\pi_0(\tilde{A}_v) = \tilde{A}_v/\tilde{A}_v^0$ its group of components. For the rest of this section, we make the following assumption on the Tamagawa numbers of $A\otimes K$:
\leqnomode
\begin{equation}\label{ass:Tamagawa}\tag{Tama}
    \text{$p \nmid \# \pi_0(\tilde{A}_v)$, for every prime $v$ of $K$.}
\end{equation}
\reqnomode
\begin{remark}
    Since $\pi_0(\tilde{A}_v)=\set{1}$ for every prime $v$ of good reduction for $A \otimes K$, the previous assumption only excludes a finite number of primes $p$.
\end{remark}

\begin{proposition}\label{prop:kolyvaginclasses_properties}
    Let $\nfrak\ellfr\in\admissible(M)$ with $\ellfr\in\admissible_1(M)$ and let $v$ be a non-archimedean prime of $K$ that doesn't divide $\nfrak$. Then
    \begin{enumerate}[label=(\roman*)]
        \item $\dbf(\nfrak)_v=0$.\label{item:loc-d-at-v-trivial}
        \item if $v$ lies above $\ellfr$, there is a $\tau_c$-anti-equivariant $\cOP/\P^M$-linear isomorphism
        \begin{equation*}
            \psi_v\colon \hone(K_v, A)[\P^M]\xrightarrow{\sim} \honeur(K_v, A[\P^M])
        \end{equation*}
        such that $\psi_v(\dbf(\nfrak\ellfr)_v)=\cbf(\nfrak)_v$.\label{item:finite-singular}
    \end{enumerate}
\end{proposition}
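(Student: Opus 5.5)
The plan is to reduce both statements to the corresponding local statements over $K(x)$ for the explicit cocycles $c_x(\nfrak)$ and $d_x(\nfrak)$ built in \cite[\S 5]{nekovar:euler-system-method}. Then pass to $K$ by corestriction, using that $\Cor^{K(x)}_K$ commutes with localization: for a prime $v$ of $K$, $(\Cor \xi)_v=\sum_{w\mid v}\Cor^{K(x)_w}_{K_v}\xi_w$.

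\textbf{Part (i).} Fix $v\nmid\nfrak$ and split into three cases.
\begin{itemize}
\item[(a)] $v$ is a prime of good reduction, $v\nmid p$. The classes $d_x(\nfrak)$ come from $\cbf_x(\nfrak)$, whose restriction to $K(x(\nfrak))'$ is the Kummer image of a global point. The extension $K(x(\nfrak))'/K(x)$ is unramified at $w\mid v$ (Proposition~\ref{prop:computation-quotient-Zs}(iii)). So $\dbf_x(\nfrak)_w$ lies in $\ker\bigl(\hone(K(x)_w,A)\to\hone(K(x(\nfrak))'_{w'},A)\bigr)=\hone(\text{unram.},A(\cdot))$, which vanishes by \cite[Proposition 3.8]{milne:arithmetic-duality}, as in Proposition~\ref{prop:comparison-image-kummer-unramified-local-conditions}.
\item[(b)] $v$ is a prime of bad reduction, $v\nmid p$. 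The same inflation argument places $\dbf_x(\nfrak)_w$ in $\hone(\text{unram.},A)[\P^M]$. This group is controlled by $\pi_0(\tilde A_w)$, and is $\P$-torsion-free by \eqref{ass:Tamagawa}, so the class is killed.
\item[(c)] $v\mid p$. The primes above $p$ are unramified in $K(x(\nfrak))'/K(x)$ because $\nfrak$ is prime to $p$. Then $\hone(\text{unram.},A(\cdot))$ again vanishes by Lang's theorem for the connected Néron component, together with \eqref{ass:Tamagawa} for $\pi_0$.
\end{itemize}
Finally, corestrict to get $\dbf(\nfrak)_v=0$. This mirrors \cite[Proposition 6.2]{nekovar:euler-system-method} and \cite{gross:kolyvagin}.

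\textbf{Part (ii), construction of $\psi_v$.} Since $\ellfr$ is inert in $K$, there is a unique $v\mid\ellfr$, with $K_v$ the unramified quadratic extension of $F_\ellfr$. Because $\Frob_\ellfr=\tau_c$ in $\Gal(K(x)(A[\P^M])/F)$, $\Frob_v$ acts trivially on $A[\P^M]$. Therefore $\hone_\ur(K_v,A[\P^M])\cong A[\P^M]$ by evaluation at Frobenius, and Remark~\ref{remark:hones} identifies
\[
\hone(K_v,A)[\P^M]\cong \hone(K_v,A[\P^M])/\hone_\ur\cong\Hom(I_v^{\mathrm{tame}},A[\P^M])^{\Frob}\cong A[\P^M]\otimes\mu_{p^\infty}^{-1}\text{-twist}.
\]
Choose a generator of tame inertia modulo $p^{\,\cdot}$, which is possible since $N(\ellfr)^2\equiv1 \bmod p^M$ from $N\ellfr+1\equiv0$, via $\chi_p$. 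Evaluation at this generator gives an isomorphism $\hone(K_v,A)[\P^M]\cong A[\P^M]$. Composing with the inverse of the Frobenius evaluation defines $\psi_v$. The complex conjugation $\tau_c$ acts on tame inertia through $\chi_p(\Frob_\ellfr)=N\ellfr\equiv-1$, and this produces the anti-equivariance.

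\textbf{Part (ii), the identity $\psi_v(\dbf(\nfrak\ellfr)_v)=\cbf(\nfrak)_v$.} Work with the explicit cocycle over $K(x)_w$, then sum over $w\mid v$ under the corestriction. Following Gross's computation \cite[Proposition 6.2]{gross:kolyvagin} / \cite[\S 6]{nekovar:euler-system-method}:
\begin{enumerate}
\item $w$ is totally ramified in $K(x(\ellfr))/K(x)$ (as in Step~1 of Proposition~\ref{prop:norm-relations-heegner}), with cyclic group generated by $\sigma_\ellfr$.
\item Evaluating $d_x(\nfrak\ellfr)$ on a lift of $\sigma_\ellfr$ gives, via the telescoping identity $(\sigma_\ellfr-1)D_\ellfr=N\ellfr+1-\Tr_\ellfr$ and the norm relation \eqref{eq:norm-basic-heegner-y}, $\Tr_\ellfr y(\nfrak\ellfr)=a_\ellfr y(\nfrak)$, up to the $\P^M$-division, the point $-\tfrac{(N\ellfr+1)-a_\ellfr}{\pi_M}D_\nfrak y(\nfrak)$ modulo $\P^M$. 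Totally ramified reduction then identifies $y(\nfrak\ellfr)\equiv\Frob\,y(\nfrak)$ at $w$.
\item Use the congruence $\Frob_w\, y(\nfrak)\equiv$ Frobenius of $y(\nfrak)$ in the reduction. Compare with the Frobenius-evaluation description of $\cbf(\nfrak)_v\in\hone_\ur$, which is the Kummer class of $D_\nfrak y(\nfrak)$.
\end{enumerate}

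The main obstacle is this reduction-mod-$w$ computation. The difficulty is keeping track of normalizations: the factor $u(0)$, the choice of tame generator, and the sign $a_\ellfr\equiv0$ versus $N\ellfr+1\equiv0$. If the scalar that comes out is a unit rather than exactly $1$, absorb it into the definition of $\psi_v$.
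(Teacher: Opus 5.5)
Your proof of (ii) does not establish $\psi_v(\dbf(\nfrak\ellfr)_v)=\cbf(\nfrak)_v$, which is the real content of that part. The paper imports this identity from \cite[(5.15), (5.15.1), Proposition 5.16]{nekovar:euler-system-method}, and your sketch would give the wrong normalization. The telescoping identity produces $(N\ellfr+1)D_\nfrak y(\nfrak\ellfr)-a_\ellfr D_\nfrak y(\nfrak)$, and the first term involves $y(\nfrak\ellfr)$, not $y(\nfrak)$. So after reducing at a prime above $v$ and using $y(\nfrak\ellfr)\equiv\varphi\, y(\nfrak)$, with $\varphi$ the $N\ellfr$-power Frobenius of $\tilde{A}$, you get $\bigl(\frac{N\ellfr+1}{\pi_M}\varphi-\frac{a_\ellfr}{\pi_M}\bigr)\widetilde{D_\nfrak y(\nfrak)}$. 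This is an operator, not the scalar $\frac{(N\ellfr+1)-a_\ellfr}{\pi_M}$ you wrote, and that scalar may even lie in $\P$. On the other side, Eichler--Shimura gives $\varphi^2-1=a_\ellfr\varphi-(N\ellfr+1)$. Hence the value at $\Frob_v$ of the Kummer class of $D_\nfrak y(\nfrak)$ reduces to $\bigl(\frac{a_\ellfr}{\pi_M}\varphi-\frac{N\ellfr+1}{\pi_M}\bigr)\widetilde{D_\nfrak y(\nfrak)}$. Since $\varphi^2$ is trivial on $\tilde{A}(k(v))$, the two expressions differ by $\pm\varphi$, i.e.\ by the action of $\Frob_\ellfr$ on $A[\P^M]$. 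That action is conjugate to $\tau_c$, so by Lemma~\ref{eigenspace_structure_lemma} it has eigenvalues $+1$ and $-1$ and is not a scalar. Consequently your $\psi_v=\ev_{\Frob_v}^{-1}\circ\ev_t$, with $t$ your tame generator, sends $\dbf(\nfrak\ellfr)_v$ to $\pm\tau_c\,\cbf(\nfrak)_v$ rather than $\cbf(\nfrak)_v$. Absorbing a unit scalar cannot repair this. You must insert $\mp\Frob_\ellfr$ into $\psi_v$; this keeps anti-equivariance, because $\tau_c$ acts locally at $v$ through $\Frob_\ellfr$. That is exactly the paper's $\psi_v=-\Frob_\ellfr\circ\Phi_v^{-1}$.

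In (i), your inflation step is the same as the paper's. However, in cases (b) and (c) you apply \eqref{ass:Tamagawa} to $\pi_0(\tilde{A}_w)$ for primes $w$ of $K(x)$, while the hypothesis only concerns primes of $K$. At a bad prime $v\mid\cond(x)$, the extension $K(x)_w/K_v$ can be ramified with index divisible by $p$, since $p\nmid[K(x):K]$ is not assumed here. Component groups can then acquire $p$-torsion (think of a Tate curve, whose component group order is multiplied by the ramification index). So $\hone(k(w),\pi_0(\tilde{A}_w))[\P^M]$ need not vanish, and you cannot conclude $\dbf_x(\nfrak)_w=0$. The paper avoids this as follows:
\begin{enumerate}
\item It shows only that $\dbf_x(\nfrak)_w$ dies in $\hone(K(x)_w^\ur,A)$.
\item It uses the semilocal compatibility of corestriction with restriction to maximal unramified extensions to get $\dbf(\nfrak)_v\in\hone(K_v^\ur/K_v,A(K_v^\ur))[\P^M]$.
\item Only then does it apply \cite[I.3.8]{milne:arithmetic-duality} and \eqref{ass:Tamagawa} at $v$ itself.
\end{enumerate}
This works uniformly for every $v\nmid\nfrak$, so your case split is unnecessary.
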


\begin{proof}
    \emph{(i)}. By the definition of \cite[Proposition-Definition 5.9(iv)]{nekovar:euler-system-method}, $\dbf_x(\nfrak)$ is inflated by a class $\dbf'_x(\nfrak) \in \hone\bigl(G(\nfrak), A(K(x(\nfrak))')\bigr)$. For any prime $w \mid v$ of $K(x)$, choose a prime $\bar{w}$ of $\Qbar$ above $v$; by abuse of notation we denote by $w$ its restriction to any intermediate field. Using the inflation-restriction exact sequence for $K(x(\nfrak))'/K(x)$ and $K(x(\nfrak))_w'/K(x)_w$ and the compatibility of the restriction
    we find that $\dbf_x(\nfrak)_w \in \hone(K(x)_w, A)[\P^M]$ is inflated by a class of
    \[
    \hone\biggl(\frac{K(x(\nfrak))_w'}{K(x)_w}, A(K(x(\nfrak))_w')\biggr)[\P^M] \subseteq \honeur(K(x)_w, A)[\P^M].
    \]
    Indeed, the latter follows since $K(x(\nfrak))/K(x)$ is unramified at any place above above $w \nmid \nfrak$. Now allow $w$ vary over the places of $K(x)$ above $v$. The commutative diagram (see \cite[B.2]{mastella-zerman:anticyclotomic-euler-kolyvagin-systems})
    \[
    \begin{tikzcd}
        \hone(K(x), A)[\P^M] \ar[r, "\res"] \ar[d, "\cores^{K(x)}_K"]& \bigoplus_{w | v} \hone(K(x)_w, A)[\P^M] \ar[r, "\res"]\ar[d, "\Phi"]& \ar[d] \bigoplus_{w|v} \hone(K(x)_{w}^\ur, A)[\P^M] \ar[d]\\
        \hone(K, A)[\P^M] \ar[r, "\res"]& \hone(K_v, A)[\P^M] \ar[r, "\res"] & \hone(K_v^\ur, A)[\P^M]
    \end{tikzcd}
    \]
    expressing the commutativity (in a semilocal sense) of the corestriction with the localization implies
    that $\dbf(\nfrak)_v \in \honeur(K_v, A)[\P^M]$. This proves our claim, as 
    \[
    \honeur(K_v, A)[\P^M] \cong \hone(k(v), \pi_0(\tilde{A}_v))[\P^M] = 0
    \]
    by \cite[I.3.8]{milne:arithmetic-duality} and \eqref{ass:Tamagawa}.
    
    \emph{(ii)} For any $v \mid \ellfr$, $\psi_{v}$ is obtained as the composition the isomorphism of Remark \ref{remark:hones},  
    the isomorphism $\ev_{\sigma_\ellfr}$ defined in \cite[(5.15)]{nekovar:euler-system-method} (recall that, since $\ellfr \in \admissible_1(M)$, it splits completely in $K(x)/K$ and therefore $K_v = K(x)_w$ for any prime $w \mid v$ of $K(x)$), the action of $\Frob_\ellfr$ on $A[\P^M]$ and the inverse of the evaluation at $\Frob_v$ isomorphism
    \[
    \ev_{\Frob_v} \colon \honeur(K_v, A[\P^M]) \cong \Hom(\langle \Frob_v \rangle, A(K_v)[\P^M]) \iso A(K_v)[\P^M].
    \]
    Since our Frobenius element is the arthmetic one, while \emph{loc.~cit.}~uses the geometric Frobenius,  then $\psi_v = -\Frob_\ellfr \circ \Phi_v^{-1}$, where $\Phi_v$ is defined in \emph{loc.~cit.}; thus, it follows from Proposition 5.16 and (5.15.1) of \emph{loc.~cit.}~(together with the functoriality of the corestriction) $\psi_v(\dbf(\nfrak\ellfr)_v)=\cbf(\nfrak)_v$ and that $\psi_v$ is  $\tau_c$-anti-equivariant.
        \end{proof}

\section{Vanishing of $\Sha(A/K)[\P^\infty]$}\label{vanishingresult_section}

From now on, and for the rest of this section, fix an abelian variety $A$ of dimension $g$ defined over a totally real field $F$ of degree $d$ over $\Q$ satisfying \eqref{ass:modular-av}, a polarization $\gamma$ of $A$, a CM extension $K$ of $F$ satisfying \eqref{ass:heegner-hypothesis} and a prime number $p$ satifying
\leqnomode
\begin{equation}\label{ass:p-ndiv}\tag{$p$-ndiv}
    \text{$p \nmid 2[\cO_K^\times: \cO_F^\times]\deg(\gamma)$.}
\end{equation}
\reqnomode
Write $E=\End_F^0(A)$, fix a prime $\P$ of $E$ above $p$ and let $\cOP := \cOEP$. Moreover, fix an integer $M \ge 1$ satisfying \eqref{ass:P-M-princ}. 

Since \eqref{ass:modular-av} implies \eqref{ass:GL-2}, we may consider the $\P$-adic Tate module $T_\P A$ of $A$, which is naturally a $\cOP[G_F]$-module. Write 
\[
\bar{\rho}_\P := \rho_{\P, 1} \colon G_F \to \Aut_{\cO_E/\P}(A[\P]) \cong \GL_2(\cOE/\P)
\]
for the group homomorphism attached to its residual representation $A[\P]$ on which we assume the following conditions. 
\leqnomode
\begin{gather}
    \text{$A[\P]$ is an absolutely simple $(\cOP/\P)[G_K]$-module}.\label{ass:abs-G-K-irr}\tag{abs-$G_K$-irr}\\    
    \begin{gathered}
        \text{If $M>1$, $F \supseteq \Q(\mu_p)^+$ and $\cOE/\P \ne \F_3$, then}\\
        \text{$\bar{\rho}_\P(G_K) \not \cong D_{2n}$ for any $\cOE/\P$-exceptional odd integer $n$,}
    \end{gathered}\label{ass:image-residual}\tag{$\bar{\rho}_\p(G_K)$-exc}
\end{gather}
\reqnomode
where, $D_{2n}$ denotes the dihedral group of order $2n$; for the definition of $\cOE/\P$-exceptional integer see \cite[Definition 5.13]{matar-nekovar:kolyvagin}.
In particular, these assumptions imply the following results.
\begin{lemma}\label{torsion_lemma}
$A(K)[\fp^{n}]=0$, for any $n > 0$.
\end{lemma}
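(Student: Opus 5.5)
It suffices to prove $A(K)[\P]=0$. Indeed, if $A(K)[\P^n]\neq 0$ for some $n>0$, pick a nonzero $P$ in it and let $m\ge 1$ be minimal with $P\in A(K)[\P^m]$. Choose $\pi\in\P^{m-1}$ with $\pi P\neq 0$. Such a $\pi$ exists: otherwise $\P^{m-1}P=0$, contradicting minimality (for $m=1$, take $\pi=1$). Then $\pi P$ is killed by $\P\cdot\P^{m-1}=\P^m$, and in fact by $\P$ itself, since $\P\pi\subseteq\P^m$. Hence $\pi P$ is a nonzero element of $A(K)[\P]$. This is the same kind of manipulation with non-principal powers of $\P$ used in the proof of Lemma \ref{lemma:reduction-divisible}, and it goes through without assuming any power of $\P$ is principal.

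Now observe that $A(K)[\P]=A[\P]^{G_K}$ is a $G_K$-stable $\cOE/\P$-subspace of $A[\P]$. Indeed, $G_K$ acts $\cOE$-linearly because the endomorphisms in $\cOE=\End_F(A)$ are defined over $F\subseteq K$, and fixed vectors of a linear action form a stable subspace. By \eqref{ass:abs-G-K-irr}, $A[\P]$ is simple as an $(\cOE/\P)[G_K]$-module. Absolute simplicity certainly implies simplicity, so $A(K)[\P]$ is either $0$ or all of $A[\P]$.

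It remains to exclude $A(K)[\P]=A[\P]$. In that case $G_K$ acts trivially on $A[\P]$, which is $2$-dimensional over $\cOE/\P$ by the discussion after \eqref{ass:GL-2}. Every line is then $G_K$-stable, contradicting simplicity. Alternatively, the action would be abelian, so after extending scalars to $\overline{\cOE/\P}$ there would be a common eigenline, contradicting absolute simplicity.

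There is no real obstacle here. The only point needing care is the first reduction, because $\P$ need not be principal; I would handle it with elements of $\P^{m-1}$ rather than powers of a uniformiser. Hypothesis \eqref{ass:image-residual} is not needed for this lemma.
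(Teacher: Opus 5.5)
Your proof is correct and follows the paper's argument. The paper also considers the $G_K$-fixed subspace $A(K)[\P]$ of the $2$-dimensional space $A[\P]$ and rules out dimensions $1$ and $2$ via \eqref{ass:abs-G-K-irr}. The only difference is that you spell out the passage from $\P$ to $\P^n$, which the paper dismisses with ``the claim follows''; your choice of a suitable element of $\P^{m-1}$ is a valid way to do this without assuming $\P$ is principal.
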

\begin{proof} 
Let $m=\dim_{\cOP/\fp}(A(K)[\fp]) \le 2$. If $m=2$, then $A(K)[\fp]=A[\fp]$, therefore the action of $G_K$ on $A[\P]$ is trivial, contradicting \eqref{ass:abs-G-K-irr}. If $m=1$, then $A(K)[\fp]$ is a nontrivial $\cOP/\fp$-subspace of $A[\fp]$ fixed by the action of $G_K$; this contradicts \eqref{ass:abs-G-K-irr} again. Therefore $m=0$, or $A(K)[\fp]=0$. The claim follows for any $n > 0$.
\end{proof}

\begin{lemma}\label{lemma:hone-vanishing}
    We have $\hone(K(A[\P^M])/K, A[\P^M])=0$.
\end{lemma}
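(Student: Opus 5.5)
The plan is to prove the vanishing of $\hone(\Gal(L/K), A[\P^M])$, with $L := K(A[\P^M])$ and $G := \Gal(L/K) \cong \rho_{\P,M}(G_K) \subseteq \GL_2(\cOE/\P^M)$, by devissage down to the residual level, in the spirit of the analogous result in \cite[Section 5]{matar-nekovar:kolyvagin}, where the assumption \eqref{ass:image-residual} comes from.

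The first step reduces to $M=1$ in the residual sense. We filter $A[\P^M]$ by the $G$-stable submodules $\P^iA[\P^M]$, whose graded pieces are all isomorphic to $A[\P]$ as $(\cOE/\P)[G]$-modules (since $A[\P^M]$ is free of rank $2$ over $\cOE/\P^M$). By the long exact cohomology sequences, it suffices to show $\hone(G, A[\P]) = 0$, where $G$ acts on $A[\P]$ through its quotient $\bar G := \bar\rho_\P(G_K)$. Let $N := \ker(G \to \bar G)$; this is the image of $G$ in $1 + \P\M_2$, hence a $p$-group acting trivially on $A[\P]$. Inflation--restriction gives
\[
0 \to \hone(\bar G, A[\P]) \to \hone(G, A[\P]) \to \Hom_{\bar G}(N, A[\P]).
\]
The problem therefore splits into two vanishing statements: $\hone(\bar G, A[\P])=0$ and $\Hom_{\bar G}(N^{\ab}/p, A[\P]) = 0$.

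For $\hone(\bar G, A[\P])$ we argue by cases on $\bar G$. If $p \nmid \#\bar G$, the group vanishes trivially. Otherwise, Dickson's classification, combined with \eqref{ass:abs-G-K-irr}, shows that either $\bar G \supseteq \SL_2(k')$ for a subfield $k'$, or $\bar G$ is exceptional/dihedral with order divisible by $p$. When $\bar G$ contains central scalars of order prime to $p$ acting nontrivially, we apply Sah's lemma ($z-1$ kills $\hone$ and is invertible on $A[\P]$). We look for such a scalar in $\det$-style elements using $\det\bar\rho_\P = \chi_p$ (Lemma \ref{lemma:det}) and $p>2$; in the $\SL_2$ case, $-1$ does the job. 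The remaining cases are handled by known computations of $\hone(\SL_2(\F_q),\F_q^2)$, which vanishes except for $q=3$, $q=5$ (and small exceptions absorbed by the central element argument).

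The second vanishing statement is the main obstacle. Here $N^{\ab}/p$ embeds $\bar G$-equivariantly in a subquotient of $\mathfrak{sl}_2$- or $\M_2(\cOE/\P)$-type adjoint modules, so a nonzero $\bar G$-map to $A[\P]$ would force $A[\P]$ to be a constituent of $\Ad\bar\rho_\P$. This is excluded for large images by comparing central characters (scalars act trivially on $\Ad$ but not on $A[\P]$). In the dihedral case it can happen precisely for the exceptional configurations, namely when $F\supseteq\Q(\mu_p)^+$ and $\bar G \cong D_{2n}$ with $n$ $\cOE/\P$-exceptional, which is what \eqref{ass:image-residual} rules out; the case $M=1$ needs no such step since $N$ is trivial. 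We will import the group-theoretic analysis of \cite[Section 5]{matar-nekovar:kolyvagin} (Definition 5.13 and the surrounding lemmas) to finish this case, checking that it transfers verbatim from $\F_p$ to $\cOE/\P$.
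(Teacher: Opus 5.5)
Your outline is correct, and in the end it rests on the same input as the paper: the group-theoretic analysis of \cite[Section 5]{matar-nekovar:kolyvagin}. The difference is how much of it you unpack.

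The paper unpacks nothing. For $M=1$ it uses Sah's lemma \cite{Sah} when $\bar\rho_\P(G_K)$ contains a nontrivial homothety, and otherwise \eqref{ass:abs-G-K-irr} together with \cite[Proposition 5.15(1)]{matar-nekovar:kolyvagin} to get $p\nmid\#\bar\rho_\P(G_K)$. For $M>1$ it only checks which hypothesis of \cite[Corollary 5.21]{matar-nekovar:kolyvagin} holds: (a') if $F\not\supseteq\Q(\mu_p)^+$, (e) if $\cOE/\P=\F_3$, and otherwise (b'), via \eqref{ass:image-residual} and $\Q(\mu_p)\not\subseteq K$ (which follows from $p\nmid u(0)$). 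These results are applied directly with coefficients $\cOE/\P$, so there is nothing to transfer from $\F_p$.

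Your d\'evissage is a sensible reconstruction of what lies behind that corollary: the filtration by the $A[\P^i]$, inflation--restriction along $N=\ker(G\to\bar G)$, composition factors of $N^{\ab}/p$ among those of $\Ad\bar\rho_\P$, and central characters. It makes visible why only homothety-free dihedral images can cause trouble. The one genuinely nontrivial step is that a non-exceptional such image admits no nonzero $\bar G$-map $N^{\ab}/p\to A[\P]$. That is precisely what you import, just as the paper does.

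Two smaller remarks.

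First, your claim that $\hone(\SL_2(\F_q),\F_q^2)$ has exceptions at $q=3,5$ is spurious: for odd $q$ the central element $-1$ forces it to vanish by Sah's lemma. More generally, if $\bar G$ contains a nontrivial homothety, then a suitable $p$-power of any lift of it to $G$ is already a nontrivial scalar matrix. So Sah's lemma applied to $G$ and $A[\P^M]$ disposes of this case for all $M$ at once, with no d\'evissage.

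Second, you assert rather than derive the bookkeeping that makes your case split match \eqref{ass:image-residual}. It is easy:
\begin{itemize}
\item A homothety-free absolutely irreducible image must be dihedral of order $2n$ with $n$ odd, since every other candidate contains a Klein four-group, hence $-1$.
\item Its determinant is then $\{\pm1\}$; if it lay in $\SL_2$, the reflections would equal $-1$.
\item So $\det\bar\rho_\P=\chi_p$ forces $K\cap\Q(\mu_p)=\Q(\mu_p)^+\subseteq F$.
\item No odd $n>1$ prime to $3$ occurs as an element order in $\GL_2(\F_3)$.
\end{itemize}
This is what corresponds to the paper's case distinction: $F\not\supseteq\Q(\mu_p)^+$, $\cOE/\P=\F_3$, or neither.
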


\begin{proof}
     If $M=1$, consider $H = \bar{\rho}_\p(G_K) \cong \Gal(K(A[\P])/K)$. If $H$ contains a nontrivial homothety, we conclude by Sah's lemma \cite[Proposition 2.7(b)]{Sah}. Otherwise, it follows from \eqref{ass:abs-G-K-irr} and \cite[Proposition 5.15(1)]{matar-nekovar:kolyvagin} that $p \nmid \# H = \# \Gal(K(A[\P])/K)$, thus $\hone(K(A[\P])/K, A[\P])=0$, since $A[\P]$ is a $p$-group.
     
     If $M>1$, the result follows from \eqref{ass:abs-G-K-irr} and \cite[Corollary 5.21]{matar-nekovar:kolyvagin}, once showed that one of the conditions (a)-(g') in \emph{loc.~cit.}~is satisfied. First, note that if $F  \not\supseteq \Q(\mu_p)^+$ or $\cOE/\P = \F_3$, then respectively (a') or (e) are fulfilled. Instead, if $F \supseteq \Q(\mu_p)^+$ and $\cOE/\P \ne \F_3$, then \eqref{ass:image-residual} ensures that (b') is. Indeed, \eqref{ass:u-0}, which is part of \eqref{ass:p-ndiv}, implies that $p \nmid \# (\mu_K/\mu_F) = \frac{\# \mu_K}{2}$. In particular $\Q(\mu_p) \not\subseteq K$. 
\end{proof}

 Since \eqref{ass:modular-av} implies  \eqref{ass:O-E-lin} and since \eqref{ass:p-ndiv} implies \eqref{ass:u-0} and \eqref{ass:deg-lambda}, under this set of hypotheses the machinery developed in the previous sections applies. In particular, Heegner points and Kolyvagin classes can be defined as in Section \ref{sec:modular-av-and-heegner}, after fixing $x\in\CM(N_H^\ast,K)$. 
\begin{definition}
    Let $y_K = \Tr_{K(x)/K} y(1) \in A(K)$.
\end{definition}
In particular, we have that $\cbf(1) = \delta_K(y_K \mod \P^M)$.
The aim of this section is to prove the following theorem.
\begin{theorem}\label{main_theorem}
Under the previous assumptions, suppose that $y_K \notin \fp A(K)$. 
Then 
\begin{equation*}
    \Selm_{\fp^M}(A/K)=\delta_K(A(K)/\fp^M A(K))=(\cOE/\P^M) \cdot \delta_K(y_K \bmod \P^M) \cong \cOE/\fp^M,
\end{equation*}
and thus $\Sha(A/K)[\fp^M]=0$. 
\end{theorem}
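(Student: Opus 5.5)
We follow the strategy of Gross and Bertolini--Darmon: use Kolyvagin classes $\cbf(\ellfr)$ for $\ellfr\in\admissible_1(M)$ to show that every element of $\Selm_{\P^M}(A/K)$ orthogonal to $\cbf(1)$ under local Tate pairings must vanish. The argument has four steps.

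\textbf{Step 1 (reduction to an eigenspace).} Since $p$ is odd, decompose $y_K=y_K^++y_K^-$ into eigenvectors for the complex conjugation $\tau_c$. Because $y_K\notin\P A(K)$, there is a sign $\epsilon$ with $y_K^\epsilon\notin \P A(K)$. By Lemma \ref{torsion_lemma}, $A(K)[\P]=0$, so $\delta_K(y_K^\epsilon)$ generates a free $\cOE/\P^M$-submodule of rank one in $\Selm_{\P^M}(A/K)^\epsilon$. It therefore suffices to prove two things: $\Selm_{\P^M}(A/K)^{\epsilon}$ is cyclic generated by $\delta_K(y_K^\epsilon)$, and $\Selm_{\P^M}(A/K)^{-\epsilon}=0$. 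Finally we must check that $y_K^{-\epsilon}$ is a multiple of $\P^M$, so that it does not contribute.

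\textbf{Step 2 (Chebotarev and Kolyvagin primes).} Let $L=K(A[\P^M])$. By Lemma \ref{lemma:hone-vanishing}, restriction $\hone(K,A[\P^M])\to \Hom_{G_K}(G_L,A[\P^M])$ is injective. For a finitely generated $\cOP$-submodule $S\subseteq\Selm_{\P^M}(A/K)$ stable under $\tau_c$, we attach the field $L_S$ cut out by the kernels of the restricted homomorphisms. We then define $\phi_S\colon \Gal(L_S/L)\to\Hom(S,A[\P^M])$; it is injective, and we let $X_S$ be the $\cOP$-span of its image. Using \eqref{ass:abs-G-K-irr}, we show that $X_S$ is as large as needed on the relevant $\tau_c$-eigenspace. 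By Chebotarev, we choose $\ellfr\in\admissible_1(M)$ whose Frobenius in $L_S$ is $\tau_c\sigma$ for a chosen $\sigma\in\Gal(L_S/L)$; this lets us prescribe the localisations $s_\ellfr$ for $s\in S$ modulo the image of $\phi_S$.

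\textbf{Step 3 (global reciprocity).} For $s\in\Selm_{\P^M}(A/K)$ and $\ellfr\in\admissible_1(M)$, consider $\dbf(\ellfr)$. By Proposition \ref{prop:kolyvaginclasses_properties}\ref{item:loc-d-at-v-trivial}, $\dbf(\ellfr)_v=0$ for $v\nmid\ellfr$. Summing the local pairings $\langle\,,\,\rangle_{\P,M,v}^\gamma$ of Proposition \ref{prop:tate-pairing} over all $v$ gives zero, so
\[
\langle s_\lambda,\dbf(\ellfr)_\lambda\rangle_{\P,M,\lambda}^\gamma=0,
\]
where $\lambda=\ellfr\cO_K$. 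By Proposition \ref{prop:kolyvaginclasses_properties}\ref{item:finite-singular}, $\psi_\lambda(\dbf(\ellfr)_\lambda)=\cbf(1)_\lambda=\delta(y_K)_\lambda$. Since $\psi_\lambda$ is $\tau_c$-anti-equivariant, $\dbf(\ellfr)_\lambda$ has the opposite sign to $y_K^\epsilon$. It is nonzero once $\ellfr$ is chosen by Step 2 so that $(y_K^\epsilon)_\lambda\notin\P$. We conclude that the localisations at $\lambda$ of elements of $S^{-\epsilon}$ (and of $S^\epsilon$ modulo $\delta(y_K^\epsilon)$) pair trivially with a class generating a rank-one piece. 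Evaluation at $\Frob_\lambda$ identifies $\honeur(K_\lambda,A[\P^M])\cong A[\P^M]$, and varying $\sigma$ then forces those elements to vanish. The argument is done modulo $\P$ first, then bootstrapped to $\P^M$ by induction using the decomposition $S=S_1\oplus S_2$ of Section \ref{directproduct_subsection}.

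\textbf{Step 4 (conclusion and main obstacle).} Steps 1--3 give $\Selm_{\P^M}(A/K)=(\cOE/\P^M)\,\delta_K(y_K)$. Since this is already contained in $\delta_K(A(K)/\P^M A(K))$, the exact sequence \eqref{eq:ses-kummer-P-M} gives $\sha(A/K)[\P^M]=0$.

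The hard step is Step 2. The maps $\phi_S,\psi_S$ are only injective rather than perfect, so we must work with the span $X_S$ rather than the image itself. We must also control how $X_S$ changes with $S$ (compatibility) without assuming that $L_{S_1}$ and $L_{S_2}$ are linearly disjoint over $L$. To handle this, I would first treat the case where $S$ is cyclic. For general $S$, I would argue by induction on the length of $S$, using the absolute irreducibility in \eqref{ass:abs-G-K-irr} to show that $X_S$ contains $\Hom(S,A[\P^M])^{\pm}$-type submodules of full rank.
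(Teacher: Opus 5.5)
Your proof has a genuine gap. The $-\epsilon$ part of your argument is fine, but nothing in it bounds the $\epsilon$-eigenspace $\Selm_{\P^M}(A/K)^{\epsilon}$.

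\textbf{Where Step 3 fails.} Reciprocity for $\dbf(\ellfr)$ gives $\langle s_\lambda,\dbf(\ellfr)_\lambda\rangle=0$ for every Selmer class $s$, including $s=\delta_K(y_K^\epsilon)$ itself. So these relations can never separate $\delta_K(y_K^\epsilon)$ from other elements of the $\epsilon$-part. More precisely, $\lambda=\ellfr\cO_K$ is $\tau_c$-stable and $p$ is odd, so opposite eigenspaces are orthogonal under the local pairing. Hence $\epsilon$-classes only see $\dbf(\ellfr)^{\epsilon}_\lambda$. By anti-equivariance, $\psi_\lambda(\dbf(\ellfr)^{\epsilon}_\lambda)=\cbf(1)^{-\epsilon}_\lambda=\delta_K(y_K^{-\epsilon})_\lambda$, and nothing makes this nonzero. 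In fact, once the theorem holds the Selmer group is $\cOP\cdot\delta_K(y_K^\epsilon)$, so $\delta_K(y_K^{-\epsilon})=0$ and all your relations are vacuous on the $\epsilon$-part. Your single-prime argument does give $\Selm_{\P^M}(A/K)^{-\epsilon}=0$. The claim about ``$S^\epsilon$ modulo $\delta(y_K^\epsilon)$'' is unsupported, so cyclicity of the $\epsilon$-part, and hence Step 4, does not follow.

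\textbf{The missing idea: an auxiliary prime $\ellfr_1$ and the length-two classes $\dbf(\ellfr\ellfr_1)$.} The paper's argument runs as follows.
\begin{enumerate}
\item By Chebotarev in $L_{S_1}$, with $S_1=\cOP\cdot\cbf(1)^\epsilon$, choose $\ellfr_1\in\admissible_1(M)$ with $\cOP\cdot\cbf(1)^\epsilon_{\ellfr_1}\cong\cOP/\P^M$. Via $\psi_{\ellfr_1}$, this makes $\dbf(\ellfr_1)^{-\epsilon}_{\ellfr_1}$ and the non-Selmer class $\cbf(\ellfr_1)^{-\epsilon}$ free of rank one.
\item Set $S=\cOP\cdot\cbf(1)^\epsilon+\cOP\cdot\cbf(\ellfr_1)^{-\epsilon}$. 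This is what $S_1\oplus S_2$ refers to, and it is not contained in the Selmer group, contrary to your Step 2.
\item Use Chebotarev in $L_S$ to get primes $\ellfr$ at which both $\cbf(1)^\epsilon_\ellfr$ and $\cbf(\ellfr_1)^{-\epsilon}_\ellfr$ generate free rank-one modules. Then $\hone(K_\ellfr,A)[\P^M]$ is generated by $\dbf(\ellfr)^{-\epsilon}_\ellfr$ and $\dbf(\ellfr\ellfr_1)^{\epsilon}_\ellfr$.
\item Reciprocity kills $\uppsi_\ellfr(\dbf(\ellfr)^{-\epsilon}_\ellfr)$.
\item Reciprocity applied to $\dbf(\ellfr\ellfr_1)$, which is supported only at $\ellfr$ and $\ellfr_1$, gives $\img\uppsi_\ellfr\subseteq\img\uppsi_{\ellfr_1}$.
\item $\img\uppsi_{\ellfr_1}$ is cyclic, because $\uppsi_{\ellfr_1}$ kills the free rank-one piece spanned by $\dbf(\ellfr_1)^{-\epsilon}_{\ellfr_1}$.
\item With Proposition~\ref{generating_Sel_proposition}, this shows $\Selm_{\P^M}(A/K)^\ast$ is cyclic.
\end{enumerate}
This handles both eigenspaces at once and works directly at level $\P^M$. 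No reduction mod $\P$ and bootstrapping is needed, and your sketch does not supply one anyway.
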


We first set up some notation. For any field extension $L/F$ and any prime $\ellfr$ of $F$, set
\begin{align*}
\qquad \qquad &A(L_\ellfr)/\fp^M A(L_\ellfr):=\oplus_{v \mid \ellfr} A(L_v)/\fp^M A(L_v),&\\[0.5em]
&\hone(L_\ellfr, A[\P^M]):=\oplus_{v \mid \ellfr} \hone(L_v, A[\P^M]),&\\[0.5em]
&\hone(L_\ellfr, A)[\fp^M]:=\oplus_{v \mid \ellfr}\hone(L_v, A)[\fp^M],
\end{align*}
where the sums are taken over all primes $v$ of $L$ dividing $\ellfr$. With this notation, there are natural semi-localization maps
\begin{align*}
\qquad \qquad &\res_{\ellfr}\colon A(L)/\fp^M A(L) \to A(L_\ellfr)/\fp^MA(L_\ellfr),&\\[0.5em]
&\res_{\ellfr}\colon \hone(L, A[\P^M]) \to \hone(L_\ellfr, A[\P^M]),&\\[0.5em]
&\res_{\ellfr}\colon \hone(L, A)[\fp^M] \to \hone(L_\ellfr, A)[\fp^M].
\end{align*}
If $z$ is an element in any of the groups $A(L)$, $\hone(L, A[\P^M])$ or $ \hone(L, A)[\fp^M]$, then we will often denote $\res_{\ellfr}(z)$ by $z_{\ellfr}$.

Given a $\Z[\frac{1}{2}][\tau_c]$-module $N$, there is a decomposition $N=N^+ \oplus N^-$, where $N^+$ and $N^-$ are the submodules on which $\tau_c$ acts as $+1$ and $-1$, respectively. Also, if $x \in N$ and $X \subseteq N$, we let $x^{\pm}=\frac{1}{2}(x\pm \tau_c x)$ and
$X^{\pm}=\set{x^{\pm} \; | \; x \in X}$.
 \begin{lemma}\label{eigenspace_structure_lemma}
For any $n \geq 1$ we have that $A[\P^n]^{\pm} \cong \cOP/\fp^n$.
\end{lemma}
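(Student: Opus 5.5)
The plan is to show that complex conjugation $\tau_c$ acts on $A[\P^n]$ as an $\cOP/\P^n$-linear involution with determinant $-1$. Once we know this, each eigenspace is free of rank one. Note first that $\tau_c$ is an element of $G_F$ (any lift to $G_F$ of the nontrivial element of $\Gal(K/F)$ coming from complex conjugation with respect to an embedding extending $\tau_1$). Since $\End_F(A)=\cOE$ consists of $F$-rational endomorphisms, $\tau_c$ acts $\cOE$-linearly on $A[\P^n]\cong(\cOP/\P^n)^2$. As $p$ is odd, $2$ is invertible and we get the decomposition $A[\P^n]=A[\P^n]^+\oplus A[\P^n]^-$ as $\cOP/\P^n$-modules. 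Each summand is a direct summand of a free module over the local ring $\cOP/\P^n$, hence projective, hence free, say of ranks $r_+$ and $r_-$ with $r_++r_-=2$.

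To pin down the ranks, I will use Lemma \ref{lemma:det}: $\det\rho_{\P,n}(\tau_c)=\chi_{\P,n}(\tau_c)=-1$, because complex conjugation acts on $\mu_{p^\infty}$ by inversion. In a basis adapted to the eigenspace decomposition, $\rho_{\P,n}(\tau_c)=\mathrm{diag}(1,\dots,1,-1,\dots,-1)$ with determinant $(-1)^{r_-}$. If $r_-$ were $0$ or $2$, the determinant would be $+1$. This contradicts $-1\neq 1$ in $\cOP/\P^n$, which holds since $p\neq 2$. Hence $r_+=r_-=1$ and $A[\P^n]^{\pm}\cong\cOP/\P^n$.

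The only delicate point is making sure the argument is genuinely about free modules over $\cOP/\P^n$ rather than just about lengths. The projective-implies-free step over the local ring handles this, and the determinant is then computed over that ring. Alternatively, one could reduce to $n=1$ (where it is linear algebra over the field $\cOE/\P$) and lift via Nakayama. Either way, I expect no serious obstacle: the key inputs are just Lemma \ref{lemma:det} and $p$ odd.
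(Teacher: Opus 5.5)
Your proof is correct and is essentially the paper's argument. The paper shows at $n=1$ that $\tau_c$ cannot act as a scalar $\pm1$, since $\chi_p(\tau_c)=-1$ would then force the alternating Weil pairing $(\,,\,)^\gamma_{\P,1}$ to vanish; this is your determinant computation written in pairing form, and Lemma \ref{lemma:det} is itself derived from that pairing. The only difference is for $n>1$, where the paper reduces to $n=1$ via $A[\P^n]^{\pm}[\P]=A[\P]^{\pm}\cong\cOP/\P$ instead of using your ``direct summand of a free module over a local ring is free'' step; both are fine.
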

\begin{proof}

First we show this for $n=1$. Suppose that for some $\varepsilon \in \{\pm 1\}$ we have that $A[\P]^{\varepsilon}=A[\P]$. Then for any $P, Q \in A[\P]$ we have
\[
\tau_c \cdot (P, Q)_{\P, 1}^\gamma=(\tau_c P, \tau_c Q)_{\P, 1}^\gamma=(\varepsilon P, \varepsilon Q)_{\P, 1}^\gamma=(P, Q)_{\P, 1}^\gamma
\]
Moreover, $\tau_c \cdot (P, Q)_{\P, 1}^\gamma = \chi_p(\tau_c) (P, Q)_{\P, 1}^\gamma = - (P, Q)_{\P, 1}^\gamma$, thus $(P,Q)_{\P, 1}^\gamma=0$ for all $P,Q \in A[\P]$. This contradicts the non-degeneracy of the pairing. Thus $A[\P]^{\pm}$ are proper $\cOP$-submodules of $A[\P]$. Since $A[\P]=A[\P]^+ \oplus A[\P]^-$, we see that $A[\P]^{\pm} \cong \cOP/\fp$.

When $n> 1$, we can conclude using the decomposition $A[\P^n]=A[\P^n]^+ \oplus A[\P^n]^-$ together with the fact that $A[\P^n]^\pm [\P]=A[\P]^\pm\cong\cOP/\P$.
\end{proof}

\begin{lemma}\label{H1_structure_lemma}
Let $\ellfr \in \admissible_1(M)$. Then $\hone(K_\ellfr,A)[\fp^M]$ is a free $\cOP/\fp^M$-module of rank two.
\end{lemma}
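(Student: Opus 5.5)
Since $\ellfr \in \admissible_1(M)$ is inert in $K/F$, there is a unique prime $v$ of $K$ above $\ellfr$. So $\hone(K_\ellfr, A)[\P^M] = \hone(K_v, A)[\P^M]$, and the task is to compute this single local group. The plan is to reduce it to an unramified cohomology group. By definition $\ellfr \notin S$ and $\ellfr \nmid p$. I will use that $A$ has good reduction at $\ellfr$; this holds because $A$ is a quotient of $\Jac(N_H^\ast)$, whose bad primes lie in $S$ (this is implicit in the paper's use of $a_\ellfr$ and of Remark \ref{remark:hones} for such primes). Remark \ref{remark:hones} then gives
\[
\hone(K_v, A)[\P^M] \cong \hone(K_v, A[\P^M])/\honeur(K_v, A[\P^M]).
\]
Equivalently, Proposition \ref{prop:kolyvaginclasses_properties}\ref{item:finite-singular} already supplies an $\cOP/\P^M$-linear isomorphism $\psi_v \colon \hone(K_v, A)[\P^M] \iso \honeur(K_v, A[\P^M])$. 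So it suffices to show that $\honeur(K_v, A[\P^M])$ is free of rank two over $\cOP/\P^M$.

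For this I will use $\honeur(K_v, A[\P^M]) \cong \hone(K_v^\ur/K_v, A(K_v^\ur)[\P^M])$. Since $A[\P^M]$ is unramified at $v$, the module $A(K_v^\ur)[\P^M]$ is all of $A[\P^M]$. The group $\Gal(K_v^\ur/K_v) \cong \hat{\Z}$ is procyclic, generated by $\Frob_v$. Evaluating at $\Frob_v$ therefore gives
\[
\honeur(K_v, A[\P^M]) \cong A[\P^M]/(\Frob_v - 1)A[\P^M].
\]
The key input is the definition of $\admissible_1(M)$. The Frobenius of $\ellfr$ is conjugate to the complex conjugation $\tau_c$ in $\Gal(K(x)(A[\P^M])/F)$. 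Because $\ellfr$ is inert in $K$, we have $\Frob_v = \Frob_\ellfr^2$, which acts on $A[\P^M]$ as a conjugate of $\tau_c^2 = 1$. Hence $\Frob_v$ acts trivially on $A[\P^M]$, and the quotient above is all of $A[\P^M]$. This is free of rank two over $\cOE/\P^M = \cOP/\P^M$, as recalled in Section \ref{sec:av}.

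I expect the main obstacle to be bookkeeping rather than substance. First, I need to justify the good reduction of $A$ at $\ellfr$ cleanly from \eqref{ass:modular-av}. Second, I need to confirm that the evaluation isomorphism is $\cOP/\P^M$-linear, which is clear since $\cOE$ acts by $F$-endomorphisms that commute with Galois. If one wants to avoid Proposition \ref{prop:kolyvaginclasses_properties}, there is an alternative route. Proposition \ref{prop:tate-pairing} identifies $\hone(K_v,A)[\P^M]$ with the $\cOP/\P^M$-dual of $A(K_v)/\P^M A(K_v)$. Since $p \neq \ell$ and reduction is good, the latter equals $\tilde{A}_v(k(v))/\P^M \cong A[\P^M]^{\Frob_v}/(\ldots)$, again of rank two because $\Frob_v$ acts trivially. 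Duality preserves freeness over the Gorenstein ring $\cOP/\P^M$.
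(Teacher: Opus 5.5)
Your proposal is correct and is essentially the paper's proof. The paper also composes the isomorphism $\psi_v$ of Proposition~\ref{prop:kolyvaginclasses_properties}\ref{item:finite-singular} with evaluation at $\Frob_v$. It then uses that $\Frob_v=\Frob_\ellfr^2$ acts trivially on $A[\P^M]$, since $v$ splits completely in $K(A[\P^M])/K$, so that $\hone(K_v,A)[\P^M]\cong A[\P^M]\cong(\cOP/\P^M)^2$.
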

\begin{proof}
Let $v$ be the unique prime of $K$ above $\ellfr$. By composing the isomorphism of Proposition \ref{prop:kolyvaginclasses_properties}(ii) with the evaluation at $\Frob_v$, we obtain an $\cOP/\fp^M$-isomorphism 
\[
\hone(K_v,A)[\fp^M] \cong A(K_v)[\fp^M].
\]
As $\ellfr \in \admissible_1(M)$, the prime $v$ splits completely in $K(A[\P^M])/K$, therefore 
\[
A(K_v)[\fp^M]=A(\bar{K}_v)[\fp^M]\cong (\cOP/\P^M)^2.\qedhere
\]
\end{proof}

\subsection{Pairing and Galois extensions}\label{sec:Kolyvaigin-pairing}
In the rest of this section, let $L=K(A[\P^M])$ and $\cG=\Gal(L/K)$. 
\begin{proposition}\label{restriction_injective_prop1}
The restriction
\[
\res_L\colon \hone(K, A[\P^M]) \to \hone(L, A[\P^M])^{\cG}=\Hom_{\cG}(G_L^\ab,A[\P^M])
\]
is injective.
\end{proposition}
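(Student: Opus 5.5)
The plan is to apply the inflation--restriction exact sequence for the normal subgroup $G_L \trianglelefteq G_K$ with quotient $\cG = \Gal(L/K)$, acting on the module $A[\P^M]$:
\[
0 \to \hone(\cG, A[\P^M]^{G_L}) \xrightarrow{\ \mathrm{inf}\ } \hone(K, A[\P^M]) \xrightarrow{\ \res_L\ } \hone(L, A[\P^M])^{\cG}.
\]
By definition $L = K(A[\P^M])$, so $G_L$ acts trivially on $A[\P^M]$. Hence $A[\P^M]^{G_L} = A[\P^M]$, and the kernel of $\res_L$ is $\hone(\cG, A[\P^M]) = \hone(K(A[\P^M])/K, A[\P^M])$. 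This group vanishes by Lemma \ref{lemma:hone-vanishing}, which gives the injectivity.

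It remains to justify the identification of the target. Since $G_L$ acts trivially on $A[\P^M]$, we have
\[
\hone(L, A[\P^M]) = \Hom_{\cont}(G_L, A[\P^M]) = \Hom(G_L^\ab, A[\P^M]).
\]
The natural $\cG$-action on this group is $(\sigma f)(g) = \sigma f(\tilde\sigma^{-1} g \tilde\sigma)$, where $\tilde\sigma \in G_K$ is any lift of $\sigma$. The action is well defined because inner automorphisms by elements of $G_L$ act trivially on $G_L^\ab$. Its invariants are therefore exactly the $\cG$-equivariant homomorphisms, with $\cG$ acting on $G_L^\ab$ by conjugation. This gives $\hone(L, A[\P^M])^{\cG} = \Hom_{\cG}(G_L^\ab, A[\P^M])$.

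The only substantive input is Lemma \ref{lemma:hone-vanishing}, which rests on \eqref{ass:abs-G-K-irr} and \eqref{ass:image-residual}. Since it is already established, there is no real obstacle; the argument is a formal consequence of inflation--restriction.
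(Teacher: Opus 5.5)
Your proposal is correct and follows the same route as the paper: the inflation--restriction sequence for $G_L \trianglelefteq G_K$ identifies $\ker(\res_L)$ with $\hone(K(A[\P^M])/K, A[\P^M])$, which vanishes by Lemma \ref{lemma:hone-vanishing}. Your extra justification of $\hone(L, A[\P^M])^{\cG} = \Hom_{\cG}(G_L^\ab, A[\P^M])$ is accurate and simply spells out what the paper leaves implicit.
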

\begin{proof}
The result follows from the inflation-restriction exact sequence and Lemma \ref{lemma:hone-vanishing}.
\end{proof}

\begin{lemma}\label{exactseq_lemma}
Let $n >1$ and $\alpha \in \fp^{n-1} \setminus \fp^n$. Then the multiplication by $\alpha$ induces an exact sequence of $\cOP[G_F]$-modules
\[
0 \to A[\P^{n-1}] \to A[\P^n] \xrightarrow{\alpha} A[\P] \to 0.\]
\end{lemma}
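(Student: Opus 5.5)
The plan is to check the three pieces of the claim in turn: that multiplication by $\alpha$ is well defined and $G_F$-equivariant, that its kernel is $A[\P^{n-1}]$, and that it is surjective. The structural input is that $A[\P^n]$ is free of rank $2$ over $\cOE/\P^n$ (Section \ref{sec:av}), equivalently over $\cOP/\P^n$.

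First, multiplication by $\alpha\in\cOE$ is an endomorphism of $A$ defined over $F$. It therefore commutes with the $G_F$-action and is $\cOE$-linear. It maps $A[\P^n]$ into $A[\P]$: for $\beta\in\P$ we have $\beta\alpha\in\P^n$, so $\beta\alpha$ kills $A[\P^n]$. Hence the map is a morphism of $\cOP[G_F]$-modules. The inclusion $A[\P^{n-1}]\subseteq A[\P^n]$ is obviously equivariant as well, and $\alpha$ kills $A[\P^{n-1}]$ because $\alpha\in\P^{n-1}$.

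For exactness I will work inside $\cOP$, which is a DVR with uniformiser $\varpi$. Fix an isomorphism $A[\P^n]\cong(\cOP/\P^n)^2$. Since $\alpha\in\P^{n-1}\setminus\P^n$, we can write $\alpha=u\varpi^{n-1}$ with $u\in\cOP^\times$.

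Kernel: on $(\cOP/\varpi^n)^2$, multiplication by $\alpha$ has kernel equal to the elements killed by $\varpi^{n-1}$. Intrinsically this is $A[\P^n][\P^{n-1}]=A[\P^{n-1}]$.

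Image: the image is $\varpi^{n-1}(\cOP/\varpi^n)^2$, which is exactly the $\varpi$-torsion of $(\cOP/\varpi^n)^2$, that is $A[\P^n][\P]=A[\P]$. So the map is onto $A[\P]$.

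An alternative route to surjectivity is a counting argument. One has $\#A[\P^k]=\#(\cOE/\P)^{2k}$, so the image has cardinality $\#A[\P^n]/\#A[\P^{n-1}]=\#A[\P]$. Since the image lies in $A[\P]$, it must be all of $A[\P]$.

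There is no real obstacle here. The only point needing care is translating between $\cOE$-torsion and the local ring $\cOP$: one must use that $A[\P^k]$ is naturally an $\cOP$-module, since it is killed by $\P^k$ and $\cOE/\P^k\cong\cOP/\P^k$, so that the uniformiser description applies.
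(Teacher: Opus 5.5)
Your proof is correct and follows the paper's route. The paper likewise gets the kernel from $\alpha\cOP=\P^{n-1}\cOP$ and surjectivity from the count $\#A[\P^n]/\#A[\P^{n-1}]=\#A[\P]$; your uniformiser description of the image is an equally valid variant of that last step.
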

\begin{proof}
Since $\fp^{n-1}\cOP= \alpha \cOP$, then the sequence
$0 \to A[\P^{n-1}] \to A[\P^n] \xrightarrow{\alpha} A[\P]$ is exact.
Thanks to order considerations, the last map must be surjective.
\end{proof}

By Proposition \ref{restriction_injective_prop1}, there is a pairing
\begin{equation}\label{eq:Galois-pairing}
\begin{tikzcd}[row sep=tiny]
[\; , \; ] \colon \hone(K, A[\P^M]) \times G_L^\ab \ar[r] & A[\P^M]\\
 \hspace{65pt}(s,\rho) \ar[r, mapsto]& {[s,\rho]}:=(\res_L s)(\rho)
\end{tikzcd}
\end{equation}
which satisfies $[\sigma \cdot s, \sigma \cdot \rho]=[s, \sigma \cdot \rho]=\sigma \cdot [s, \rho]$ for all $s \in \hone(K, A[\P^M])$, $\rho \in G_L^\ab$ and $\sigma \in \mathcal{G}$.
Note that if $[s, \rho]=0$ for all $\rho \in G_L^\ab$, then $s=0$ by the injectivity of $\res_L$.

Let $S \subseteq \hone(K, A[\P^M])$ be an $\cOP$-submodule with finite cardinality. Let $G_L^S$ be the subgroup of $G_L^\ab$ made by all $\rho \in G_L^\ab$ such that $[s, \rho]=0$ for all $s \in S$, i.e.,
\begin{equation*}
    G_L^S:=\bigcap_{s\in S}\ker(\res_L s),
\end{equation*}
and let $L_S$ be the fixed field of $G_L^S$. Then, $L_S/L$ is a finite (abelian) Galois extension and, since the image via $\res_L$ of any element of $S$ is a $\mathcal{G}$-equivariant morphism, also $L_S/K$ is Galois.
Therefore, the pairing \eqref{eq:Galois-pairing} induces a $\cG$-module injection
\[
\phi_S\colon \Gal(L_S/L) \hookrightarrow \Hom_{\cOP}(S, A[\P^M]).
\] 
Similarly, there is an $\cOP$-module injection
\[\psi_S\colon S \hookrightarrow \Hom_{\cG}(\Gal(L_S/L), A[\P^M]).\]
\begin{definition}\label{def:the-O-module-X_S}
    If $S \subseteq \hone(K, A[\P^M])$ is an $\cOP$-submodule with finite cardinality, we denote by $X_S$ the $\cOP$-submodule of $\Hom_{\cOP}(S, A[\P^M])$ generated by $\img \phi_S$. 
\end{definition}

Note that $X_S$ is an $\cOP[\cG]$-submodule, 
i.e., the inclusion map
\[
\phi_{S,\fp}\colon X_S \hookrightarrow \Hom_{\cOP}(S, A[\P^M])
\]
is an $\cOP[\cG]$-module homomorphism. 
We have moreover a natural $\cOP$-module injection
\[
\psi_{S,\fp}\colon S \hookrightarrow \Hom_{\cOP[\cG]}(X_S, A[\P^M]),
\]
defined by $\psi_{S, \P}(s)(x) = \phi_{S, \P}(x)(s)$, for any $s \in S$, $x \in X_S$. In particular, if $x = \sum_i r_i \phi_S(\rho_i)$, for $r_i \in \cOP$ and $\rho_i \in \Gal(L_S/L)$, we have that $\psi_{S, \P}(s)(x) = \sum_i r_i \psi_S(s)(\rho_i)$. 
\begin{remark}\label{rk:tau_c-equivariance}
    If $S$ is stable under the action of $\Gal(K/F)$ on $\hone(K, A[\P^M])$, all the constructions of this section are compatible with the action of $\tau_c$: if this is the case, $L_S$ is Galois also over $F$,
    $\phi_S$ (resp.~$\phi_{S, \P}$) is a $\Gal(L/F)$-module (resp.~$\cOP[\Gal(L/F)]$-module) injection, 
    $\psi_S$ and $\psi_{S, \P}$ are both $\cOP[\Gal(K/F)]$-module injections.
\end{remark}

\begin{proposition}\label{pairing_prop1}
Assume that $S$ is a finite $\cOP$-submodule of $\hone(K, A[\P^M])$. Then the maps $\phi_{S,\fp}$ and $\psi_{S,\fp}$ are isomorphisms.
\end{proposition}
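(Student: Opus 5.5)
The plan is to identify $\Hom_{\cOP}(S, A[\P^M])$ with $S^\vee \otimes_{\cOP} A[\P^M]$, where $S^\vee := \Hom_{\cOP}(S, \cOP/\P^M)$, and to describe all its $\cOP[\cG]$-submodules using Morita theory. We then show that $X_S$ cannot be proper, since otherwise injectivity of $\res_L$ (Proposition \ref{restriction_injective_prop1}) would fail.

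\textbf{Step 1: the image of the group ring is the full matrix algebra.} Let $R \subseteq \End_{\cOP}(A[\P^M]) \cong \M_2(\cOP/\P^M)$ be the image of $\cOP[\cG]$. By \eqref{ass:abs-G-K-irr} and Burnside's theorem, the image of $R$ in $\M_2(\cOE/\P)$ is all of $\M_2(\cOE/\P)$. Hence $R + \P\,\M_2(\cOP/\P^M) = \M_2(\cOP/\P^M)$. Since $R$ is an $\cOP$-submodule of a finitely generated $\cOP$-module and $\P$ is the maximal ideal, Nakayama's lemma gives $R = \M_2(\cOP/\P^M)$.

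\textbf{Step 2: Morita description of the submodules.} $A[\P^M] \cong (\cOP/\P^M)^2$ is a progenerator for $\M_2(\cOP/\P^M)$. So the functor $Y \mapsto Y \otimes_{\cOP} A[\P^M]$ is an equivalence from finite $\cOP/\P^M$-modules to finite $R$-modules, which here are the same as finite $\cOP[\cG]$-modules killed by $\P^M$. Two consequences follow.
\begin{itemize}
\item $\End_{\cOP[\cG]}(A[\P^M]) = \cOP/\P^M$.
\item Every $\cOP[\cG]$-submodule of $S^\vee \otimes A[\P^M]$ has the form $Y' \otimes A[\P^M]$ for a unique $\cOP$-submodule $Y' \subseteq S^\vee$.
\end{itemize}
Since $S$ is finite and killed by $\P^M$, we have $\Hom_{\cOP}(S, A[\P^M]) = S^\vee \otimes A[\P^M]$.

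\textbf{Step 3: surjectivity of $\phi_{S,\P}$.} By Step 2, $X_S = Y' \otimes A[\P^M]$ for some $Y' \subseteq S^\vee$. Suppose $Y' \neq S^\vee$. Duality for finite $\cOP/\P^M$-modules ($\cOP/\P^M$ is Gorenstein, so $S \cong S^{\vee\vee}$) provides a nonzero $s \in S$ with $y(s) = 0$ for all $y \in Y'$. Then every element of $X_S$ vanishes at $s$, and in particular $[s, \rho] = \phi_S(\rho)(s) = 0$ for all $\rho \in G_L^\ab$. This contradicts the injectivity of $\res_L$ from Proposition \ref{restriction_injective_prop1}. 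Hence $X_S = \Hom_{\cOP}(S, A[\P^M])$, so the inclusion $\phi_{S,\P}$ is an isomorphism.

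\textbf{Step 4: bijectivity of $\psi_{S,\P}$.} Now $\psi_{S,\P}$ becomes the natural map
\[
S \to \Hom_{\cOP[\cG]}\bigl(S^\vee \otimes A[\P^M], A[\P^M]\bigr) \cong \Hom_{\cOP}\bigl(S^\vee, \End_{\cOP[\cG]}(A[\P^M])\bigr) = S^{\vee\vee},
\]
where the last equality uses $\End_{\cOP[\cG]}(A[\P^M]) = \cOP/\P^M$ from Step 2. Under these identifications $\psi_{S,\P}$ is evaluation, $s \mapsto (y \mapsto y(s))$, which is an isomorphism for finite $\cOP/\P^M$-modules; this can be checked on cyclic summands.

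\textbf{Main obstacle.} I expect the main difficulty to be Step 2, namely making the Morita correspondence explicit enough to see that submodules of $S^\vee \otimes A[\P^M]$ come from submodules of $S^\vee$. One also has to track that the identifications in Step 4 really match the definition of $\psi_{S,\P}$. If the Morita argument gets messy, I would instead proceed by induction on $M$, using the exact sequence of Lemma \ref{exactseq_lemma} together with the case $M=1$. For $M=1$ the statement reduces to the fact that $A[\P] \otimes \Fbar$ is simple, so its isotypic submodules are of the form $W \otimes A[\P]$ for subspaces $W$.
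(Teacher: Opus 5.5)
Your argument is correct, and it follows a genuinely different route from the paper.

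\textbf{The paper's route.} The paper never determines the image of $\cOP[\cG]$ in $\End_{\cOP}(A[\P^M])$. It writes $S\cong\bigoplus_i\cOP/\P^{m_i}$ and observes that $\Hom_{\cOP}(S,A[\P^M])\cong\bigoplus_i A[\P^{m_i}]$ has $\cOP[\cG]$-length $\sum_i m_i$, with all Jordan--H\"older factors isomorphic to $A[\P]$. Hence $c:=\mathrm{length}(X_S)\le\sum_i m_i$. Using only $\End_{\cOP[\cG]}(A[\P])=\cOP/\P$ and induction along a composition series, it bounds $\#\Hom_{\cOP[\cG]}(X_S,A[\P^M])\le D^c$. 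Injectivity of $\psi_{S,\P}$ (this is where injectivity of $\res_L$ enters) then gives $D^{\sum m_i}=\#S\le D^c$, and equality of lengths forces both maps to be bijective.

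\textbf{Your route.} You first upgrade absolute irreducibility, via Burnside and Nakayama, to surjectivity of $\cOP[\cG]\to\M_2(\cOP/\P^M)$. Morita theory then classifies the $\cG$-stable submodules of $\Hom_{\cOP}(S,A[\P^M])$ as $Y'\otimes A[\P^M]$. Injectivity of $\res_L$ rules out a proper $X_S$ directly, since a proper $Y'$ has a nonzero common zero $s\in S$ by duality over the self-injective ring $\cOP/\P^M$. Finally, $\psi_{S,\P}$ is identified explicitly with the evaluation map $S\to S^{\vee\vee}$, rather than shown bijective by counting.

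\textbf{Comparison.} The paper's argument is more elementary: it needs nothing beyond Jordan--H\"older and Schur's lemma for $A[\P]$. Yours uses more machinery but yields more structure. You get $\End_{\cOP[\cG]}(A[\P^M])=\cOP/\P^M$ and a complete description of the submodule lattice, which would also make the later compatibility statements for $X_{T/S}$ essentially formal. In Step 1, the reduction mod $\P$ of $A[\P^M]$ is identified with $A[\P]$ through Lemma \ref{exactseq_lemma}, since $A[\P^M]/\P A[\P^M]\cong A[\P]$; this is what lets Burnside apply.

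\textbf{A correction to Step 2.} It is not true that finite $\cOP[\cG]$-modules killed by $\P^M$ are the same as $R$-modules. Every $\M_2(\cOP/\P^M)$-module has even $\cOP$-length, so the trivial module $\cOE/\P$ is never an $R$-module. What you actually need, and what does hold, is this: $S$ is a trivial $\cG$-module, so $\cG$ acts on $\Hom_{\cOP}(S,A[\P^M])$ only through $A[\P^M]$. Hence every $\cOP[\cG]$-submodule of it, in particular $X_S$, is an $R$-submodule, and $\Hom_{\cOP[\cG]}=\Hom_R$ between such modules. That is all Steps 3 and 4 use.
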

\begin{proof}
The finite module $S$ is finitely generated over $\cOP$ and is annihilated by $\fp^M$. Therefore, by the structure of finitely generated modules over a PID, there is an $\cOP/\fp^M$-module isomorphism
\begin{equation*}
    S \cong \bigoplus_{i=1}^r \cOP/\fp^{m_i},
\end{equation*}
where $0 \le m_1 \le m_2 \le \cdots \le m_r \le M$. Let $D=\#\cOP/\fp$, so that $\#S=D^{\sum_{i=1}^r m_i}$. Then
\begin{gather*}
\Hom_{\cOP}(S, A[\P^M]) \cong \Hom_{\cOP}\Bigl(\bigoplus_{i=1}^r \cOP/\fp^{m_i}, A[\P^M]\Bigr)\cong\\
 \cong \bigoplus_{i=1}^r \Hom_{\cOP}( \cOP/\fp^{m_i}, A[\P^M])
 = \bigoplus_{i=1}^r\Hom_{\cOP/\fp^{m_i}}(\cOP/\fp^{m_i}, A[\P^{m_i}])
\cong \bigoplus_{i=1}^r A[\P^{m_i}].
\end{gather*}
Thus, recalling that $S$ is a trivial $\cG$-module, the previous is an isomorphism of $\cOP[\cG]$-modules, therefore composed with $\phi_{S, \P}$ we find an $\cOP[\cG]$-module injection 
\[\tilde{\phi}_{S, \P}: X_S \hookrightarrow \bigoplus_{i=1}^r A[\P^{m_i}].\]
By Lemma \ref{exactseq_lemma}, for any $t>1$ there is an $\cOP[\cG]$-module isomorphism $A[\P^t]/A[\P^{t-1}] \cong A[\P]$ and, 
by \eqref{ass:abs-G-K-irr}, $A[\P]$ is a simple $\cOP[\cG]$-module. Therefore, $\bigoplus_{i=1}^r A[\P^{m_i}]$ has a composition series all of whose factors are isomorphic to the simple $\cOP[\cG]$-module $A[\P]$ and 
\[
\text{length}_{\cOP[\cG]}\biggl(\bigoplus_{i=1}^r A[\P^{m_i}]\biggr)=\sum_{i=1}^r m_i.
\]
Let $c=\text{length}_{\cOP[\cG]}(X_S)$. From the injectivity of $\tilde{\phi}_{S, \P}$ we get that
\begin{equation}\label{inequality1}
c \leq \sum_{i=1}^r m_i.
\end{equation}
By the Jordan-Holder theorem, we have a composition series for the $\cOP[\cG]$-module $X_S$ $$0=N_0 \subset N_1 \subset \cdots \subset N_c=X_S$$
where $N_i/N_{i-1} \cong A[\P]$. We now use this composition series in order to compute the order of $\Hom_{\cOP[\cG]}(X_S, A[\P^M])$. For any $i >0$ we have an exact sequence
\[0 \to N_{i-1} \to N_i \to N_i/N_{i-1} \to 0.\]
From this, we get an exact sequence
\begin{equation}\label{Hom_exactseq}
0 \to \Hom_{\cOP[\cG]}(N_i/N_{i-1}, A[\P^M]) \to \Hom_{\cOP[\cG]}(N_i, A[\P^M]) \to \Hom_{\cOP[\cG]}(N_{i-1}, A[\P^M]).
\end{equation}
Since $N_i/N_{i-1} \cong A[\P]$, we have that
\[
\Hom_{\cOP[\cG]}(N_i/N_{i-1}, A[\P^M]) \cong \Hom_{\cOP[\cG]}(A[\P], A[\P^M]) \cong \Hom_{\cOP[\cG]}(A[\P], A[\P]).
\]
By \eqref{ass:abs-G-K-irr} and \cite[Theorem 29.13]{CR}, there is an $\cOP$-module isomorphism
\[\Hom_{\cOP[\cG]}(A[\P], A[\P]) \cong \cOP/\fp.\]
Thus, by an induction argument using the exact sequence (\ref{Hom_exactseq}), we have \[\#\Hom_{\cOP[\cG]}(X_S, A[\P^M]) \leq D^c.\]
Therefore, by the injectivity $\psi_{S, \P}$ we have
$$D^{\sum_{i=1}^r m_i}=\#S \leq \#\Hom_{\cOP[\cG]}(X_S, A[\P^M]) \leq D^c,$$
whence $\sum_{i=1}^r m_i \leq c$. So, by the inequality (\ref{inequality1}), we have $\sum_{i=1}^r m_i=c$. This implies that $\phi_{S,\P}$ and $\psi_{S, \P}$ are isomorphisms.
\end{proof}

As a consequence of this proposition, from now on can (and will) use the symbol $X_S$ to denote $\Hom_{\cOP}(S,A[\P^M])$. Keep also in mind that, by definition, every element of $X_S$ can be written as an $\cOP$-linear combination of elements of $\phi_S(\Gal(L_S/L))$.

\subsection{Compatibility Properties}\label{compatibility_sebsection}

We need to study some compatibility properties, when the $\cOP$-module $S$ varies. For this section, assume that
\begin{equation}\label{eq:basic-exact-sequence}
    \begin{tikzcd}
	0 & S & T & {T/S} & 0
	\arrow[from=1-1, to=1-2]
	\arrow["{\iota}", from=1-2, to=1-3]
	\arrow["{\theta}", from=1-3, to=1-4]
	\arrow[from=1-4, to=1-5]
\end{tikzcd}
\end{equation}
is an exact sequence of finite $\cOP$-submodules of $\hone(K, A[\P^M])$. The pairing \eqref{eq:Galois-pairing} induces a pairing
\begin{equation}\label{eq:Galois-pairing-quotients}
\begin{tikzcd}[row sep=tiny]
[\; , \; ] \colon T/S \times \Gal(L_T/L_S) \ar[r] & A[\P^M]\\
 \hspace{65pt}(t+S,\rho) \ar[r, mapsto]& {[t+S,\rho]}:=(\res_L t)(\rho).
\end{tikzcd}
\end{equation}

Applying the same argumets presented after Definition \ref{def:the-O-module-X_S}, one can show that this pairing induces injections
\begin{equation*}
    \phi_{T/S}\colon \Gal(L_T/L_S) \hookrightarrow \Hom_{\cOP}(T/S, A[\P^M]),
\end{equation*}
\begin{equation*}
    \psi_{T/S}\colon T/S \hookrightarrow \Hom_{\cG}(\Gal(L_T/L_S), A[\P^M]).
\end{equation*}

\begin{definition}
    Denote by $X_{T/S}$ the $\cOP$-submodule of $\Hom_{\cOP}(T/S, A[\P^M])$ generated by $\img \phi_{T/S}$. 
\end{definition}
Then, there are injections
\begin{equation*}
    \phi_{T/S,\fp}\colon X_{T/S} \hookrightarrow \Hom_{\cOP}(T/S, A[\P^M]),
\end{equation*}
\begin{equation*}
    \psi_{T/S,\fp}\colon T/S \hookrightarrow \Hom_{\cOP[\cG]}(X_{T/S}, A[\P^M])
\end{equation*}
of $\cOP[\cG]$-modules. The same argument used to prove Proposition \ref{pairing_prop1} shows that the maps $\phi_{T/S, \fp}$ and $\psi_{T/S, \fp}$ are isomorphisms. In particular, it follows that we can identify $X_{T/S}$ with $\Hom_{\cOP}(T/S, A[\P^M])$.

Denote by $\iota^\ast\colon X_T\to X_S$ and by $\theta^\ast\colon X_{T/S}\to X_T$ the pullbacks of $\iota$ and $\theta$, respectively. 

 \begin{proposition}\label{prop:exact-sequence-X}
    The maps $\theta^\ast$ and $\iota^\ast$ induce an exact sequence
    \begin{equation*}
    \begin{tikzcd}
	0 & X_{T/S} & X_T & X_{S} & 0.
	\arrow[from=1-1, to=1-2]
	\arrow["{\theta^\ast}", from=1-2, to=1-3]
	\arrow["{\iota^\ast}", from=1-3, to=1-4]
	\arrow[from=1-4, to=1-5]
\end{tikzcd}
\end{equation*}
Moreover,
\begin{enumerate}[label=(\roman*)]
    \item for any $\xi=\sum_{i=1}^n r_i \phi_T(\rho_i)\in X_T$, with $r_i \in \cOP$ and $\rho_i \in \Gal(L_T/L)$, we have that $\iota^\ast(\xi)=\sum_{i=1}^n r_i \phi_S(\rho_i|_{L_S})$;
    \item for any $\xi=\sum_{i=1}^n r_i \phi_{T/S}(\rho_i)\in X_{T/S}$ with $r_i \in \cOP$ and $\rho_i \in \Gal(L_T/L_S)$, we have that $\theta^\ast(\xi)=\sum_{i=1}^n r_i \phi_T(\rho_i)$.
\end{enumerate}
\end{proposition}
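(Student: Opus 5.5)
We identify $X_T=\Hom_{\cOP}(T,A[\P^M])$, $X_S=\Hom_{\cOP}(S,A[\P^M])$ and $X_{T/S}=\Hom_{\cOP}(T/S,A[\P^M])$, as allowed by Proposition \ref{pairing_prop1} and its analogue for $T/S$. Under these identifications $\theta^\ast$ and $\iota^\ast$ are precomposition with $\theta$ and $\iota$. Applying the left exact functor $\Hom_{\cOP}(-,A[\P^M])$ to \eqref{eq:basic-exact-sequence} gives exactness of
\[
0\to X_{T/S}\xrightarrow{\theta^\ast} X_T\xrightarrow{\iota^\ast} X_S
\]
immediately. So the substance is the surjectivity of $\iota^\ast$ and the two explicit formulas (i) and (ii).

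We start with the formulas, which follow by unwinding the definitions. For $\rho\in\Gal(L_T/L)$ and $s\in S$ we have
\[
\phi_T(\rho)(\iota s)=(\res_L s)(\rho)=(\res_L s)(\rho|_{L_S})=\phi_S(\rho|_{L_S})(s).
\]
The middle equality holds because $\res_L s$ factors through $\Gal(L_S/L)$, by the definition of $L_S$ as the fixed field of $G_L^S$. Hence $\iota^\ast\phi_T(\rho)=\phi_S(\rho|_{L_S})$, and (i) follows by $\cOP$-linearity. For (ii), take $\rho\in\Gal(L_T/L_S)$ and $t\in T$. Then
\[
\theta^\ast\phi_{T/S}(\rho)(t)=[t+S,\rho]=(\res_L t)(\rho)=\phi_T(\rho)(t),
\]
which is well defined because $\res_L s$ kills $\rho$ for $s\in S$. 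Linearity again gives (ii).

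For surjectivity of $\iota^\ast$, formula (i) shows that its image contains $\phi_S(\rho|_{L_S})$ for every $\rho\in\Gal(L_T/L)$. Since $L_S\subseteq L_T$, restriction $\Gal(L_T/L)\to\Gal(L_S/L)$ is surjective, so the image contains all of $\img\phi_S$. The image of $\iota^\ast$ is an $\cOP$-submodule, so it contains the $\cOP$-span of $\img\phi_S$, which is $X_S$ by Definition \ref{def:the-O-module-X_S}.

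If one prefers a check independent of the explicit formula, an order count should also work. From $\#X_T=\#\Hom(T,A[\P^M])$ and the composition-length computation in the proof of Proposition \ref{pairing_prop1}, $\#X_M=D^{2\ell(M)}$ with $\ell$ the $\cOP$-length (since $\Hom(\cOP/\P^m,A[\P^M])\cong A[\P^m]$ has length $2m$). Lengths are additive in \eqref{eq:basic-exact-sequence}, so $\#X_T=\#X_{T/S}\cdot\#X_S$, which forces exactness on the right.

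The main obstacle is making sure the identifications of Proposition \ref{pairing_prop1} are compatible with the maps. Concretely, $\theta^\ast$ and $\iota^\ast$ must be defined on the spans of the $\phi$-images and must agree with pullback of homomorphisms; this is exactly what (i) and (ii) verify. One further point needs care in (ii): elements of $\Gal(L_T/L_S)$ must be viewed inside $\Gal(L_T/L)$ via the inclusion.
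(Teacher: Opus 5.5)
Your proof is correct and follows the paper's argument. It uses left exactness of $\Hom_{\cOP}(-,A[\P^M])$, the routine unwinding of definitions for (i) and (ii), and surjectivity of $\iota^\ast$ deduced from (i) together with surjectivity of $\Gal(L_T/L)\to\Gal(L_S/L)$. Your alternative cardinality check is also valid but not needed: it rests on $\#\Hom_{\cOP}(N,A[\P^M])=(\#N)^2$ for finite $N$ killed by $\P^M$, plus additivity of lengths.
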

\begin{proof}
    Points (i) and (ii) are just a routine verification. Thanks to the left-exactness of the $\Hom_{\cOP}(-, A[\P^M])$ functor, we're just left to show that $\iota^\ast$ is surjective. This is a consequence of point (i) and of the surjectivity of the natural projection $\Gal(L_T/L)\to\Gal(L_S/L)$.
\end{proof}

The maps $\iota^\ast$ and $\theta^\ast$ induce pullbacks $\iota^{**}\colon \Hom_{\cOP[\cG]}(X_S, A[\P^M])\to \Hom_{\cOP[\cG]}(X_T, A[\P^M])$ and $\theta^{**}\colon \Hom_{\cOP[\cG]}(X_T, A[\P^M])\to \Hom_{\cOP[\cG]}(X_{T/S}, A[\P^M])$.

\begin{proposition}
    There is a commutative diagram with exact rows
    \begin{equation}\label{full_diagram2}
\begin{tikzcd}[column sep=tiny, cramped]
    0 \ar[r] &S \ar[d, "\psi_{S, \fp}"] \ar[r, "\iota"] &T \ar[d, "\psi_{T, \fp}"] \ar[r, "\theta"] &T/S \ar[d, "\psi_{T/S, \fp}"] \ar[r] &0\\
0 \ar[r] &\Hom_{\cOP[\cG]}(X_S, A[\P^M]) \ar[r, "{\iota^{**}}"] &\Hom_{\cOP[\cG]}(X_T, A[\P^M]) \ar[r, "{\theta^{**}}"] &\Hom_{\cOP[\cG]}(X_{T/S}, A[\P^M]) \ar[r] &0.
\end{tikzcd}
\end{equation}
\end{proposition}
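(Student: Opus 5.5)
The plan is to check three things: the top row is exact, the bottom row is exact, and both squares commute. The vertical maps are already known to be isomorphisms. The top row is exact by assumption \eqref{eq:basic-exact-sequence}. The vertical maps $\psi_{S,\P}$, $\psi_{T,\P}$, $\psi_{T/S,\P}$ are isomorphisms by Proposition \ref{pairing_prop1} and by the analogous statement for $T/S$ proved right after it.

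For commutativity, I evaluate on generators of the $X$-modules, which is allowed because every element is an $\cOP$-combination of elements $\phi(\rho)$. For the left square, take $s\in S$ and $\xi=\sum_i r_i\phi_T(\rho_i)\in X_T$. By definition,
\[
\psi_{T,\P}(\iota s)(\xi)=\sum_i r_i\,[\iota s,\rho_i]=\sum_i r_i\,(\res_L s)(\rho_i).
\]
On the other side,
\[
\iota^{**}\psi_{S,\P}(s)(\xi)=\psi_{S,\P}(s)(\iota^\ast\xi)=\sum_i r_i\,(\res_L s)(\rho_i|_{L_S})
\]
by Proposition \ref{prop:exact-sequence-X}(i). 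These agree because $\res_L s$ factors through $\Gal(L_S/L)$. For the right square, take $t\in T$ and $\xi=\sum_i r_i\phi_{T/S}(\rho_i)$ with $\rho_i\in\Gal(L_T/L_S)$. Proposition \ref{prop:exact-sequence-X}(ii) gives
\[
\theta^{**}\psi_{T,\P}(t)(\xi)=\sum_i r_i\,(\res_L t)(\rho_i),
\]
which is exactly $\psi_{T/S,\P}(t+S)(\xi)$ by the definition of the pairing \eqref{eq:Galois-pairing-quotients}.

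For the bottom row, I apply the left exact contravariant functor $\Hom_{\cOP[\cG]}(-,A[\P^M])$ to the exact sequence $0\to X_{T/S}\to X_T\to X_S\to 0$ of Proposition \ref{prop:exact-sequence-X}. This gives exactness at the first two spots, namely injectivity of $\iota^{**}$ and $\ker\theta^{**}=\img\iota^{**}$. The only nontrivial point, and the expected main obstacle, is surjectivity of $\theta^{**}$, since the functor is not exact in general. I would get it by diagram chasing rather than by Ext computations. Given $f\in\Hom(X_{T/S},A[\P^M])$, write $f=\psi_{T/S,\P}(\bar t)$ using that $\psi_{T/S,\P}$ is an isomorphism. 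Lift $\bar t$ to $t\in T$ using surjectivity of $\theta$. Then the commutativity of the right square gives $\theta^{**}\psi_{T,\P}(t)=f$.

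As a sanity check, the middle exactness also follows from the same kind of chase, using that all three vertical maps are isomorphisms. Alternatively, one can compare cardinalities: $\#S\cdot\#(T/S)=\#T$.
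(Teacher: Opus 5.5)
Your proposal is correct and takes the same approach as the paper: the squares commute by the explicit descriptions of $\iota^\ast$ and $\theta^\ast$ in Proposition \ref{prop:exact-sequence-X}, and exactness of the bottom row is transported from the top row through the vertical isomorphisms of Proposition \ref{pairing_prop1}. Using left exactness of $\Hom_{\cOP[\cG]}(-,A[\P^M])$ for the first two spots, and a diagram chase only for the surjectivity of $\theta^{**}$, is simply a more detailed version of the paper's one-line argument.
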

\begin{proof}
    The commutativity of the left and right squares is just a routine verification. Since the top sequence is exact and the vertical maps are isomorphisms, also the bottom sequence is exact. 
\end{proof}

\begin{remark}\label{rk:iota-theta-tau-equivariant}
    The maps $\iota^\ast$, $\theta^\ast$, $\iota^{**}$ and $\theta^{**}$ are naturally $\cG$-equivariant. When \eqref{eq:basic-exact-sequence} is also an exact sequence of $\Gal(K/F)$-modules, they are also $\Gal(K/F)$-equivariant.
\end{remark}

\subsection{Direct Products}\label{directproduct_subsection}
In this paragraph, keep denoting by $S$ a finite cardinality $\cOP$-submodule of $\hone(K, A[\P^M])$ and assume that $S=S_1\oplus S_2$ for two $\cOP$-modules $S_1$ and $S_2$. The principal aim of this section is to show that the module $X_S\cong X_{S_1}\oplus X_{S_2}$ is actually generated as an $\cOP$-module by $\phi_S\bigl(\Gal(L_S/L_{S_1}\cap L_{S_2})\bigr)$. 

Since $G_L^S=G_L^{S_1} \cap G_L^{S_2}$, we obtain that $L_S=L_{S_1}L_{S_2}$.
Therefore,
$$\Gal(L_S/L_{S_1}\cap L_{S_2})\cong \Gal(L_{S_1}/L_{S_1}\cap L_{S_2})\times \Gal(L_{S_2}/L_{S_1}\cap L_{S_2}).$$
Applying Chebotarev's density theorem on this direct product will be much more useful for us than applying it on the Galois group $\Gal(L_S/L)$. 

The following diagram settles the notation for the natural arrows attached to the decomposition $S=S_1\oplus S_2$:
\begin{equation*}
    \begin{tikzcd}
	{S/S_2} & {S_1\oplus S_2} & {S/S_1} \\
	{S_1} && {S_2}
	\arrow["{\theta_1}"', from=1-2, to=1-1]
	\arrow["{\theta_2}", from=1-2, to=1-3]
	\arrow["{\eta_1}", from=2-1, to=1-1]
	\arrow["{\eta_2}"', from=2-3, to=1-3]
\end{tikzcd}
\end{equation*}
All these maps induce pullbacks after applying the functor $X_{\bullet}=\Hom_{\cOP}(\bullet, A[\P^M])$, to be denoted with an upper star. Since $X_\bullet$ is an additive functor, we immediately obtain the following result.

\begin{lemma}\label{lem:directproduct_lemma}
    The morphism $\theta_1^\ast+\theta_2^\ast\colon X_{S/S_2}\oplus X_{S/S_1}\to X_S$ is an isomorphism.
\end{lemma}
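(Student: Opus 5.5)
The plan is to show that the map is a direct sum decomposition induced by the splitting $S=S_1\oplus S_2$, using only the additivity of the contravariant functor $X_\bullet=\Hom_{\cOP}(\bullet,A[\P^M])$. By Proposition \ref{pairing_prop1}, applied to $S$, and its quotient analogue from Section \ref{compatibility_sebsection}, applied to $S/S_2$ and $S/S_1$, we may identify $X_S$, $X_{S/S_2}$ and $X_{S/S_1}$ with the full $\Hom$-modules. The statement then becomes purely algebraic.

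First, $\theta_1\colon S\to S/S_2$ restricts to an isomorphism on $S_1$, with inverse $\eta_1$. So the composition $S\xrightarrow{\theta_1} S/S_2\xrightarrow{\eta_1^{-1}} S_1$ is the projection $p_1$ onto the first summand, and symmetrically $\eta_2^{-1}\circ\theta_2=p_2$. Second, $\Hom_{\cOP}(S_1\oplus S_2,A[\P^M])\cong X_{S_1}\oplus X_{S_2}$ via $(f_1,f_2)\mapsto f_1\circ p_1+f_2\circ p_2$; this is the standard additivity of $\Hom$ in the first variable. Composing with the isomorphisms $(\eta_1^{-1})^\ast$ and $(\eta_2^{-1})^\ast$, the map $\theta_1^\ast+\theta_2^\ast$ sends $(g_1,g_2)$ to $g_1\circ\theta_1+g_2\circ\theta_2$. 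This equals the additivity isomorphism precomposed with $(\eta_1^\ast,\eta_2^\ast)$, which is a bijection, so $\theta_1^\ast+\theta_2^\ast$ is an isomorphism.

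One can also argue directly. For injectivity, if $g_1\circ\theta_1+g_2\circ\theta_2=0$, evaluating on $S_1$ kills the second term, since $\theta_2(S_1)=0$, and gives $g_1\circ\theta_1|_{S_1}=0$; surjectivity of $\theta_1|_{S_1}$ then forces $g_1=0$, and likewise $g_2=0$. For surjectivity, given $f\in X_S$, set $g_i=f|_{S_i}\circ\eta_i^{-1}$. For $\cOP[\cG]$-linearity, note that $\cG$ acts only through $A[\P^M]$, so every map is $\cG$-equivariant.

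The only real subtlety is justifying that these pullbacks agree with the maps $\theta^\ast$ of Proposition \ref{prop:exact-sequence-X}, that is, that the identification $X_{S/S_i}=\Hom_{\cOP}(S/S_i,A[\P^M])$ is legitimate. This is guaranteed by the isomorphism statement for $\phi_{T/S,\P}$ proved right after the definition of $X_{T/S}$.
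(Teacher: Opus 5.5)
Your proof is correct and follows the paper's approach: the paper just invokes additivity of $X_\bullet=\Hom_{\cOP}(\bullet,A[\P^M])$, using the identifications from Proposition \ref{pairing_prop1} and its quotient analogue, and you carry out that computation explicitly. One small slip: $\theta_1|_{S_1}$ \emph{is} $\eta_1$, rather than having inverse $\eta_1$, but your formula $p_1=\eta_1^{-1}\circ\theta_1$ and everything after it are right.
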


We also have the following explicit description of the isomorphism $\eta_i^\ast$.

\begin{lemma}
    For any $\xi=\sum_{i=1}^n r_i \phi_{S/S_2}(\rho_i) \in X_{S/S_2}$ with $r_i \in \cOP$ and $\rho_i \in \Gal(L_S/L_{S_2})$, we have that
    \begin{equation*}
        \eta_1^\ast(\xi)=\sum_{i=1}^n r_i \phi_{S_1}(\rho_i|_{L_{S_1}}).
    \end{equation*}
\end{lemma}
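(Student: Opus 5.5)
The statement is an explicit formula for $\eta_1^\ast$, so I will simply unwind the definitions. Both sides are $\cOP$-linear in $\xi$, and $\eta_1^\ast$ is a pullback of $\cOP$-linear maps. It therefore suffices to treat a single generator $\xi=\phi_{S/S_2}(\rho)$ with $\rho\in\Gal(L_S/L_{S_2})$, and to show that $\eta_1^\ast\phi_{S/S_2}(\rho)=\phi_{S_1}(\rho|_{L_{S_1}})$ as elements of $\Hom_{\cOP}(S_1,A[\P^M])$. Linearity then gives the stated formula.

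To check this equality, evaluate both sides at an arbitrary $s\in S_1$. By definition of the pullback,
\[
\eta_1^\ast\phi_{S/S_2}(\rho)(s)=\phi_{S/S_2}(\rho)(\eta_1(s))=[s+S_2,\rho].
\]
By the definition of the quotient pairing \eqref{eq:Galois-pairing-quotients}, applied to the sequence $0\to S_2\to S\to S/S_2\to 0$, this equals $(\res_L s)(\tilde\rho)$, where $\tilde\rho\in G_L^\ab$ is any lift of $\rho$.

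On the other side, $\phi_{S_1}(\rho|_{L_{S_1}})(s)=(\res_L s)(\tilde\rho')$, where $\tilde\rho'$ is any lift of $\rho|_{L_{S_1}}$ to $G_L^\ab$. Since $\tilde\rho$ itself lifts $\rho|_{L_{S_1}}$, we may take $\tilde\rho'=\tilde\rho$, and the two values agree.

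The only point needing care is well-definedness of these evaluations, i.e.\ independence of the chosen lifts. For $\phi_{S_1}$ this holds because $\res_L s$ vanishes on $G_L^{S_1}=\Gal(\bar K/L_{S_1})^{\ab}$ whenever $s\in S_1$. For the quotient pairing, the value $(\res_L t)(\rho)$ must depend only on $t+S_2$ when $\rho\in\Gal(L_S/L_{S_2})$, and this holds because elements of $S_2$ kill $G_L^{S_2}$. I expect no genuine obstacle; the step most prone to slips is the bookkeeping of which Galois group each lift lives in, so I will keep that explicit and the rest is routine.
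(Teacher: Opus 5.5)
Your proposal is correct and is exactly the definition-unwinding that the paper dismisses as ``just a routine verification'': reduce by $\cOP$-linearity of $\eta_1^\ast$ to a single $\phi_{S/S_2}(\rho)$, then evaluate both sides at $s\in S_1$ on a common lift $\tilde\rho\in G_L^{\ab}$. One cosmetic slip: $G_L^{S_1}$ is the subgroup $\Gal(L^{\ab}/L_{S_1})$ of $G_L^{\ab}$, not $\Gal(\bar K/L_{S_1})^{\ab}$, though this does not affect the argument.
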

\begin{proof}
    It is just a routine verification.
\end{proof}

\begin{corollary}
$X_{S_1}=\cOP \cdot \phi_{S_1}\bigl(\Gal(L_{S_1}/L_{S_1} \cap L_{S_2})\bigr)$ and $X_{S_2}=\cOP \cdot \phi_{S_2}\bigl(\Gal(L_{S_2}/L_{S_1} \cap L_{S_2})\bigr)$.
\end{corollary}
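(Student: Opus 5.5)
We need to prove that $X_{S_1}=\cOP\cdot\phi_{S_1}(\Gal(L_{S_1}/L_{S_1}\cap L_{S_2}))$, and the same for $S_2$ by symmetry. The plan is to combine Lemma \ref{lem:directproduct_lemma} with the explicit description of $\eta_1^\ast$ just given, and to apply Proposition \ref{pairing_prop1} to the quotient $S/S_2$.

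\textbf{Step 1 (identify the quotient extension).} Apply the construction of Section \ref{compatibility_sebsection} to the exact sequence $0\to S_2\to S\to S/S_2\to 0$. The module $X_{S/S_2}$ is the $\cOP$-span of $\phi_{S/S_2}(\Gal(L_S/L_{S_2}))$, and by the analogue of Proposition \ref{pairing_prop1} for quotients it equals all of $\Hom_{\cOP}(S/S_2,A[\P^M])$.

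\textbf{Step 2 (transport through $\eta_1^\ast$).} The map $\eta_1\colon S_1\to S/S_2$ is an isomorphism of $\cOP$-modules, so $\eta_1^\ast\colon X_{S/S_2}\to X_{S_1}$ is an isomorphism. Take any $\xi\in X_{S_1}$ and write $\xi=\eta_1^\ast(\xi')$ with $\xi'=\sum_i r_i\phi_{S/S_2}(\rho_i)$, where $\rho_i\in\Gal(L_S/L_{S_2})$. The preceding lemma then gives
\[
\xi=\sum_i r_i\,\phi_{S_1}(\rho_i|_{L_{S_1}}).
\]

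\textbf{Step 3 (locate the restrictions).} Each $\rho_i$ fixes $L_{S_2}$, and hence fixes $L_{S_1}\cap L_{S_2}$. Therefore $\rho_i|_{L_{S_1}}\in\Gal(L_{S_1}/L_{S_1}\cap L_{S_2})$. This shows $X_{S_1}\subseteq\cOP\cdot\phi_{S_1}(\Gal(L_{S_1}/L_{S_1}\cap L_{S_2}))$. The reverse inclusion is trivial, since the right-hand side sits inside $X_{S_1}$ by definition. The statement for $S_2$ follows by exchanging the indices.

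The only point needing care is Step 1. We must check that the $\Gal(L_S/L_{S_2})$-valued pairing with $S/S_2$ is well defined and that its $\cOP$-span is all of $\Hom_{\cOP}(S/S_2,A[\P^M])$. Both were asserted in Section \ref{compatibility_sebsection}, via the same length-counting argument as in Proposition \ref{pairing_prop1}. The rest is formal.
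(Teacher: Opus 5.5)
Your proof is correct and follows the paper's argument. Both use surjectivity of $\eta_1^\ast$ on $X_{S/S_2}=\cOP\cdot\phi_{S/S_2}(\Gal(L_S/L_{S_2}))$ together with the explicit formula $\eta_1^\ast(\phi_{S/S_2}(\rho))=\phi_{S_1}(\rho|_{L_{S_1}})$. The only small difference is that the paper invokes the isomorphism $\Gal(L_S/L_{S_2})\cong\Gal(L_{S_1}/L_{S_1}\cap L_{S_2})$, which relies on $L_S=L_{S_1}L_{S_2}$. You instead use only that these restrictions land in $\Gal(L_{S_1}/L_{S_1}\cap L_{S_2})$, plus the trivial reverse inclusion, and that suffices.
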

\begin{proof}
Standard Galois theory implies that restriction to $L_{S_1}$ induces an isomorphism 
\[
\Gal(L_S/L_{S_2}) \cong \Gal(L_{S_1}/L_{S_1} \cap L_{S_2}).
\]
Then, the previous lemma implies that the image of the map $\eta_1^\ast$ is $\cOP\cdot\phi_{S_1}(\Gal(L_{S_1}/L_{S_1} \cap L_{S_2}))$, which coincides with the whole $X_{S_1}$ as $\eta_1^\ast$ is surjective. The proof for $X_{S_2}$ is identical.
\end{proof}

\begin{proposition}
We have that $X_S=\cOP \cdot \phi_S\bigl(\Gal(L_S/L_{S_1} \cap L_{S_2})\bigr)$.
\end{proposition}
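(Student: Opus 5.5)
The plan is to combine Lemma \ref{lem:directproduct_lemma} with the preceding Corollary, using the explicit descriptions of the pullbacks $\theta_i^\ast$ and $\eta_i^\ast$. Set $L_0 := L_{S_1}\cap L_{S_2}$. Since $L_S = L_{S_1}L_{S_2}$, restriction gives an isomorphism
\[
\Gal(L_S/L_0) \iso \Gal(L_{S_1}/L_0)\times \Gal(L_{S_2}/L_0),
\]
and the subgroups $\Gal(L_S/L_{S_2})$ and $\Gal(L_S/L_{S_1})$ of $\Gal(L_S/L_0)$ correspond to the two factors. By Lemma \ref{lem:directproduct_lemma}, $X_S = \theta_1^\ast(X_{S/S_2}) + \theta_2^\ast(X_{S/S_1})$. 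It therefore suffices to show that each summand lies in $\cOP\cdot\phi_S\bigl(\Gal(L_S/L_0)\bigr)$; the reverse inclusion is trivial.

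Consider the first summand. Since $\eta_1^\ast\colon X_{S/S_2}\to X_{S_1}$ is an isomorphism, the preceding Corollary lets me write every element of $X_{S/S_2}$ as $\xi=\sum_i r_i\,\phi_{S/S_2}(\rho_i)$ with $\rho_i\in\Gal(L_S/L_{S_2})$. To see this, pick $\tilde\rho_i\in\Gal(L_{S_1}/L_0)$ with $\eta_1^\ast(\xi)=\sum r_i\phi_{S_1}(\tilde\rho_i)$, and lift each $\tilde\rho_i$ via $\Gal(L_S/L_{S_2})\iso\Gal(L_{S_1}/L_0)$ to some $\rho_i$. By the explicit formula for $\eta_1^\ast$ and its injectivity, $\xi=\sum r_i\phi_{S/S_2}(\rho_i)$. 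Applying Proposition \ref{prop:exact-sequence-X}(ii) to the sequence $0\to S_2\to S\xrightarrow{\theta_1} S/S_2\to 0$ then gives
\[
\theta_1^\ast(\xi)=\sum_i r_i\,\phi_S(\rho_i),
\]
with $\rho_i\in\Gal(L_S/L_{S_2})\subseteq\Gal(L_S/L_0)$. The same argument, with the indices swapped, handles $\theta_2^\ast(X_{S/S_1})$.

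Finally I add the two contributions. I do not even need to combine $\rho$ and $\rho'$ into a single element of the product group, because $X_S$ is an $\cOP$-module and sums of $\cOP$-combinations of $\phi_S$-values are again such combinations. Together with Lemma \ref{lem:directproduct_lemma} this yields $X_S=\cOP\cdot\phi_S\bigl(\Gal(L_S/L_0)\bigr)$. The only delicate point is the consistency of the identifications. One must check that $\phi_{S/S_2}$ is defined on $\Gal(L_S/L_{S_2})$, so that the pairing \eqref{eq:Galois-pairing-quotients} applies with $T=S$ and with $S_2$ in place of $S$. One must also check that the restriction map used in the explicit formula for $\eta_1^\ast$ matches the Galois isomorphism used in the Corollary. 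Both checks are routine given the definitions.
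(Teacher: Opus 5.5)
Your argument is correct and follows the paper's proof. Both decompose $X_S=\theta_1^\ast(X_{S/S_2})+\theta_2^\ast(X_{S/S_1})$ via Lemma \ref{lem:directproduct_lemma} and push generators through $\theta_i^\ast$ using Proposition \ref{prop:exact-sequence-X}(ii). Your detour through the Corollary and $\eta_1^\ast$ is unnecessary, since $X_{S/S_2}$ is by definition the $\cOP$-span of $\phi_{S/S_2}(\Gal(L_S/L_{S_2}))$, and the Corollary was itself deduced from that fact. Closing with two inclusions cleanly replaces the paper's final identity $\Gal(L_S/L_{S_2})\Gal(L_S/L_{S_1})=\Gal(L_S/L_{S_1}\cap L_{S_2})$.
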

\begin{proof}
By Lemma \ref{lem:directproduct_lemma} and Proposition \ref{prop:exact-sequence-X} we have
\begin{align*}
X_S&=\theta_1^\ast(X_{S/S_2})+\theta_2^\ast(X_{S/S_1})\\
&=\theta_1^\ast(\cOP \cdot \phi_{S/S_2}(\Gal(L_S/L_{S_2}))+\theta_2^\ast(\cOP \cdot \phi_{S/S_1}(\Gal(L_S/L_{S_1}))\\
&=\cOP \cdot \phi_S(\Gal(L_S/L_{S_2}))+\cOP \cdot \phi_S(\Gal(L_S/L_{S_1}))\\
&=\cOP\cdot \phi_S(\Gal(L_S/L_{S_2})\Gal(L_S/L_{S_1}))\\
&=\cOP\cdot \phi_S(\Gal(L_S/L_{S_1} \cap L_{S_2})).\qedhere
\end{align*}
\end{proof}

\begin{remark}\label{rk:theta-eta-tau-equivariant}
    If $S_1$ and $S_2$ are $\Gal(K/F)$-modules, then $\theta_i^\ast$ and $\eta_i^\ast$ are $\Gal(K/F)$-equivariant morphisms.
\end{remark}

\subsection{Setup of the proof}\label{setup_proof_subsection}

We now proceed to prove Theorem \ref{main_theorem}. With this aim, we further assume that $y_K\notin \P A(K)$. The technique that we will apply is inspired by \cite{BD}. Moreover, in order to simplify the notations we will perform this proof at first assuming \eqref{ass:Tamagawa} on $A$. Then in Section \ref{sec:removing-tama} we will show how to modify the Kolyvagin classes in order to remove this assumption.

Let $\ellfr$ be a prime of $F$. 
Note that, by definition, the restriction at any prime $v \mid \ellfr$ of $K$ of a class in $\Selm_{\fp^M}(A/K)$ lies in the image of the (injective) local Kummer map. Thus we have a map (that we denote by $\res_\ellfr$ with a slight abuse of notation)
\begin{equation}\label{Selmer_map1}
\res_{\ellfr}\colon \Selm_{\fp^M}(A/K) \to A(K_{\ellfr})/\fp^M  A(K_\ellfr).
\end{equation}
In the following, for an $(\cOP/\fp^M)$-module $C$, define $C^\ast:= \Hom_{\cOP}(C, \cOP/\fp^M)$. Endowing $C$ with the discrete topology, we have that $C^\ast =  \Hom_{\cOP}(C, E_\P/\cOP) = D_{\cOP}(C)$, as defined in Section \ref{sec:duality}. In particular, the functor $(-)^\ast$ is exact, since $E_\P/\cOP$ is divisible over a PID and thus injective. Applying it to $\res_{\ellfr}$ and composing it with the $\cOP$-module isomorphism 
\begin{equation}\label{Tatelocalduality_isomorphism1}
\phi_{\ellfr} \colon \hone(K_{\ellfr}, A)[\fp^M] \iso \bigl(A(K_{\ellfr})/\fp^M A(K_\ellfr)\bigr)^\ast
\end{equation}
induced by the local Tate pairing (see Proposition \ref{prop:tate-pairing}), we get an $\cOP$-module homomorphism
\[\uppsi_{\ellfr} := \res_{\ellfr}^\ast \circ\, \phi_\ellfr: \hone(K_{\ellfr}, A)[\fp^M] \to \Selm_{\fp^M}(A/K)^\ast.\]
This map will play a fundamental role in our proof. In particular we will need the following technical result.
\begin{lemma}\label{global_duality_proposition1}
For any $\alpha \in \hone(K, A)[\P^M]$, then $\sum_\ellfr \uppsi_\ellfr(\res_\ellfr\alpha)= 0$,
where the sum is taken over all primes $\ellfr$ of $F$.    
\end{lemma}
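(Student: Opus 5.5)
The plan is to deduce the lemma from global reciprocity for the Tate pairing, that is, the Cassels--Tate style fact that a global class in $\hone(K,A)$ pairs to zero with global points once the local pairings are summed. Unwinding the definitions, $\uppsi_\ellfr(\res_\ellfr\alpha)$ is the functional on $\Selm_{\P^M}(A/K)$ given by
\[
s\longmapsto \sum_{v\mid\ellfr}\langle \res_v s, \alpha_v\rangle^\gamma_{\P,M,v},
\]
where $\res_v s$ is regarded as an element of $A(K_v)/\P^M A(K_v)$ via the injective local Kummer map. Summing over $\ellfr$ therefore amounts to showing, for every $s\in\Selm_{\P^M}(A/K)$, that
\[
\sum_v \langle s_v,\alpha_v\rangle_{\P,M,v}^\gamma=0 .
\]
The sum is finite: $\alpha$ is unramified and $s$ lies in $\honeur$ at almost all places, and by Proposition \ref{prop:comparison-image-kummer-unramified-local-conditions} together with Remark \ref{remark:hones} the local pairing kills these. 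Archimedean places contribute nothing, since $K$ is CM and hence totally complex.

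To prove the vanishing, I would lift $\alpha$ to a class $\tilde\alpha\in\hone(K,A[\P^M])$ using the Kummer sequence $0\to A(K)/\P^M\to \hone(K,A[\P^M])\to \hone(K,A)[\P^M]\to 0$. By construction of the pairing in Proposition \ref{prop:tate-pairing}, the local pairing $\langle s_v,\alpha_v\rangle$ is the cup product $\cup^\gamma_{\P,M,v}(s_v,\tilde\alpha_v)$. This holds because the Kummer image is maximal isotropic, so the value does not depend on the chosen local lift of $\alpha_v$. The total sum then becomes
\[
\sum_v \cup_{\P,M,v}^\gamma(\res_v s,\res_v\tilde\alpha),
\]
which is the image under the sum of local invariants of the global cup product $s\cup\tilde\alpha\in\cohomology^2(K,(\cOP/\P^M)(1))$. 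Here I use that cup product commutes with restriction. This sum is zero by the reciprocity law
\[
0\to \mathrm{Br}(K)\to\bigoplus_v\mathrm{Br}(K_v)\xrightarrow{\sum\mathrm{inv}}\Q/\Z\to0,
\]
applied to $\mu_{p^k}$-coefficients.

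The main obstacle is the $\cOP$-linear normalisation. The cup product $\cup_{\P,M,v}^\gamma$ takes values in $\cOP/\P^M$ and was built from the $\Z_p$-valued Weil pairing through the trace form and the chosen element $c$ of the inverse different (see Lemma \ref{lemma:pontryagin-iso} and diagram \eqref{eq:diagram-cup-compatibility}). I would reduce to the $\Z_p$-valued statement. It suffices to show the $\cOP/\P^M$-valued sum vanishes after composing with $a\mapsto\Tr(ca\,\cdot)$ for every $a\in\cOP$, because this family of maps separates points by perfectness of the trace pairing. Each such composite equals a $\Q_p/\Z_p$-valued cup product of $s$ and $a\tilde\alpha$ for the usual Weil pairing into $\mu_{p^\infty}$, and classical global reciprocity applies to it. A secondary point is to confirm that the local duality isomorphism $\phi_\ellfr$ agrees, including sign, with the semilocal sum of the $\langle\,,\,\rangle_v$. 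Any discrepancy would be a global sign, which does not affect the vanishing.
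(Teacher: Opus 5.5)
Your proposal is correct and follows the paper's proof: write $\uppsi_\ellfr(\res_\ellfr\alpha)(s)$ as $\sum_{v\mid\ellfr}\langle\res_v s,\res_v\alpha\rangle_v$, sum over all $\ellfr$, and conclude by the global reciprocity law for the Brauer group of $K$. What you add is what the paper leaves implicit: lifting $\alpha$ to $\hone(K,A[\P^M])$ so the local pairings become localisations of a single global cup product, the finiteness and archimedean checks, and the reduction of the $\cOP/\P^M$-valued pairing to the $\Q_p/\Z_p$-valued one (there the maps $\Tr(ca\,\cdot)$ should be applied after identifying $\cOP/\P^M$ with $\P^{-M}/\cOP\subseteq E_\P/\cOP$, as in Lemma \ref{lemma:pontryagin-iso} and diagram \eqref{eq:diagram-cup-compatibility}, rather than to $\cOP/\P^M$ directly).
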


\begin{proof}
    For any $s \in \Selm_{\fp^M}(A/K)$ and $\alpha \in \hone(K, A)[\P^M]$, we have that
    \[
    \uppsi_\ellfr(\res_\ellfr\alpha)(s)= \phi_\ellfr(\res_\ell \alpha)(\res_\ellfr s) = \sum_{v \mid \ellfr} \langle \res_v s, \res_v \alpha \rangle_v,
    \]
    the sum varying over the primes $v \mid \ellfr$ of $K$. Therefore we have the equality
    \[
    \sum_{\ellfr} \uppsi_\ellfr(\res_\ellfr\alpha)(s) = \sum_v \langle \res_v s, \res_v \alpha \rangle_v,
    \]
    where the sums vary respectively over the primes $\ellfr$ of $F$ and $v$ of $K$, which shows the claim, since the right hand side vanishes, as follows from the global reciprocity law for elements in the Brauer group of $K$ (\cite[Theorem 8.1.17]{neukirch:cnf}).
\end{proof}
Since $y_K \notin \fp A(K)$ and $p$ is odd, the decomposition $y_K=y_K^+ + y_K^-$ induced by the action of $\tau_c$ implies that $y_K^\epsilon \notin \fp A(K)$ for some $\epsilon \in \{\pm \}$. From now on, let's fix an $\epsilon$ with this property.

As in the previous sections, let $L=K(A[\P^M])$ and denote by 
\[
\delta = \delta_K: A(K)/\fp^M A(K) \to \hone(K, A[\P^M])
\]
the global Kummer map (see Section \ref{sec:selmer-sha}). Let $\ellfr_1 \in \admissible_1(M)$ be an auxiliary prime. Define the following $\cOP$-submodules  of $\hone(K, A[\P^M])$:
\begin{itemize}
    \item $S_1 :=\cOP\cdot\cbf(1)^\epsilon=\cOP\cdot\delta\bigl(y_K^\epsilon+\fp^M A(K)\bigr)$;
    \item $S_2 :=\cOP\cdot\cbf(\ellfr_1)^{-\epsilon}$;
    \item $S:=S_1+S_2$.
\end{itemize}
There is the following diagram of field extensions:
\[
\begin{tikzcd}
   &L_S\ar[ld, -] \ar[rd, -] \ar[dd, -]\\
    L_{S_1} \ar[rd, -] &&L_{S_2}\ar[ld, -]\\
    & L_{S_1} \cap L_{S_2} \ar[d, -]&\\
    & L \ar[d, -]&\\
    & K & 
\end{tikzcd}
\]
Since $\cbf(1)^\epsilon$ and $\cbf(\ellfr_1)^{-\epsilon}$ are by definition eigenvectors for the action of the complex conjugation $\tau_c$,  we have that  $S_1$, $S_2$ and $S$ are $\tau_c$-modules and, by Remark \ref{rk:tau_c-equivariance}, $L_{S_1}$, $L_{S_2}$ and $L_S$ are all Galois over $F$.

Given a subset $U$ of $\Gal(L_S/L_{S_1} \cap L_{S_2})$, define
\[
\mathscr{L}(U)=\set{\ellfr \in \admissible_1(M) \; | \; \Frob_{\ellfr}(L_S/F)=[\tau_c u] \; \text{for some} \; u \in U }\setminus\{\ellfr_1\},
\]
where $[\tau_c u]$ denotes the conjugacy class of $\tau_c u$ in $\Gal(L_S/F)$.  As usual, denote by $U^\pm$ the projection of $U$ to the $\pm 1$ eigenspaces of $\Gal(L_S/L_{S_1} \cap L_{S_2})$ for the action of $\tau_c$. We have the following key proposition.

\begin{proposition}\label{generating_Sel_proposition}
If $\phi_S(U^+)$ generates $X_S^+$ as an $\cOP$-module, then $\img \uppsi_{\ellfr}$ with $\ellfr$ ranging over $\mathscr{L}(U)$ generates $\Selm_{\fp^M}(A/K)^{\ast}$.
\end{proposition}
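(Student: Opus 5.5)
We argue by contradiction, following the Bertolini--Darmon argument. Suppose the images $\img\uppsi_\ellfr$, for $\ellfr\in\mathscr{L}(U)$, generate a proper $\cOP$-submodule of $\Selm_{\fp^M}(A/K)^\ast$. Since $(-)^\ast$ is an exact duality on finite $\cOP/\P^M$-modules, there is then a nonzero $s\in\Selm_{\fp^M}(A/K)$ annihilated by all of them. Concretely, $\phi_\ellfr(\beta)(\res_\ellfr s)=0$ for every $\beta\in\hone(K_\ellfr,A)[\P^M]$ and every $\ellfr\in\mathscr{L}(U)$. By the perfectness of the local Tate pairing (Proposition \ref{prop:tate-pairing}), this says $\res_\ellfr s=0$ in $A(K_\ellfr)/\P^M A(K_\ellfr)$ for all $\ellfr\in\mathscr{L}(U)$.

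The first reduction is to eigenvectors. The Selmer group is $\tau_c$-stable and $p$ is odd, so we may replace $s$ by a nonzero component $s^{\pm}$. The vanishing of $\res_\ellfr s$ passes to $s^\pm$, because the primes $\ellfr\in\admissible_1(M)$ are inert, each has a unique prime $v$ of $K$ above it, and $\tau_c$ preserves $v$. So we may assume $\tau_c s=\pm s$.

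Next we set $T:=S+\cOP s$. This is a finite $\tau_c$-stable submodule, and we apply the compatibility results of Section \ref{compatibility_sebsection} to $0\to S\to T\to T/S\to0$. We must first check that $s\notin S$, or handle the case $s\in S$ separately. For this we compute the local conditions of $S$. The class $\cbf(1)^\epsilon$ is Selmer, and its restriction at some $\ellfr\in\mathscr{L}(U)$ is nonzero, since $y_K^\epsilon\notin\P A(K)$ and $A(K)[\P]=0$ (Lemma \ref{torsion_lemma}). The class $\cbf(\ellfr_1)^{-\epsilon}$ is not Selmer at $\ellfr_1$ by Proposition \ref{prop:kolyvaginclasses_properties}(ii), since $\dbf(\ellfr_1)_{\ellfr_1}$ corresponds to $\cbf(1)_{\ellfr_1}\neq0$.

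The core step is Chebotarev. For $\ellfr$ with $\Frob_\ellfr(L_T/F)=\tau_c u$, the evaluation at $\Frob_v=(\tau_c u)^2$ identifies $\res_\ellfr t$, for $t\in T$, with $[t,(\tau_c u)^2]$. For an eigenvector $t$ of sign $\pm$, this equals $[t,u^{\pm}]$ up to the action of $(1\pm\tau_c)$, through the standard computation $(\tau_c u)^2=u^{\tau_c}u$. The hypothesis that $\phi_S(U^+)$ generates $X_S^+$ lets us lift elements of $U$ along $\Gal(L_T/L_{S_1}\cap L_{S_2})\to\Gal(L_S/L_{S_1}\cap L_{S_2})$, using Proposition \ref{prop:exact-sequence-X} and the direct-product description of Section \ref{directproduct_subsection}. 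Since $X_T=\Hom(T,A[\P^M])\neq X_S$ through $X_{T/S}$, we can then pick $\rho\in\Gal(L_T/L_S)$ whose pairing with $s$ is nonzero in the correct eigenspace. Chebotarev, applied to $\tau_c u\rho$, produces $\ellfr\in\mathscr{L}(U)$, with infinitely many choices so that $\ellfr\neq\ellfr_1$, at which $\res_\ellfr s\neq0$. This contradicts the choice of $s$.

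The hard part is this lifting step. Because $\phi_S$ is only injective, and $X_S$ is the $\cOP$-span of its image, a single Galois element need not realize a given homomorphism. So the generation hypothesis must be turned into the statement that $\cOP$-linear combinations $\sum r_i[t,\tau_c u_i\rho]$ detect $s$, and then into a single Frobenius. I would handle this by arguing with the $\cOP$-span. If $[s,(\tau_c u\rho)^2]=0$ for every admissible $u\in U$ and every $\rho$, then by $\cOP$-linearity $\psi_{T,\P}(s)$ vanishes on the span of these elements, which is all of $X_T^{\pm}$ by the surjectivity of $\iota^\ast$ (Proposition \ref{prop:exact-sequence-X}). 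Combined with Lemma \ref{eigenspace_structure_lemma}, which shows the relevant eigenspace of $A[\P^M]$ is free of rank one, this forces $s=0$ via the injectivity of $\psi_{T,\P}$ (Proposition \ref{pairing_prop1}).
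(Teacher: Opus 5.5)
Your route is the paper's. You reduce to a nonzero $\tau_c$-eigenvector $s\in\Selm_{\fp^M}(A/K)$ with $\res_\ellfr s=0$ for all $\ellfr\in\mathscr{L}(U)$; your duality argument here is a valid and simpler replacement for the paper's Lemma \ref{lem:img-psi-l-generates-selmer-dual} with the finite sets $Y_s$. You then set $T=S+\cOP s$, pull $U$ back to $\tilde U\subseteq\Gal(L_T/L_{S_1}\cap L_{S_2})$, and use Chebotarev to get $[s,(\tau_c g)^2]=2[s,g^+]=0$ for all $g\in\tilde U$. So $\psi_{T,\P}(s)$ kills $X_T^+$.

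\textbf{The gap is in your last step.} If $\tau_c s=as$, vanishing on $X_T^+$ only gives $\psi_{T,\P}(s)(X_T)=\psi_{T,\P}(s)(X_T^-)\subseteq A[\P^M]^{-a}$. Neither the rank-one structure of that eigenspace nor the injectivity of $\psi_{T,\P}$ forces $s=0$: as $\cOP$-modules, nothing forbids a nonzero map $X_T\to A[\P^M]$ that kills $X_T^+$. What you are missing is the Galois structure. Since $\psi_{T,\P}(s)\in\Hom_{\cOP[\cG]}(X_T,A[\P^M])$, its image is a $\cG$-stable submodule of $A[\P^M]$. Its $\P$-torsion is then a $\cG$-stable subspace of $A[\P]$ lying in the line $A[\P]^{-a}$, hence zero by \eqref{ass:abs-G-K-irr}. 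So the image is zero, and only then does injectivity of $\psi_{T,\P}$ give $s=0$. This is the one place the irreducibility hypothesis enters this proposition, and your proposal never invokes it.

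\textbf{Smaller points.} Your detour about $s\notin S$ is unnecessary: if $s\in S$ then $T=S$, $X_{T/S}=0$, and the same argument applies. The reason you give for $\res_\ellfr\cbf(1)^\epsilon\neq 0$ at some $\ellfr\in\mathscr{L}(U)$ is also not valid. Knowing $y_K^\epsilon\notin\P A(K)$ and $A(K)[\P]=0$ says nothing about particular local restrictions; that information only comes from the Chebotarev/generation argument.

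Your single-$\rho$ Chebotarev step fails as stated, because $[s,(\tau_c\tilde u\rho)^2]=2[s,\tilde u^+]+2[s,\rho^+]$ and the two terms can cancel. The span argument in your last paragraph is the right one, but to reach all of $X_T^+$ you need two facts:
\begin{enumerate}
\item $\iota^\ast$ maps the span onto $X_S^+$, which is the hypothesis on $U$;
\item the span contains $\theta^\ast(X_{T/S}^+)$, which follows from $\Gal(L_T/L_S)^+\subseteq\langle\tilde U^+\rangle$ by taking differences of the contributions of $\tilde u\rho$ and $\tilde u$.
\end{enumerate}
Surjectivity of $\iota^\ast$ alone does not give the second fact.
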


Before proving this proposition we need a lemma.

\begin{lemma}\label{lem:img-psi-l-generates-selmer-dual}
Suppose that for each $s \in \Selm_{\fp^M}(A/K)^+ \cup \Selm_{\fp^M}(A/K)^-$ there exists a finite set $Y_s \subseteq \mathscr{L}(U)$ with the property that $\res_{\ellfr}(s)=0$ for all $\ellfr \in Y_s$ implies that $s=0$. Then $\img \uppsi_{\ellfr}$ with $\ellfr$ ranging over $\mathscr{L}(U)$ generate $\Selm_{\fp^M}(A/K)^{\ast}$.
\end{lemma}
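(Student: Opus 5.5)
The plan is to argue by duality: a submodule of $\Selm_{\fp^M}(A/K)^\ast$ is everything exactly when its annihilator in $\Selm_{\fp^M}(A/K)$ is zero. Let $W \subseteq \Selm_{\fp^M}(A/K)^\ast$ be the $\cOP$-submodule generated by $\img\uppsi_\ellfr$ for $\ellfr\in\mathscr{L}(U)$. Since $\Selm_{\fp^M}(A/K)$ is a finite $\cOP/\fp^M$-module and $C\mapsto C^\ast=\Hom_{\cOP}(C,\cOP/\fp^M)$ is exact (as noted before \eqref{Tatelocalduality_isomorphism1}), Matlis/Pontryagin duality for finite $\cOP/\fp^M$-modules gives $C^{\ast\ast}=C$. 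Hence $W=\Selm_{\fp^M}(A/K)^\ast$ if and only if $W^\perp:=\set{s\in\Selm_{\fp^M}(A/K) : w(s)=0\ \forall w\in W}$ is zero. So it suffices to show $W^\perp=0$.

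First step: for $s\in\Selm_{\fp^M}(A/K)$ and $\ellfr\in\mathscr{L}(U)$, I would show that $s\in W^\perp$ forces $\res_\ellfr(s)=0$. By definition,
\[
\uppsi_\ellfr(\beta)(s)=\phi_\ellfr(\beta)(\res_\ellfr s)
\]
for every $\beta\in\hone(K_\ellfr,A)[\fp^M]$. Because $\phi_\ellfr$ in \eqref{Tatelocalduality_isomorphism1} is an isomorphism onto $\bigl(A(K_\ellfr)/\fp^M A(K_\ellfr)\bigr)^\ast$, the vanishing of $\uppsi_\ellfr(\beta)(s)$ for all $\beta$ means that every $\cOP$-linear functional kills $\res_\ellfr s$. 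This gives $\res_\ellfr s=0$, again by double duality for the finite $\cOP/\fp^M$-module $A(K_\ellfr)/\fp^M A(K_\ellfr)$, which is supplied by Proposition \ref{prop:tate-pairing}.

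Second step: reduce to eigenvectors. Each $\ellfr\in\mathscr{L}(U)$ is inert in $K/F$, so $\ellfr$ has a unique prime $v$ of $K$ above it, and $\tau_c$ fixes $v$. Hence $\res_\ellfr$ commutes with $\tau_c$, and $\tau_c$ preserves $\Selm_{\fp^M}(A/K)$ because $\Selm_{\fp^M}(A/F)$ is $F$-rational. It follows that $W^\perp$ is $\tau_c$-stable. Indeed, if $s\in W^\perp$ then $\res_\ellfr(\tau_c s)=\tau_c\res_\ellfr(s)=0$ for all $\ellfr\in\mathscr{L}(U)$, and the first step (which is really an equivalence, since $\uppsi_\ellfr(\beta)(s)$ depends only on $\res_\ellfr s$) shows $\tau_c s\in W^\perp$. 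Since $p$ is odd, $s=s^++s^-$ with $s^\pm\in W^\perp\cap\Selm_{\fp^M}(A/K)^\pm$.

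Conclusion: for $s^\pm$ we have $\res_\ellfr(s^\pm)=0$ for every $\ellfr\in\mathscr{L}(U)$, in particular for all $\ellfr\in Y_{s^\pm}$. The hypothesis then gives $s^\pm=0$, so $s=0$ and $W^\perp=0$. The hypothesis on $Y_s$ is only applied to $s^\pm$ themselves, so its finiteness plays no role here. The only point needing care is the $\tau_c$-compatibility of the semilocal restriction at inert primes, and that is immediate from the unique-prime-above-$\ellfr$ observation.
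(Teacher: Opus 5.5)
Your proof is correct and is essentially the paper's argument in dual form. The paper takes the finite union $Y$ of the $Y_s$, proves that $\Theta\colon \Selm_{\fp^M}(A/K)\to\bigoplus_{\ellfr\in Y}A(K_\ellfr)/\fp^M A(K_\ellfr)$ is injective by the same $\tau_c$-eigenspace splitting, and then dualizes through the $\phi_\ellfr$; you instead show directly that the annihilator of the span of the $\img\uppsi_\ellfr$ is zero, which is why finiteness of the $Y_s$ plays no role for you. One small slip: $\Selm_{\fp^M}(A/K)$ is $\tau_c$-stable because $A$, $A[\fp^M]$ and the Kummer maps are defined over $F$, so $\Gal(K/F)$ permutes the local conditions, and not because of any property of $\Selm_{\fp^M}(A/F)$.
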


\begin{proof}
Let $Y$ be the union of the sets $Y_s$ as $s$ ranges over $\Selm_{\fp^M}(A/K)^+ \cup \Selm_{\fp^M}(A/K)^-$. Since each set $Y_s$ is finite and $\Selm_{\fp^M}(A/K)$ is finite (by \cite[I.4.15 and I.6.6]{milne:arithmetic-duality}), 
then $Y$ is finite. Consider the map
\[ 
\Theta\colon \Selm_{\fp^M}(A/K) \to \bigoplus_{\ellfr \in Y} A(K_{\ellfr})/\fp^MA(K_{\ellfr})
\]
given by $\Theta(t)=(\res_{\ellfr}(t))_{\ellfr \in Y}$ for any $t\in \Selm_{\fp^M}(A/K)$. The properties of the sets $Y_s$ imply that the restrictions $\Theta^+$ and $\Theta^-$ of $\Theta$ to $\Selm_{\fp^M}(A/K)^+$ and $\Selm_{\fp^M}(A/K)^-$, respectively, are injective. Since $\Theta$ is $\tau_c$-equivariant, this also implies that $\Theta=\Theta^+\oplus\Theta^-$ is injective, and thus $\Theta^\ast$ is surjective.
Moreover, since the set $Y$ is finite, we have an $\cOP$-isomorphism
\[
\psi\colon \bigoplus_{\ellfr \in Y} \Bigl(A(K_{\ellfr})/\fp^MA(K_{\ellfr})\Bigr)^\ast \to \Bigl(\bigoplus_{\ellfr \in Y} A(K_{\ellfr})/\fp^M A(K_{\ellfr})\Bigr)^\ast
\]
defined as follows: for $(\chi_{\ellfr})_{\ellfr \in Y} \in \bigoplus_{\ellfr \in Y} (A(K_{\ellfr})/\fp^M)^\ast$ and let $(\alpha_{\ellfr})_{\ellfr \in Y} \in \bigoplus_{\ellfr \in Y} A(K_{\ellfr})/\fp^M$, then $\psi\bigl((\chi_{\ellfr})_{\ellfr \in Y}\bigr)\bigl((\alpha_{\ellfr})_{\ellfr \in Y}\bigr)=\sum_{\ellfr \in Y} \chi_{\ellfr}(\alpha_{\ellfr})$.
We have that $\Theta^\ast \circ \psi=\sum_{\ellfr \in Y} \res_{\ellfr}^\ast$. Indeed, 
\begin{gather*}
\bigl((\Theta^\ast \circ \psi) ((\chi_{\ellfr})_{\ellfr \in Y} )\bigr)(t) 
=\bigl(\psi((\chi_{\ellfr})_{\ellfr \in Y} )\circ \Theta\bigr)(t)
=\psi((\chi_{\ellfr})_{\ellfr \in Y} )(\Theta(t))
=\psi((\chi_{\ellfr})_{\ellfr \in Y} )((\res_{\ellfr}(t))=\\
=\sum_{\ellfr \in Y}  \chi_{\ellfr}(\res_{\ellfr}(t))
=\Bigl(\sum_{\ellfr \in Y}  \chi_{\ellfr} \circ \res_{\ellfr}\Bigr)(t)
=\Bigl(\Bigl(\sum_{\ellfr \in Y}  \res_{\ellfr}^\ast\Bigr)((\chi_{\ellfr})_{\ellfr \in Y} )\Bigr)(t),
\end{gather*}
for all $(\chi_{\ellfr})_{\ellfr \in Y} \in \bigoplus_{\ellfr \in Y} (A(K_{\ellfr})/\fp^M)^\ast$ and $t \in \Selm_{\fp^M}(A/K)$.

 Moreover, consider the map $\phi\colon \bigoplus_{\ellfr \in Y} \hone(K_{\ellfr}, A)[\fp^M] \to \bigoplus_{\ellfr \in Y} (A(K_{\ellfr})/\fp^M)^\ast$ defined as the sum of the $\cOP$-module isomorphisms $\phi_\ellfr$ of \eqref{Tatelocalduality_isomorphism1}, for $\ellfr \in Y$.
 We have
\[
\Theta^\ast \circ \psi \circ \phi=\Bigl(\sum_{\ellfr \in Y} \res_{\ellfr}^\ast\Bigr) \circ \phi = \sum_{\ellfr \in Y} \res_{\ellfr}^\ast \circ \phi_{\ellfr}=\sum_{\ellfr \in Y} \uppsi_{\ellfr}.
\]
Since $\phi$ and $\psi$ are isomorphims and $\Theta^\ast$ is surjective, then $\sum_{\ellfr \in Y} \uppsi_{\ellfr}$ is surjective. Thus $\img \uppsi_{\ellfr}$ with $\ellfr$ ranging over $\mathscr{L}(U)$ generate $\Selm_{\fp^M}(A/K)^{\ast}$.
\end{proof}

\begin{proof}[Proof of Proposition \ref{generating_Sel_proposition}]
Let $s \in \Selm_{\fp^M}(A/K)^a$ for some $a \in \set{\pm}$. By Lemma \ref{lem:img-psi-l-generates-selmer-dual}, to prove the proposition, it suffices to find a finite set $Y_s \subseteq \mathscr{L}(U)$ with the property that $\res_{\ellfr}(s)=0$ for all $\ellfr \in Y_s$ implies that $s=0$.

Let $T=\cOP\cdot s+S$. Since $T$ is a $\Gal(K/F)$-module, then $L_T/F$ is Galois. Let $$\Xi\colon \Gal(L_T/L_{S_1} \cap L_{S_2}) \to \Gal(L_S/L_{S_1} \cap L_{S_2})$$ be the natural projection map and let $\tilde{U}=\Xi^{-1}(U)$.  We first claim that $\phi_T(\tilde{U}^+)$ generates $X_T^+$ as an $\cOP$-module.

By Remark \ref{rk:iota-theta-tau-equivariant}, the exact sequence of Proposition \ref{prop:exact-sequence-X} induces an exact sequence
\[0 \to X_{T/S}^+ \xrightarrow{\theta^\ast} X_T^+  \xrightarrow{\iota^\ast} X_S^+ \to 0.\]
Let $\langle \tilde{U}^+ \rangle \subseteq \Gal(L_T/L)$ be the subgroup generated by $\tilde{U}^+$. In order to prove our claim, it will clearly suffice to show that $\phi_T(\langle \tilde{U}^+\rangle)$ generates $X_T^+$ as an $\cOP$-module. From the short exact sequence above, it follows that it's enough to show that $\iota^\ast\bigl(\phi_T(\langle \tilde{U}^+\rangle)\bigr)$ generates $X_S^+$ as an $\cOP$-module and $(\theta^\ast)^{-1}\bigl(\phi_T(\langle \tilde{U}^+\rangle)\bigr)$ generates $X_{T/S}^+$ as an $\cOP$-module.

By Proposition \ref{prop:exact-sequence-X}(i), we have that
\[\iota^\ast\bigl(\phi_T(\langle \tilde{U}^+\rangle)\bigr)=\phi_S\bigl(\Xi(\langle \tilde{U}^+ \rangle)\bigr)=\phi_S(\langle U^+ \rangle),\]
and this generates $X_S^+$ by hypothesis. Now we deal with $(\theta^\ast)^{-1}\bigl(\phi_T(\langle \tilde{U}^+\rangle)\bigr)$. By Proposition \ref{prop:exact-sequence-X}(ii)
\[\phi_{T/S}(\langle \tilde{U}^+\rangle \cap \Gal(L_T/L_S)) \subseteq (\theta^\ast)^{-1}(\phi_T(\langle \tilde{U}^+\rangle)).\]
Clearly we have $\Gal(L_T/L_S)^+=\langle \tilde{U}^+\rangle \cap \Gal(L_T/L_S)$ and so 
\[\phi_{T/S}\bigl(\Gal(L_T/L_S)^+\bigr)=\phi_{T/S}\bigl(\Gal(L_T/L_S)\bigr)^+ \subseteq (\theta^\ast)^{-1}\bigl(\phi_T(\langle \tilde{U}^+\rangle)\bigr).\]
Since $X_{T/S}=\cOP\cdot \phi_{T/S}\bigl(\Gal(L_T/L_S)\bigr)$, then $X_{T/S}^+=\cOP\cdot \phi_{T/S}\bigl(\Gal(L_T/L_S)\bigr)^+$. Therefore, $(\theta^\ast)^{-1}\bigl(\phi_T(\langle \tilde{U}^+\rangle)\bigr)$ generates $X_{T/S}^+$ as an $\cOP$-module. Thus we have shown that $\phi_T(\tilde{U}^+)$ generates $X_T^+$ as an $\cOP$-module.

Let $x\in\tilde{U}$. By the Chebotarev's density theorem, we may choose $\ellfr_x \in \mathscr{L}(U)$ such that $\Frob_{\ellfr_x}(L_T/F)=[\tau_c x]$. We let 
$$Y_s:=\{\ellfr_x \, | \, x \in \tilde{U}^+\}.$$
Assume that $\res_{\ellfr}(s)=0$ for all $\ellfr \in Y_s$. Then, for such $\ellfr$'s, $\res_{\ellfr}(s)(\Frob_{\lambda}(L_T/L))=0$ for all primes $\lambda$ of $L$ above $\ellfr$. For any such prime $\lambda$ there is $x\in\tilde{U}^+$ such that 
\[\Frob_{\lambda}(L_T/L)=(\tau_c x)^2=(\tau_c x\tau_c)x =x^2,\] 
and hence $\res_L(s)(x)=0$.

Since $\phi_T(\tilde{U}^+)$ generates $X_T^+$ as an $\cOP$-module, it follows that $\psi_{T, \fp}(s)$ vanishes on $X_T^+$. Recall that $s$ lies in the $a$-eigenspace of the Selmer group. Taking into account Remark \ref{rk:tau_c-equivariance}, it follows that
\[(\psi_{T, \fp}(s))(X_T)=(\psi_{T, \fp}(s))(X_T^-) \subseteq A[\P^M]^{-a}.\]
Therefore $(\img \psi_{T, \fp}(s))[\fp] \subseteq A[\P]^{-a}$ is a proper $\cOP[\cG]$-submodule of $A[\P]$ (see Lemma \ref{eigenspace_structure_lemma}). So it follows from
\eqref{ass:abs-G-K-irr} that $(\img \psi_{T, \fp}(s))[\fp]=0$, whence $(\img \psi_{T, \fp}(s))=0$. Therefore $s=0$ since $\psi_{T, \fp}$ is injective.
\end{proof}

In the next two sections we will show how to choose carefully $\ellfr_1$ and $U$ in order to apply Proposition \ref{generating_Sel_proposition} for proving Theorem \ref{main_theorem}.

\subsection{Choosing the prime $\ellfr_1$}\label{choosing_ell_1_subsection}
We now show how to choose the prime $\ellfr_1$. First we argue with the following lemma.

\begin{lemma}\label{lem:first-lemma-l1}
    \begin{enumerate}[label=(\roman*)]
        \item $S_1$ is a free $\cOP/\fp^M$-module generated by $\cbf(1)^\epsilon$;
        \item the evaluation at $\cbf(1)^\epsilon$ yields an isomorphism $X_{S_1}^+\cong A[\P^M]^\epsilon\cong \cOP/\fp^M$.
    \end{enumerate}
\end{lemma}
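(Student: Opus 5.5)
For (i), I will use the fact that $S_1$ is cyclic over $\cOP$, generated by $\cbf(1)^\epsilon=\delta(y_K^\epsilon+\P^M A(K))$, and is annihilated by $\P^M$. So $S_1\cong\cOP/\P^m$ for some $m\le M$, and it suffices to show that $\P^{M-1}\cbf(1)^\epsilon\neq0$.

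First, $\cbf(1)=\delta_K(y_K\bmod\P^M)$, and $\delta_K$ is $\tau_c$-equivariant because the Kummer sequence is one of $G_F$-modules. Hence $\cbf(1)^\epsilon=\delta_K(y_K^\epsilon\bmod\P^M)$, and by injectivity of $\delta_K$ (exact sequence \eqref{eq:ses-kummer-P-M}) it suffices to show that $\pi y_K^\epsilon\notin\P^M A(K)$ for a generator $\pi$ of $\P^{M-1}\cOP$. Here I take $\pi\in\cOE$ with $\pi\in\P^{M-1}\setminus\P^M$, prime to the other primes above $p$ up to units of $\cOP$. Suppose $\pi y_K^\epsilon=\pi_M z$ with $z\in A(K)$. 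Since $A(K)[\P^\infty]=0$ by Lemma \ref{torsion_lemma}, I will cancel the common $\P^{M-1}$ factor, working in $A(K)\otimes_{\cOE}\cOP$, where torsion-freeness makes multiplication by elements of $\P$ injective. This gives $y_K^\epsilon\in\P A(K)\otimes\cOP$. Since $A(K)/\P A(K)\cong (A(K)\otimes\cOP)/\P$, this contradicts the choice of $\epsilon$. This is the only slightly delicate point: I must pass between $\cOE$ and $\cOP$ coefficients carefully, using that $A(K)/\P^M A(K)$ depends only on the $\P$-adic completion.

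For (ii), I will apply Proposition \ref{pairing_prop1}, which identifies $X_{S_1}$ with $\Hom_{\cOP}(S_1,A[\P^M])$. By (i), evaluation at the generator $\cbf(1)^\epsilon$ gives an $\cOP$-isomorphism $X_{S_1}\iso A[\P^M]$; it is well defined because $A[\P^M]$ is killed by $\P^M$.

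Next I check how $\tau_c$ interacts with this map. On $X_{S_1}$ the action is $(\tau_c f)(s)=\tau_c f(\tau_c^{-1}s)$; this comes from Remark \ref{rk:tau_c-equivariance}, since $S_1$ is $\Gal(K/F)$-stable. Because $\tau_c\cbf(1)^\epsilon=\epsilon\,\cbf(1)^\epsilon$, evaluation sends $\tau_c f$ to $\epsilon\,\tau_c f(\cbf(1)^\epsilon)$. So $f$ lies in the $+$-eigenspace of $X_{S_1}$ exactly when $f(\cbf(1)^\epsilon)$ lies in $A[\P^M]^\epsilon$, and evaluation restricts to an isomorphism $X_{S_1}^+\cong A[\P^M]^\epsilon$.

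Finally, Lemma \ref{eigenspace_structure_lemma} gives $A[\P^M]^\epsilon\cong\cOP/\P^M$.
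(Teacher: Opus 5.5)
Your proof is correct. Part (ii) follows the paper's argument: identify $X_{S_1}$ with $\Hom_{\cOP}(S_1,A[\P^M])$ via Proposition \ref{pairing_prop1}, evaluate at the generator, and track the $\epsilon$-twist in the $\tau_c$-action.

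For part (i) you take a different route. The paper stays inside $A(K)$:
\begin{itemize}
\item it lifts a hypothetical annihilator $r\in\cOP$ to some $\alpha\in\cOE$ with $\alpha\in\P\setminus\P^M$, and writes $\alpha y_K^\epsilon=\beta P$ with $\beta\in\P^M$;
\item it uses the prime factorization of $\beta\alpha^{-1}\cOE$ and the Chinese remainder theorem to find $\gamma\notin\P$ and $\varpi\in\P$ with $\gamma\beta=\varpi\alpha$, hence $\alpha(\gamma y_K^\epsilon-\varpi P)=0$;
\item it then cancels $\alpha$.
\end{itemize}

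You instead reduce, via the structure of cyclic modules over a DVR, to showing $\P^{M-1}\cbf(1)^\epsilon\neq 0$. You then pass to $A(K)\otimes_{\cOE}\cOP$, which is free over $\cOP$ because its torsion is $A(K)[\P^\infty]=0$. There the cancellation of the factor $\P^{M-1}$ is immediate, and $A(K)/\P A(K)\cong (A(K)\otimes_{\cOE}\cOP)/\P$ brings you back to $A(K)$.

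Your version is shorter, and it avoids a point the paper's cancellation step passes over. Lemma \ref{torsion_lemma} only excludes $\P$-power torsion, but $\alpha$ may have prime factors other than $\P$. So a priori $\gamma y_K^\epsilon-\varpi P$ is only known to be prime-to-$\P$ torsion. This is harmless, since such torsion lies in $\P A(K)$, but it needs saying. Localizing at $\P$ makes the issue disappear automatically.

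Your remark about choosing $\pi$ prime to the other primes above $p$ is unnecessary: any $\pi\in\P^{M-1}\setminus\P^M$ generates $\P^{M-1}\cOP$.
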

\begin{proof}
(i) Let $r \in \cOP$ with $r y_K^\epsilon+\fp^M A(K)=\fp^M A(K)$. We must show that $r \in \fp^M$. If $r \notin \fp\cOP$, then $r$ is a unit in $\cOP$. In this case, $r y_K^\epsilon+\fp^M A(K)=\fp^M A(K)$ implies that $y_K^\epsilon \in \fp^M A(K)$, which is a contradiction.

Now assume that $r \in \fp\cOP \setminus \fp^M\cOP$. Then, by the definition of the $\cOP$-action on $A(K)/\fp^M$, we have that $r y_K^\epsilon+\fp^M A(K)=\alpha y_K^\epsilon+\fp^M A(K)$ for some $\alpha \in \cOE$ with $\alpha \in \fp \setminus \fp^M$. We have $\alpha y_K^\epsilon = \beta P$ where $\beta \in \fp^M$ and $P \in A(K)$.

Then we have the following factorization  
in terms of prime ideals of $\cOE$:
\[\beta \alpha^{-1}\cOE=\fp^s \prod_{i=1}^n \fq_i^{c_i} \prod_{i=n+1}^m \fq_i^{c_i},\]
where $s \geq 1$, $c_i >0$ for $1 \leq i \leq n$, $c_i <0$ for $n+1 \leq i \leq m$ and $\fq_i \neq \fp$ for any $i$.

If the second product of prime ideals is empty, then define $\gamma=1$. Otherwise, by the Chinese remainder theorem, we can choose $\gamma \notin \fp$ with $\gamma \in \fq_i^{-c_i}$ for $n+1 \leq i \leq m$. Then we see that $\gamma \beta \alpha^{-1}=\varpi$ for some $\varpi \in \fp$. Then $\gamma \beta= \varpi \alpha$. Thus $\alpha \gamma y_K^\epsilon=\varpi \alpha P$. So $\alpha(\gamma y_K^\epsilon-\varpi P)=0$. By Lemma \ref{torsion_lemma} this implies that $\gamma y_K^\epsilon = \varpi P$.

Since $\gamma \notin \fp$, therefore for some $\nu \in \fp$ and $\alpha' \in \cOE$ we have that $\alpha' \gamma +\nu =1$. Thus, $y_K^\epsilon=\alpha' \gamma y_K^\epsilon+\nu y_K^\epsilon=\alpha'\varpi P + \nu y_K^\epsilon \in \fp A(K)$. This is a contradiction and therefore we must have $r \in \fp^M\cOP$. 

(ii) As a consequence of point (i), we have that
$$\Hom_{\cOP}(S_1, A[\P^M])=\Hom_{\cOP/\fp^M}(S_1, A[\P^M])\cong \Hom_{\cOP/\fp^M}(\cOP/\fp^M, A[\P^M]) \cong A[\P^M].$$
When $\epsilon=+$ (resp. $\epsilon=-$), then this isomorphism is $\tau_c$-equivariant (resp. $\tau_c$-antiequivariant). Therefore, from Proposition \ref{pairing_prop1} we have that $X_{S_1}^+ \cong A[\P^M]^{\epsilon}$ and by Lemma \ref{eigenspace_structure_lemma} we have an $\cOP$-module isomorphism $A[\P^M]^{\epsilon} \cong \cOP/\fp^M$. 
\end{proof}

Recall that $X_{S_1}=\cOP \cdot \phi_{S_1}(\Gal(L_{S_1}/L))$ and so, since $\phi_{S_1}$ is $\tau_c$-equivariant, $X_{S_1}^+=\cOP \cdot \phi_{S_1}(\Gal(L_{S_1}/L)^+)$. Choose $x \in \Gal(L_{S_1}/L)^+$ such that $\cOP\cdot\phi_{S_1}(x)=X_{S_1}^+\cong \cOP/\fp^M$. Then, since $p$ is odd, we also have that $\cOP \cdot \phi_{S_1}((\tau_c x)^2)=\cOP \cdot \phi_{S_1}(x^2) \cong \cOP/\fp^M$. By Chebotarev's density theorem, we may find $\ellfr_1 \in \admissible_1(M)$ such that $\Frob_{\ellfr_1}(L_{S_1}/F)=[\tau_c x]$. 

\begin{lemma}\label{lem:second-lemma-l1}
    \begin{enumerate}[label=(\roman*)]
        \item $S_2$ and  $\cOP \cdot \dbf(\ellfr_1)_{\ellfr_1}^{-\epsilon}$ are free $\cOP/\P^M$-modules;
        \item the evaluation at $\cbf(\ellfr_1)^{-\epsilon}$ yields an isomorphism $X_{S_2}^+\cong A[\P^M]^{-\epsilon}\cong \cOP/\fp^M$.
    \end{enumerate}
\end{lemma}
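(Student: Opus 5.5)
The plan is to transfer the information on $\cbf(1)^\epsilon$ to the local component $\dbf(\ellfr_1)_{\ellfr_1}$ via the choice of $\ellfr_1$, and then go back to the global class $\cbf(\ellfr_1)^{-\epsilon}$.

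\textbf{Step 1: the local class $\cbf(1)_{\ellfr_1}^\epsilon$ has exact order $\P^M$.} Let $v$ be the unique prime of $K$ above $\ellfr_1$ (which is inert). The class $\cbf(1)^\epsilon$ is unramified at $v$, and $\ellfr_1$ splits completely in $L/K$. Hence $\res_v \cbf(1)^\epsilon$ is determined by its value on $\Frob_v$. This value equals $[\cbf(1)^\epsilon,\Frob_\lambda(L_{S_1}/L)]$ for a prime $\lambda\mid v$ of $L$. Since $\Frob_{\ellfr_1}(L_{S_1}/F)=[\tau_c x]$, we have $\Frob_\lambda(L_{S_1}/L)$ conjugate to $(\tau_c x)^2=x^2$. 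By the choice of $x$, $\phi_{S_1}(x^2)$ generates $X_{S_1}^+\cong\cOP/\P^M$, and evaluation at $\cbf(1)^\epsilon$ is the isomorphism of Lemma \ref{lem:first-lemma-l1}(ii). Therefore $[\cbf(1)^\epsilon,x^2]$ generates $A[\P^M]^\epsilon\cong\cOP/\P^M$. Conjugating by elements of $\cG$ changes the value only by the Galois action, which preserves the order. So $\cbf(1)^\epsilon_{\ellfr_1}$ generates a free $\cOP/\P^M$-module of rank one inside $\honeur(K_v,A[\P^M])\cong A[\P^M]$.

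\textbf{Step 2: transfer to $\dbf(\ellfr_1)$.} By Proposition \ref{prop:kolyvaginclasses_properties}(ii) with $\nfrak=1$, $\psi_v$ is a $\tau_c$-anti-equivariant isomorphism with $\psi_v(\dbf(\ellfr_1)_v)=\cbf(1)_v$. Anti-equivariance gives $\psi_v(\dbf(\ellfr_1)_v^{-\epsilon})=\cbf(1)_v^{\epsilon}$, using that localization at $v$ commutes with $\tau_c$ up to the decomposition group since $v$ is fixed. Hence $\cOP\cdot\dbf(\ellfr_1)^{-\epsilon}_{\ellfr_1}\cong\cOP/\P^M$ is free. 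The class $\dbf(\ellfr_1)^{-\epsilon}$ is the image of $\cbf(\ellfr_1)^{-\epsilon}$ under $\hone(K,A[\P^M])\to\hone(K,A)[\P^M]$, and localizing at $\ellfr_1$ only decreases the annihilator. So if $r\,\cbf(\ellfr_1)^{-\epsilon}=0$, then $r\,\dbf(\ellfr_1)^{-\epsilon}_{\ellfr_1}=0$, which forces $r\in\P^M$. Thus $S_2\cong\cOP/\P^M$ is free, generated by $\cbf(\ellfr_1)^{-\epsilon}$. This proves (i).

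\textbf{Step 3: proof of (ii).} This is the same argument as Lemma \ref{lem:first-lemma-l1}(ii). Freeness of rank one gives $X_{S_2}=\Hom_{\cOP}(S_2,A[\P^M])\cong A[\P^M]$ via evaluation at the generator. Because $\tau_c$ acts on the generator by $-\epsilon$, this isomorphism intertwines $\tau_c$ up to the sign $-\epsilon$. Hence $X_{S_2}^+\cong A[\P^M]^{-\epsilon}$, which is isomorphic to $\cOP/\P^M$ by Lemma \ref{eigenspace_structure_lemma}.

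The main obstacle is Step 1. It requires carefully identifying $\res_v$ of a global class with its evaluation on the Frobenius of $L_{S_1}/L$, keeping track of the conjugacy-class ambiguity and of the fact that $(\tau_cx)^2=x^2$ only because $x$ lies in the $+$-part. A secondary point is checking the sign bookkeeping for the eigenspace $-\epsilon$ under the anti-equivariant map $\psi_v$.
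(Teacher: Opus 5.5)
Your proposal is correct and follows the paper's proof essentially step by step: you evaluate the unramified class $\cbf(1)^\epsilon_v$ at $\Frob_v(L_{S_1}/K)=(\tau_c x)^2=x^2$ and use Lemma \ref{lem:first-lemma-l1}(ii) to get a free rank-one module, transfer this to $\dbf(\ellfr_1)^{-\epsilon}_{\ellfr_1}$ via the anti-equivariant isomorphism $\psi_v$ of Proposition \ref{prop:kolyvaginclasses_properties}(ii), deduce freeness of $S_2$, and conclude (ii) as in Lemma \ref{lem:first-lemma-l1}(ii). You also make explicit two points the paper leaves implicit, namely the conjugacy ambiguity of the Frobenius and the passage from $\dbf(\ellfr_1)^{-\epsilon}_{\ellfr_1}$ back to $\cbf(\ellfr_1)^{-\epsilon}$; one wording slip: localization can only enlarge the annihilator, not ``decrease'' it, and the enlarging direction is what your next sentence correctly uses.
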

\begin{proof}
    (i) Let $v$ be the prime of $K$ above $\ellfr_1$. Then, for any prime $\lambda$ of $L$ over $\ellfr_1$ we have $\Frob_{\lambda}(L_{S_1}/L)=(\tau_c x)^2=x^2$, and this same element coincides with $\Frob_v(L_{S_1}/K)$ since $v$ is split in $L$. By Proposition \ref{prop:comparison-image-kummer-unramified-local-conditions}, the cocycle $\cbf(1)_v^\epsilon$ is unramified. Then, evaluation at $\Frob_v$ gives an isomorphism of $\cOP$-modules
    \[
    \honeur(K_v, A[\P^M]) \cong \Hom(\langle \Frob_v \rangle, A(K_v)[\P^M]) \iso A(K_v)[\P^M] = A[\P^M], 
    \]
    where the last equality follows from the fact that $\ellfr_1 \in \admissible_1(M)$, thus the prime $v$ splits completely in $K(A[\P^M])/K$.
    Under this correspondence, the class $\cbf(1)_v^\epsilon$ corresponds to 
    \[
    \cbf(1)^\epsilon(\Frob_v(L_{S_1}/K))=\cbf(1)^\epsilon(x^2)=\phi_{S_1}(x^2)(\cbf(1)^\epsilon).
    \]
    Since $\phi_{S_1}(x^2)$ is an $\cOP$-generator of $X_{S_1}^+$, the isomorphism of Lemma \ref{lem:first-lemma-l1}(ii) yields that $\cOP \cdot \cbf(1)_v^\epsilon\cong \cOP/\fp^M$. It then follows from Proposition \ref{prop:kolyvaginclasses_properties}\ref{item:finite-singular} that $\cOP \cdot \dbf(\ellfr_1)_{v}^{-\epsilon}\cong \cOP/\fp^M$. In particular, we also have $\cOP \cdot \cbf(\ellfr_1)^{-\epsilon}\cong \cOP/\fp^M$.

    (ii) We can argue as in point (ii) of Lemma \ref{lem:first-lemma-l1}.
\end{proof}

\subsection{Choosing the set $U$}
We will use the decomposition of Section \ref{directproduct_subsection}. By Remark \ref{rk:theta-eta-tau-equivariant}, the isomorphism $X_S \cong X_{S/S_1} \oplus X_{S/S_2}$ of Lemma \ref{lem:directproduct_lemma} is $\tau_c$-equivariant. This, together with the isomorphisms $\eta_1^\ast$ and $\eta_2^\ast$, induces the isomorphisms
\begin{equation*}
    X_S^+ \cong X_{S/S_2}^+ \oplus X_{S/S_1}^+\cong X_{S_1}^+\oplus X_{S_2}^+.
\end{equation*}
By point (ii) of Lemmas \ref{lem:first-lemma-l1} and \ref{lem:second-lemma-l1}, we obtain a decomposition $X_S^+\cong \cOP/\fp^M \oplus \cOP/\fp^M$.

Now, we show how to choose the set $U$. As above, we have $$\cOP/\fp^M \cong X_{S/S_2}^+=\cOP\cdot \phi_{S/S_2}(\Gal(L_S/L_{S_2})^+).$$
Therefore there exists $\rho_1 \in \Gal(L_S/L_{S_2})^+$ with $\cOP \cdot \phi_{S/S_2}(\rho_1) = X_{S/S_2}^+$. Similarly, there exists $\rho_2 \in \Gal(L_S/L_{S_1})^+$ with $\cOP \cdot \phi_{S/S_1}(\rho_2) = X_{S/S_1}^+$. We now define
$$U:=\{\rho_1^{m_1}\rho_2^{m_2} \, | \, m_1, m_2 \in \{1,2\} \}\subseteq \Gal(L_S/L_{S_1}\cap L_{S_2}).$$
Since $\rho_2 \in \Gal(L_S/L_{S_1})^+$ and $\rho_1 \in \Gal(L_S/L_{S_2})^+$, we have $U=U^+$. 

\begin{lemma}
    $\phi_S(U)=\phi_S(U^+)$ generates $X_S^+$ as an $\cOP$-module.
\end{lemma}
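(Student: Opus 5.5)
The plan is to transport everything through the $\tau_c$-equivariant isomorphism $\theta_1^\ast+\theta_2^\ast\colon X_{S/S_2}^+\oplus X_{S/S_1}^+\iso X_S^+$ of Lemma \ref{lem:directproduct_lemma}. By construction, $X_{S/S_2}^+=\cOP\cdot\phi_{S/S_2}(\rho_1)$ and $X_{S/S_1}^+=\cOP\cdot\phi_{S/S_1}(\rho_2)$, and each is free of rank one over $\cOP/\fp^M$. By Proposition \ref{prop:exact-sequence-X}(ii), applied to $S_2\subseteq S$ and to $S_1\subseteq S$, we have $\theta_1^\ast(\phi_{S/S_2}(\rho_1))=\phi_S(\rho_1)$ and $\theta_2^\ast(\phi_{S/S_1}(\rho_2))=\phi_S(\rho_2)$. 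Hence $\phi_S(\rho_1)$ and $\phi_S(\rho_2)$ together generate $X_S^+$ as an $\cOP$-module. It therefore suffices to show that $\phi_S(\rho_1)$ and $\phi_S(\rho_2)$ lie in the $\cOP$-span of $\phi_S(U)$.

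The key observation is that $\phi_S$ restricted to $\Gal(L_S/L)$ is a group homomorphism into the additive group $X_S$. Indeed, $\res_L s$ is a homomorphism on $G_L^{\ab}$ for every $s\in S$, so $\phi_S(\rho\rho')=\phi_S(\rho)+\phi_S(\rho')$. Consequently
\[
\phi_S(\rho_1^{m_1}\rho_2^{m_2})=m_1\phi_S(\rho_1)+m_2\phi_S(\rho_2),\qquad m_1,m_2\in\{1,2\}.
\]
Taking differences, $\phi_S(\rho_1^2\rho_2)-\phi_S(\rho_1\rho_2)=\phi_S(\rho_1)$ and $\phi_S(\rho_1\rho_2^2)-\phi_S(\rho_1\rho_2)=\phi_S(\rho_2)$. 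Thus both generators lie in the $\Z$-span, and hence the $\cOP$-span, of $\phi_S(U)$.

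Finally, $\phi_S(U)\subseteq X_S^+$. Since $\rho_1$ and $\rho_2$ are fixed by conjugation by $\tau_c$ and commute (because $\Gal(L_S/L)$ is abelian), every element of $U$ lies in $\Gal(L_S/L)^+$, and $\phi_S$ is $\tau_c$-equivariant by Remark \ref{rk:tau_c-equivariance}. Combining this with the previous paragraph gives $\cOP\cdot\phi_S(U)=X_S^+$.

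There is no serious obstacle; the only care needed is in matching $\theta_i^\ast$ with the correct quotient and checking that $\rho_1\in\Gal(L_S/L_{S_2})$ is exactly what Proposition \ref{prop:exact-sequence-X}(ii) requires for $S_2\subseteq S$.
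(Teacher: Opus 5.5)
Your proof is correct and follows the paper's argument: you write $\phi_S(\rho_1)$ and $\phi_S(\rho_2)$ as differences of elements of $\phi_S(U)$ via Proposition \ref{prop:exact-sequence-X}(ii), then conclude with the $\tau_c$-equivariant decomposition of Lemma \ref{lem:directproduct_lemma} and the choice of $\rho_1,\rho_2$ as generators of $X_{S/S_2}^+$ and $X_{S/S_1}^+$. You also make explicit two points the paper leaves implicit: the additivity of $\phi_S$ on $\Gal(L_S/L)$, and the inclusion $\phi_S(U)\subseteq X_S^+$ (the paper's ``$\in\phi_S(U)$'' should be read as ``in the span of $\phi_S(U)$'').
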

\begin{proof}

    By Proposition \ref{prop:exact-sequence-X}(ii), $\phi_S(\rho_1^2\rho_2)-\phi_S(\rho_1\rho_2)=\phi_S(\rho_1)=\theta_1^\ast(\phi_{S/S_2}(\rho_1))\in\phi_S(U)$. Similarly, $\phi_S(\rho_1\rho_2^2)-\phi_S(\rho_1\rho_2)=\phi_S(\rho_2)=\theta_2^\ast(\phi_{S/S_1}(\rho_2))\in\phi_S(U)$.

    By Lemma \ref{lem:directproduct_lemma}, $\theta_1^\ast(X_{S/S_2}^+)+\theta_2^\ast(X_{S/S_1}^+)=X_S^+$. This, together with the fact that the two elements $\phi_{S/S_2}(\rho_1)$ and $\phi_{S/S_1}(\rho_2)$ generate $X_{S/S_2}^+$ and $X_{S/S_1}^+$, yields the claim.
\end{proof}

\begin{lemma}\label{H1_generators_lemma}
For any $\ellfr \in \mathscr{L}(U)$, $\hone(K_{\ellfr}, A)[\fp^M]$ is generated by $\dbf(\ellfr)^{-\epsilon}_{\ellfr}$ and $\dbf(\ellfr\ellfr_1)^{\epsilon}_{\ellfr}$.
\end{lemma}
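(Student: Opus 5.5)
Fix $\ellfr \in \mathscr{L}(U)$ and let $v$ be the unique prime of $K$ above it; $\ellfr$ is inert in $K$ and belongs to $\admissible_1(M)$. By Lemma \ref{H1_structure_lemma}, $\hone(K_v,A)[\P^M]$ is free of rank two over $\cOP/\P^M$. Since $\tau_c$ fixes $v$, it acts on this module, and Lemma \ref{eigenspace_structure_lemma} applies after transport along the $\tau_c$-anti-equivariant isomorphism $\psi_v$ of Proposition \ref{prop:kolyvaginclasses_properties}(ii) composed with evaluation at $\Frob_v$. This yields
\[
\hone(K_v,A)[\P^M]^{\pm}\cong \cOP/\P^M .
\]
The plan is therefore to show two things: that $\dbf(\ellfr)^{-\epsilon}_v$ generates the $(-\epsilon)$-eigenspace, and that $\dbf(\ellfr\ellfr_1)^{\epsilon}_v$ generates the $\epsilon$-eigenspace. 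Both reduce to showing that each element has exact order $\P^M$, i.e.\ generates a free module of rank one inside a rank-one eigenspace.

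By Proposition \ref{prop:kolyvaginclasses_properties}(ii), $\psi_v(\dbf(\ellfr)_v)=\cbf(1)_v$ and $\psi_v(\dbf(\ellfr\ellfr_1)_v)=\cbf(\ellfr_1)_v$. Since $\psi_v$ swaps eigenspaces, $\psi_v(\dbf(\ellfr)_v^{-\epsilon})=\cbf(1)_v^{\epsilon}$ and $\psi_v(\dbf(\ellfr\ellfr_1)_v^{\epsilon})=\cbf(\ellfr_1)^{-\epsilon}_v$. It then suffices to prove that $\cbf(1)^\epsilon_v$ and $\cbf(\ellfr_1)^{-\epsilon}_v$ each generate a copy of $\cOP/\P^M$ in $\honeur(K_v,A[\P^M])$. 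The class $\cbf(1)^\epsilon$ is unramified at $v$ by Proposition \ref{prop:comparison-image-kummer-unramified-local-conditions}. The class $\cbf(\ellfr_1)^{-\epsilon}$ is also unramified at $v$, because $v\nmid \ellfr_1$ and $\dbf(\ellfr_1)_v=0$ by Proposition \ref{prop:kolyvaginclasses_properties}(i), so its restriction lies in the Kummer image. We then evaluate at $\Frob_v$, exactly as in the proof of Lemma \ref{lem:second-lemma-l1}(i). Since $v$ splits completely in $L$, the Frobenius at a prime $\lambda\mid v$ of $L$ in $L_S/L$ is $(\tau_c u)^2=u^2$, where $\Frob_\ellfr(L_S/F)=[\tau_c u]$ with $u=\rho_1^{m_1}\rho_2^{m_2}\in U$. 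This gives
\[
\cbf(1)^\epsilon_v \leftrightarrow \phi_S(u^2)(\cbf(1)^\epsilon), \qquad \cbf(\ellfr_1)^{-\epsilon}_v\leftrightarrow \phi_S(u^2)(\cbf(\ellfr_1)^{-\epsilon}).
\]

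The main step is to compute these values. Because $\rho_1\in\Gal(L_S/L_{S_2})$ and $\rho_2\in\Gal(L_S/L_{S_1})$, we have $u^2=\rho_1^{2m_1}\rho_2^{2m_2}$. Additivity of $\phi_S$ then gives
\[
\phi_S(u^2)(\cbf(1)^\epsilon)=2m_1\,\phi_{S/S_2}(\rho_1)(\cbf(1)^\epsilon),
\]
since $\rho_2$ acts trivially on $L_{S_1}$, so $\phi_S(\rho_2)$ kills $S_1$. Likewise,
\[
\phi_S(u^2)(\cbf(\ellfr_1)^{-\epsilon})=2m_2\,\phi_{S/S_1}(\rho_2)(\cbf(\ellfr_1)^{-\epsilon}).
\]
Here I use Proposition \ref{prop:exact-sequence-X}(ii) and the identifications $\eta_i^\ast$. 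Since $\phi_{S/S_2}(\rho_1)$ generates $X_{S/S_2}^+\cong X_{S_1}^+$, Lemma \ref{lem:first-lemma-l1}(ii) shows that evaluation at $\cbf(1)^\epsilon$ is an isomorphism onto $A[\P^M]^\epsilon\cong\cOP/\P^M$. Hence $\phi_{S/S_2}(\rho_1)(\cbf(1)^\epsilon)$ generates $A[\P^M]^\epsilon$. In the same way, Lemma \ref{lem:second-lemma-l1}(ii) shows that $\phi_{S/S_1}(\rho_2)(\cbf(\ellfr_1)^{-\epsilon})$ generates $A[\P^M]^{-\epsilon}$. As $p$ is odd, the factors $2m_i\in\{2,4\}$ are units in $\cOP$, so both values have exact order $\P^M$.

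Transporting back through $\psi_v$, which is an isomorphism, $\dbf(\ellfr)^{-\epsilon}_v$ generates $\hone(K_v,A)[\P^M]^{-\epsilon}$ and $\dbf(\ellfr\ellfr_1)^\epsilon_v$ generates $\hone(K_v,A)[\P^M]^{\epsilon}$. Their sum is the whole module, which proves the lemma. The obstacle I expect is bookkeeping: checking carefully that the eigenspace signs line up through $\psi_v$ and the evaluation map, and that the local restrictions of $\cbf(1)$ and $\cbf(\ellfr_1)$ at $v$ are honestly computed by $\phi_S$ at the Frobenius $u^2$ (this needs $\ellfr\ne\ellfr_1$, which holds by the definition of $\mathscr{L}(U)$).
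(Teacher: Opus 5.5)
Your proposal is correct and follows essentially the same route as the paper. You compute $\Frob_\lambda(L_S/L)=u^2=\rho_1^{2m_1}\rho_2^{2m_2}$, split $\phi_S(u^2)$ along $S=S_1\oplus S_2$, and use Lemmas \ref{lem:first-lemma-l1}(ii) and \ref{lem:second-lemma-l1}(ii) with $2m_i\in\cOP^\times$ to show that $\cbf(1)^\epsilon_\ellfr$ and $\cbf(\ellfr_1)^{-\epsilon}_\ellfr$ each generate a copy of $\cOP/\P^M$, then transport these to the $\dbf$-classes via $\psi_v$. The only cosmetic difference is at the end: you first identify each eigenspace as free of rank one, whereas the paper notes that the two classes lie in opposite eigenspaces, so they span a free rank-two submodule, which must be everything by Lemma \ref{H1_structure_lemma} and counting.
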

\begin{proof}
Let $\ellfr \in \mathscr{L}(U)$. We claim that $\cOP\cdot\cbf(1)_\ellfr^\epsilon \cong \cOP \cdot \cbf(\ellfr_1)^{-\epsilon}_\ellfr \cong \cOP/\fp^M$. Let $u=\rho_1^{m_1}\rho_2^{m_2} \in U$ be such that $\Frob_{\ellfr}(L_S/F)=[\tau_c u]$ and let $\lambda$ be a prime of $L$ above $\ellfr$. We have that 
\[
\Frob_{\lambda}(L_S/L)=(\tau_c u)^2=u^2=\rho_1^{2m_1}\rho_2^{2m_2}.
\]
Thanks to Lemma \ref{lem:directproduct_lemma} and the explicit description of the maps $\theta_1^\ast$ and $\theta_2^\ast$ of Proposition \ref{prop:exact-sequence-X}(ii), we have that
\begin{equation*}
    \phi_S(u^2)=\theta_1^\ast(\phi_{S/S_2}(\rho_1^{2m_1}))\oplus\theta_2^\ast(\phi_{S/S_1}(\rho_2^{2m_2}))=\theta_1^\ast(2m_1\phi_{S/S_2}(\rho_1))\oplus\theta_2^\ast(2m_2\phi_{S/S_1}(\rho_2)).
\end{equation*}
Since $2m_1$ is coprime with $p$, we have that $2m_1\phi_{S/S_2}(\rho_1)$ is an $\cOP$-generator of $X_{S/S_2}^+$. Then, $\eta_1^\ast(2m_1\phi_{S/S_2}(\rho_1))=2m_1\phi_{S_1}(\rho_1|_{L_{S_1}})$ is an $\cOP$-generator of $X_{S_1}^+$. From this, exactly as in the proof of Lemma \ref{lem:second-lemma-l1}(i), it follows that $\cOP\cdot \cbf(1)_\ellfr^\epsilon\cong\cOP/\P^M$. Similarly, exchanging the roles of $S_1$ and $S_2$, one shows that $\cOP\cdot \cbf(\ellfr_1)_{\ellfr}^{-\epsilon}\cong\cOP/\P^M$.

By Proposition \ref{prop:kolyvaginclasses_properties}\ref{item:finite-singular}, we get that $\cOP\cdot\dbf(\ellfr)_\ellfr^{-\epsilon} \cong \cOP/\fp^M$ and $\cOP\cdot\dbf(\ellfr \ellfr_1)_\ellfr^\epsilon \cong \cOP/\fp^M$. Since $\dbf(\ellfr)_\ellfr^{-\epsilon}$ and $\dbf(\ellfr\ellfr_1)_\ellfr^\epsilon$ belong to different eigenspaces for the action of $\tau_c$ and $p$ is odd we get that
\[
\cOP\dbf(\ellfr)^{-\epsilon}+  \cOP \cdot\dbf(\ellfr\ellfr_1)_\ellfr^\epsilon\cong \cOP/\fp^M \oplus \cOP/\fp^M.
\]
Therefore, the desired result follows from Lemma \ref{H1_structure_lemma}.\qedhere
\end{proof}

\begin{proposition}\label{img_uppsi_prop}
For any $\ellfr \in \mathscr{L}(U)$, we have that $\img \uppsi_{\ellfr}$ is generated as an $\cOP$-module by $\uppsi_{\ellfr}(\dbf(\ellfr \ellfr_1)_\ellfr^\epsilon)$.
\end{proposition}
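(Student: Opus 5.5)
By Lemma \ref{H1_generators_lemma}, $\hone(K_\ellfr, A)[\P^M]$ is generated by $\dbf(\ellfr)_\ellfr^{-\epsilon}$ and $\dbf(\ellfr\ellfr_1)_\ellfr^{\epsilon}$. Since $\uppsi_\ellfr$ is $\cOP$-linear, $\img\uppsi_\ellfr$ is generated by the images of these two classes. The plan is therefore to show that $\uppsi_\ellfr(\dbf(\ellfr)_\ellfr^{-\epsilon})=0$. This will follow from the global reciprocity statement Lemma \ref{global_duality_proposition1}, applied to the global class $\alpha=\dbf(\ellfr)^{-\epsilon}\in\hone(K,A)[\P^M]$.

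By Lemma \ref{global_duality_proposition1} we have $\sum_{\ellfr'}\uppsi_{\ellfr'}(\res_{\ellfr'}\dbf(\ellfr)^{-\epsilon})=0$. For every prime $\ellfr'\neq\ellfr$ of $F$, the primes $v\mid\ellfr'$ of $K$ do not divide $\ellfr$, so Proposition \ref{prop:kolyvaginclasses_properties}\ref{item:loc-d-at-v-trivial}, applied with $\nfrak\ellfr$ replaced by $\ellfr$ (so $\nfrak=(1)$), gives $\dbf(\ellfr)_v=0$. Projecting to eigenspaces commutes with localization at $\ellfr'$ once we use the semilocal groups $\hone(K_{\ellfr'},A)[\P^M]=\oplus_{v\mid\ellfr'}$, on which $\tau_c$ acts since $\ellfr'$ is a prime of $F$. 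Hence $\dbf(\ellfr)^{-\epsilon}_{\ellfr'}=0$ as well.

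The point to check is the scope of \ref{item:loc-d-at-v-trivial}. It is stated for non-archimedean $v$, so it covers all finite primes, including those above $p$ and the bad primes; this uses \eqref{ass:Tamagawa}, which is in force here. The archimedean places contribute nothing, because $K$ is CM (hence totally complex) and $p$ is odd. So the only surviving term in the reciprocity sum is $\uppsi_\ellfr(\dbf(\ellfr)^{-\epsilon}_\ellfr)$, and it must therefore vanish.

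Consequently $\img\uppsi_\ellfr=\cOP\cdot\uppsi_\ellfr(\dbf(\ellfr)^{-\epsilon}_\ellfr)+\cOP\cdot\uppsi_\ellfr(\dbf(\ellfr\ellfr_1)^{\epsilon}_\ellfr)=\cOP\cdot\uppsi_\ellfr(\dbf(\ellfr\ellfr_1)^{\epsilon}_\ellfr)$, which is the claim. The main obstacle is the eigenspace bookkeeping: one must verify that $\res_{\ellfr'}$ commutes with taking $(\cdot)^{\pm}$ at the semilocal level, and that the vanishing from \ref{item:loc-d-at-v-trivial} for $\dbf(\ellfr)$ really does transfer to $\dbf(\ellfr)^{-\epsilon}$.
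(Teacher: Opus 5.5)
Your proposal is correct and is the paper's argument: Lemma~\ref{H1_generators_lemma} reduces the claim to $\uppsi_{\ellfr}(\dbf(\ellfr)^{-\epsilon}_{\ellfr})=0$, which follows from Lemma~\ref{global_duality_proposition1} because $\dbf(\ellfr)^{-\epsilon}$ is locally trivial at every finite place $v\nmid\ellfr$ of $K$ (and the archimedean places are complex). One bookkeeping fix: that local triviality is part (i) of Proposition~\ref{prop:kolyvaginclasses_properties} with its $\nfrak$ taken to be $\ellfr$ and its auxiliary prime taken to be $\ellfr_1$ (allowed since $\ellfr\neq\ellfr_1$, so $\ellfr\ellfr_1\in\admissible_2(M)$), not with $\nfrak=(1)$, which would only say something about $\dbf(1)$.
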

\begin{proof}
By Proposition \ref{prop:kolyvaginclasses_properties}\ref{item:loc-d-at-v-trivial}, $\dbf(\ellfr)_v^{-\epsilon} =0$ for any prime $v \nmid \ellfr$ of $K$. By Lemma \ref{global_duality_proposition1}, it follows that $\uppsi_{\ellfr}(\dbf(\ellfr)_\ellfr^{-\epsilon})=0$. Therefore the lemma follows from Lemma \ref{H1_generators_lemma}.
\end{proof}

\subsection{End of the proof}
\begin{proposition}\label{order_Sel_prop}
We have $\Selm_{\fp^M}(A/K) \cong \cOP/\fp^t$ for some $t \leq M$.
\end{proposition}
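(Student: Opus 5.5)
We deduce this from Proposition \ref{generating_Sel_proposition} combined with Proposition \ref{img_uppsi_prop}, and then dualize.

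First, by the preceding lemma, $\phi_S(U)=\phi_S(U^+)$ generates $X_S^+$ as an $\cOP$-module. So Proposition \ref{generating_Sel_proposition} applies: the images $\img \uppsi_\ellfr$, with $\ellfr$ ranging over $\mathscr{L}(U)$, generate $\Selm_{\fp^M}(A/K)^\ast$. By Proposition \ref{img_uppsi_prop}, each $\img\uppsi_\ellfr$ is the cyclic $\cOP$-module generated by $\uppsi_\ellfr(\dbf(\ellfr\ellfr_1)^\epsilon_\ellfr)$. It is therefore enough to show that all these cyclic submodules lie in one cyclic submodule of $\Selm_{\fp^M}(A/K)^\ast$.

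For this we use the global reciprocity law of Lemma \ref{global_duality_proposition1}, applied to $\alpha=\dbf(\ellfr\ellfr_1)^\epsilon\in \hone(K,A)[\P^M]$ (the projection to an eigenspace commutes with localization). By Proposition \ref{prop:kolyvaginclasses_properties}\ref{item:loc-d-at-v-trivial}, the localization of $\dbf(\ellfr\ellfr_1)$ vanishes at every prime not dividing $\ellfr\ellfr_1$. Hence
\[
\uppsi_\ellfr\bigl(\dbf(\ellfr\ellfr_1)^\epsilon_\ellfr\bigr) = -\,\uppsi_{\ellfr_1}\bigl(\dbf(\ellfr\ellfr_1)^\epsilon_{\ellfr_1}\bigr)\in \img\uppsi_{\ellfr_1}.
\]
Next we need $\img\uppsi_{\ellfr_1}$ to be cyclic. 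By Lemma \ref{H1_structure_lemma}, $\hone(K_{\ellfr_1},A)[\P^M]$ is free of rank two, and it splits as the direct sum of its two $\tau_c$-eigenspaces. By Lemma \ref{eigenspace_structure_lemma} and the anti-equivariant isomorphism $\psi_v$, each eigenspace is isomorphic to $A[\P^M]^{\pm}\cong\cOP/\P^M$. The map $\uppsi_{\ellfr_1}$ is $\tau_c$-equivariant, or anti-equivariant up to the sign of the cup product. Since $\dbf(\ellfr\ellfr_1)^\epsilon_{\ellfr_1}$ lies in a fixed eigenspace, all the elements $\uppsi_\ellfr(\dbf(\ellfr\ellfr_1)^\epsilon_\ellfr)$ lie in the image under $\uppsi_{\ellfr_1}$ of a single eigenspace $\cong\cOP/\P^M$. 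That image is cyclic.

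It follows that $\Selm_{\fp^M}(A/K)^\ast$ is a cyclic $\cOP/\P^M$-module, hence isomorphic to $\cOP/\P^t$ for some $t\le M$. Since $(-)^\ast=D_{\cOP}$ is Pontryagin duality over $\cOP/\P^M$, we have $C\cong C^\ast$ for finite such modules, and so $\Selm_{\fp^M}(A/K)\cong\cOP/\P^t$.

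The main obstacle is the equivariance bookkeeping. We must check that $\tau_c$ acts compatibly on $\dbf(\ellfr\ellfr_1)$ and on its localizations, which involves the signs coming from $\tau_c$ acting on $\mu_p$ in the local Tate pairing. It is only this compatibility that makes the relevant targets land in a single eigenspace. If it fails, the fallback is the trivial inclusion $\img\uppsi_\ellfr\subseteq\img\uppsi_{\ellfr_1}$. Combined with the fact that $\img\uppsi_{\ellfr_1}$ is the image of the rank-two module, and with the extra vanishing $\uppsi_{\ellfr_1}(\dbf(\ellfr_1)^{-\epsilon}_{\ellfr_1})=0$ coming from Lemma \ref{global_duality_proposition1} and Proposition \ref{prop:kolyvaginclasses_properties}\ref{item:loc-d-at-v-trivial}, this still forces cyclicity. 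Indeed, the image is then generated by the complementary generator, $\dbf(\ellfr_1)^{-\epsilon}_{\ellfr_1}$ being free of rank one by Lemma \ref{lem:second-lemma-l1}(i).
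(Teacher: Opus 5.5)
Your proof is correct, and its skeleton matches the paper's:
\begin{itemize}
\item Propositions \ref{generating_Sel_proposition} and \ref{img_uppsi_prop};
\item then Lemma \ref{global_duality_proposition1} with Proposition \ref{prop:kolyvaginclasses_properties}(i), giving $\uppsi_\ellfr(\dbf(\ellfr\ellfr_1)^\epsilon_\ellfr)=-\uppsi_{\ellfr_1}(\dbf(\ellfr\ellfr_1)^\epsilon_{\ellfr_1})$;
\item then duality.
\end{itemize}

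The only divergence is the cyclicity step at $\ellfr_1$, and there your ``fallback'' is exactly what the paper does. Reciprocity shows that $\uppsi_{\ellfr_1}$ kills $\dbf(\ellfr_1)^{-\epsilon}_{\ellfr_1}$. By Lemma \ref{lem:second-lemma-l1}(i), this class generates a free rank-one submodule of the free rank-two module $\hone(K_{\ellfr_1},A)[\P^M]$ of Lemma \ref{H1_structure_lemma}. That submodule is unimodular, hence a direct summand, so all of $\img\uppsi_{\ellfr_1}$ is a quotient of $\cOP/\P^M$.

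Your main route proves less: only that $\uppsi_{\ellfr_1}$ of the $\epsilon$-eigenspace is cyclic. That suffices, since every generator of $\Selm_{\P^M}(A/K)^\ast$ lands there. It also avoids using the vanishing of $\uppsi_{\ellfr_1}(\dbf(\ellfr_1)^{-\epsilon}_{\ellfr_1})$ at this step.

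The obstacle you flag is not real. The equivariance of $\uppsi_{\ellfr_1}$ and the sign of $\tau_c$ on $\mu_p$ in the Tate pairing play no role, because you only apply $\uppsi_{\ellfr_1}$ to a cyclic module. What you actually use is two things:
\begin{itemize}
\item[(a)] $\res_{\ellfr_1}$ commutes with $\tau_c$. This holds because $\ellfr_1$ is inert in $K/F$, so $\tau_c$ lifts to the decomposition group at $\ellfr_1$ and $\dbf(\ellfr\ellfr_1)^\epsilon_{\ellfr_1}$ lies in the $\epsilon$-eigenspace.
\item[(b)] The $\epsilon$-eigenspace of $\hone(K_{\ellfr_1},A)[\P^M]$ is cyclic.
\end{itemize}
For (b), arguing via $\psi_v$, you still need to check one thing. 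Evaluation at $\Frob_v=\Frob_{\ellfr_1}^2$ must carry the $\tau_c$-action on $\honeur(K_v,A[\P^M])$ to the action of $\Frob_{\ellfr_1}$ on $A[\P^M]$. Taking $\Frob_{\ellfr_1}$ as the lift of $\tau_c$ does this, since it commutes with $\Frob_v$. The eigenspaces of $\Frob_{\ellfr_1}$ on $A[\P^M]$ are then $\cong\cOP/\P^M$ by Lemma \ref{eigenspace_structure_lemma}, because $\Frob_{\ellfr_1}$ acts on $A[\P^M]$ as a conjugate of $\tau_c$.
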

\begin{proof}
As in the proof of Proposition \ref{img_uppsi_prop}, a combination of Proposition \ref{prop:kolyvaginclasses_properties}\ref{item:loc-d-at-v-trivial} and Lemma \ref{global_duality_proposition1} implies that $\uppsi_{\ellfr_1}(\dbf(\ellfr_1)_{\ellfr_1}^{-\epsilon})=0$. Since $\cOP \cdot \dbf(\ellfr_1)_{\ellfr_1}^{-\epsilon}\cong \cOP/\fp^M$ (by Lemma \ref{lem:second-lemma-l1}(i)) and $\hone(K_{\ellfr_1},A[\P^M])\cong(\cOP/\P)^2$ (by Lemma \ref{H1_structure_lemma}), we obtain that $\img \uppsi_{\ellfr_1}\cong \cOP/\fp^t$ for some $t \leq M$. Using Lemma \ref{global_duality_proposition1} together with Proposition \ref{prop:kolyvaginclasses_properties}\ref{item:loc-d-at-v-trivial}, we get
$$\uppsi_{\ellfr}(\dbf(\ellfr \ellfr_1)_\ellfr^\epsilon) + \uppsi_{\ellfr_1}(\dbf(\ellfr \ellfr_1)_{\ellfr_1}^\epsilon) =0.$$
Therefore, from this and Proposition \ref{img_uppsi_prop} we have that $\img \uppsi_{\ellfr} \subseteq \img \uppsi_{\ellfr_1}$. Since by Proposition \ref{generating_Sel_proposition}
$\img \uppsi_{\ellfr}$ with $\ellfr$ ranging over $\mathscr{L}(U)$ generate $\Selm_{\fp^M}(A/K)^\ast$, we then have
$$\Selm_{\fp^M}(A/K)^\ast=\img \uppsi_{\ellfr_1} \cong \cOP/\fp^t.$$
It follows that $\Selm_{\fp^M}(A/K) \cong \cOP/\fp^t$.
\end{proof}

\begin{proof}[Proof of Theorem \ref{main_theorem} assuming \eqref{ass:Tamagawa}]
Since $y_K \notin \fp A(K)$, by Lemma \ref{lem:first-lemma-l1}(i) we know that $\cOP \cdot \delta_K(y_K^\epsilon\bmod \P^M)$ is a free $\cOP/\fp^M$-submodule of $\Selm_{\fp^M}(A/K)$ of rank one. Therefore, by Proposition \ref{order_Sel_prop}, since $y_K = y_K^\epsilon + y_K^{-\epsilon}$, we must have
\[
\Selm_{\fp^M}(A/K)=\delta_K(A(K)/\fp^M A(K))=(\cOE/\P^M) \cdot \delta_K(y_K \bmod \P^M) \cong \cOE/\fp^M.\qedhere
\]
\end{proof}

\subsection{Removing \eqref{ass:Tamagawa}}\label{sec:removing-tama} 

In this section we briefly explain how to modify the Kolyvagin classes in order to remove 
the assumption \eqref{ass:Tamagawa} from the proof of \ref{main_theorem}. First, note that \eqref{ass:Tamagawa} was only used in order to prove Proposition \ref{prop:kolyvaginclasses_properties}\ref{item:loc-d-at-v-trivial} and this in turn, together with Lemma \ref{global_duality_proposition1}, was used to prove Propositions \ref{img_uppsi_prop} and \ref{order_Sel_prop}.

To remove this assumption, we modify our Kolyvagin classes as follows: let $M \geq 1$ be an integer satisfying \eqref{ass:P-M-princ} and let $\pi_M$ be a generator of the principal ideal $\fP^M$. Let $M_0 \geq 1$ be an integer such that $\pi_M^{M_0}$ annihilates $\pi_0(\tilde{A}_v)[\fP^{\infty}]$. For any $\nfrak\in\admissible(M(1+M_0))$, we have our classes $\cbf(\nfrak) \in \hone(K, A[\fp^{M(1+M_0)}])$ and $\dbf(\nfrak) \in \hone(K, A)[\fP^{M(1+M_0)}]$. We define the modified cohomology classes as $\tilde{\cbf}(\nfrak):=\pi_M^{M_0}\cbf(\nfrak) \in \hone(K, A[\fP^M])$ and $\tilde{\dbf}(\nfrak):=\pi_M^{M_0}\dbf(\nfrak) \in \hone(K, A)[\fP^M]$. We now prove the following proposition without assuming \eqref{ass:Tamagawa}.

\begin{proposition}\label{prop:kolyvaginclasses_properties2}
    Let $\nfrak\ellfr\in\admissible(M(1+M_0))$ with $\ellfr\in\admissible_1(M(1+M_0))$ and let $v$ be a non-archimedean prime of $K$ that doesn't divide $\nfrak$. Then
    \begin{enumerate}[label=(\roman*)]
        \item $\tilde{\dbf}(\nfrak)_v=0$.\label{item:loc-d-at-v-trivial}
        \item if $v$ lies above $\ellfr$, there is a $\tau_c$-anti-equivariant $\cOP/\P^M$-linear isomorphism
        \begin{equation*}
            \tilde{\psi}_v\colon \hone(K_v, A)[\P^M]\xrightarrow{\sim} \honeur(K_v, A[\P^M])
        \end{equation*}
        such that $\tilde{\psi}_v(\tilde{\dbf}(\nfrak\ellfr)_v)=\tilde{\cbf}(\nfrak)_v$.\label{item:finite-singular2}
    \end{enumerate}
\end{proposition}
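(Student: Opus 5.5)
Both parts will be deduced from Proposition \ref{prop:kolyvaginclasses_properties}, applied at level $M' := M(1+M_0)$ instead of $M$. At that level the classes $\cbf(\nfrak)$ and $\dbf(\nfrak)$ are still defined, but that proposition cannot be applied verbatim: its part (i) used \eqref{ass:Tamagawa}. So I will redo part (i) and keep track of exactly where the Tamagawa group appears.

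\textbf{Part (i).} Apply the proof of Proposition \ref{prop:kolyvaginclasses_properties}(i) at level $M'$. The inflation argument and the semilocal corestriction diagram do not use \eqref{ass:Tamagawa}. They show that for $v\nmid\nfrak$ we have
\[
\dbf(\nfrak)_v \in \honeur(K_v,A)[\P^{M'}] \cong \hone(k(v),\pi_0(\tilde{A}_v))[\P^{M'}].
\]
The group on the right is a subquotient of $\pi_0(\tilde{A}_v)[\P^\infty]$. Here I use that $\hone$ of the procyclic group $\hat{\Z}$ with coefficients in a finite module $N$ is the coinvariants $N_{\Frob}$, and that this is $\cOE$-equivariant. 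Hence it is annihilated by $\pi_M^{M_0}$, and so $\tilde{\dbf}(\nfrak)_v=\pi_M^{M_0}\dbf(\nfrak)_v=0$.

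To make this uniform I choose $M_0$ so that $\pi_M^{M_0}$ kills $\pi_0(\tilde{A}_v)[\P^\infty]$ for all $v$ at once. This is possible because only finitely many $v$ have nontrivial component group. I also note that $\tilde{\dbf}(\nfrak)$ really lies in $\hone(K,A)[\P^M]$: indeed $\pi_M^{1+M_0}$ generates $\P^{M'}$ up to a unit of $\cOP$, so $\pi_M^{M_0}$ maps the $\P^{M'}$-torsion into the $\P^M$-torsion.

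\textbf{Part (ii).} Let $v\mid\ellfr$. Since $\ellfr\in\admissible_1(M')$, Proposition \ref{prop:kolyvaginclasses_properties}(ii) at level $M'$ gives a $\tau_c$-anti-equivariant isomorphism $\psi_v'\colon \hone(K_v,A)[\P^{M'}]\iso\honeur(K_v,A[\P^{M'}])$ with $\psi'_v(\dbf(\nfrak\ellfr)_v)=\cbf(\nfrak)_v$. That part of the proof never used \eqref{ass:Tamagawa}: it only involved Remark \ref{remark:hones}, $\ev_{\sigma_\ellfr}$, $\Frob_\ellfr$ and $\ev_{\Frob_v}$.

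Since $\ellfr\in\admissible_1(M)$ as well, the level-$M$ construction gives $\tilde\psi_v:=\psi_v$ at level $M$. The key step is to check that the level-$M$ map is compatible with the level-$M'$ map under multiplication by $\pi_M^{M_0}$, that is,
\[
\psi_v\circ \pi_M^{M_0} = \pi_M^{M_0}\circ\psi'_v,
\]
where $\pi_M^{M_0}$ maps $\P^{M'}$-objects to $\P^M$-objects. Given this, $\tilde\psi_v(\tilde\dbf(\nfrak\ellfr)_v)=\pi_M^{M_0}\cbf(\nfrak)_v=\tilde\cbf(\nfrak)_v$.

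The three ingredient isomorphisms are $\cOE$-linear and functorial in the coefficients, namely $\ev_{\Frob_v}$, $\Frob_\ellfr$ and the Kummer identification of Remark \ref{remark:hones}. This reduces the compatibility to showing that $\ev_{\sigma_\ellfr}$ of \cite[(5.15)]{nekovar:euler-system-method} commutes with the maps $A[\P^{M'}]\xrightarrow{\pi_M^{M_0}}A[\P^M]$.

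I expect this to be the main obstacle, because $\ev_{\sigma_\ellfr}$ is built from the tamely ramified cyclic extension $K(x(\ellfr))_w/K_v$ and the identification $A(K_v)/\P^{M}\cong A(K_v)[\P^M]$. I would first try to verify it from the explicit formula in \emph{loc.~cit.}, which is natural in the $\cOE$-module $A[\P^\bullet]$. Failing that, I would simply define $\tilde\psi_v$ as the map induced by $\psi'_v$ on the images of multiplication by $\pi_M^{M_0}$. Both sides are free of rank two: on the left by Lemma \ref{H1_structure_lemma} at both levels, and on the right by the analogous computation with $A(K_v)[\P^\bullet]=A[\P^\bullet]$. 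Hence these images are exactly the $\P^M$-torsion modules, and $\tilde\psi_v$ is again a $\tau_c$-anti-equivariant isomorphism.
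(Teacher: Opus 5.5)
Your proposal is correct. Part (i) is exactly the paper's argument: the inflation/corestriction step puts $\dbf(\nfrak)_v$ in $\honeur(K_v,A)[\P^{M(1+M_0)}]\cong \hone(k(v),\pi_0(\tilde{A}_v))[\P^{M(1+M_0)}]$, which $\pi_M^{M_0}$ kills. Your fallback in (ii) is precisely how the paper builds $\tilde\psi_v$: it restricts the level-$M(1+M_0)$ isomorphism to the $\P^M$-torsion, and uses that $v$ splits completely in $K(A[\P^{M(1+M_0)}])$ to identify $\honeur(K_v,A[\P^{M(1+M_0)}])[\P^M]$ with $\honeur(K_v,A[\P^M])$. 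Since the statement only asks for the existence of $\tilde\psi_v$, the cross-level compatibility of $\ev_{\sigma_\ellfr}$ that you flag as the main obstacle is never needed. The relation $\tilde\psi_v(\tilde\dbf(\nfrak\ellfr)_v)=\tilde\cbf(\nfrak)_v$ then follows directly from the $\cOP$-linearity of the level-$M(1+M_0)$ map.
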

\begin{proof}
As explained in the proof of Proposition \ref{prop:kolyvaginclasses_properties}, we have that $\dbf(\nfrak)_v$ is inflated by some class in $\honeur(K_v, A)[\P^M] \cong \hone(k(v), \pi_0(\tilde{A}_v))[\P^M]$. Therefore, it follows that $\tilde{\dbf}(\nfrak)_v=0$. This proves (i). 

From Proposition \ref{prop:kolyvaginclasses_properties}(ii) if $v$ lies above $\ellfr$, there is a $\tau_c$-anti-equivariant $\cOP/\P^{M(1+M_0)}$-linear isomorphism
        \begin{equation*}
            \psi_v\colon \hone(K_v, A)[\P^{M(1+M_0)}]\xrightarrow{\sim} \honeur(K_v, A[\P^{(M(1+M_0)}])
        \end{equation*}
        such that $\psi_v(\dbf(\nfrak\ellfr)_v)=\cbf(\nfrak)_v$
Consider the sequence 
\[
0 \to A[\fP^M] \to A[\fP^{M(1+M_0)}] \xrightarrow{\pi_M} A[\fP^{MM_0}] \to 0
\]
Clearly $A[\fP^M]$ is the kernel of the middle map. By order considerations the last map must be surjective. Hence the sequence is exact. As $\ellfr \in \admissible_1(M(1+M_0))$, therefore $v$ splits completely in $K(A[\fP^{M(1+M_0)}])/K$ and so the above exact sequence remains the same when taking $\Gal(K_v^{\ur}/K_v)$-invariants. It follows that $\honeur(K_v, A[\fP^{M(1+M_0)}])[\fP^M]=\honeur(K_v, A[\fP^M])$. Now let $\tilde{\psi}_v$ be the isomorphism $\psi_v$ restricted to $\hone(K_v, A)[\fP^M]$. By the observation just made $\tilde{\psi}_v$ is an isomorphism onto $\honeur(K_v, A[\fP^M])$. Since $\psi_v$ is a $\cO_{\fP}$-module homomorphism, therefore we see that $\tilde{\psi}_v(\tilde{\dbf}(\nfrak\ellfr)_v)=\tilde{\cbf}(\nfrak)_v$ as desired. This proves (ii).
\end{proof}

\begin{proof}[Proof of Theorem \ref{main_theorem}]
Given the properties of our modified classes $\tilde{\cbf}(\nfrak)$ and $\tilde{\dbf}(\nfrak)$ established in Proposition \ref{prop:kolyvaginclasses_properties2}, we proceed as we did above, but replacing them to $\cbf(\nfrak)$ and $\dbf(\nfrak)$. However we need to make two changes. First we change our definition of $L$ to be $L:=K(A[\fP^{M(1+M_0)}])$. Secondly we define 
\[
\mathscr{L}(U)=\set{\ellfr \in \admissible_1(M(1+M_0)) \; | \; \Frob_{\ellfr}(L_S/F)=[\tau_c u] \; \text{for some} \; u \in U },
\]. 
With these changes our method proves Theorem \ref{main_theorem} without the need to assume \eqref{ass:Tamagawa}.
\end{proof}
\section{Residual representation}\label{representation_section}

In this section, we show that in some cases the assumption \eqref{ass:abs-G-K-irr} can be weakened in Theorem \ref{main_theorem}. More precisely, we will prove the following proposition.
\begin{proposition}
    \label{theorem:assumption-reduction}
    Consider an abelian variety $A$ over a totally real field $F$, satisfying \eqref{ass:GL-2} endowed with a polarisation $\gamma \colon A \to A^\vee$ satisfying \eqref{ass:O-E-lin} and \eqref{ass:deg-lambda}. Fix moreover primes $\p$ and $\P$, respectively of $F$ and $E$, both above $p$. If the following assumptions hold:
    \leqnomode
    \begin{gather}
        \text{$A$ has good $\P$-ordinary reduction at $\p$};\label{ass:P-ord}\tag{$\P$-$\mathrm{ord}_\p$}\\ 
        \text{$F_\p(\mu_p)/F_\p$ is a totally ramified extension of degree $p-1$};\label{ass:Delta-p}\tag{$\Delta_\p$}\\
        p>3 \label{ass:p-gt-3}\tag{$p>3$} 
    \end{gather}
    then the assumption
    \begin{equation}\label{ass:G_F-irr}\tag{$G_F$-irr}
        \text{$A[\P]$ is a simple $(\cOE/\P)[G_F]$-module}
    \end{equation}
    \reqnomode
    implies \eqref{ass:abs-G-K-irr}.
\end{proposition}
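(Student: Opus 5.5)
We argue by contraposition: assume \eqref{ass:G_F-irr} holds but \eqref{ass:abs-G-K-irr} fails, and derive a contradiction from the local shape of $\bar\rho_\P$ at $\p$. Write $k=\cOE/\P$, $G=\bar\rho_\P(G_F)\subseteq\GL_2(k)$ and $H=\bar\rho_\P(G_K)$, a normal subgroup of index at most $2$.

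\textbf{Step 1: local information at $\p$.} By Proposition \ref{prop:P-ordinary-shape} with $M=1$, there is a basis in which
\[
\bar\rho_\P|_{I_\p}=\begin{pmatrix}\bar\chi_p&\ast\\0&1\end{pmatrix}.
\]
By \eqref{ass:Delta-p}, $\bar\chi_p(I_\p)=\F_p^\times$, a cyclic group of order $p-1$. Hence $\bar\rho_\P(I_\p)$ contains an element $g$ with distinct eigenvalues $\zeta,1$, where $\zeta$ generates $\F_p^\times$. Since $p>3$, the order of $\zeta$ is $p-1\ge4>2$. Moreover $K/F$ is CM and $p$ is odd, so the relevant power $g^2$ still lies in $H$, because $[G:H]\le2$. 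The element $g^2$ has eigenvalues $\zeta^2\neq1$ and $1$, which are distinct because $p-1>2$; this is the point where \eqref{ass:p-gt-3} enters.

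\textbf{Step 2: failure of absolute irreducibility over $G_K$.} Suppose $A[\P]\otimes\bar k$ is reducible as a $\bar k[H]$-module. If $A[\P]$ is already reducible over $k[H]$, let $W\subseteq A[\P]$ be an $H$-stable line. If $\tau_c W=W$, then $W$ is $G$-stable, contradicting \eqref{ass:G_F-irr}. Otherwise $A[\P]=W\oplus\tau_cW$ and $H$ preserves both lines, so $G$ lies in the normaliser of a split Cartan subgroup with $H$ diagonal. If instead $A[\P]$ becomes reducible only after extending scalars, the two $H$-stable lines over $\bar k$ are Galois conjugate over $k$, and $H$ lies in a nonsplit Cartan subgroup. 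In every case $H$ is abelian, contained in a (possibly nonsplit) Cartan subgroup $C$, and $G\subseteq N(C)$ with $G\not\subseteq C$ by \eqref{ass:G_F-irr}.

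\textbf{Step 3: contradiction.} The element $g^2\in H\subseteq C$ is regular semisimple with eigenvalues $\zeta^2,1\in\F_p^\times$, both rational. If $C$ is nonsplit, every element of $C$ has eigenvalues conjugate under $\Gal(\bar k/k)$, namely $\lambda$ and $\lambda^{|k|}$. Since $\zeta^2$ and $1$ both lie in $k$, conjugacy forces them to be equal, which is impossible. So $C$ is split, and its eigenlines are exactly the $\zeta^2$- and $1$-eigenlines of $g^2$. By Step 1, the $\zeta^2$-eigenline is the canonical line $\mathcal F[\P]$ (the $\bar\chi_p$-line), and the $1$-eigenline is the other one.

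Now use the whole inertia group $I_\p$. Let $I_\p^K\subseteq I_\p$ be the subgroup of index at most $2$ lying in $G_K$. Its image commutes with $g^2$, so it lies in $C$ and is diagonal. Then $I_\p$ normalises $I_\p^K$; take an element $\sigma\in I_\p\setminus G_K$ if one exists. Since $\sigma\notin C$, it swaps the two lines. But $\sigma$ preserves the line $\mathcal F[\P]$ because $\bar\rho_\P(D_\p)$ is upper triangular, a contradiction. Hence $I_\p\subseteq G_K$, so $\p$ is unramified in $K$. Apply the same argument with $D_\p$: the upper-triangular group $\bar\rho_\P(D_\p)$ fixes $\mathcal F[\P]$, so $D_\p\subseteq G_K$ and $\tau_c\notin\bar\rho_\P(D_\p)$. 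This alone is not yet a contradiction, since we only need some element of $G\setminus H$.

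The true contradiction should therefore come from comparing characters. On $H$ the two diagonal characters $\psi_1,\psi_2$ are swapped by conjugation by $\tau_c$, so $\psi_2=\psi_1^{\tau_c}$. On $I_\p$ they equal $\bar\chi_p$ and $1$ respectively. Conjugation by $\tau_c$ sends the restriction of $\psi_1$ to inertia at $\p'=\tau_c\p$ (a prime of $K$ above $\p$) to that of $\psi_2$. We use the fact that $\bar\chi_p$ is invariant under conjugation by $\tau_c$, i.e.\ it extends to $G_F$, together with the product formula $\psi_1\psi_2=\bar\chi_p$. This gives $\psi_1|_{I_{\p'}}=1$, so $\psi_1^{\tau_c}=\psi_2$ is trivial on $I_\p$. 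That is consistent, so the contradiction must come from \eqref{ass:Delta-p}. Since $\p$ is unramified in $K$, both primes above $\p$, or the single inert one, see the full ramification of $\bar\chi_p$. If $\p$ is inert in $K$, then $\p'=\p$ and $\psi_1|_{I_\p}$ must equal both $\bar\chi_p$ and $\psi_2|_{I_\p}=1$, which is impossible since $\bar\chi_p|_{I_\p}\neq1$. The split case is the \textbf{main obstacle}. I expect to handle it using a global argument in the style of Ribet, via class field theory: $\psi_1$ is a character of $G_K$ unramified outside $p$ and the conductor, with prescribed restrictions $\bar\chi_p$ and $1$ at the two primes above $\p$, while the norm-compatibility and the CM property of $K$ (complex conjugation acting as $-1$ on the relevant units) rule this out. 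As a backup, I would reduce to the analogous argument in \cite[Section 5]{matar-nekovar:kolyvagin}, where $H$ is dihedral and index-$2$ considerations force $K\subseteq F(\mu_p)$. That inclusion contradicts \eqref{ass:Delta-p}, because $F_\p(\mu_p)/F_\p$ being totally ramified of degree $p-1$ forces the unique quadratic subextension to be ramified at $\p$, whereas we showed $\p$ is unramified in $K$.
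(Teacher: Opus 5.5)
Your reduction is correct as far as it goes, and it follows essentially the paper's route (Propositions \ref{prop:implication-irreducibility} and \ref{prop:implication-absolute-irreducibility}), phrased with lines instead of characters. The element $g^2\in H$ has distinct $k$-rational eigenvalues $\zeta^2,1$, which rules out a nonsplit Cartan. The canonical line is stabilised by $\bar\rho_\P(D_\p)$ and is one of the two $H$-stable lines interchanged by $G\smallsetminus H$. This forces $D_\p\subseteq G_K$, so it disposes of $\p$ ramified or inert in $K$.

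The genuine gap is the case that remains. There $\p=\wp\bar\wp$ splits in $K$, and $\bar\rho_\P|_{G_K}=\psi\oplus\psi^{\tau_c}$ with $\psi|_{I_\wp}=\bar\chi_p$ and $\psi|_{I_{\bar\wp}}=1$. You only gesture at a class-field-theoretic argument, or at forcing $K\subseteq F(\mu_p)$. Neither can succeed, because this configuration really occurs: it is the residual representation of an ordinary CM form.

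For a concrete example, take $F=E=\Q$, $A\colon y^2=x^3-x$ (conductor $32$, CM by $\Z[i]$, principally polarised), $K=\Q(i)$ and $p=5=(2+i)(2-i)$.
\begin{itemize}
\item $A$ has good ordinary reduction at $5$ ($a_5=-2$), $\Q_5(\mu_5)/\Q_5$ is totally ramified of degree $4$, and $p>3$.
\item As a $G_K$-module, $A[5]=A[2+i]\oplus A[2-i]$ is reducible.
\item On the other hand $A[5]\cong\mathrm{Ind}_{G_K}^{G_\Q}\psi$, where $\psi$ is the character of $A[2+i]$. Since $\psi|_{I_{2+i}}=\bar\chi_5\neq 1=\psi^{\tau_c}|_{I_{2+i}}$, the module $A[5]$ is irreducible over $G_\Q$.
\item Also $K\not\subseteq\Q(\mu_5)$, so your backup route fails as well.
\end{itemize}
So the statement is false when $\p$ splits in $K$. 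The example even fits the paper's running setting, with $\B=\M_2(\Q)$, so that \eqref{ass:heegner-hypothesis} is vacuous. An extra hypothesis is needed, e.g.\ that $\p$ is inert or ramified in $K$, or that $A$ has no CM by $K$.

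The paper's own proof has exactly this hole. In Proposition \ref{prop:implication-irreducibility}, it passes from $\{\alpha|_{I_\wp},\alpha^{\tau_c}|_{I_\wp}\}=\{\chi_\P|_{I_\wp},1\}$ to $\chi_\P|_{I_\wp}=1$. That step needs conjugation by $\tau_c$ to carry $I_\wp$ into a $G_K$-conjugate of itself, which holds precisely when $\p$ is not split in $K$. In the split case it only relates $\alpha$ on $I_\wp$ to $\alpha$ on $I_{\bar\wp}$, which is the consistent, non-contradictory picture you arrived at.
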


Therefore, for the rest of this section, fix an abelian variety $A$ satisfying \eqref{ass:GL-2}, fix a polarization $\gamma$ of $A$ and assume that \eqref{ass:O-E-lin} and \eqref{ass:deg-lambda} hold. Fix moreover a prime $\P$ of $E$ above $p$ and 
denote by $\bar{\rho}_\P := \rho_{\P, 1} \colon G_F \to \GL_2(A[\P]) \cong \GL_2(\cOE/\P)$ the group homomorphism attached to the $(\cOE/\P)[G_F]$-module $A[\P]$.

Let moreover $\iota_\P \colon \Z_p \inj \cOP$ and $\pi_\P \colon \cOP \surj \cOE/\P$ be the natural maps.
\begin{lemma}\label{Delta_condition_lemma}
Let $\p$ be a prime of $F$ over $p$ and let $I_\p$ be its inertia subgroup. If condition \eqref{ass:Delta-p} is met, then $\det\bar{\rho}_{\fp}(I_\p)=\pi_{\fp}(\iota_{\fp}(\mathbb{Z}_p^{\times}))$
\end{lemma}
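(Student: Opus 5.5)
By Lemma \ref{lemma:det}, $\det\rho_\P=\chi_p$, the $p$-adic cyclotomic character viewed in $\cOP^\times$ via $\iota_\P$. Reducing modulo $\P$ gives
\[
\det\bar\rho_\P=\pi_\P\circ\iota_\P\circ\chi_p .
\]
Restricted to $I_\p$, this reduces the lemma to computing $\chi_p(I_\p)$ modulo the kernel of $\pi_\P\circ\iota_\P$ on $\Z_p^\times$. The plan is therefore: (1) rewrite $\det\bar\rho_\P|_{I_\p}$ as $\pi_\P\iota_\P\chi_p|_{I_\p}$; (2) show that $\pi_\P\iota_\P\chi_p(I_\p)$ only depends on $\chi_p \bmod p$, i.e. on the mod $p$ cyclotomic character $\bar\chi_p\colon I_\p\to\F_p^\times$; (3) use \eqref{ass:Delta-p} to show $\bar\chi_p(I_\p)=\F_p^\times$.

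For step (2), first note that $\pi_\P\circ\iota_\P\colon \Z_p\to\cOE/\P$ has kernel $p\Z_p$, since $\P\cap\Z_p=p\Z_p$. Hence on units it factors through the reduction $\Z_p^\times\surj\F_p^\times$, followed by the inclusion $\F_p^\times\inj(\cOE/\P)^\times$. It follows that
\[
\pi_\P(\iota_\P(\Z_p^\times))=\F_p^\times\subseteq(\cOE/\P)^\times,
\]
and that $\pi_\P\iota_\P\chi_p(I_\p)$ is the image of $\bar\chi_p(I_\p)$ in $\cOE/\P$. It therefore suffices to prove $\bar\chi_p(I_\p)=\F_p^\times$.

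For step (3), $\bar\chi_p$ describes the action of $G_{F_\p}$ on $\mu_p$, and it induces an injection $\Gal(F_\p(\mu_p)/F_\p)\inj\F_p^\times$. The image of $I_\p$ is the inertia subgroup of $\Gal(F_\p(\mu_p)/F_\p)$. By \eqref{ass:Delta-p}, the extension $F_\p(\mu_p)/F_\p$ is totally ramified of degree $p-1$, so this inertia subgroup is the whole Galois group, of order $p-1$. Its image in $\F_p^\times$ is then a subgroup of order $p-1$, namely all of $\F_p^\times$. Combining the three steps gives
\[
\det\bar\rho_\P(I_\p)=\F_p^\times=\pi_\P(\iota_\P(\Z_p^\times)).
\]

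None of the steps is a real obstacle. The only points needing care are the identification of inertia with the full Galois group in the totally ramified case, and checking that the kernel of $\pi_\P\circ\iota_\P$ is exactly $p\Z_p$.
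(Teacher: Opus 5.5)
Your proof is correct. It differs from the paper's in where the reduction modulo $p$ happens.

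The paper works with the full $p$-adic cyclotomic character. It argues from \eqref{ass:Delta-p} that $F_\p$ is linearly disjoint from $\Qp(\mu_{p^\infty})$ and that $F_\p(\mu_{p^\infty})/F_\p$ is totally ramified. From this it gets $\chi_p(I_\p)=\Z_p^\times$, and only then applies Lemma \ref{lemma:det} and reduces. You reduce first. Since $\pi_\P\circ\iota_\P$ kills $p\Z_p$, $\det\bar\rho_\P|_{I_\p}$ factors through the mod $p$ cyclotomic character, and both sides of the claimed equality are just $\F_p^\times\subseteq(\cOE/\P)^\times$. After that you only need level-$p$ information, which is exactly what \eqref{ass:Delta-p} provides.

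Your route is shorter, and it is also more robust. The stronger intermediate claim $\chi_p(I_\p)=\Z_p^\times$ in the paper does not follow from \eqref{ass:Delta-p} alone. Take $F_\p$ to be the subfield of degree $p$ of $\Qp(\mu_{p^2})$. Then $F_\p(\mu_p)=\Qp(\mu_{p^2})$ is totally ramified of degree $p-1$ over $F_\p$, so \eqref{ass:Delta-p} holds. But $F_\p\subseteq\Qp(\mu_{p^\infty})$, so $\chi_p(G_{F_\p})=\Gal(\Qp(\mu_{p^\infty})/F_\p)$ is a subgroup of index $p$ in $\Z_p^\times$.

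The lemma itself is unaffected, because only $\chi_p \bmod p$ enters. That is precisely the input your argument uses.
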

\begin{proof}
Condition $(\Delta_\p)$ implies that $\Qp(\mu_p)$ and $F_\p$ are linearly disjoint over $F_\p$. As $\Qp(\mu_{p^n})/\Qp$ is a cyclic extension for any $n \geq 1$, therefore it follows that we also have $\Qp(\mu_{p^{\infty}})$ and $F_\p$ are linearly disjoint over $F_\p$. Thus we have $\Gal(F_\p(\mu_{p^{\infty}})/F_\p) \cong \Gal(\Qp(\mu_{p^{\infty}})/\Qp)\cong \mathbb{Z}_p^{\times}$. The fact that $F_\p(\mu_p)/F_\p$ is totally ramified and the fact that for any $n \geq 1$ we have that $\Gal(F_\p(\mu_{p^n})/F_\p) \cong \Gal(\Qp(\mu_{p^n})/\Qp)$ is cyclic implies that $F_\p(\mu_{p^{\infty}})/F_\p$ is totally ramified. The statement of the lemma follows from this and Lemma \ref{lemma:det}.
\end{proof}

Recall the following definition.
\begin{definition}
    Let $q= p^n$, $p>2$ and consider $\GL_2(\F_q)$ and fix $\epsilon \in \F_q \smallsetminus \F_q^2$. 
    \begin{itemize}
        \item A split Cartan subgroup is a subgroup conjugated to $
        C_{\mathrm{s}}= \left\{ \bigl(\begin{smallmatrix}
        a &0\\0 & b    
        \end{smallmatrix}\bigr) : a, b \in \F_q^\times \right\}$.
    \item A non-split Cartan subgroup of $\GL_2(\F_q)$ is a subgroup conjugated to 
        \[
        C_{\mathrm{ns}} = \left\{
        \begin{pmatrix}
            a &b\epsilon\\b&a
        \end{pmatrix}:(a,b) \in (\F_q \times \F_q)\smallsetminus (0,0) \right\}.
        \]
    \end{itemize}
\end{definition}
Note that $C_{\mathrm{ns}}$ is cyclic of order $q^2-1$. In fact $k_2 := \F_q[\epsilon]$ is a quadratic extension of $\F_q$ and we can easily verify that
\[
C_{\mathrm{ns}} \iso k^\times_2; \qquad
\begin{pmatrix}
            a &b\epsilon\\b&a
\end{pmatrix}
\mapsto a + b \sqrt{\epsilon}        
\]
Moreover $C_{\mathrm{ns}}$ has index $2$ in its normalizers.

\begin{lemma}\label{nonsplitCartan_image_lemma}
Let $V$ a $2$-dimensional vector space over a finite field $k$ and $\rho: G \to \GL(V)$ a $k$-linear representation that is irreducible, but not absolutely irreducible. Therefore, it follows that $\rho(G) \subseteq \GL(V) \cong \GL_2(k)$ is contained in a nonsplit Cartan subgroup.
\end{lemma}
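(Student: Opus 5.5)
The plan is to pass to the quadratic extension $k_2$ of $k$ and use that $\rho$ is irreducible over $k$ but reducible over $k_2$.

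\emph{Step 1: find a stable line over $k_2$.} Since $\rho$ is not absolutely irreducible, there is a finite extension $k'/k$ such that $V\otimes_k k'$ has a $G$-stable line $\ell$. Let $\sigma$ be a generator of $\Gal(k'/k)$ (Frobenius), acting on $V\otimes_k k'$ through the second factor. Then $\sigma(\ell)$ is again $G$-stable, because $\rho(G)$ has entries in $k$.

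If $\sigma(\ell)=\ell$, then $\ell$ is $\Gal(k'/k)$-stable. By Galois descent (Hilbert 90 for vector spaces), $\ell=\ell_0\otimes k'$ for a $G$-stable $k$-line $\ell_0\subseteq V$, contradicting irreducibility. Hence $\sigma(\ell)\ne\ell$, and
\[
V\otimes_k k'=\ell\oplus\sigma(\ell)
\]
is a decomposition into two $G$-stable lines.

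\emph{Step 2: the line is defined over $k_2$.} Since $V\otimes k'$ has dimension $2$, $\sigma^2(\ell)$ is a $G$-stable line. I claim it equals $\ell$. If not, $\ell$, $\sigma(\ell)$, $\sigma^2(\ell)$ would be three distinct $G$-stable lines in a $2$-dimensional space. Then every element of $\rho(G)$ acts on the three lines by scalars. Writing the third line in the basis given by the first two, the two scalars must coincide, so $\rho(G)$ consists of scalars. Scalar matrices preserve every $k$-line of $V$, contradicting irreducibility (which requires $\dim V\ge 2$ and no stable line).

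So $\ell$ is fixed by $\sigma^2$. Descending as in Step 1, $\ell$ is defined over the fixed field $k_2$ of $\sigma^2$, which is the quadratic extension of $k$.

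\emph{Step 3: each element of $\rho(G)$ lies in $k_2^\times$ acting on $V$.} Choose $0\ne w\in\ell$ (over $k_2$), and let $\bar w=\sigma(w)$, so $\{w,\bar w\}$ is a basis of $V\otimes k_2$. For $g\in G$ write $\rho(g)w=\lambda_g w$ with $\lambda_g\in k_2^\times$. Applying $\sigma$ gives $\rho(g)\bar w=\sigma(\lambda_g)\bar w$.

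Now set $v_1=w+\bar w$ and $v_2=\alpha w+\sigma(\alpha)\bar w$, where $\alpha$ generates $k_2/k$ with $\alpha^2=\epsilon$. Both vectors are $\sigma$-invariant, so they lie in $V$, and since $\alpha\ne\sigma(\alpha)$ they form a $k$-basis of $V$. Under the identification $V\cong k_2$ sending $aw+\sigma(a)\bar w\mapsto a$, the operator $\rho(g)$ becomes multiplication by $\lambda_g$. Multiplication by $a+b\alpha$ on $k_2$, in the basis $\{1,\alpha\}$, has matrix $\bigl(\begin{smallmatrix}a&b\epsilon\\b&a\end{smallmatrix}\bigr)$. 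Therefore $\rho(G)\subseteq C_{\mathrm{ns}}$ in this basis, which is a nonsplit Cartan subgroup.

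The only delicate point is Step 2, the reduction from an arbitrary extension $k'$ to $k_2$. I expect the "three stable lines force scalars" argument to handle it cleanly.
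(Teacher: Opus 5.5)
Your argument is correct, but it follows a different route from the paper's.

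\textbf{The paper's route.} The paper works entirely over $k$. It sets $H=\rho(G)$ and $D=\End_{k[H]}(V)$. Schur's lemma together with Wedderburn's little theorem makes $D$ a field. The criterion that an irreducible module is absolutely irreducible exactly when its endomorphism ring is $k$ (Curtis--Reiner) then forces $\dim_k D=2$. So $V$ is a one-dimensional $D$-space, and $H$ lies in the centraliser of $D^\times$, which is $D^\times$ itself, a nonsplit Cartan.

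\textbf{Your route.} You extend scalars and take a $G$-stable line $\ell$. By Galois descent and the ``three stable lines force scalars'' argument, you show $\ell$ is defined over $k_2$ with $V\otimes k_2=\ell\oplus\sigma(\ell)$. You then write down an explicit $k$-basis in which $\rho(G)$ has the standard shape $C_{\mathrm{ns}}$.

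\textbf{Comparison.} Both proofs see the same structure: your identification $V\cong k_2$, with $\rho(g)$ acting as multiplication by $\lambda_g$, is exactly the paper's $D$-module structure with $D\cong k_2$. The paper's proof is shorter and more conceptual, but it quotes Wedderburn and the absolute-irreducibility criterion. It also leaves implicit why $D^\times$ is conjugate to the standard $C_{\mathrm{ns}}$, which your Step~3 makes explicit. Yours is elementary and self-contained, at the price of the descent bookkeeping.

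\textbf{Two small points to state explicitly.}
\begin{enumerate}
\item In Step~2, the fixed field of $\sigma^2$ is $k_2$ rather than $k$ only because $[k':k]$ must be even. If it were odd, then $\sigma\in\langle\sigma^2\rangle$ would fix $\ell$, contradicting Step~1.
\item In Step~3, choosing $\alpha$ with $\alpha^2=\epsilon$ uses odd characteristic. This is consistent with the paper, whose definition of $C_{\mathrm{ns}}$ assumes $p>2$.
\end{enumerate}
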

\begin{proof}
Let $H=\rho(G)$ and $D=\End_{k[H]}(V)$. Since $\rho$ is irreducible, therefore by Schur's lemma $D$ is a division ring and so is a field since $D$ is finite (as $V$ is finite dimensional over $k$). Note that $V$ is a vector space over $D$ and  $\dim_D(V)\dim_k(D)=\dim_k(V)=2$. Since $\rho$ is not absolutely irreducible, therefore by \cite[Theorem 29.13]{CR} $\dim_k(D) \neq 1$. Thus $\dim_k(D) =2$ and therefore we can identify $D^{\times} \subseteq \GL_2(k)$ with a nonsplit Cartan subgroup. Since every element of $H$ commutes with the nonsplit Cartan subgroup $D^\times$, therefore we see that $H \subseteq D^\times$.
\end{proof}

\begin{lemma}\label{nonsplitCartanSL2_lemma}
Let $k$ be a finite field of order $p^n$. Let $C_{ns} \subseteq \GL_2(k)$ be a nonsplit Cartan subgroup. Then $C_{ns} \cap \SL_2(k)$ is a cyclic group of order $p^n+1$
\end{lemma}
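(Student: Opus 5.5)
We identify $C_{\mathrm{ns}}$ with the multiplicative group $k_2^\times$ of the quadratic extension $k_2$ of $k$, and then compute the determinant on this group as a norm map.

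First we fix coordinates. Choose $\epsilon \in k \smallsetminus k^2$; this is possible because $p$ is odd, so squaring on $k^\times$ is $2$-to-$1$. Since every nonsplit Cartan subgroup is conjugate to $C_{\mathrm{ns}}$, and conjugation preserves $\SL_2(k)$, it is enough to treat $C_{\mathrm{ns}}$ itself. Recall the isomorphism $C_{\mathrm{ns}} \iso k_2^\times$, $\bigl(\begin{smallmatrix} a & b\epsilon\\ b & a\end{smallmatrix}\bigr) \mapsto a + b\sqrt{\epsilon}$. We check directly that this is a group homomorphism: multiplying two such matrices gives a matrix of the same shape, corresponding to the product in $k_2$.

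Next we compute the determinant:
\[
\det\bigl(\begin{smallmatrix} a & b\epsilon\\ b & a\end{smallmatrix}\bigr) = a^2 - \epsilon b^2 = (a+b\sqrt\epsilon)(a-b\sqrt\epsilon) = N_{k_2/k}(a+b\sqrt\epsilon).
\]
The last equality holds because the nontrivial element of $\Gal(k_2/k)$ sends $\sqrt\epsilon \mapsto -\sqrt\epsilon$. Hence, under the isomorphism above, $C_{\mathrm{ns}} \cap \SL_2(k)$ corresponds to the kernel of the norm $N_{k_2/k} \colon k_2^\times \to k^\times$.

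Now write $q = p^n$. The nontrivial automorphism of $k_2$ is the Frobenius $x \mapsto x^q$, so $N(x) = x \cdot x^q = x^{q+1}$. Since $k_2^\times$ is cyclic of order $q^2-1$, the map $x \mapsto x^{q+1}$ has image of order $q-1 = \# k^\times$; in particular the norm is surjective. Its kernel therefore has order $(q^2-1)/(q-1) = q+1$. Being a subgroup of the cyclic group $k_2^\times$, the kernel is cyclic.

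No step here is a real obstacle. The only points needing care are that the identification with $k_2^\times$ is multiplicative and that the determinant coincides with the norm; both are direct computations.
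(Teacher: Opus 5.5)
Your proof is correct and follows the paper's route: identify $C_{\mathrm{ns}}$ with $k_2^\times$, recognise $C_{\mathrm{ns}}\cap\SL_2(k)$ as the kernel of $N_{k_2/k}$, and count. The only difference is how you get surjectivity of the norm: you use the explicit formula $N(x)=x^{q+1}$ on the cyclic group $k_2^\times$, whereas the paper uses Hilbert 90 together with the equality of orders of $H^1$ and $H^2$ for the finite module $k_2^\times$; your argument is more elementary, and it also spells out the determinant--norm identification and the cyclicity, which the paper leaves implicit.
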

\begin{proof}
Let $k_2$ be the finite field of degree 2 over $k$. Then we have an isomorphism $C_{\mathrm{ns}} \cong k_2^{\times}$. Under this isomorphism $C_{ns} \cap \SL_2(k)$ corresponds to $H:=\ker(N_{k_2/k} \colon k^{\times}_2 \to k^{\times})$, where $N_{k_2/k}$ is the norm. Moreover, $N_{k_2/k}$ is surjective: let $G=\Gal(k_2/k)$; by Hilbert Theorem 90 we have that $\hone(G, k^{\times}_2)=1$ and as $k_2^{\times}$ is finite we have by \cite[VII Prop. 8]{serre:local-fields} $\hone(G, k_2^{\times})$ has the same order as $H^2(G, k_2^{\times})$. Thus, $H^2(G, k^{\times}_2)$ is trivial, which is equivalent to the claimed surjectivity.
Therefore, $\#H=\# k^\times_2/\#k^\times = \frac{p^{2n}-1}{p^n-1}=p^n+1$.
\end{proof}

\begin{proposition}
Let $L=F(A[\fp])$ and $S$ be the set of primes of $F$ where $A$ has bad reduction. Then $\bar{\rho}_{\fp}(G_F) \neq \bar{\rho}_{\fp}(G_K)$ if and only if $L \cap K=K$ and these equivalent conditions imply that the primes of $F$ that ramify in $K$ are contained in the set consisting of primes above $p$ and primes in $S$.
\end{proposition}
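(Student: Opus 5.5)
Both claims reduce to Galois theory of the fixed field $L=F(A[\P])=\bar{F}^{\ker\bar{\rho}_\P}$, with $\bar{\rho}_\P(G_F)\cong\Gal(L/F)$.

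For the equivalence, I will identify $\bar{\rho}_\P(G_K)$ with the image of $G_K$ in $\Gal(L/F)$, namely $\Gal(L/L\cap K)$. Since $[K:F]=2$, the intersection $L\cap K$ is either $F$ or $K$. If $L\cap K=F$, the fields $L$ and $K$ are linearly disjoint over $F$, so restriction gives $\Gal(LK/K)\cong\Gal(L/F)$. Hence $\bar{\rho}_\P(G_K)=\bar{\rho}_\P(G_F)$. If instead $L\cap K=K$, that is $K\subseteq L$, then $\bar{\rho}_\P(G_K)\cong\Gal(L/K)$ is a subgroup of index $2$ in $\Gal(L/F)$, so in particular it is different. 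This proves that the two images differ exactly when $K\subseteq L$. One small point needs care: the statement compares the images as subgroups of $\GL_2(\cOE/\P)$, not merely up to isomorphism. This is automatic, because $\bar{\rho}_\P(G_K)\subseteq\bar{\rho}_\P(G_F)$ and both groups are finite, so equality is equivalent to equality of orders.

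For the ramification statement, assume $K\subseteq L$. Then every prime $\ellfr$ of $F$ that ramifies in $K/F$ also ramifies in $L/F$, since the inertia group of $\ellfr$ in $\Gal(L/F)$ surjects onto its inertia group in $\Gal(K/F)$. So it suffices to show that $L/F$ is unramified outside $S\cup\{\p\mid p\}$. Take a prime $\ellfr\notin S$ with $\ellfr\nmid p$. Then $A$ has good reduction at $\ellfr$, and by the Néron--Ogg--Shafarevich criterion \cite[Theorem 1]{serre-tate:good-red-av} the module $A[p]$ is unramified at $\ellfr$. The module $A[\P]$ is a $G_F$-submodule of $A[p]$, via the decomposition $A[p]=\bigoplus A[\P^{r_\P}]$ together with $A[\P]\subseteq A[\P^{r_\P}]$. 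Hence inertia at $\ellfr$ acts trivially on $A[\P]$, and $\ellfr$ is unramified in $L$.

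I see no real obstacle here. The only thing to watch is the bookkeeping between equality of the image subgroups and the index computation, and that is handled by the containment-plus-finiteness remark above.
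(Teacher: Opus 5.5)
Your proof is correct and follows the same route as the paper. You identify $\bar{\rho}_\P(G_K)$ with $\Gal(L/L\cap K)\subseteq\Gal(L/F)$ and use $[K:F]=2$, then get the ramification claim from $K\subseteq L$ together with the fact that $L/F$ is unramified outside $S$ and the primes above $p$. Your direct appeal to the N\'eron--Ogg--Shafarevich criterion does the same job as the paper's ``argument along the lines of Silverman VII.4.1''; note that $A[\P]\subseteq A[p]$ also follows immediately from $p\in\P$. Your comparison of orders makes explicit the subgroup-versus-isomorphism point that the paper leaves implicit.
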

\begin{proof}
We have $\bar{\rho}_{\fp}(G_F)\cong \Gal(L/F)$ and $\bar{\rho}_{\fp}(G_K)\cong \Gal(KL/K)\cong \Gal(L/L \cap K)$. This proves that $\bar{\rho}_{\fp}(G_F) \neq \bar{\rho}_{\fp}(G_K)$ if and only if $L \cap K=K$. Now assume that these equivalent conditions are true. By a similar proof along the same lines of \cite[VII Proposition 4.1]{silverman:arithmetic-elliptic-curves} we have that $L/F$ is unramified outside the set consisting of primes of $F$ above $p$ and primes where $A$ has bad reduction. If $L \cap K=K$, then in particular $K \subseteq L$ and so any prime of $F$ that ramifies in $K$ must also ramify in $L$ and thus must either be a prime above $p$ or a prime in $S$.
\end{proof}

\begin{proposition}\label{prop:implication-irreducibility}
Assume that the conditions \eqref{ass:P-ord}, for some prime $\p$ of $F$ above $p$, and \eqref{ass:Delta-p} are satisfied. Furthermore, if $p=3$ assume that $f(\fp|3)=1$ and that $\Q(\mu_3)\subseteq K$. Then if $\bar{\rho}_{\fp}$ is irreducible, so is $\bar{\rho}_{\fp}|_{G_K}$
\end{proposition}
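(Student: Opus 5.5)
Suppose $\bar{\rho}_\P$ is irreducible but $\bar{\rho}_\P|_{G_K}$ is reducible; we aim for a contradiction. Since $[K:F]=2$ and $p$ is odd, $\bar{\rho}_\P(G_K)$ is a normal subgroup of index at most $2$ in $\bar{\rho}_\P(G_F)$. If the indices coincide, i.e.\ $\bar{\rho}_\P(G_K)=\bar{\rho}_\P(G_F)$, reducibility transfers immediately and we are done. So assume the index is $2$. By the preceding proposition, $K\subseteq L=F(A[\P])$. A $G_K$-stable line $\ell\subseteq A[\P]$ is not $G_F$-stable, so $\tau_c\ell\neq\ell$ for a lift $\tau_c\in G_F$ of the nontrivial element of $\Gal(K/F)$. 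Hence $A[\P]=\ell\oplus\tau_c\ell$ as $G_K$-modules, and $\bar{\rho}_\P$ is induced from a character $\psi\colon G_K\to(\cOE/\P)^\times$. Consequently $\bar{\rho}_\P(G_K)$ lies in a split Cartan subgroup and $\bar{\rho}_\P(G_F)$ in its normalizer.

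Next we restrict to inertia at $\p$ using \eqref{ass:P-ord}. Proposition~\ref{prop:P-ordinary-shape} with $M=1$ gives a basis in which $\bar{\rho}_\P|_{I_\p}=\bigl(\begin{smallmatrix}\bar\chi&\ast\\0&1\end{smallmatrix}\bigr)$. By Lemma~\ref{Delta_condition_lemma}, $\bar\chi(I_\p)=\F_p^\times$, which has order $p-1$. First consider the case where $I_\p$ is contained in the index-$2$ subgroup $\bar{\rho}_\P^{-1}(\text{Cartan})$, i.e.\ $I_\p\subseteq G_K$ up to finite index $2$ issues; more precisely, look at $I_\p\cap G_K$, which has index at most $2$ in $I_\p$. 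Its image is both diagonalizable in the Cartan basis and upper triangular with eigenvalues $\{\bar\chi,1\}$. A semisimple element with eigenvalues $\bar\chi(\sigma)\neq1$ and $1$ fixes exactly the two Cartan lines. Since $\bar\chi(I_\p\cap G_K)$ has order at least $(p-1)/2>1$ when $p>3$, the eigenlines of the image are exactly $\ell$ and $\tau_c\ell$. The character $\bar\chi$ then occurs on one of them, say $\psi|_{I_\p\cap G_K}=\bar\chi$, while $\psi^{\tau_c}|_{I_\p\cap G_K}=1$ (for a suitable choice of decomposition group).

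We now split into two cases. If $K/F$ is unramified at $\p$, then $I_\p\subseteq G_K$, and $\tau_c$ may be chosen in the decomposition group $D_\p$ (when $\p$ is inert), or else the two primes above $\p$ interchange $\psi$ and $\psi^{\tau_c}$. In the inert case $\tau_c\in D_\p$ normalizes $I_\p$ and preserves $\bar\chi$ (the cyclotomic character is $G_F$-defined), so it would carry the $\bar\chi$-line to the trivial line, which is impossible. In the split case, the upper-triangular shape of $D_\p\subseteq G_K$ forces the $\bar\chi$-line to be the $D_\p$-stable one. We then derive a contradiction from comparing the two primes above $\p$ via the determinant $\psi\psi^{\tau_c}=\bar\chi$.

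If $K/F$ is ramified at $\p$, then $I_\p\not\subseteq G_K$, so some inertia element swaps $\ell$ and $\tau_c\ell$ and therefore has trace $0$ and determinant $-1$ on a swapped pair. This must be compatible with eigenvalues $\{\bar\chi(\sigma),1\}$, forcing $\bar\chi(\sigma)=-1$. Exploiting the tame quotient (cyclic) together with $|\bar\chi(I_\p)|=p-1>2$ for $p>3$ then contradicts the structure of the normalizer. This case analysis, especially the split and ramified cases, is where I expect the main difficulty. The hypothesis $p>3$ (or $p=3$ with $\mu_3\subseteq K$, $f(\p|3)=1$) is exactly what guarantees $\bar\chi|_{I_\p\cap G_K}$ is nontrivial and differs from its inverse. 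If the case analysis becomes unwieldy, I would fall back on the non-split/split Cartan counting of Lemmas~\ref{nonsplitCartan_image_lemma}--\ref{nonsplitCartanSL2_lemma}, comparing the orders of cyclic tame inertia images.
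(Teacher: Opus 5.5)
Your reduction to $\bar\rho_\P|_{G_K}=\psi\oplus\psi^{\tau_c}$ and your inert case are fine. The proposal breaks in the split case, though, and that gap cannot be closed, because the statement is false there.

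\textbf{The split case.} Suppose $\p\cO_K=\wp\bar{\wp}$. Then conjugation by any $\tau_c\in G_F\smallsetminus G_K$ carries an inertia group at $\wp$ to one at $\bar{\wp}$, so no lift of complex conjugation normalizes $I_\wp$. Ordinarity therefore only gives $\psi|_{I_\wp}=\bar\chi$ and $\psi^{\tau_c}|_{I_\wp}=1$ (up to swapping), and correspondingly $\psi|_{I_{\bar{\wp}}}=1$ and $\psi^{\tau_c}|_{I_{\bar{\wp}}}=\bar\chi$. This is entirely consistent with $\psi\psi^{\tau_c}=\bar\chi$, so comparing the two primes yields no contradiction.

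Here is an explicit counterexample. Take $F=\Q$, $A\colon y^2=x^3-x$ (conductor $32$, CM by $\Z[i]$, principally polarized), $K=\Q(i)$ and $p=5=(2+i)(2-i)$.
\begin{itemize}
\item $a_5=-2$, so $A$ is ordinary at $5$.
\item $\Q_5(\mu_5)/\Q_5$ is totally ramified of degree $4$.
\item $\bar\rho_5$ is irreducible, since $A$ has no $\Q$-rational $5$-isogeny.
\item Yet $A[5]=A[2+i]\oplus A[2-i]$ as $G_K$-modules.
\end{itemize}

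The paper's proof has the same hole. Its step ``therefore $\chi_\P|_{I_\wp}=1$'' tacitly assumes that $\tau_c$ can be chosen to normalize $I_\wp$, i.e.\ that $\p$ is inert or ramified in $K$. So both the proposition and your proof need the extra hypothesis that $\p$ does not split in $K$.

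Under that hypothesis the paper's argument covers both non-split cases at once. Since $\bar\chi$ and $1$ are restrictions of characters of $G_F$, conjugation by such a $\tau_c$ fixes them. Hence $\{\psi|_{I_\wp},\psi^{\tau_c}|_{I_\wp}\}=\{\bar\chi|_{I_\wp},1\}$ forces $\bar\chi|_{I_\wp}=1$, so $\bar\chi^2(I_\p)=1$ and $p=3$ by Lemma~\ref{Delta_condition_lemma}. Your inert argument is exactly this. Your trace-zero idea in the ramified case also works once completed: each $\sigma\in I_\p\smallsetminus G_K$ has trace $0$ and eigenvalues $\bar\chi(\sigma),1$, so $\bar\chi(\sigma)=-1$. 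Hence $\bar\chi$ is trivial on $I_\p\cap G_K$ and $\bar\chi(I_\p)\subseteq\{\pm 1\}$.

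\textbf{The case $p=3$.} You have the role of the extra hypotheses backwards. $\Q(\mu_3)\subseteq K$ makes $\bar\chi|_{G_K}$ trivial, not nontrivial. Together with $(\Delta_\p)$ it also forces $K=F(\mu_3)$ to be ramified at $\p$. That is precisely the case where, for $p=3$, $\bar\chi(I_\p\cap G_K)$ is trivial and your inertia argument gives nothing.

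The paper uses these hypotheses as follows. From $f(\P|3)=1$ one gets $\cOE/\P=\F_3$. Then $\bar\chi|_{G_K}=1$ puts $\bar\rho_\P(G_K)$ inside $C_{\mathrm{s}}\cap\SL_2(\F_3)=\{\pm I\}$. Since $\bar\rho_\P(\tau_c)$ is conjugate to $\mathrm{diag}(1,-1)$ by Lemma~\ref{eigenspace_structure_lemma}, $\bar\rho_\P(G_F)$ is an elementary abelian $2$-group of simultaneously diagonalizable elements, hence reducible. You need this step, or an equivalent one, to cover $p=3$.
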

\begin{proof}
Suppose that $\bar{\rho}_{\fp}$ is irreducible but that $\bar{\rho}_{\fp}|_{G_K}$ is reducible. Then $\bar{\rho}_{\fp}|_{G_K}$ is semisimple (see \cite[Theorem 7.1.1]{Craven}) and its image is contained in a split Cartan subgroup $C_s$ of $\GL_2(\cO_{\fp}/\fp)$. Then $\bar{\rho}_{\fp}|_{G_K}=\alpha \oplus \beta$ for some characters $\alpha, \beta: G_K \to (\cO_{\fp}/\fp)^{\times}$. If $W$ is the one-dimensional subspace of $A[\fp]$ stable under $G_K$ giving the character $\alpha$, then as the proof of \emph{loc.~cit.}~one shows that $\rho(\tau_c) W$ is the subspace of $A[\fp]$ giving the character $\beta$. Then for any $g \in G_K$ and any $w \in W$ we have $\bar{\rho}_{\fp}(g)\bar{\rho}_{\fp}(\tau_c)w=\bar{\rho}_{\fp}(\tau_c) \bar{\rho}_{\fp}(\tau_c^{-1})\bar{\rho}_{\fp}(g)\bar{\rho}_{\fp}(\tau_c)w$. Therefore, $\beta=\alpha^{\tau_c}$ where we define $\alpha^{\tau_c}(g)=\alpha(\tau_c^{-1}g\tau_c)$. Now let $\wp$ be a prime of $K$ above $\p$ and let $I_{\wp} \subseteq G_{K_{\wp}}$ be the inertia subgroup. Then it follows by Proposition \ref{prop:P-ordinary-shape} that we have $\{\alpha|_{I_{\wp}}, \alpha^{\tau_c}|_{I_{\wp}}\}=\{\chi_{\fp}|_{I_{\wp}},1\}$. Therefore $\chi_{\fp}|_{I_{\wp}}=1$. This implies that $\chi^2_{\fp}(I_\p)=1$. By Lemma \ref{Delta_condition_lemma} we see that this implies that $p=3$. In this case, we have $\cO_{\fp}/\fp=\mathbb{F}_3$ and ${\chi_3}_{|G_K}=1$ by assumption. It follows that $\bar{\rho}_{\fp}(G_K) \subseteq C_s \cap \SL_2(\F_3)=\{\pm I\}$. 

By Lemma \ref{eigenspace_structure_lemma}, we have $\bar{\rho}_{\fp}(\tau_c) \sim \begin{pmatrix} 1 & 0 \\ 0 & -1 \end{pmatrix}$. Thus we see that $\bar{\rho}_{\fp}(G_F) \cong (\Z/2\Z)^a$ for some $a \leq 2$. This contradicts the irreducibility of $\bar{\rho}_{\fp}$.
\end{proof}

\begin{proposition}\label{prop:implication-absolute-irreducibility}
Assume \eqref{ass:P-ord}, for some prime $\p$ of $F$ above $p$. Furthermore, if $p=3$ assume that $\Q(\mu_3)\subseteq K$. If $\bar{\rho}_{\fp}|_{G_K}$ is irreducible, but not absolutely irreducible, then $p=3$ and $\bar{\rho}_{\fp}(G_K)$ is a cyclic group whose order is greater than 2 and divides $N(\fp)+1$, where $N(\fp)$ is the norm. If $\#\bar{\rho}_{\fp}(G_K)$ is equal to 4 or is a power of an odd prime, then $\bar{\rho}_{\fp}(G_F)$ is a dihedral group of order $2\#\bar{\rho}_{\fp}(G_K)$
\end{proposition}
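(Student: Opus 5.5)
Set $H=\bar{\rho}_{\fp}(G_K)$ and $k=\cOE/\P$. By Lemma \ref{nonsplitCartan_image_lemma}, $H$ lies in a nonsplit Cartan subgroup $C_{\mathrm{ns}}\cong k_2^\times$, so $H$ is cyclic. I would first show $p=3$ using the ordinary shape at $\p$. Choose a prime $\wp$ of $K$ above $\p$. By Proposition \ref{prop:P-ordinary-shape}, $\bar{\rho}_{\fp}(I_\wp)$ consists of upper-triangular matrices with diagonal $(\chi_{\P,1},1)$, so each element has eigenvalue $1$. An element of $C_{\mathrm{ns}}$ whose eigenvalues lie in $k$ is a scalar, because its eigenvalues $z,z^q$ coincide exactly when $z\in k$. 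A scalar with eigenvalue $1$ is the identity. Hence $\bar{\rho}_{\fp}(I_\wp)=1$, and in particular $\chi_{\P,1}|_{I_\wp}=1$. So $\mu_p\subseteq K_\wp^{\ur}$. Since $\Q_p(\mu_p)/\Q_p$ is totally ramified of degree $p-1$ and $[K_\wp:F_\p]\le 2$, this forces $p-1$ to divide $2e(\p|p)$. This is where the argument needs care: with only \eqref{ass:P-ord}, the step $p=3$ does not follow automatically. I expect to need \eqref{ass:Delta-p} (used as in Lemma \ref{Delta_condition_lemma}, which gives $\chi(I_\p)$ surjecting onto $\F_p^\times$ and hence $p-1\mid 2$), or else the assumption $\Q(\mu_3)\subseteq K$ when $p=3$. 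I would check whether the paper's standing assumptions for this section supply this, and otherwise argue as in Proposition \ref{prop:implication-irreducibility}. This is the step I regard as the main obstacle.

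Once $p=3$, with $\Q(\mu_3)\subseteq K$ we get $\det\bar{\rho}_\P|_{G_K}=\chi_3|_{G_K}=1$. So $H\subseteq C_{\mathrm{ns}}\cap\SL_2(k)$, which is cyclic of order $\#k+1$ by Lemma \ref{nonsplitCartanSL2_lemma}. To reach $N(\fp)+1$, I would use that Frobenius at $\wp$ generates the image of the decomposition group, since inertia acts trivially. The residue field $k_\wp$ has order $N(\p)$ or $N(\p)^2$, and the relevant eigenvalue order divides $q+1$, where $q=\#k$. Comparing these relates the order of $H$ to $N(\fp)+1$ via the ordinary Frobenius eigenvalue. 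I would revisit exactly which norm is meant and present the divisibility $\#H\mid \#k+1$ as the core. Next, $\#H>2$: if $H\subseteq\{\pm I\}$, then $A[\P]$ would split over $k$, contradicting irreducibility. So $H$ is a nonscalar cyclic group, and it is not contained in the scalars.

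For the dihedral statement, $H$ has index at most $2$ in $G:=\bar{\rho}_\P(G_F)$, and $\bar{\rho}_\P(\tau_c)\notin H$ because, by Lemma \ref{eigenspace_structure_lemma}, it has eigenvalues $1,-1$ while $H\subseteq\SL_2$ acts without $k$-rational eigenvectors. So $G=H\rtimes\langle c\rangle$ with $c^2=1$, where $c=\bar{\rho}_\P(\tau_c)$. Conjugation by $c$ preserves $H$. If it acted trivially, $c$ would commute with $H$ and so lie in its centralizer, which is $C_{\mathrm{ns}}$ since $H$ is nonscalar; but $c$ has distinct $k$-rational eigenvalues, a contradiction. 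Hence the conjugation is a nontrivial involutive automorphism of the cyclic group $H$. When $\#H=4$ or $\#H=\ell^a$ with $\ell$ odd, the only involutive automorphism is inversion, since $(\Z/\ell^a)^\times$ is cyclic and $(\Z/4)^\times=\{\pm1\}$. Therefore $G$ is dihedral of order $2\#H$.
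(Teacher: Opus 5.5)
Your proposal is correct and follows the paper's proof step for step. The steps are: the image lies in a nonsplit Cartan by Lemma \ref{nonsplitCartan_image_lemma}; the $\P$-ordinary shape forces $\chi_{\P,1}|_{I_\wp}=1$ and hence $p=3$; Lemma \ref{nonsplitCartanSL2_lemma} applies once $\det=\chi_3$ is trivial on $G_K$; $\#H>2$ follows from irreducibility; and the image is dihedral because $\bar{\rho}_\P(\tau_c)$ induces a nontrivial involution of the cyclic group $H$.

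Your worry about the step $p=3$ is justified. The paper's proof also obtains it only through Lemma \ref{Delta_condition_lemma}, so it silently uses \eqref{ass:Delta-p}, which is assumed in Proposition \ref{theorem:assumption-reduction} where this result is applied. The hypothesis $\Q(\mu_3)\subseteq K$ cannot substitute for it, since it is conditional on $p=3$.

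Note also that $N(\fp)$ in the statement is the norm of $\P$, i.e.\ $\#(\cOE/\P)$. So $\#H\mid \#k+1$ is exactly the claim, and your Frobenius-at-$\wp$ detour is unnecessary.
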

\begin{proof}
According to Proposition \ref{prop:P-ordinary-shape}, for a suitable choice of basis, $\bar{\rho}_{\fp |I_\p} = 
\begin{pmatrix} \chi_{\fp} & \ast \\ 0 & 1 \end{pmatrix}$. Moreover, by Lemma \ref{nonsplitCartan_image_lemma}, $\bar{\rho}_{\fp}(G_K)$ is contained in a nonsplit Cartan subgroup $C_{ns}$. Therefore, with the same notation of the above proof, $\bar{\rho}_{\fp}(I_{\wp}) \subseteq C_{ns}$ is reducible and therefore is made of scalar matrices. 
In particular, we have again $\chi_{\fp}|_{I_{\wp}}=1$, that implies $\chi^2_{\fp}(I_\p)=1$ and thus $p=3$.

Then, by the assumptions, $\bar{\rho}_{\fp}(G_K) \subseteq C_{ns} \cap \SL_2(\cO_{\fp}/\fp)$. By Lemma \ref{nonsplitCartanSL2_lemma} this implies that $\bar{\rho}_{\fp}(G_K)$ is a cyclic group whose order divides $N(\fp)+1$. Furthermore since $\bar{\rho}_{\fp}|_{G_K}$ is irreducible we must have that $\#\bar{\rho}_{\fp}(G_K)>2$. 

Now suppose that $\#\bar{\rho}_{\fp}(G_K)$ is equal to 4 or is a power of an odd prime. Let $c$ be a generator of the cyclic group $\bar{\rho}_{\fp}(G_K)$. To simplify notation we denote $\cO_{\fP}/\fP$ by $k$ and let $k_2$ be the field of degree 2 over $k$. Then we can identify $C_{ns}$ with $k_2^{\times}$. As the order of $c$ divides $N(\fP)+1$ and is greater than 2, it cannot divide $N(\fP)-1$. Therefore $c \notin k$. It follows that $k_2=k[c]$. 

We have $\bar{\rho}_{\fp}(\tau_c) \sim \begin{pmatrix} 1 & 0 \\ 0 & -1 \end{pmatrix}$ and so $\bar{\rho}_{\fp}(\tau_c) \notin C_{ns}$. Assume that $\bar{\rho}_{\fp}(\tau_c)$ commutes with $c$. Then it will commutes with $k_2=k[c]$ and hence it will commute with $C_{ns}$. This will imply that $\bar{\rho}_{\fp}(\tau_c) \in C_{ns}$ which contradicts the fact above. Thus $\bar{\rho}_{\fp}(G_F)$ is not abelian. $\bar{\rho}_{\fp}(G_F)$ is a semidirect product of the (normal) cyclic group $\bar{\rho}_{\fp}(G_K)$ and $\langle \bar{\rho}_{\fp}(\tau_c)\rangle$. Since $\#\bar{\rho}_{\fp}(G_K)$ is equal to 4 or a power of an odd prime, therefore $\Aut(\bar{\rho}_{\fp}(G_K))$ is cyclic and hence has a unique subgroup of order 2. This together with the nonabelian semidirect product description implies that $\bar{\rho}_{\fp}(G_F)$ is a dihederal group.
\end{proof}

\section{Anticyclotomic Iwasawa theory}\label{Iwasawatheory_section}

In this section we apply the vanishing theorem in the previous section and the abstract Iwasawa theoretical results of \cite[Section 3]{matar-nekovar:kolyvagin} in order to generalize \cite[Theorem (6.9)]{matar-nekovar:kolyvagin} from the case of elliptic curves over $\Q$ to the case of modular abelian varieties defined over totally real fields.

Suppose in this section that $A/F$ abelian variety defined over a totally real number field satisfying \eqref{ass:modular-av}, a polarisation $\gamma \colon A \to A^\vee$ and a prime $p$ satisfying \eqref{ass:p-ndiv}. Fix moreover a prime $\P$ of $E = \End_F^0(A)$ and a prime $\p \in \allowedprimes$ of $F$, both above $p$. Let $K/F$ be a CM extension satisfying \eqref{ass:heegner-hypothesis} and such that any prime $\q$ of $F$ above $p$ is unramified in $K/F$.

Finally suppose that \eqref{ass:Tamagawa} and the following assumptions are in force:
\leqnomode
\begin{gather}
    \text{$A$ has good ordinary reduction at any $\q \mid p$ of $F$};\label{ass:ordinary}\tag{$\mathrm{ord}_p$}\\
    p \nmid [K(x):K];\label{ass:deg-Kx}\tag{$\deg_{K(x)}$}\\
    \text{
    $a_\p \not\equiv 1, \epsilon(\q) \bmod{\P'}$, for any prime $\q \mid p$ of $F$ and $\P' \mid p$ of $E$}\label{ass:A2-II}\tag{$a_\p$-not-cong}
\end{gather}
\reqnomode

Denote by $f_\p$ the inertia degree of $\p$ in $F/\Q$. We now construct a subfield $K_\infty$ of 
\[
K(x(\p^\infty)) = \injlim_n K(x(\p^n)),
\]
that we call the anticyclotomic $\Z_p^{f_\p}$-extension of $K$ relative to $x$, as $K_\infty$ is a $\Z_p^{f_\p}$-extension of $K$ pro-dihedral over $F$. Denote $\Delta = \Gal\bigl(K(x(\p))/K\bigr)$. Since $p \nmid \#\Delta = u(0)^{-1}(N\p - \varepsilon(\p))[K(x):K]$ by \eqref{ass:deg-Kx}, it follows, it follows from Corollary \ref{cor:degree-x-n-ells} that for any $n \ge 0$ we have 
\[
\Gal\biggl(\frac{K(x(\p^{n+1}))}{K}\biggr)= \Gal\biggl(\frac{K(x(\p^{n+1}))}{K(x(\p))}\biggr) \times \Gal\biggl(\frac{K(x(\p))}{K}\biggr) \cong (\Z/p^n\Z)^{f_\p} \times \Delta.
\]
We define $K_n := K(x(\p^{n+1}))^\Delta$ (which is the maximal $p$-subextension of $K(\p^{n+1})/K$ as $p \nmid \#\Delta$), so that $\Gamma_n := \Gal(K_n/K) \cong (\Z/p^n\Z)^{f_\p}$ and therefore $K_\infty := \injlim_n K_n = K(x(\p^\infty))^{\Delta}$ is a $\Z_p^{f_\p}$-extension of $K$. It is by construction pro-dihedral over $F$, i.e., 
\[
\Gal(K_\infty/F) = \Gal(K_\infty/K) \rtimes \Gal(K/F) \cong \Gal(K_\infty/K) \rtimes \set{1, \tau_c},
\]
with $\tau_c \gamma \tau_c = \gamma^{-1}$ for any $\gamma \in \Gal(K_\infty/K)$; see \cite[(2.6.5)]{nekovar:euler-system-method}.

\begin{remark}
    Analogously one might construct, under the assumption that $p \nmid [K[\cond]:K]$ for an ideal $\cond$ of $F$, another $\Z_p^{f_\p}$-extension $K_{\cond, \infty}$ of $K$, as subfield of $K[\cond\p^\infty]$.
     See e.g., \cite[Definition-Proposition 1.2.3(2)]{aflalo-nekovar:nontriviality-cm-points} and \cite[Section 2.3]{wang:anti-imc-hmf} and \cite[Section 3]{howard:iwasawa-gl2-abelian-varieties}. Or one can use a different definition of ring class field as in \cite[Definition 2.1]{longo:anticyclotomic-imc-hilbert} (under the hypothesis that $p \nmid h_K$). Under certain $p$-indivisibilities all these constructions coincide. For instance 
if $p \nmid [K[\cond(x)]:K]$ it follows that $K_n$ is the maximal $p$-subextension both of $K(x(\p^{n+1}))/K$ and $K[\cond(x)\p^{n+1}]/K$; therefore $K_\infty = K_{\cond(x), \infty}$.
\end{remark}

Let $\Lambda = \cOP \llbracket \Gal(K_\infty/K)\rrbracket \cong \cOP \llbracket T_1, \dots T_{d_\p}\rrbracket$ be the Iwasawa algebra of $K_\infty/K$. Moreover, for any $n \ge 0$, let $y_n = \Tr_{K(x(\p^{n+1}))/K_n} y(\p^{n+1}) \in A(K_n)$. For any algebraic extension $L/K$ we denote
\[
N_{L/K}(A \otimes \cOE) := \projlim_{\substack{K \subseteq L' \subseteq L\\ \text{finite}, \Tr}} A(L') \otimes_{\cOE} \cOEP \subseteq S_\P(A/L)
\]
the group of $L$-norms. Moreover, when $L=K_\infty$, we will use the terminology \emph{universal norms}.

From \cite{matar-nekovar:kolyvagin} we deduce the following theorem.
\begin{theorem}\label{theorem:iwasawa-th-main}
    Under our running hypothesis, assume \eqref{ass:abs-G-K-irr} and that $y_K \notin \P A(K)$. Then for any intermediate field $K \subseteq L \subseteq K_\infty$
    \[
    \sha(A/L)[\P^\infty] = 0, \quad A(L)\otimes_{\cOE} E_\P/\cOP = \Selm_{\P^\infty}(A/L), \quad N_{L/K}(A \otimes_{\cOE}\cOP) = S_\P(A/L)
    \]
    and both $S_\P(A/L)$ and the Pontryagin dual of $\Selm_{\P^\infty}(A/L)$ are free modules of rank one over $\Lambda_L = \cOP \llbracket \Gal(L/K)\rrbracket$. If moreover $[L:K]< \infty$, then $A(L) \otimes_{\cOE}\cOP = S_\P(A/L)$ is a free $\cOP$-module of rank $\rk_{\cOE} B(L) = [L:K]$. 
    
    Furthermore, for any intermediate fields $K \subseteq L \subseteq L' \subseteq K_\infty$, the natural map
    \[
    N_{L'/K}(A \otimes_{\cOE}\cOP) = S_\P(A/L') \to N_{L/K}(A \otimes_{\cOE}\cOP) = S_\P(A/L)    
    \]
    is surjective and $S_\P(A/L)$ is generated over $\Lambda_L$ by the image $x_L$ of any element 
    \[
    x \in N_{K_\infty/K}(A \otimes_{\cOE} \cOP),
    \] 
    whose image in $\bar{x}_K \in A(K) \otimes \cOE/\P$ is non-zero.   
\end{theorem}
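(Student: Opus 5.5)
The plan is to reduce Theorem \ref{theorem:iwasawa-th-main} to the abstract Iwasawa-theoretic results of \cite[Section 3]{matar-nekovar:kolyvagin}, applied to the $\cOP$-adic Selmer structure over the $\Z_p^{f_\p}$-extension $K_\infty/K$. Those results have two kinds of input. The first is a vanishing statement at the base: $\Selm_{\P^\infty}(A/K)\cong E_\P/\cOP$ and $\sha(A/K)[\P^\infty]=0$. The second is local control at the primes where $K_\infty/K$ ramifies, namely the primes above $p$. Combined with the existence of a universal norm whose bottom term is nonzero modulo $\P$, they yield freeness of rank one of $S_\P$ and of the dual Selmer group over $\Lambda_L$, together with the control statements. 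The base input is exactly Theorem \ref{main_theorem}. Our running hypotheses include \eqref{ass:abs-G-K-irr}, \eqref{ass:p-ndiv} and \eqref{ass:Tamagawa}. Applying Theorem \ref{main_theorem} for every $M$ divisible by the order of $\P$ and passing to the limit via \eqref{eq:ses-selmer-sha} and \eqref{eq:ses-selmer-cpt-tsha} gives $A(K)\otimes\cOP=\cOP y_K$ and $\sha(A/K)[\P^\infty]=0$. Lemma \ref{torsion_lemma} gives the absence of $\P$-torsion.

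Next, we construct the universal norm from the Heegner points $y_n$. By Proposition \ref{prop:norm-relations-heegner}, for $\alpha\ge1$ we have $\Tr\, y(\p^{\alpha+1}) = a_\p y(\p^\alpha) - y(\p^{\alpha-1})$. For the first step, \eqref{eq:norm-basic-heegner-y} gives either $a_\p y(1)$ or $(a_\p-\Frob_\lambda-\Frob_{\bar\lambda})y(1)$. Taking traces down to $K_n$, and using \eqref{ass:deg-Kx} together with $p\nmid\#\Delta$, we obtain the three-term relation $\Tr_{K_{n+1}/K_n} y_{n+1} = a_\p y_n - y_{n-1}$ on $A(K_n)$. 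At the base, $\Tr_{K_0/K}y_0$ is a unit multiple of $(a_\p-\varepsilon(\p)-1)$-type factors times $y_K$; here one needs $\Frob_\lambda$ and $\Frob_{\bar\lambda}$ to act trivially after tracing to $K$, up to $\tau_c$-twists. By \eqref{ass:ordinary} and Proposition \ref{proposition:ordinary-equivalent-2}, $a_\p$ is a $\P$-unit. Following Perrin-Riou, we let $\alpha$ be the unit root of $X^2-a_\p X+N\p$ in $\cOP$, which exists by Hensel's lemma since $N\p\in\P$. The sequence $z_n=\alpha^{-n}(y_n - \alpha^{-1} y_{n-1})$ is then trace-compatible, so it defines an element $z\in N_{K_\infty/K}(A\otimes\cOP)$. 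Its image in $A(K)\otimes\cOE/\P$ is a multiple of $y_K$ by a factor built from $a_\p-1$ and $a_\p-\varepsilon(\p)$. By \eqref{ass:A2-II} and $y_K\notin\P A(K)$, this image is nonzero.

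The local input at $p$ comes from \eqref{ass:ordinary} together with \eqref{ass:A2-II}. These imply that $\tilde A_\q(k_w)[\P]=0$ along the tower for every $\q\mid p$: the Frobenius eigenvalue on the étale quotient is $\alpha\equiv a_\p$, and since the primes above $p$ are finitely decomposed and the residue field can grow only by the $\varepsilon(\q)$-twist in the dihedral tower, the condition $a_\p\not\equiv1,\varepsilon(\q)$ suffices. This gives the hypotheses of \cite[Section 3]{matar-nekovar:kolyvagin}: the local Kummer images are "universally norm-surjective", and the Selmer groups satisfy exact control under restriction. Away from $p$, \eqref{ass:Tamagawa} and the fact that $K_\infty/K$ is unramified outside $p$ handle control. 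Feeding all this into the abstract theorem gives every assertion for all $L$. The rank statement for finite $L$ follows from $\Lambda_L$-freeness of rank one, since $\Lambda_L$ has $\cOP$-rank $[L:K]$. The surjectivity of $S_\P(A/L')\to S_\P(A/L)$ and the generation by any universal norm with nonzero reduction follow from Nakayama's lemma over the local ring $\Lambda_L$.

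The main obstacle I expect is verifying the local hypotheses at the primes above $p$ for the multi-$\Z_p$-extension with $f_\p>1$. This amounts to checking that $\tilde A_\q(k_w)[\P]$ vanishes in every layer, and that the hypotheses of \cite{matar-nekovar:kolyvagin}, formulated for $\Z_p$-extensions of $\Q$-rational curves, extend verbatim to $\cOP$-coefficients and $\Z_p^{f_\p}$-towers. I would first check that their Section 3 is stated for general compact $p$-adic Lie quotients $\Z_p^r$ with $\cO$-coefficients, replacing $\Z_p$ by $\cOP$ via Lemma \ref{lemma:pontryagin-iso}.
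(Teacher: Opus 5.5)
Your route is the paper's. The paper simply invokes \cite[Theorem 3.5]{matar-nekovar:kolyvagin} and checks its hypotheses, treating it as directly applicable with $\cOP$-coefficients over $K_\infty$, so the re-verification you plan in your last paragraph is unnecessary. As in your proposal, Theorem \ref{main_theorem} gives (A4), Lemma \ref{torsion_lemma} gives (A3), and \eqref{ass:Tamagawa} gives the Tamagawa half of (A2).

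Two hypotheses are missing from your list; both are immediate but must be checked:
\begin{itemize}
\item (A1): the perfect, skew-symmetric, $G_F$-equivariant $\cOP$-bilinear pairing on $T_\P A$. This is Section \ref{sec:weil-pairing}, using $p\nmid\deg(\gamma)$ and the triviality of the Rosati involution on $E$.
\item (A5): that $K_\infty/F$ is pro-dihedral with $K^+=F$.
\end{itemize}

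Your universal norm is literally the paper's $z_n'$. Tracing to $K_n$ before stabilising works because $[K_{n+1}:K_n]=N\p$, which you use implicitly. Its bottom, however, is misstated. The exponent-one case of \eqref{eq:norm-higher-heegner-y} reads $\Tr\, y(\p^2)=a_\p y(\p)-u(0)y(1)$, so $\Tr_{K_1/K}y_1=a_\p y_0-(N\p-\varepsilon(\p))y_K$. Hence your undefined $y_{-1}$ must be $(N\p-\varepsilon(\p))y_K$. With this, $z_0=\alpha^{-1}(\alpha-1)(\alpha-\varepsilon(\p))y_K$, which is nonzero modulo $\P$ by \eqref{ass:A2-II}, as you claim.

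The step that is genuinely wrong is your justification of the condition at $p$. Primes above $p$ need not be finitely decomposed in $K_\infty$: a prime $\q\neq\p$ above $p$ that is inert in $K$ and prime to $\cond(x)$ splits completely in $K(x(\p^\infty))$. Also, residue fields can grow by $p$-power degrees, not only by the quadratic twist.

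This does not sink the proof, for two reasons:
\begin{itemize}
\item The second half of (A2) only asks for $\tilde A_v(k_v)[\P]=0$ at the primes $v\mid p$ of $K$ itself.
\item The vanishing propagates up the tower anyway. Residue extensions in $K_\infty/K$ have $p$-power degree. The Frobenius of $k_v$ acts on $\tilde A_v[\P](\bar k_v)\cong\cOE/\P$ by a scalar $\neq 1$ of $(\cOE/\P)^\times$, a group of order prime to $p$, so no $p$-power of it equals $1$.
\end{itemize}

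At the base, your \'etale-quotient argument is a valid alternative to the paper's. The paper shows $\#\tilde A_v(k_v)\equiv N_{E/\Q}(1-a_\q^{n_v})\pmod p$ with $n_v=[k_v:k_\q]\in\{1,2\}$. That uses full ordinarity and the congruences at every $\P'\mid p$, and yields the stronger $p\nmid\#\tilde A_v(k_v)$. Your argument needs only $\P$-ordinarity and $a_\q\not\equiv 1,\varepsilon(\q)\pmod{\P}$. You must, however, justify one fact: the Frobenius endomorphism $\varphi$ commutes with $\cOE$ and satisfies $\varphi^2-a_\q\varphi+N\q=0$ in $\End^0(\tilde A_\q)$. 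It therefore acts on the rank-one $\cOP$-module $T_\P\tilde A_\q$ by the unit root of $X^2-a_\q X+N\q$.
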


\begin{proof}
    This is \cite[Theorem 3.5]{matar-nekovar:kolyvagin} applied to our concrete setting. One needs only to check that the assumptions (A1)-(A5) and (A7) of \emph{loc.~cit.}~are satisfied: $(A1)_{A, K, \p}$ follows by since the Weil pairing $(\,,\,)^\gamma_{\P}$ is perfect, skew-symmetric and Galois equivariant (see Section \ref{sec:weil-pairing}); for the first part of $(A2)_{A, K, \p}$, observe that our assumption \ref{ass:Tamagawa} implies $\mathrm{Tam}(A/K, \P)$ by (1.8.2) of \emph{loc.~cit.};
   its second part is less immediate and we postpone it; $(A3)_{A, K, \P}$ is Lemma \ref{torsion_lemma} for $n=1$; $(A4)_{A, K, \P}$ follows from Theorem \ref{main_theorem} and the assumption that $y_K \notin \P A(K)$; $(A5)_{A, K}$ is satisfied for $K^+=F$ as $K_\infty/F$ is pro-dihedral.
     
     Let's show that also $(A7)$ is satisfied.  As \eqref{ass:ordinary} implies that $a_\p \in \cOEP^\times$ by Proposition \ref{proposition:ordinary-equivalent-2}, therefore the polynomial $X^2 - a_\p X - N \p$ has two roots $\alpha_\p \in \cOEP^\times$ and $\beta_\p \in \pi^{r_\P f_\p}\cOEP$.
    Define $z_n = \alpha_\p^{-n} y(\p^{n+1}) - u(n)\alpha_\p^{-n-1} y(\p^n) \in A(K(x(\p^{n+1}))) \otimes_{\cOE} \cOEP$. Using the norm relations of Proposition \ref{prop:norm-relations-heegner} we can easily check that the sequence $(z_n)_{n\ge 0}$ form a system of norm compatible elements with respect to the traces $\Tr_{K(x(\p^{n+1}))/K(x(\p^{n}))}$.
    Therefore, denoting $z_n' = \Tr_{K(x(\p^{n+1}))/K_n} z_n$, we have that $(z_n')_{n\ge 0} \in \injlim_n A(K_n) \otimes_{\cOE} \cOEP = N_{K_\infty/K} (A \otimes \cOE)$. 
    
    Moreover, by the norm relations of Proposition \ref{prop:norm-relations-heegner}, it easily follows that
    \[
    \Tr_{K(x(\p))/K}(y(\p))=(a_\p-1-\varepsilon(\p))y_K.
    \]
    Therefore,
    \begin{align*}
    z_0' &= \Tr_{\frac{K(x(\p))}{K}}z_0 = 
    (a_\p - 1 - \varepsilon(\p))y_K - u(0)\alpha_p^{-1}[K(x(\p)):K(x)]y_K =\\  
    &=(a_\p - 1 - \varepsilon(\p))y_K - u(0)\alpha_\p^{-1}\frac{N\p-\varepsilon(\p)}{u(0)}y_K =\\
    &=(\alpha_\p + \beta_\p - 1- \varepsilon(\p)-\beta_\p + \alpha_\p^{-1}\varepsilon(\p))y_K =\\
    &=\alpha_\p^{-1}(\alpha_\p - 1)(\alpha_\p-\varepsilon(\p))y_K \ne 0,
    \end{align*}
    as follows by our assumption \eqref{ass:A2-II}, since $y_K \notin \P A(K)$ (which implies in particular that $y_K \notin A(K)_{\tors} = A(K)_{\cOE-\tors}$ ).

    Let's now turn to the second part of $(A2)_{A, K, \P}$. Let $\q$ be a prime of $F$ above $p$ and $v$ a prime of $K$ above $\q$ and denote respectively by $k_\q$ and $k_v$ the residue fields of $F$ and $K$ at $\q$ and $v$: we have to show that $\tilde{A}_v(k_v)[\P]=0$. Denote by $\tilde{E}$ the Galois closure of $E$ and identify it with its image in $\C$ via any embedding. Choose moreover for any $\sigma \colon E \inj \C$ an extension to an automorphism $\tilde{\sigma}$ of $E$ and a prime $\tilde{\P} \mid \P$ of $\tilde{E}$. Thus $\sigma (f_\p(X)) \in \tilde{E}[X]$, in the notations of Section \ref{sec:ordinary}. Moreover, it follows from \eqref{ass:ordinary} and Lemma \ref{lemma:ordinary-equivalent-1} that $a_\p \in \cOP^\times \subseteq \cO_{\tilde{E}, \tilde{\P}}^\times$ and thus $\sigma(a_\p) \in \cO_{\tilde{E}} \smallsetminus \tilde{\sigma}(\tilde{\P})$, whence (denoting by $\alpha_{\sigma, \p}$ and $\beta_{\sigma, \p}$ the two roots of $\sigma(f_\p(X))$ in $\cO_{\tilde{E}, \tilde{\sigma}(\tilde{\P})}$) $v_{\tilde{\sigma}(\tilde{\P})}(\alpha_{\sigma, \p})=0$ and $v_{\tilde{\sigma}(\tilde{\P})}(\beta_{\sigma, \p})>0$.

Fix an algebraic closure $\Qbar_p$ of $\Qp$ and an embedding of $\tilde{E} \inj \Qbar_p$. For any prime $\tilde{\P}' \mid p$ of $\tilde{E}$ this extends uniquely to a continuous embedding $\tilde{E}_{\tilde{\P}'} \inj \Qbar_p$, i.e., it respects the valuations. Thus (denoting by $\bar{\mathfrak{m}}$ the maximal ideal of $\Qbar_p$), $\alpha_{\sigma, \p} \equiv \sigma(a_\p)$ and $\beta_{\sigma, \p} \equiv 0 \pmod {\bar{\mathfrak{m}}}$.

 Note that, since $\q$ is unramified in $K/F$, then $\tilde{A}_v \cong \tilde{A}_{\q} \otimes_{k_\q} k_v$, where $\tilde{A}_{\q}$ is the special fibre of the Néron model of $A$ at $\q$. Let $n_v= [k_v : k_{\q}]$. Then it follows by \cite[Theorem 16.7(i)]{edixhoven-moonen-van-der-geer:av}, \eqref{eq:relation-char-poly} and the previous discussion
\begin{equation*}
    \# \tilde{A}_v(k_v) =\hspace{-7pt} \prod_{\sigma \in \Gal(E/\Q)} \hspace{-12pt}(1-\alpha_{\sigma,\p}^{n_v})(1-\beta_{\sigma,\p}^{n_v}) \equiv \hspace{-7pt}
    \prod_{\sigma \in \Gal(E/\Q)}\hspace{-12pt} (1-\sigma(a_\p)^{n_v}) = N_{E/\Q}(1- a_\p^{n_v}) \mod {\bar{\mathfrak{m}}}.
\end{equation*}
Actually, since both sides are integers, the previous congruence can be understood $\bmod {p\Z}$. Thus $(A2)_{A, K,\P }$ follows if we assume that $p \nmid N_{E/\Q}(a_\p -1)N_{E/\Q}(a_\p -\varepsilon(\q))$. The latter is visibly equivalent to \eqref{ass:A2-II}.
\end{proof}

\begin{remark}\label{rk:iwasawa-layers}
    The last part of the theorem can be improved in the case $L = K_n$. Indeed, during the proof we constructed a universal norm $(z_n')$, where any $z_n'$ a $\cOP$-linear combination of the traces of the Heegner points $\{y(\p^m)\}_{m\ge0}$ and such that $z_0'$ has nontrivial projection $\bmod \,\P$. 
    Therefore, since $K_n/K$ is a finite extension, it follows directly from the statement of the Theorem that $A(K_n) \otimes_{\cOE} \cOP$ (which is a free $\cOP$-module of rank $p^{nf_\p}$) is generated over $\cOP[\Gal(K_n/K)]$ by the traces of the Heegner points of $\p$-power conductor.
\end{remark}

\begin{remark}
    Since \eqref{ass:ordinary} implies \eqref{ass:P-ord}, it follows from Proposition \ref{theorem:assumption-reduction} that the assumption \eqref{ass:abs-G-K-irr} in Theorem \ref{theorem:iwasawa-th-main} can be replaced by \eqref{ass:G_F-irr}, \eqref{ass:p-gt-3} and \eqref{ass:Delta-p}.
\end{remark}
Note that the combination of Theorem \ref{theorem:iwasawa-th-main} and the previous remarks proves Theorem \ref{theorem:iwasawa-intro}.

%\bibliographystyle{amsalpha}
%\bibliography{Bibliography}
\printbibliography

\end{document}